\theoremstyle{plain}
\newtheorem{theorem}{Theorem}[section]
\newtheorem{lemma}[theorem]{Lemma}
\newtheorem{corollary}[theorem]{Corollary}
\newtheorem{prop}[theorem]{Proposition}
\newtheorem{conj}[theorem]{Conjecture}
\theoremstyle{remark}
\newtheorem{remark}[theorem]{Remark}
\newtheorem{example}[theorem]{Example}
\newtheorem*{note*}{Note}
\newtheorem*{remark*}{Remark}
\newtheorem*{example*}{Example}
\theoremstyle{definition}
\newtheorem*{definition*}{Definition}
\newtheorem{definition}[theorem]{Definition}
\newcommand{\Z}{\mathbb{Z}}
\newcommand{\R}{\mathbb{R}}
\newcommand{\Q}{\mathbb{Q}}
\newcommand{\C}{\mathbb{C}}
\newcommand{\N}{\mathbb{N}}
\newcommand{\F}{\mathbb{F}}
\newcommand{\Aff}{\mathrm{Aff}}
\newcommand{\Ann}{\mathrm{Ann}}
\newcommand{\Aut}{\mathrm{Aut}}
\newcommand{\Gal}{\mathrm{Gal}}
\newcommand{\cl}{\mathrm{cl}}
\newcommand{\nr}{\mathrm{nr}}
\newcommand{\Hom}{\mathrm{Hom}}
\newcommand{\Fit}{\mathrm{Fit}}
\newcommand{\res}{\mathrm{res}}
\newcommand{\quot}{\mathrm{quot}}
\newcommand{\proj}{\mathrm{proj}}
\newcommand{\tors}{\mathrm{tors}}
\newcommand{\Irr}{\mathrm{Irr}}
\newcommand{\ind}{\mathrm{ind}}
\newcommand{\PMod}{\mathrm{PMod}}
\newcommand{\ETNC}{\mathrm{ETNC}}
\newcommand{\SSC}{\mathrm{SSC}}
\newcommand{\LTC}{\mathrm{LTC}}
\newcommand{\loc}{\mathrm{loc}}
\newcommand{\Spec}{\mathrm{Spec}}
\numberwithin{equation}{section}
 \newcommand{\onto}{\twoheadrightarrow}
 \newcommand{\pr}{\mathrm{pr}}
 \newcommand{\mal}{^{\times}}
 \newcommand{\et}{\mathrm{\acute{e}t}}
\title[On the ETNC for Tate motives and unconditional annihilation results]{On the equivariant Tamagawa number conjecture for Tate motives and unconditional annihilation results}
\author{Henri Johnston}
\address{
Department of Mathematics\\
University of Exeter\\
Exeter\\
EX4 4QF\\
U.K.
}
\email{H.Johnston@exeter.ac.uk}
\urladdr{http://emps.exeter.ac.uk/mathematics/staff/hj241}
\author{Andreas Nickel}
\address{
Universit\"{a}t Bielefeld\\
Fakult\"{a}t f\"{u}r Mathematik\\
Postfach 100131\\
Universit\"{a}tsstr. 25\\
33501 Bielefeld\\
Germany}
\email{anickel3@math.uni-bielefeld.de}
\urladdr{http://www.math.uni-bielefeld.de/$\sim$anickel3/english.html}
\subjclass[2010]{11R42, 19F27}
\keywords{Tamagawa number, algebraic $K$-groups, annihilators, class groups}
\date{Version of 7th September 2014}
\begin{document}

\maketitle

\begin{abstract}
Let $L/K$ be a finite Galois extension of number fields with Galois group $G$.
Let $p$ be a prime and let $r \leq 0$ be an integer.
By examining the structure of the $p$-adic group ring $\Z_{p}[G]$,
we prove many new cases of the $p$-part of the
equivariant Tamagawa number conjecture (ETNC) for the pair $(h^{0}(\Spec(L))(r),\Z[G])$.
The same methods can also be applied to other conjectures concerning the vanishing
of certain elements in relative algebraic $K$-groups.
We then prove a conjecture of Burns concerning the annihilation of class groups as
Galois modules for a large class of interesting extensions, including cases in which the full ETNC is not known.
Similarly, we construct annihilators
of higher dimensional algebraic $K$-groups of the ring of integers in $L$.
\end{abstract}

\section{Introduction}

Building on work of Bloch and Kato \cite{MR1086888}, Fontaine and Perrin-Riou \cite{MR1265546}, and Kato \cite{MR1338860},
Burns and Flach \cite{MR1884523} formulated
the equivariant Tamagawa number conjecture (ETNC)
for any motive over $\Q$ with the action of a semisimple $\Q$-algebra, 
describing the leading term at $s=0$ of an equivariant motivic $L$-function in terms of certain 
cohomological Euler characteristics. 
This is a powerful and unifying formulation which, in particular, 
recovers the generalised Stark conjectures and the Birch and Swinnerton-Dyer conjecture.
We refer the reader to the survey article \cite{MR2088713} for a more detailed overview.

In the present article we shall consider the ETNC when specialised to the case of Tate motives
(in principle, our techniques also apply to other cases).
Let $L/K$ be a finite Galois extension of number fields with Galois group $G$ and let $r$ be an integer.
When specialised to the pair $(h^0(\mathrm{Spec}(L))(r),\Z[G])$, 
the ETNC asserts the vanishing of a certain element
$T\Omega(\Q(r)_{L},\Z[G])$ in the relative algebraic $K$-group $K_{0}(\Z[G],\R)$.
This element relates the leading terms at $s=r$ of Artin $L$-functions to
certain natural arithmetic invariants, and when it vanishes we shall say that `$\ETNC(L/K,r)$ holds'.
In the case that $T\Omega(\Q(r)_{L},\Z[G])$ belongs to $K_{0}(\Z[G],\Q)$
(when $r=0$ this is equivalent to Stark's conjecture for $L/K$), the
conjecture breaks down into local conjectures at each prime $p$
thanks to the canonical isomorphism
\begin{equation*}\label{eq:intro-decomp}
K_{0}(\Z[G], \Q) \simeq \bigoplus_{p} K_{0}(\Z_{p}[G], \Q_{p}).
\end{equation*}
We shall say that `$\ETNC_{p}(L/K,r)$ holds' when the $p$-part of $\ETNC(L/K,r)$ holds.

Let $p$ be a prime and let $DT(\Z_{p}[G])$ denote the torsion subgroup of $K_{0}(\Z_{p}[G],\Q_{p})$.
If $N$ is a normal subgroup of $G$, then there is a natural map
\begin{equation}\label{eq:intro-tors-quot-isom}
\quot_{G/N}^{G}:DT(\Z_{p}[G]) \longrightarrow DT(\Z_{p}[G/N]).
\end{equation}
By studying the structure of the $p$-adic group ring $\Z_{p}[G]$,
we shall give criteria for this quotient map to be an isomorphism.
Thus if these criteria are satisfied and $T\Omega(\Q(r)_{L},\Z[G])$ is torsion,
then the functorial properties of the conjecture
show that $\ETNC_{p}(L/K,r)$ is equivalent to $\ETNC_{p}(L^{N}/K,r)$
(here $L^{N}$ denotes the subfield of $L$ fixed by $N$).
Therefore we can prove many new cases of the $p$-part of the ETNC by reducing to known cases of the ETNC and its weaker variants.

We now consider the following concrete example. 
Let $q=\ell^{n}$ be a prime power and let $\Aff(q)$ denote the group of affine transformations on $\F_{q}$, 
the finite field with $q$ elements.
Hence $\Aff(q)$ is isomorphic to the semidirect product $\F_{q} \rtimes \F_{q}^{\times}$ with the natural action.
Note that $\Aff(3) \simeq S_{3}$, the symmetric group on three letters, and
$\Aff(4) \simeq A_{4}$, the alternating group on four letters.
Now let $L/\Q$ be any Galois extension such that the Galois group $G$ is isomorphic to $\Aff(q)$. 
As all complex irreducible characters of $\Aff(q)$ are either linear or rational-valued, we know
by results of Ritter and Weiss  \cite{MR1423032} and of 
Tate \cite[Chapter II, Theorem 6.8]{MR782485}, respectively,
that the strong Stark conjecture (as formulated by Chinburg \cite[Conjecture 2.2]{MR724009}) holds for the extension $L/\Q$.
This is equivalent to the assertion that $T\Omega(\Q(0)_{L},\Z[G])$ is torsion.
Now let $N \simeq \F_{q}$ be the commutator subgroup of $G$.
For every prime $p \neq \ell$, we shall prove that the group ring $\Z_{p}[G]$ is isomorphic to the direct sum
of $\Z_{p}[G/N]$ and some maximal $\Z_{p}$-order.
From this we deduce that the map \eqref{eq:intro-tors-quot-isom} is an isomorphism in this case.
Thus, by the functorial properties of the conjecture, $\ETNC_{p}(L/\Q,0)$ is equivalent to $\ETNC_{p}(L^{N}/\Q,0)$.
However, the extension $L^{N}/\Q$ is abelian and the ETNC is known for all such extensions by work of Burns and Greither \cite{MR1992015} and Flach \cite{MR2863902}.
Therefore for every prime $p \neq \ell$ we can prove $\ETNC_{p}(L/\Q,0)$.
We remark that up until now there has been no known example
of a finite non-abelian group $G$ and an odd prime $p$ dividing the order of $G$
such that $\ETNC_{p}(L/\Q,0)$ has been shown to hold for every extension
$L/\Q$ with $\Gal(L/\Q) \simeq G$.

From the above example regarding $\Aff(q)$ we deduce the following result.
Fix a natural number $n$. We can construct an infinite family of Galois extensions
of number fields $L/F$ with $\Gal(L/F) \simeq C_{n}$ (the cyclic group of order $n$) and
$F/\Q$ non-abelian (indeed non-Galois) such that $\ETNC(L/F,0)$ holds.
To date the only examples $L/F$ with $F/\Q$ non-abelian for which $\ETNC(L/F,0)$ is known to hold have been either trivial, quadratic, or cubic (see \S \ref{subsec:known-cases}).

Now assume that $r<0$ is odd and consider finite Galois extensions $L/K$ of totally real number fields.
Combining the above approach with a recent result of Burns \cite{burns-mc}
allows us to prove even more than in the case $r=0$.
For example, suppose that $K=\Q$ and $\Gal(L/\Q) \simeq \Aff(q)$ as above.
In this case we prove $\ETNC(L/\Q,r)$ outside the $2$-part.
Assuming the conjectural vanishing of certain $\mu$-invariants,
this result was already established by Burns \cite{burns-mc}.
However, we stress that all of our results are unconditional and do not rely on
any conjecture on the vanishing of $\mu$-invariants.

Let $n$ be an odd natural number and let $p$ be an odd prime. Then the dihedral group $D_{2n}$ has a unique subgroup
isomorphic to the cyclic group $C_{n}$ and Breuning  \cite[Proposition 3.2(2)]{MR2031413} showed that the restriction
map
\[
\res^{D_{2n}}_{C_{n}}: DT(\Z_{p}[D_{2n}]) \longrightarrow DT(\Z_{p}[C_{n}])
\]
is injective. Combining this with results of Bley \cite[Corollary 4.3]{MR2226270} on $\ETNC_{p}(L/K,0)$
where $L/K$ is an abelian extension of an imaginary quadratic field, we establish many new cases of
 $\ETNC_{p}(L/\Q,0)$ where $\Gal(L/\Q) \simeq D_{2n}$. In particular, we give an explicit infinite family of finite non-abelian groups
with the property that for each member $G$ there are infinitely many extensions $L/\Q$ with $\Gal(L/\Q) \simeq G$ such that $\ETNC(L/\Q,0)$ holds.
Up until now, the only known family of finite non-abelian groups with this property has been that containing the single group $Q_{8}$,
the quaternion group of order $8$
(this is a result of Burns and Flach \cite[Theorem 4.1]{MR1981031}). Using work of Johnson-Leung (\cite[Main Theorem]{Johnson-ETNC}), 
by the same method we also establish new results for dihedral extensions of $\Q$ in the case $r<0$.

We can also prove certain cases of other conjectures concerning the vanishing of certain elements in relative algebraic $K$-groups, provided that these elements satisfy the appropriate functorial properties.
In particular, we consider the global equivariant epsilon constant conjecture of Bley and Burns \cite{MR2005875},
the local equivariant epsilon constant conjecture of Breuning \cite{MR2078894}, and the
leading term conjecture at $s=1$ of Breuning and Burns \cite[\S 3]{MR2371375}.

We now consider certain annihilation conjectures.
Let $L/K$ be a finite Galois extension of number fields with Galois group $G$.
To each finite set $S$ of places of $K$ containing all archimedean places,
one can associate a so-called `Stickelberger element'
$\theta_{S}$ in the centre of the complex group algebra $\C[G]$.
This Stickelberger element is defined via $L$-values at zero of $S$-truncated Artin $L$-functions attached to the (complex) characters of $G$.
Let us denote the roots of unity of $L$ by $\mu_{L}$ and the class group of $L$ by $\cl_{L}$.
Assume that $S$ contains all finite primes of $K$ that ramify in $L/K$. Then it was independently
shown in \cite{MR525346}, \cite{MR524276} and \cite{MR579702} that when $G$ is abelian we have
\begin{equation}\label{eq:abelian-integrality}
    \Ann_{\Z[G]} (\mu_{L}) \theta_{S} \subseteq \Z[G],
\end{equation}
where we denote by $\Ann_{\Lambda}(M)$ the annihilator ideal of $M$ regarded as a module over the ring $\Lambda$.
Now Brumer's conjecture asserts that $\Ann_{\Z[G]} (\mu_{L}) \theta_{S}$ annihilates $\cl_{L}$.

Using $L$-values at integers $r<0$, one can define higher Stickelberger elements $\theta_{S}(r)$.
Coates and Sinnott \cite{MR0369322} conjectured that these elements can be used to construct annihilators of the higher
$K$-groups $K_{-2r}(\mathcal{O}_{S})$, where we denote by $\mathcal{O}_{S}$ the ring of $S(L)$-integers in $L$ for
any finite set $S$ of places  of $K$; here, we write $S(L)$ for the set of places of $L$ which lie above those in $S$.
However if, for example, $L$ is totally real and $r$ is even, these conjectures merely predict that zero annihilates
$K_{-2r}(\mathcal{O}_{S})$ (resp.~$\cl_{L}$) if $r<0$ (resp.~$r=0$).

In the case $r=0$, Burns \cite{MR2845620} presented a universal theory of refined Stark conjectures. In particular,
the Galois group $G$ may be non-abelian, and he uses leading terms rather than values of Artin $L$-functions to construct
conjectural nontrivial annihilators of the class group. His conjecture thereby extends the aforementioned conjecture of Brumer.
Similarly, in the case $r<0$ the second named author \cite{MR2801311} has formulated a conjecture on the annihilation of higher $K$-groups
which generalises the Coates-Sinnott conjecture.
The Quillen-Lichtenbaum conjecture relates $K$-groups to \'{e}tale cohomology, predicting that for all odd primes $p$,
integers $r<0$ and $i=0,1$ the canonical
$p$-adic Chern class maps
\[
K_{i-2r}(\mathcal{O}_{L}) \otimes_{\Z} \Z_{p} \longrightarrow
H^{2-i}_{\et}(\mathcal{O}_{L}[1/p], \Z_{p}(1-r))
\]
constructed by Soul\'{e} \cite{MR553999} are isomorphisms. Following fundamental work of Voevodsky and Rost, Weibel \cite{MR2529300} has completed the proof
of the Bloch-Kato conjecture which relates Milnor $K$-theory to \'{e}tale cohomology and implies the Quillen-Lichtenbaum conjecture.
In this way, one obtains a cohomological version of the conjecture on the annihilation of higher $K$-groups, and it is this version we will deal with later.
Both annihilation conjectures are implied by $\ETNC(L/K,r)$.

In the present article, we prove Burns' conjecture for a wide class of interesting extensions. As our method often works equally well in other situations,
we also provide new evidence for the annihilation conjecture on higher $K$-groups as well as for several other conjectures in Galois module theory.
Let us now assume that $r=0$.
We illustrate our results by again returning to the example of an extension
$L/\Q$ with $\Gal(L/\Q) \simeq \Aff(q)$ where $q$ is a power of a prime $\ell$.
As discussed above, we know $\ETNC_{p}(L/\Q,0)$ and thus
the $p$-part of Burns' conjecture for every prime $p \neq \ell$.
However, by considering certain `denominator ideals' that play a role in many annihilation conjectures, we also deduce the $\ell$-part of Burns' conjecture
(up to a factor $2$ if $\ell=2$) from the validity of the strong Stark conjecture,
even though $\ETNC_{\ell}(L/\Q,0)$ is not known in this case.
By a similar method
we prove Burns' conjecture for every Galois extension of number fields $L/K$
with $\Gal(L/K) \simeq S_{3}$.
In the case $r<0$, we prove certain cases of the aforementioned
conjecture on the annihilation of higher $K$-groups.

\subsection*{Acknowledgements}
The authors are indebted to
Frieder Ladisch for providing a proof of Lemma \ref{lem:idem-val}
and to Florian Eisele for additional comments on this topic.
The authors are grateful to James Newton for providing the reference
 \cite{MR680538}
used in Remark \ref{rmk:infinite-ETNC-family},
and to Paul Buckingham for a number of corrections and suggestions.
The second named author acknowledges financial support provided by the DFG 
within the Collaborative Research Center 701
`Spectral Structures and Topological Methods in Mathematics'.

\subsection*{Notation and conventions}
All rings are assumed to have an identity element and all modules are assumed
to be left modules unless otherwise  stated. We fix the following notation:

\medskip

\begin{tabular}{ll}
$S_{n}$ & the symmetric group on $n$ letters \\
$A_{n}$ & the alternating group on $n$ letters \\
$C_{n}$ & the cyclic group of order $n$ \\
$D_{2n}$ & the dihedral group of order $2n$\\
$Q_{8}$ & the quaternion group of order $8$\\
$V_{4}$ & the subgroup of $A_{4}$ generated by double transpositions\\
$\F_{q}$ & the finite field with $q$ elements, where $q$ is a prime power\\
$\Aff(q)$ & the affine group isomorphic to $\F_{q} \rtimes \F_{q}^{\times}$ defined in Example \ref{ex:affine}\\
$v_{p}(x)$ & the $p$-adic valuation of the rational number $x$\\
$\Irr_{F}(G)$ & the set of $F$-irreducible characters of the finite group $G$\\
$R^{\times}$ & the group of units of a ring $R$\\
$\zeta(R)$ & the centre of a ring $R$\\
$M_{m \times n} (R)$ & the set of all $m \times n$ matrices with entries in a ring $R$
\end{tabular}

\section{$p$-adic group rings and hybrid orders}

\subsection{Central idempotents in $p$-adic group rings} \label{subsec:idempotents}
Let $p$ be a prime and let $G$ be a finite group.
Let $e_{1}, \ldots, e_{t}$ be the central primitive idempotents in the group algebra $A:=\Q_{p}[G]$.
Then
\[
A = A_{1} \oplus \cdots \oplus A_{t}
\]
where $A_{i}:=Ae_{i}=e_{i}A$.
By Wedderburn's theorem each $A_{i}$ is isomorphic to an algebra of $m_{i} \times m_{i}$ matrices over
a skewfield $D_{i}$ and $F_{i} := \zeta(D_{i})$ is a finite field extension of $\Q_{p}$.
We denote the Schur index of $D_{i}$ by $s_{i}$ so that $[D_{i}:F_{i}]=s_{i}^{2}$ and set $n_{i}=m_{i}s_{i}$.

Now $e_{i} = \sum_{\chi \in \mathcal{C}_{i}} e_{\chi}$ where
each $e_{\chi}:=\chi(1)|G|^{-1}\sum_{g \in G} \chi(g^{-1})g$ is the central primitive
idempotent of $\C_{p}[G]$ corresponding to a character $\chi \in \Irr_{\C_{p}}(G)$
and each $\mathcal{C}_{i}$ is a Galois conjugacy class of such characters.
Note that $n_{i}=\chi(1)$ for any choice of $\chi \in \mathcal{C}_{i}$.
The authors are indebted to Frieder Ladisch for providing a proof of the following lemma, as well as to
Florian Eisele for additional comments.

\begin{lemma}\label{lem:idem-val}
If $e_{i} \in \Z_{p}[G]$ then $v_{p}(\chi(1)) = v_{p}(|G|)$ for some (and hence every) $\chi \in \mathcal{C}_{i}$.
\end{lemma}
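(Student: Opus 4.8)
The plan is to show that the hypothesis $e_i \in \Z_p[G]$ forces the $p$-part of $\chi(1)$ to be as large as possible, namely $v_p(|G|)$. Since $\chi(1) \mid |G|$ always holds, the inequality $v_p(\chi(1)) \le v_p(|G|)$ is automatic, so the content is the reverse inequality. First I would recall the formula $e_\chi = \chi(1)|G|^{-1}\sum_{g \in G}\chi(g^{-1})g$ and sum over the Galois conjugacy class $\mathcal{C}_i$ to get $e_i = \chi(1)|G|^{-1}\sum_{g\in G} \left(\sum_{\psi\in\mathcal{C}_i}\psi(g^{-1})\right) g$. The coefficient of $g$ is thus $\chi(1)|G|^{-1}$ times a sum of character values, which is an algebraic integer fixed by $\mathrm{Gal}(\Q_p(\zeta)/\Q_p)$ and hence an element of $\Z_p$; write $a_g$ for this coefficient-sum-of-character-values, so the coefficient of $g$ in $e_i$ is $\chi(1)|G|^{-1} a_g$ with $a_g \in \Z_p$.

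Next I would extract the key arithmetic input from $e_i \in \Z_p[G]$: every coefficient $\chi(1)|G|^{-1}a_g$ lies in $\Z_p$. The natural place to apply this is at $g = 1$, where $a_1 = \sum_{\psi \in \mathcal{C}_i}\psi(1) = |\mathcal{C}_i|\chi(1)$, giving the coefficient $|\mathcal{C}_i|\chi(1)^2|G|^{-1} \in \Z_p$, i.e. $v_p(|\mathcal{C}_i|) + 2v_p(\chi(1)) \ge v_p(|G|)$. This alone is not quite enough, so the real step is to control $v_p(|\mathcal{C}_i|)$. Here I would use that $\mathcal{C}_i$ is the $\mathrm{Gal}(\Q_p(\chi)/\Q_p)$-orbit of $\chi$ — equivalently (up to the Schur index bookkeeping $n_i = m_i s_i = \chi(1)$) that $A_i$ is a simple $\Q_p$-algebra with centre $F_i = \Q_p(\chi)$ of degree $|\mathcal{C}_i|$ over $\Q_p$ — together with the fact that $\Q_p(\chi) \subseteq \Q_p(\zeta_{|G|})$, whose ramification and residue degree over $\Q_p$ are governed by the prime-to-$p$ part and the $p$-part of $|G|$ respectively. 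The point to nail down is that the degree $[\Q_p(\chi):\Q_p]$ can have $p$-adic valuation at most $v_p([\Q_p(\zeta_{|G|}):\Q_p])$, and one checks $v_p([\Q_p(\zeta_{|G|}):\Q_p]) = 0$ unless... — in fact, more robustly, I would argue directly with the idempotent: the sum of the coefficients of $e_i$ over all $g$ in a fixed $p$-Sylow-relevant subset, or better, apply the relation $e_i \cdot e_i = e_i$ and trace considerations, to pin down $v_p(\chi(1))$ exactly.

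The cleanest route, which I would actually write up, is via the reduced trace / the augmentation-type argument: consider the $\Z_p$-valued function $g \mapsto \chi(1)|G|^{-1}a_g$ and evaluate the idempotent against the regular character or compute $\sum_{g} (\text{coeff of }g)\cdot\overline{\psi(g)}$ for $\psi \in \mathcal{C}_i$, which reproduces $\chi(1)$ and must therefore lie in the $\Z_p$-span of the coefficients; combined with integrality this yields $v_p(\chi(1)) \ge v_p(|G|) - v_p(|\mathcal{C}_i|)$, and then the orbit $\mathcal{C}_i$ being a Galois orbit inside a cyclotomic field over $\Q_p$ of prime-to-$p$ degree (since wild ramification in $\Q_p(\zeta_{|G|})/\Q_p$ contributes a $p$-power to the ramification index but the relevant field $\Q_p(\chi)$ is the fixed field under the decomposition behaviour controlling $\mathcal{C}_i$) forces $v_p(|\mathcal{C}_i|) = 0$, completing the proof. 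The main obstacle I anticipate is precisely this last point — showing $p \nmid |\mathcal{C}_i|$, i.e. that the Galois conjugacy class of $\chi$ over $\Q_p$ has prime-to-$p$ size; this requires care with the structure of $\mathrm{Gal}(\Q_p(\zeta_{|G|})/\Q_p)$, separating tame and wild parts, and arguing that the $p$-part of this Galois group must fix $\chi$ (otherwise the coefficients of $e_i$ would fail to be $p$-integral). Everything else is bookkeeping with the explicit idempotent formula and standard integrality of character values.
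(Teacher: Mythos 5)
The proposal has a genuine gap and does not give a proof. The fundamental problem is that you only exploit the hypothesis $e_{i}\in\Z_{p}[G]$ in the form ``each coefficient is $p$-integral,'' and this is simply too weak. Your computation at $g=1$ gives, correctly,
\[
v_{p}(|\mathcal{C}_{i}|) + 2\,v_{p}(\chi(1)) \geq v_{p}(|G|),
\]
but even granting $v_{p}(|\mathcal{C}_{i}|)=0$ this yields only $v_{p}(\chi(1)) \geq \tfrac{1}{2} v_{p}(|G|)$, half of what is required. The later ``orthogonality / augmentation'' step is not a repair: the identity $\sum_{g}\epsilon_{g}\psi(g)=\chi(1)$ for $\psi\in\mathcal{C}_{i}$ merely exhibits $\chi(1)$ as an $\mathcal{O}_{i}$-linear combination of $p$-integral elements, i.e.\ it says $\chi(1)\in\mathcal{O}_{i}$, which is vacuous and gives no lower bound on $v_{p}(\chi(1))$; the asserted inequality $v_{p}(\chi(1)) \geq v_{p}(|G|) - v_{p}(|\mathcal{C}_{i}|)$ does not follow from anything you write down. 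Finally, the auxiliary claim $v_{p}(|\mathcal{C}_{i}|)=0$, i.e.\ $p \nmid [F_{i}:\Q_{p}]$, is not established, is not needed by the paper, and is in fact in danger of being circular in your framework: the unramifiedness of $F_{i}/\Q_{p}$ used to control the Galois orbit is exactly one of the consequences (Proposition \ref{prop:equiv-idem}(iii)) that is deduced \emph{from} the lemma, and in any case unramifiedness alone does not bound the $p$-part of $[F_{i}:\Q_{p}]$.

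The missing idea is that $e_{i}\in\Z_{p}[G]$ implies something much stronger than $p$-integrality of all coefficients: the coefficient $\epsilon_{g}$ \emph{vanishes} whenever $g$ is $p$-singular (i.e.\ $p$ divides the order of $g$). This is a classical fact from modular representation theory, cited in the paper as \cite[Proposition 5]{MR1261587}. Once you have this, the character $\beta = \sum_{h\in\Gal(F_{i}/\Q_{p})}\chi^{h}$ (which is, up to the factor $\chi(1)/|G|$, the coefficient function of $e_{i}$) vanishes on $P-\{1\}$ for $P$ a Sylow $p$-subgroup, and then the two expressions for $\langle\beta_{P},1_{P}\rangle$ --- one via the definition of the inner product ($=\beta(1)/|P|$) and one via linearity over the Galois orbit ($=|H|\langle\chi_{P},1_{P}\rangle$) --- force $\chi(1)=|P|\langle\chi_{P},1_{P}\rangle$, whence $|P|\mid\chi(1)$. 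Your proposal never accesses this vanishing-on-$p$-singular-elements phenomenon and no amount of bookkeeping from the $g=1$ coefficient or from orthogonality will substitute for it.
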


\begin{proof}
Recall that an element $g \in G$ is said to be $p$-singular if its order is divisible by $p$.
Write $e_{i} = \sum_{g \in G}\epsilon_{g} g$ with $\epsilon_{g} \in \Z_{p}$ for $g \in G$.
Then \cite[Proposition 5]{MR1261587} shows that $\epsilon_{g}=0$ for every $p$-singular $g \in G$
(alternatively, one can use \cite[Proposition 3]{MR1261587} and that $e_{i}$ is central).
Let $\chi \in \mathcal{C}_{i}$ and put $H = \Gal(F_{i} / \Q_{p})$. Then
\[
e_{i} = \frac{\chi(1)}{|G|} \sum_{g \in G} \sum_{h \in H} \chi(g^{-1})^{h} g
\]
and so the character $\beta := \sum_{h \in H} \chi^{h}$ vanishes on $p$-singular elements.
Let $P$ be a Sylow $p$-subgroup of $G$.
Then $\beta$ vanishes on $P- \{1\}$, so the multiplicity of the trivial character of $P$ in the restriction $\beta_{P}$
is
\[
\langle \beta_{P} , 1_{P} \rangle = \beta(1) |P|^{-1} = \chi(1)|H||P|^{-1}.
\]
However, we also have
\[
\langle \beta_{P} , 1_{P} \rangle
 = \sum_{h \in H} \langle \chi^{h}_{P} , 1_{P} \rangle
 = |H|  \langle \chi_{P} , 1_{P} \rangle.
 \]
Therefore $\chi(1) = |P| \langle \chi_{P} , 1_{P} \rangle$, which gives the desired result.
\end{proof}

\begin{definition}\label{def:central-conductor}
Let $A$ be a finite-dimensional semisimple $\Q_{p}$-algebra and let $\Lambda \subseteq \Gamma$
be $\Z_{p}$-orders of full rank in $A.$
The \emph{central conductor} of $\Gamma$ into $\Lambda$ is defined to be
\[
\mathcal{F}(\Gamma,\Lambda):
= \{ x \in \zeta(\Gamma) \mid x\Gamma \subseteq \Lambda \}.
\]
\end{definition}

Let $\mathfrak{M}_{p}(G)$ be a maximal $\Z_{p}$-order such that $\Z_{p}[G] \subseteq \mathfrak{M}_{p}(G) \subseteq \Q_{p}[G]$.
Let $\mathcal{O}_{i}$ denote the integral closure of $\Z_{p}$ in $F_{i}$ and let
$\mathfrak{D}^{-1}(\mathcal{O}_{i}/\Z_{p})$ be the inverse different of $\mathcal{O}_{i}$ relative to $\Z_{p}$.
Then Jacobinski's central conductor formula \cite[Theorem 3]{MR0204538}
(also see \cite[Theorem 27.13]{MR632548}) says that
\begin{equation}\label{eq:conductor-formula}
\mathcal{F}_{p}(G) := \mathcal{F}(\mathfrak{M}_{p}(G), \Z_{p}[G]) = \bigoplus_{i=1}^{t} |G|n_{i}^{-1} \mathfrak{D}^{-1}(\mathcal{O}_{i}/\Z_{p}).
\end{equation}
This is independent of the particular choice of maximal order $\mathfrak{M}_{p}(G)$.
The idea to use \eqref{eq:conductor-formula} in the proof of the following proposition is adapted from \cite[\S 56, Exercise 10]{MR892316}.

\begin{prop}\label{prop:equiv-idem}
Fix $i$ with $1 \leq i \leq t$. Then the following are equivalent:
\begin{enumerate}
\item $e_{i} \in \Z_{p}[G]$,
\item $e_{i} \in \Z_{p}[G]$ and $\Z_{p}[G]e_{i}$ is a maximal $\Z_{p}$-order,
\item $e_{i} \in \Z_{p}[G]$ and $F_{i}/\Q_{p}$ is unramified,
\item $v_{p}(\chi(1)) = v_{p}(|G|)$ for some (and hence every) $\chi \in \mathcal{C}_{i}$.
\end{enumerate}
Furthermore, if these equivalent conditions hold then $s_{i}=1$.
\end{prop}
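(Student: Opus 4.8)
The plan is to prove the chain of implications $(i) \Rightarrow (iv) \Rightarrow (iii) \Rightarrow (ii) \Rightarrow (i)$, together with the final claim $s_i = 1$, which will fall out of the argument for $(iv) \Rightarrow (iii)$. The implication $(i) \Rightarrow (iv)$ is already done: it is precisely Lemma \ref{lem:idem-val}. For the reverse passage $(iv) \Rightarrow (iii)$, the key tool is Jacobinski's central conductor formula \eqref{eq:conductor-formula}. First I would observe that if $e_i \in \Z_p[G]$, then $e_i \cdot \mathcal{F}_p(G) = |G| n_i^{-1} \mathfrak{D}^{-1}(\mathcal{O}_i/\Z_p)$ must be an integral ideal of $\mathcal{O}_i$, since the conductor lands in $\Z_p[G]$ and hence applying the idempotent $e_i$ lands in $\Z_p[G] e_i \subseteq \mathfrak{M}_p(G) e_i$, a $\Z_p$-order in $A_i$ with centre $\mathcal{O}_i$. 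Under the hypothesis $v_p(\chi(1)) = v_p(|G|)$, i.e. $v_p(n_i) = v_p(|G|)$, the factor $|G| n_i^{-1}$ is a $p$-adic unit times a power of $p$ with valuation $v_p(|G|) - v_p(n_i) = 0$, so $|G| n_i^{-1}$ is a unit in $\Z_p$, whence $\mathfrak{D}^{-1}(\mathcal{O}_i/\Z_p)$ itself must be an integral ideal. But the inverse different always contains $\mathcal{O}_i$, so integrality of $\mathfrak{D}^{-1}(\mathcal{O}_i/\Z_p)$ forces $\mathfrak{D}^{-1}(\mathcal{O}_i/\Z_p) = \mathcal{O}_i$, i.e. $\mathfrak{D}(\mathcal{O}_i/\Z_p) = \mathcal{O}_i$, which means $F_i/\Q_p$ is unramified. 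Actually I need to be slightly careful that it is genuinely the full conductor and not just the $i$-th component that must be integral; but since \eqref{eq:conductor-formula} is a direct sum decomposition compatible with the central idempotents, the $i$-th component $|G| n_i^{-1}\mathfrak{D}^{-1}(\mathcal{O}_i/\Z_p)$ is exactly the image of $\mathcal{F}_p(G)$ under the projection to $\zeta(A_i)$, and this projection maps $\zeta(\mathfrak{M}_p(G))$ onto $\mathcal{O}_i$, so the argument goes through. This step — correctly extracting integrality of the local different factor from the conductor formula — is the main obstacle, and the delicate point is keeping track of which order the various pieces live in.

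Once $(iii)$ is in hand, the implication $(iii) \Rightarrow (ii)$ is a standard fact: if $F_i/\Q_p$ is unramified, then $\mathcal{O}_i$ is unramified over $\Z_p$, and a crossed-product / maximal-order argument shows that $\mathfrak{M}_p(G) e_i = \Z_p[G] e_i$. More concretely, since $\Z_p[G] e_i$ is a $\Z_p$-order in the simple algebra $A_i = M_{m_i}(D_i)$ and contains $e_i$, and since $F_i/\Q_p$ unramified forces (via the general theory of maximal orders over complete local rings, e.g. \cite[Theorem 17.3]{MR632548} and its refinements) that $\Z_p[G]e_i$ is already maximal — here one uses that $e_i = \chi(1)|G|^{-1}\sum \chi(g^{-1})g$ and the hypothesis $v_p(\chi(1)) = v_p(|G|)$ to see that the relevant "denominators" are $p$-adically trivial — I would conclude $\Z_p[G]e_i$ is a maximal $\Z_p$-order. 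For the final assertion $s_i = 1$: a maximal order in $M_{m_i}(D_i)$ over the DVR $\mathcal{O}_i$ has, modulo its Jacobson radical, a matrix ring over the residue division algebra; but over a finite field every division algebra is a field (Wedderburn's little theorem applied to the residue), forcing the Schur index $s_i = 1$. Equivalently, $D_i$ is a division algebra over the local field $F_i$ whose index equals the index of the maximal order's residue structure, and unramifiedness plus the residue being a finite field kills any nontrivial division part.

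Finally, $(ii) \Rightarrow (i)$ is trivial since $(ii)$ explicitly includes $e_i \in \Z_p[G]$. This closes the cycle. I would write the proof in the order $(i)\Rightarrow(iv)$ [cite Lemma \ref{lem:idem-val}], $(iv)\Rightarrow(iii)$ [conductor formula, the crux], $(iii)\Rightarrow(ii)$ [maximal order theory, extracting $s_i=1$ along the way], $(ii)\Rightarrow(i)$ [immediate]. The only genuinely substantive step is $(iv)\Rightarrow(iii)$; the rest is assembling standard structure theory of orders over complete discrete valuation rings, and I would expect the hint from \cite[\S 56, Exercise 10]{MR892316} mentioned before the statement to be essentially this conductor-formula argument.
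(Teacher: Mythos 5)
Your overall architecture matches the paper's: use Lemma \ref{lem:idem-val} for $(i)\Rightarrow(iv)$, then feed the hypothesis $v_p(\chi(1))=v_p(|G|)$ into Jacobinski's central conductor formula \eqref{eq:conductor-formula} to force the local inverse different to be trivial and hence $F_i/\Q_p$ unramified. That is exactly what the paper does, and your resolution of the ``which order does the ideal live in'' worry is the right one: the $i$-th component of $\mathcal{F}_p(G)$ is automatically an integral $\mathcal{O}_i$-ideal, so once $|G|n_i^{-1}\in\Z_p^\times$ it equals $\mathfrak{D}^{-1}(\mathcal{O}_i/\Z_p)$ and is squeezed between $\mathcal{O}_i$ and $\mathcal{O}_i$.

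Two points deserve attention, one minor and one genuine. The minor one: you open the $(iv)\Rightarrow(iii)$ argument with ``if $e_i\in\Z_p[G]$,'' but at that stage of the cycle you only have $(iv)$, and $(iii)$ itself asserts $e_i\in\Z_p[G]$, so you need to extract that from $(iv)$ rather than assume it. This is easily done from the same computation: once the $i$-th component of $\mathcal{F}_p(G)$ is shown to equal $\mathcal{O}_i$, it contains $e_i$, and $\mathcal{F}_p(G)\subseteq\Z_p[G]$ since $1\in\mathfrak{M}_p(G)$; this also gives $(ii)$ directly, since $e_i\mathfrak{M}_p(G)\subseteq\Z_p[G]$ forces $\mathfrak{M}_p(G)e_i=\Z_p[G]e_i$. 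The paper proceeds exactly this way, getting $(ii)$ and $(iii)$ from $(iv)$ in one stroke, which is cleaner than routing through a separate ``crossed-product / maximal-order'' argument for $(iii)\Rightarrow(ii)$.

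The genuine gap is the claim $s_i=1$. The Wedderburn-little-theorem argument does not work: the residue division ring of a maximal order in a central division algebra over a local field is \emph{always} a finite field, whatever the Schur index. For instance, the nonsplit quaternion algebra over $\Q_p$ has $s_i=2$, yet its unique maximal order has finite residue field. ``Unramifiedness of $F_i/\Q_p$ plus finite residue'' therefore does not kill the division part. The fact that a $p$-block of defect zero has trivial Schur index over $\Q_p$ is a nontrivial theorem of modular representation theory, and the paper accordingly cites \cite[Theorem 5]{MR0302752} (see also Remark \ref{rmk:language-mod-rep-thy}); this is not something that falls out of the structure theory of maximal orders alone.
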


\begin{proof}
It is clear that (i) is implied by each of the other conditions.
That (i) implies (iv) is  Lemma \ref{lem:idem-val}.
Suppose that (iv) holds.
Then from \eqref{eq:conductor-formula} we have
\begin{equation}\label{eq:trunc-cond}
\mathcal{F}(\mathfrak{M}_{p}(G)e_{i}, \Z_{p}[G]e_{i})
= |G|n_{i}^{-1} \mathfrak{D}^{-1}(\mathcal{O}_{i}/\Z_{p}).
\end{equation}
Since $v_{p}(\chi(1)) = v_{p}(|G|)$ for some  $\chi \in \mathcal{C}_{i}$ we have
that $|G|^{-1}n_{i} = |G|^{-1}\chi(1) \in \Z_{p}^{\times}$.
Furthermore, $1 \in \mathfrak{D}^{-1}(\mathcal{O}_{i}/\Z_{p})$ and so we must have
that $1$ is in the ideal in \eqref{eq:trunc-cond}, which forces it to be the trivial ideal.
This in turn forces $\mathfrak{D}^{-1}(\mathcal{O}_{i}/\Z_{p})$ to be trivial.
Hence  $\Z_{p}[G]e_{i}$ is a maximal $\Z_{p}$-order and $F_{i}/\Q_{p}$ is unramified, so (ii) and (iii) hold.
The last claim is \cite[Theorem 5]{MR0302752} (see Remark \ref{rmk:language-mod-rep-thy} for explanation of terminology).
\end{proof}

\begin{remark}\label{rmk:language-mod-rep-thy}
In the language of modular representation theory, when $v_{p}(\chi(1))=v_{p}(|G|)$ we say that ``$\chi$ belongs to a $p$-block of defect zero''.
When working over an extension of $\Q_{p}$ that is ``sufficiently large relative to $G$'', the theory of such blocks is well-understood;
see \cite[\S 56]{MR892316} or \cite[\S 16.4]{MR0450380}, for example. 
Proposition \ref{prop:equiv-idem} can also be deduced from \cite[Proposition 46]{MR0450380} using Lemma \ref{lem:idem-val} and \cite[Theorem]{MR551444}.
\end{remark}

\subsection{Hybrid $p$-adic group rings}
For a normal subgroup $N$ of $G$, let $e_{N} = |N|^{-1}\sum_{\sigma \in N} \sigma$
be the associated central trace idempotent in the group algebra $\Q_{p}[G]$.
Then there is a ring isomorphism $\Z_{p}[G]e_{N} \simeq \Z_{p}[G/N]$.
In particular, if $G'$ is the commutator subgroup of $G$ and
$G^{\mathrm{ab}}=G/G'$ is the maximal abelian quotient,
then $\Z_{p}[G]e_{G'} \simeq \Z_{p}[G^{\mathrm{ab}}]$.

\begin{definition}\label{def:max-comm-hybrid}
Let $\mathfrak{M}_{p}(G)$ be a maximal $\Z_{p}$-order
such that $\Z_{p}[G] \subseteq \mathfrak{M}_{p}(G) \subseteq \Q_{p}[G]$
and let $N$ be a normal subgroup of $G$.
Define the $N$-\emph{hybrid order} of $\Z_{p}[G]$ and $\mathfrak{M}_{p}(G)$ to be
$\mathfrak{M}_{p}(G,N)=\Z_{p}[G] e_{N} \oplus \mathfrak{M}_{p}(G)(1-e_{N})$.
We say that $\Z_{p}[G]$ is $N$-\emph{hybrid} if $\Z_{p}[G] =  \mathfrak{M}_{p}(G,N)$
for some choice of $\mathfrak{M}_{p}(G)$. Let $J_{p}(G,N) = \{ i \mid e_{i}e_{N}=0 \}$.
\end{definition}

\begin{remark}
The group ring $\Z_{p}[G]$ is itself maximal if and only if $p$ does not divide $|G|$
if and only if it is $G$-hybrid.
\end{remark}

\begin{prop}\label{prop:hybrid-criterion}
The group ring $\Z_{p}[G]$ is $N$-hybrid if and only if
the equivalent conditions of Proposition \ref{prop:equiv-idem} hold for each $i \in J_{p}(G,N)$.
In particular,  $\Z_{p}[G]$ is $N$-hybrid if and only if
for every $\chi \in \Irr_{\C_{p}}(G)$ such that $N \not \leq \ker \chi$
we have $v_{p}(\chi(1))=v_{p}(|G|)$.
\end{prop}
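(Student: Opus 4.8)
The plan is to unwind the definition of $N$-hybrid and match it up with the decomposition $A = \Q_p[G] = \bigoplus_{i=1}^t A_i$ via the central primitive idempotents $e_1,\dots,e_t$. The key observation is that the trace idempotent $e_N$ is central in $\Q_p[G]$, hence a sum of a subset of the $e_i$; concretely, $e_i e_N = e_i$ when $A_i$ ``sees'' the quotient $G/N$ (equivalently $N \leq \ker\chi$ for $\chi \in \mathcal{C}_i$) and $e_i e_N = 0$ otherwise. Thus $1 - e_N = \sum_{i \in J_p(G,N)} e_i$, where $J_p(G,N) = \{ i \mid e_i e_N = 0\}$. First I would record this and note the corresponding block decomposition $\Z_p[G](1-e_N) \subseteq \Q_p[G](1-e_N) = \bigoplus_{i \in J_p(G,N)} A_i$.

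Next I would prove the forward implication. Suppose $\Z_p[G]$ is $N$-hybrid, so $\Z_p[G] = \Z_p[G]e_N \oplus \mathfrak{M}_p(G)(1-e_N)$ for some maximal order $\mathfrak{M}_p(G)$. Applying the idempotent $1 - e_N$ gives $\Z_p[G](1-e_N) = \mathfrak{M}_p(G)(1-e_N)$, and since the right-hand side is a maximal $\Z_p$-order in $\bigoplus_{i \in J_p(G,N)} A_i$, so is the left-hand side. In particular, for each $i \in J_p(G,N)$ the idempotent $e_i = e_i(1-e_N)$ lies in $\Z_p[G](1-e_N) \subseteq \Z_p[G]$, so condition (i) of Proposition \ref{prop:equiv-idem} holds for every such $i$, hence all of the equivalent conditions do.

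For the converse, suppose the equivalent conditions of Proposition \ref{prop:equiv-idem} hold for each $i \in J_p(G,N)$. Then each such $e_i$ lies in $\Z_p[G]$, so $1 - e_N = \sum_{i \in J_p(G,N)} e_i \in \Z_p[G]$ and hence $e_N \in \Z_p[G]$ as well; this already gives a ring-direct decomposition $\Z_p[G] = \Z_p[G]e_N \oplus \Z_p[G](1-e_N)$. By Proposition \ref{prop:equiv-idem}(ii), each $\Z_p[G]e_i$ is a maximal $\Z_p$-order, and a finite direct sum of maximal orders is maximal, so $\Z_p[G](1-e_N) = \bigoplus_{i \in J_p(G,N)} \Z_p[G]e_i$ is a maximal order in $\bigoplus_{i \in J_p(G,N)} A_i$. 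It then suffices to choose a maximal order $\mathfrak{M}_p(G)$ containing $\Z_p[G]$ and compatible with this decomposition: take $\mathfrak{M}_p(G) = \Z_p[G]e_N \oplus \mathfrak{M}'$ where $\mathfrak{M}'$ is any maximal order in $\bigoplus_{i\notin J_p(G,N)} A_i$ containing $\Z_p[G]e_N$'s complement — but more simply, since $\Z_p[G]e_N \simeq \Z_p[G/N]$ is an order and $\Z_p[G](1-e_N)$ is already maximal, any maximal order $\mathfrak{M}_p(G) \supseteq \Z_p[G]$ automatically satisfies $\mathfrak{M}_p(G)(1-e_N) = \Z_p[G](1-e_N)$ by maximality of the latter, whence $\Z_p[G] = \mathfrak{M}_p(G,N)$. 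This proves $\Z_p[G]$ is $N$-hybrid.

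Finally, the ``in particular'' clause is a translation of Proposition \ref{prop:hybrid-criterion}'s first assertion through Proposition \ref{prop:equiv-idem}(iv): the indices $i \in J_p(G,N)$ are exactly those Wedderburn components whose characters $\chi \in \mathcal{C}_i$ satisfy $N \not\leq \ker\chi$, and for such $i$ the relevant condition is precisely $v_p(\chi(1)) = v_p(|G|)$. The only mild subtlety — the main thing to get right — is the compatibility-of-maximal-orders step in the converse: one must argue that maximality of $\Z_p[G](1-e_N)$ in its ambient algebra forces $\mathfrak{M}_p(G)(1-e_N) = \Z_p[G](1-e_N)$ for \emph{every} maximal order $\mathfrak{M}_p(G) \supseteq \Z_p[G]$, which follows because $\Z_p[G](1-e_N) \subseteq \mathfrak{M}_p(G)(1-e_N) \subseteq \Q_p[G](1-e_N)$ with the outer term already maximal.
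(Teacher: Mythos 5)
Your proof is correct and takes essentially the same approach as the paper's, which simply records the identity $\mathfrak{M}_{p}(G)(1-e_{N}) = \bigoplus_{i \in J_{p}(G,N)} \mathfrak{M}_{p}(G)e_{i}$ and declares the equivalence clear; you have spelled out exactly the details that this one-line proof suppresses, including the useful observation that maximality of $\Z_{p}[G](1-e_{N})$ forces $\mathfrak{M}_{p}(G)(1-e_{N}) = \Z_{p}[G](1-e_{N})$ for \emph{every} maximal order containing $\Z_{p}[G]$, not merely some choice.
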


\begin{proof}
This is clear once one notes that $\mathfrak{M}_{p}(G)(1-e_{N}) = \oplus_{i \in J_{p}(G,N)} \mathfrak{M}_{p}(G)e_{i}$.
\end{proof}

%

\begin{prop}\label{prop:hybrid-properties}
Suppose $\Z_{p}[G]$ is $N$-hybrid. Then
\begin{enumerate}
\item $p$ does not divide $|N|$,
\item for each $i \in J_{p}(G,N)$ the extension $F_{i}/\Q_{p}$ is unramified and $s_{i}=1$,
\item there is a ring isomorphism
\[
\Z_{p}[G] \simeq \Z_{p}[G/N] \oplus \bigoplus_{i \in J_{p}(G,N)} M_{m_{i} \times m_{i}}(\mathcal{O}_{i}),
\]
\item for any normal subgroup $K \unlhd G$ with $K \leq N$, $\Z_{p}[G]$ is also $K$-hybrid.
\end{enumerate}
\end{prop}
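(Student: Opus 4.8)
The plan is to read off each assertion from the structural description already available. Assume $\Z_p[G]$ is $N$-hybrid, so by definition $\Z_p[G] = \Z_p[G]e_N \oplus \mathfrak{M}_p(G)(1-e_N)$ for some choice of maximal order, and by Proposition \ref{prop:hybrid-criterion} the equivalent conditions of Proposition \ref{prop:equiv-idem} hold for each $i \in J_p(G,N)$; equivalently $v_p(\chi(1)) = v_p(|G|)$ for every $\chi \in \Irr_{\C_p}(G)$ with $N \not\leq \ker\chi$.

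For (i): the regular character of $N$ restricted from suitable characters forces $p \nmid |N|$. Concretely, if $p \mid |N|$, pick a nontrivial irreducible character $\psi$ of $N$; induce to $G$ and take an irreducible constituent $\chi$ of $\mathrm{Ind}_N^G\psi$. Then $N \not\leq \ker\chi$, so $v_p(\chi(1)) = v_p(|G|)$; but $\chi$ is a constituent of the induced character, so $\chi(1) \mid [G:N]\psi(1) \mid [G:N]|N|^{1/2}\cdots$ — more cleanly, one uses that the restriction $\chi_N$ contains $\psi$, hence $\chi(1) \geq \psi(1)$ and $\chi$ lies over $\psi$, so by Clifford theory $\chi(1)$ divides $[G:N]\psi(1)$. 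Now $\psi(1)^2 \leq |N|$, so $v_p(\chi(1)) \leq v_p([G:N]) + v_p(\psi(1)) < v_p([G:N]) + v_p(|N|) = v_p(|G|)$ provided we choose $\psi$ with $v_p(\psi(1)) < v_p(|N|)$; such $\psi$ exists because $\sum_{\psi \in \Irr(N)}\psi(1)^2 = |N|$ cannot have every term divisible by $|N|_p^2$ (the trivial character alone shows $\psi(1)=1$ occurs). This contradiction gives $p \nmid |N|$.

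Assertion (ii) is immediate: for $i \in J_p(G,N)$ the conditions of Proposition \ref{prop:equiv-idem} hold, and that proposition's conclusions (iii) and the final sentence say exactly that $F_i/\Q_p$ is unramified and $s_i = 1$. For (iii): since $\Z_p[G] = \Z_p[G]e_N \oplus \mathfrak{M}_p(G)(1-e_N)$ and $\mathfrak{M}_p(G)(1-e_N) = \bigoplus_{i \in J_p(G,N)}\mathfrak{M}_p(G)e_i$ by the proof of Proposition \ref{prop:hybrid-criterion}, we get $\Z_p[G] \simeq \Z_p[G]e_N \oplus \bigoplus_{i \in J_p(G,N)}\mathfrak{M}_p(G)e_i$. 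Now $\Z_p[G]e_N \simeq \Z_p[G/N]$ by the isomorphism recorded at the start of \S 2.2, and each $\mathfrak{M}_p(G)e_i$ is a maximal order in the simple algebra $A_i \simeq M_{m_i \times m_i}(D_i)$; since $s_i = 1$ we have $D_i = F_i$, so the maximal order is $M_{m_i \times m_i}(\mathcal{O}_i)$ (unique up to conjugacy, which suffices for a ring isomorphism). Finally (iv): if $K \unlhd G$ with $K \leq N$, then any $\chi \in \Irr_{\C_p}(G)$ with $K \not\leq \ker\chi$ satisfies $N \not\leq \ker\chi$ as well (since $K \leq N$), hence $v_p(\chi(1)) = v_p(|G|)$ by hypothesis; thus the criterion in Proposition \ref{prop:hybrid-criterion} applies to $K$, so $\Z_p[G]$ is $K$-hybrid.

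The only step requiring genuine care is (i); the rest is bookkeeping against the earlier results. The main obstacle in (i) is choosing the right character $\psi$ of $N$ and controlling $v_p(\chi(1))$ for a constituent $\chi$ of the induced character via Clifford theory — once the divisibility $\chi(1) \mid [G:N]\psi(1)$ is in hand, the $p$-adic valuation count is routine.
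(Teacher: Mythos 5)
Your proof of parts (ii), (iii) and (iv) matches the paper's route essentially verbatim: (ii) is Propositions~\ref{prop:hybrid-criterion} and \ref{prop:equiv-idem}; (iii) is the decomposition $\Z_p[G] = \Z_p[G]e_N \oplus \bigoplus_{i \in J_p(G,N)}\mathfrak{M}_p(G)e_i$ together with uniqueness of maximal orders in each split matrix component; and (iv) is precisely the observation that $J_p(G,K) \subseteq J_p(G,N)$, phrased via kernels of characters.

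Part (i), however, is where you diverge dramatically from the paper, and not to your advantage. The paper's proof is a one-liner: by Definition~\ref{def:max-comm-hybrid}, $N$-hybrid forces $e_N \in \Z_p[G]$, and since $e_N = |N|^{-1}\sum_{\sigma \in N}\sigma$, the coefficient of the identity element is $|N|^{-1}$, so $|N|^{-1} \in \Z_p$ and $p \nmid |N|$. Instead you invoke the character-theoretic reformulation from Proposition~\ref{prop:hybrid-criterion}, choose a nontrivial $\psi \in \Irr(N)$ of degree prime to $p$, pass to a constituent $\chi$ of $\mathrm{Ind}_N^G\psi$, and appeal to Clifford theory (that $\chi(1)/\psi(1)$ divides $[G:N]$) to derive a contradiction with $v_p(\chi(1))=v_p(|G|)$. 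This \emph{does} work — the existence of a nontrivial $\psi$ with $p \nmid \psi(1)$ follows from $\sum_{\psi\neq 1}\psi(1)^2 = |N|-1 \not\equiv 0 \pmod p$ when $p \mid |N|$ — but your justification of that existence step is garbled (``the trivial character alone shows $\psi(1)=1$ occurs'' does not help, since the trivial character is excluded), and the whole Clifford-theoretic detour is overkill when the definition of $N$-hybrid hands you $e_N \in \Z_p[G]$ directly. You should notice that (i) depends only on $e_N$ lying in $\Z_p[G]$, not on the finer criterion of Proposition~\ref{prop:hybrid-criterion}, and streamline accordingly.
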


\begin{proof}
For (i), note that we have $e_{N} = |N|^{-1}\sum_{\sigma \in N} \sigma \in \Z_{p}[G]$ and so $p$ does not divide $|N|$.
For (ii), use Proposition \ref{prop:hybrid-criterion} with Proposition \ref{prop:equiv-idem}.
Part (iii) follows from part (ii) and \cite[Theorem (17.3)]{MR1972204}.
Part (iv) follows from Proposition \ref{prop:hybrid-criterion} and the observation that $J_{p}(G,K) \subseteq J_{p}(G,N)$.
\end{proof}

\begin{lemma}\label{lem:add-a-group}
Let $H$ be a finite group of order prime to $p$.
If $\Z_{p}[G]$ is $N$-hybrid then $\Z_{p}[G \times H]$ is $(N \times \{1 \})$-hybrid.
\end{lemma}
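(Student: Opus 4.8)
The plan is to apply the characterization of hybridness given in Proposition \ref{prop:hybrid-criterion}, which reduces everything to a statement about irreducible characters and the $p$-adic valuation of their degrees. First I would record the elementary observation that since $|H|$ is prime to $p$, we have $v_p(|G \times H|) = v_p(|G| \cdot |H|) = v_p(|G|)$.

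Next I would analyze the irreducible characters of $G \times H$ over $\C_p$. Every $\psi \in \Irr_{\C_p}(G \times H)$ is of the form $\psi = \chi \boxtimes \eta$ for some $\chi \in \Irr_{\C_p}(G)$ and $\eta \in \Irr_{\C_p}(H)$, with $\psi(1) = \chi(1)\eta(1)$ and $\ker \psi = \ker \chi \times \ker \eta$ (at least after identifying $G$ and $H$ with the corresponding subgroups; strictly one should note that the character theory of a direct product is the "external tensor product" of the two character theories, which holds over any field of characteristic zero, in particular over $\C_p$). The condition $(N \times \{1\}) \not\leq \ker \psi$ then translates, via $\ker \psi = \ker\chi \times \ker\eta$, precisely into $N \not\leq \ker \chi$.

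So suppose $\psi = \chi \boxtimes \eta$ satisfies $(N \times \{1\}) \not\leq \ker \psi$. Then $N \not\leq \ker \chi$, so the hypothesis that $\Z_p[G]$ is $N$-hybrid together with Proposition \ref{prop:hybrid-criterion} gives $v_p(\chi(1)) = v_p(|G|)$. Since $\eta(1)$ divides $|H|$, which is prime to $p$, we have $v_p(\eta(1)) = 0$. Therefore
\[
v_p(\psi(1)) = v_p(\chi(1)) + v_p(\eta(1)) = v_p(|G|) = v_p(|G \times H|).
\]
By the second part of Proposition \ref{prop:hybrid-criterion}, this shows $\Z_p[G \times H]$ is $(N \times \{1\})$-hybrid.

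I do not expect a serious obstacle here; the only point requiring a little care is the claim that the $\C_p$-irreducible characters of a direct product are exactly the external tensor products of the $\C_p$-irreducible characters of the factors, and that kernels multiply accordingly. This is standard for algebraically closed fields and extends to $\C_p$ since $\C_p$ is algebraically closed; one could alternatively pass to $\bar{\Q}_p$ (or a sufficiently large extension) and use Galois descent, but invoking it directly for $\C_p$ is cleanest. Everything else is bookkeeping with $p$-adic valuations.
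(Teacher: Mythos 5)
Your proof is correct and takes essentially the same approach as the paper: invoke Proposition \ref{prop:hybrid-criterion}, decompose $\Irr_{\C_p}(G\times H)$ as external tensor products, and compare $p$-adic valuations of degrees using that $|H|$ is prime to $p$. The paper's version is just a terser statement of exactly this argument.
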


\begin{proof}
Each $\chi \in \Irr_{\C_{p}}(G \times H)$ such that
$N \times 1 \not \leq \ker \chi$ is the product of characters
$\psi \in \Irr_{\C_{p}}(G)$ and $\zeta \in \Irr_{\C_{p}}(H)$ with $N \not \leq \ker \psi$.
Hence the desired result follows from Proposition \ref{prop:hybrid-criterion} and the equality
$v_{p}(\chi(1)) = v_{p}(\psi(1)) = v_{p}(|G|) = v_{p}(|G \times H|)$.
\end{proof}

\subsection{Frobenius groups}\label{subsec:frobenius-groups}
We recall the definition and some basic facts about Frobenius groups and then use them to
provide many examples of hybrid $p$-adic group rings.

\begin{definition}
A \emph{Frobenius group} is a finite group $G$ with a proper nontrivial subgroup $H$
such that $H \cap gHg^{-1}=\{ 1 \}$ for all $g \in G-H$,
in which case $H$ is called a \emph{Frobenius complement}.
\end{definition}

\begin{theorem}\label{thm:frob-kernel}
A Frobenius group $G$ contains a unique normal subgroup $N$, known as the Frobenius kernel, such that
$G$ is a semidirect product $N \rtimes H$. Furthermore:
\begin{enumerate}
\item $|N|$ and $[G:N]=|H|$ are relatively prime.
\item The Frobenius kernel $N$ is nilpotent.
\item If $K \unlhd G $ then either $K \unlhd N$ or $N \unlhd K$.
\item If $\chi \in \Irr_{\C}(G)$ such that  $N \not \leq \ker \chi$ then $\chi= \mathrm{Ind}_{N}^{G}(\psi)$ for some $1 \neq \psi \in \Irr_{\C}(N)$.
\end{enumerate}
\end{theorem}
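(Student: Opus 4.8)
The plan is to establish the four standard properties of Frobenius groups, most of which are classical results of Frobenius, Thompson, and Wielandt. I would organise the proof around the following steps.

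First, for the existence and uniqueness of the Frobenius kernel $N$: I would define $N$ to be the set of elements of $G$ lying in no conjugate of $H$, together with the identity, i.e. $N = \left(G \setminus \bigcup_{g \in G} gHg^{-1}\right) \cup \{1\}$. A counting argument, using that the $[G:H]$ conjugates of $H$ pairwise intersect only in $\{1\}$, shows that $|N| = [G:H]$. The hard part — and the only genuinely deep input — is Frobenius's theorem that $N$ is in fact a subgroup of $G$; the only known proofs use character theory (induced characters and the fact that a class function on $H$ vanishing at $1$ induces to a generalised character of $G$). I would either prove this via the character-theoretic argument or, more economically, simply cite it as the classical theorem of Frobenius. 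Once $N$ is known to be a subgroup, it is normal (being a union of conjugacy classes by construction), and $G = NH$ with $N \cap H = \{1\}$ since $H$ meets $N$ only in the identity; hence $G = N \rtimes H$. Uniqueness follows because any normal complement to $H$ must avoid all conjugates of $H$ and have the right order.

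For the enumerated properties: part (i), that $|N|$ and $|H|=[G:N]$ are coprime, is immediate from $|N| = [G:H]$ once one observes that $\gcd(|H|, [G:H])$ must divide... actually more directly, if a prime $p$ divided both $|N|$ and $|H|$, a Sylow argument produces a nontrivial element of $H$ fixed under conjugation by a $p$-element of $N$, contradicting the Frobenius condition; I would give this short argument. Part (ii), nilpotence of $N$, is precisely Thompson's theorem that a finite group admitting a fixed-point-free automorphism of prime order is nilpotent (applied to the action of elements of $H$ of prime order on $N$); this I would cite, as its proof is far beyond the scope here. Part (iii) is elementary group theory: if $K \unlhd G$ then $K \cap H \unlhd H$ and $K$ acts on the conjugates of $H$; one shows that either $K \subseteq N$ (if $K \cap H = \{1\}$, because then $K$ avoids all conjugates of $H$, using normality of $K$) or $N \subseteq K$ (if $K \cap H \neq \{1\}$, because then $KN/N$... more carefully: $K \cap H \neq 1$ forces, via the Frobenius action, $K$ to contain a full conjugate of $H$ and then, by $K \unlhd G$, all of them, hence $G/K$ has order dividing... and one deduces $N \le K$). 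I would write out the dichotomy cleanly using that $N$ is the unique minimal way to fill out the non-$H$-conjugate elements.

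Finally, part (iv) on irreducible characters: given $\chi \in \Irr_{\C}(G)$ with $N \not\leq \ker\chi$, I would use Clifford theory together with the key fact that $H$ acts on $\Irr_{\C}(N)$ with the only fixed point being the trivial character — this follows from the Frobenius condition, since a nontrivial $H$-fixed character $\psi$ would correspond to an $H$-stable structure contradicting fixed-point-freeness of the $H$-action on $N \setminus \{1\}$ (equivalently, by Brauer's permutation lemma, $H$ fixes as many irreducible characters of $N$ as it has orbits on conjugacy classes of $N$, which is just one). Hence any nontrivial $\psi \in \Irr_{\C}(N)$ has inertia group $I_G(\psi) = N$, and Clifford theory gives that $\mathrm{Ind}_N^G(\psi)$ is irreducible; conversely every $\chi$ not containing $N$ in its kernel lies above some nontrivial $\psi$ and so equals such an induced character. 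The main obstacle throughout is genuinely the two deep cited inputs — Frobenius's theorem that $N$ is a subgroup, and Thompson's nilpotency theorem — both of which I would invoke rather than prove; everything else is bookkeeping with Clifford theory and elementary counting. Since this is a recollection of standard material, I would in practice state the theorem with references to a text such as Isaacs or Huppert and keep the proof sketch brief.
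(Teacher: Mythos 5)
Your proposal matches the paper's approach: the paper's entire proof is a list of references (Curtis--Reiner \S 14A for (i) and (iv), Robinson 10.5.6 for (ii), and Robinson Exercise 7, \S 8.5 for (iii)), and your plan likewise rests ultimately on citing the classical theorems of Frobenius and Thompson together with standard Clifford/Brauer-permutation-lemma arguments. Your expanded sketches are sound (with some roughness in the coprimality argument for (i), which is more cleanly done by counting $\langle h\rangle$-orbits of size $p$ on $N\setminus\{1\}$, and in the $K\cap H\neq 1 \Rightarrow N\leq K$ half of (iii), which is usually handled via the commutator map $n\mapsto [h,n]$ being injective on $N$), but they go well beyond what the paper itself records.
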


\begin{proof}
For (i) and (iv) see \cite[\S 14A]{MR632548}.
For (ii) see \cite[10.5.6]{MR1357169} and for (iii) see  \cite[Exercise 7, \S 8.5]{MR1357169}.
\end{proof}

\begin{theorem}\label{thm:frob-equiv}
The following statements are equivalent:
\begin{enumerate}
\item $G$ is a Frobenius group.
\item $G$ contains a proper nontrivial normal subgroup $N$ such that for each $n \in N$, $n \neq 1$, the centraliser of $n$ in $G$ is contained in $N$.
\item $G$ can be expressed as a nontrivial semidirect product $N \rtimes H$ such that the action of $H$ on $N$
is fixed-point-free (i.e. $n^{h} \neq n$ whenever $h,n \neq 1$, $h \in H$, $n \in N$).
\end{enumerate}
\end{theorem}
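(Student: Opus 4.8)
The plan is to prove the cycle of implications (i) $\Rightarrow$ (iii) $\Rightarrow$ (ii) $\Rightarrow$ (i), extracting along the way two elementary facts about a complement $H$ to a normal subgroup $N$ in a decomposition $G = N \rtimes H$ with $H$ acting fixed-point-freely: first, that $H$ is self-normalising, i.e.\ $N_G(H)=H$; and second, that the $[G:H]$ conjugates of $H$ meet pairwise trivially, so that — since each of them also meets $N$ trivially — they cover exactly $G\setminus N$. The one genuinely deep input, the existence of the Frobenius kernel, is available to us as Theorem \ref{thm:frob-kernel}, and the only other nontrivial external tool is the Schur--Zassenhaus theorem.

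For (i) $\Rightarrow$ (iii): apply Theorem \ref{thm:frob-kernel} to obtain the Frobenius kernel $N$, so $G = N\rtimes H$ with $N\unlhd G$; as $H$ is proper and nontrivial, $N$ is a proper nontrivial normal subgroup. The defining property in (i) gives $N_G(H)=H$ at once (an element of $N_G(H)\setminus H$ would conjugate $H$ to itself, contradicting $H\cap gHg^{-1}=\{1\}$ as $H\neq\{1\}$). Now if $1\neq h\in H$ fixes $1\neq n\in N$ under conjugation, i.e.\ $nhn^{-1}=h$, then $h\in H\cap nHn^{-1}$; since $n\in N\setminus\{1\}$ and $N\cap H=\{1\}$ we have $n\notin H=N_G(H)$, so $nHn^{-1}\neq H$ and (i) applied with $g=n$ forces $h=1$, a contradiction. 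Hence the action is fixed-point-free.

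For (iii) $\Rightarrow$ (ii): $N$ is the given proper nontrivial normal subgroup, so only the centraliser condition is at issue. First I would prove $N_G(H)=H$: by Dedekind's modular law $N_G(H)=(N_G(H)\cap N)H$, and for $x\in N_G(H)\cap N$ and $h\in H$ the commutator $[x,h]$ lies in $H$ (as $x$ normalises $H$) and in $N$ (as $N\unlhd G$), hence in $H\cap N=\{1\}$, so $x$ centralises $H$ and fixed-point-freeness gives $x=1$. Next, writing an arbitrary $g\notin H$ as $g=mh_0$ with $1\neq m\in N$ one has $gHg^{-1}=mHm^{-1}$, and a nonidentity element of $H\cap mHm^{-1}$ would, by the analogous "write a suitable element both inside $H$ and (using $N\unlhd G$) inside $N$" trick, be centralised by $m\in N\setminus\{1\}$ — impossible. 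Combined with $N\cap gHg^{-1}=\{1\}$, a count of the $[G:H]=|N|$ pairwise-trivially-intersecting conjugates of $H$ shows they cover precisely the $|G|-|N|$ elements of $G\setminus N$. Consequently, if $g\in C_G(n)$ with $1\neq n\in N$ but $g\notin N$, then $g$ is conjugate into $H$, producing a nonidentity element of $H$ centralising a nonidentity element of $N$ — contradicting fixed-point-freeness. Hence $C_G(n)\subseteq N$.

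For (ii) $\Rightarrow$ (i): I would first deduce that $|N|$ and $[G:N]$ are coprime, since if a prime $p$ divided both then a Sylow $p$-subgroup $P$ of $G$ has $1\neq P\cap N\unlhd P$, so $Z(P)\cap N\neq\{1\}$, and a nonidentity $z$ there satisfies $P\leq C_G(z)\subseteq N$ by (ii), contradicting $p\mid[G:N]$. By Schur--Zassenhaus, $G=N\rtimes H$ with $|H|=[G:N]$, and $H$ is proper and nontrivial since $N$ is. If $1\neq h\in H$ centralises $1\neq n\in N$ then $h\in C_G(n)\subseteq N\cap H=\{1\}$, so $H$ acts fixed-point-freely; then the two auxiliary facts of the previous paragraph show $H\cap gHg^{-1}=\{1\}$ for every $g\notin H$, i.e.\ $G$ is a Frobenius group with complement $H$. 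The main obstacle is the counting step in (iii) $\Rightarrow$ (ii) — that the conjugates of $H$ exhaust $G\setminus N$ — which rests squarely on the two lemmas $N_G(H)=H$ and "distinct conjugates of $H$ meet trivially"; this is where the fixed-point-free hypothesis does all its work, via the reduction of conjugation of $H$ by $g=mh_0$ to conjugation by $m\in N$ together with the relation $H\cap N=\{1\}$. Everything else is bookkeeping around Theorem \ref{thm:frob-kernel} and Schur--Zassenhaus.
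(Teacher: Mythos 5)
The paper does not actually prove Theorem \ref{thm:frob-equiv}; it just cites a textbook reference. Your argument is a correct, self-contained proof and follows the standard textbook route for this equivalence. The cycle (i) $\Rightarrow$ (iii) $\Rightarrow$ (ii) $\Rightarrow$ (i) is well chosen: the one genuinely non-elementary step, producing the kernel $N$ from the complement $H$, is exactly where you invoke Theorem \ref{thm:frob-kernel}, and everything else is elementary group theory plus Schur--Zassenhaus. All the key auxiliary computations check out: $N_G(H)=H$ via Dedekind's law and the commutator trick; $H\cap mHm^{-1}=\{1\}$ for $1\neq m\in N$ via the observation that a common element $h$ would satisfy $h^{-1}(m^{-1}hm)\in H\cap N=\{1\}$; the count $[G:N_G(H)](|H|-1)=|G|-|N|$ showing the conjugates of $H$ cover $G\setminus N$; and the $Z(P)\cap N\neq\{1\}$ argument giving coprimality of $|N|$ and $[G:N]$ under hypothesis (ii). One tiny redundancy: in (i) $\Rightarrow$ (iii) you note $nHn^{-1}\neq H$ before applying the Frobenius condition, but $n\notin H$ alone suffices to invoke it. This is cosmetic and does not affect correctness.
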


\begin{proof}
See \cite[\S 4.6]{MR2599132}, for example.
\end{proof}

\begin{prop}\label{prop:frob-N-hybrid}
Let $G$ be a Frobenius group with Frobenius kernel $N$.
Then for every prime $p$ not dividing $|N|$, the group ring $\Z_{p}[G]$ is $N$-hybrid.
\end{prop}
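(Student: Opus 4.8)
The plan is to verify the criterion of Proposition \ref{prop:hybrid-criterion}: for every prime $p$ with $p \nmid |N|$ and every $\chi \in \Irr_{\C_p}(G)$ with $N \not\leq \ker\chi$, we must show $v_p(\chi(1)) = v_p(|G|)$. First I would invoke Theorem \ref{thm:frob-kernel}(iv), which tells us that any such $\chi$ is of the form $\chi = \mathrm{Ind}_N^G(\psi)$ for some nontrivial $\psi \in \Irr_{\C}(N)$ (the statement is over $\C$, but since $\C_p$ and $\C$ are both algebraically closed fields of characteristic zero the character theory is identical, so this applies verbatim to $\Irr_{\C_p}$). Consequently $\chi(1) = [G:N]\cdot\psi(1) = |H|\cdot\psi(1)$.

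Next I would compute the $p$-adic valuation. Since $\psi$ is an irreducible character of $N$, its degree $\psi(1)$ divides $|N|$, and because $p \nmid |N|$ by hypothesis we get $v_p(\psi(1)) = 0$. Therefore
\[
v_p(\chi(1)) = v_p(|H|) + v_p(\psi(1)) = v_p(|H|).
\]
On the other hand, by Theorem \ref{thm:frob-kernel}(i) the orders $|N|$ and $|H|$ are coprime and $|G| = |N|\cdot|H|$, so $v_p(|G|) = v_p(|N|) + v_p(|H|) = v_p(|H|)$ (again using $p \nmid |N|$). Comparing the two displays gives $v_p(\chi(1)) = v_p(|G|)$, which is exactly the condition needed.

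Finally, having checked this for all relevant $\chi$, the second equivalent formulation in Proposition \ref{prop:hybrid-criterion} immediately yields that $\Z_p[G]$ is $N$-hybrid, completing the proof. I do not anticipate any serious obstacle here: the argument is essentially a bookkeeping exercise combining the structure theory of Frobenius groups (Theorem \ref{thm:frob-kernel}) with the elementary fact that an irreducible character degree divides the group order. The only point requiring a word of care is the transfer of Theorem \ref{thm:frob-kernel}(iv) from $\C$ to $\C_p$, but since both are algebraically closed of characteristic $0$ this is harmless; alternatively one could note that $\chi(1) = |H| \cdot \psi(1)$ already holds at the level of $\C$-characters and that the set $J_p(G,N)$ only depends on degrees.
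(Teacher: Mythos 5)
Your proof is correct and follows essentially the same route as the paper: reduce to the criterion of Proposition \ref{prop:hybrid-criterion}, apply Theorem \ref{thm:frob-kernel}(iv) to write $\chi = \mathrm{Ind}_N^G(\psi)$, and use Theorem \ref{thm:frob-kernel}(i) together with $p \nmid |N|$ to match $p$-adic valuations. You simply spell out the valuation computation (via $\psi(1) \mid |N|$) more explicitly than the paper's two-line argument, and you add the sensible remark about transporting characters from $\C$ to $\C_p$, both of which are harmless elaborations rather than a different approach.
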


\begin{proof}
Let $\chi \in \Irr_{\C_{p}}(G)$ such that $N \not \leq \ker \chi$.
Then by Theorem \ref{thm:frob-kernel}(iv) $\chi$ is induced from a nontrivial irreducible character
of $N$ and so $\chi(1)$ is divisible by $[G:N]$.
However, $|N|$ and $[G:N]$ are relatively prime by Theorem \ref{thm:frob-kernel}(i) and so
the desired result now follows from Proposition \ref{prop:hybrid-criterion}.
\end{proof}

\begin{example}\label{ex:inversion}
Let $A$ be a nontrivial finite abelian group of odd order and let $C_{2}$ act on $A$ by inversion.
Then the semidirect product $G = A \rtimes C_{2}$ is a Frobenius group and so $\Z_{2}[G]$ is $A$-hybrid
and thus is isomorphic to $\Z_{2}[C_{2}] \oplus \mathfrak{M}_{2}(G,A)(1-e_{A})$.
In particular, if $n$ is odd then one can take $G=D_{2n}$ and $A=G' \simeq C_{n}$, the subgroup of rotations.
\end{example}

\begin{example}\label{ex:metacyclic}
Let $p<q$ be distinct primes and assume that $p \mid (q-1)$.
Then there is an embedding $C_{p} \hookrightarrow \Aut(C_{q})$ and so there is
a fixed-point-free action of $C_{p}$ on $C_{q}$.
Hence the corresponding semidirect product $G = C_{q} \rtimes C_{p}$ is a Frobenius group
by Theorem \ref{thm:frob-equiv}(iii), and so $\Z_{p}[G]$ is $G'$-hybrid with $G' = C_{q}$.
\end{example}

\begin{example}\label{ex:affine}
Let $q$ be a prime power and let $\F_{q}$ be the finite field with $q$ elements.
The group $\Aff(q)$ of affine transformations on $\F_{q}$ is the group of transformations
of the form $x \mapsto ax +b$ with $a \in \F_{q}^{\times}$ and $b \in \F_{q}$.
Let $G=\Aff(q)$ and let $N=\{ x \mapsto x+b \mid b \in \F_{q} \}$.
Then $G$ is a Frobenius group with Frobenius kernel $N=G' \simeq \F_{q}$ and is isomorphic to
the semidirect product $\F_{q} \rtimes \F_{q}^{\times}$ with the natural action.
Furthermore, $G/N \simeq \F_{q}^{\times} \simeq C_{q-1}$ and $G$ has precisely one
non-linear irreducible complex character, which is rational-valued and of degree $q-1$.
Hence for every prime $p$ not dividing $q$, we have that $\Z_{p}[G]$ is $N$-hybrid
and is isomorphic to $\Z_{p}[C_{q-1}] \oplus M_{(q-1) \times (q-1)}(\Z_{p})$.
Note that in particular $\Aff(3) \simeq S_{3}$ and $\Aff(4) \simeq A_{4}$.
Thus $\Z_{2}[S_{3}] \simeq \Z_{2}[C_{2}] \oplus M_{2 \times 2}(\Z_{2})$
and $\Z_{3}[A_{4}] \simeq \Z_{3}[C_{3}] \oplus M_{3 \times 3}(\Z_{3})$.
\end{example}

\begin{example}\label{ex:frob72}
Let $p=2$ and let $G = N \rtimes Q_{8}$ where $N$ is the $2$-dimensional irreducible representation of $Q_{8}$ over $\F_{3}$
(so $N \simeq C_{3} \times C_{3}$). Thus $G$ is a Frobenius group with Frobenius kernel $N$ and so
$\Z_{2}[G]$ is $N$-hybrid.
The unique non-linear complex irreducible character of $G$ not inflated from $Q_{8}$ is rational-valued and of degree $8$. Hence we have $\Z_{2}[G] \simeq \Z_{2}[Q_{8}] \oplus M_{8 \times 8}(\Z_{2})$.
Further examples of groups of this type are given in \cite{MR2104937}.
\end{example}

\begin{example}\label{ex:S4-V4}
Let $p=3$, $G=S_{4}$ and $N=V_{4}$. Then $G/N \simeq S_{3}$ and the only two complex irreducible characters of $G$ not inflated from characters of $S_{3}$ are of degree $3$ and are rational-valued.
Hence $\Z_{3}[S_{4}]$ is $V_{4}$-hybrid and is isomorphic to
$\Z_{3}[S_{3}] \oplus M_{3 \times 3}(\Z_{3}) \oplus  M_{3 \times 3}(\Z_{3})$.
However, the only proper nontrivial normal subgroups of $S_{4}$ are $A_{4}$ and $V_{4}$, and so
by Theorem \ref{thm:frob-kernel}(i) we see that $S_{4}$ is \emph{not} a Frobenius group.
\end{example}

\begin{remark}
If $\Z_{p}[G]$ is $N$-hybrid then $p \nmid |N|$ by Proposition \ref{prop:hybrid-properties}(i).
If $p \nmid |G'|$ then $\Z_{p}[G]$ is a direct sum of matrix rings over commutative $\Z_{p}$-algebras
by \cite[Corollary]{MR704622}. However, in this case $\Z_{p}[G]$ is not necessarily $G'$-hybrid.
For example, if $G=C_{3} \times A_{4}$ then $G'=\{ 1 \} \times V_{4}$, which has order prime to $3$,
but by Example \ref{ex:affine} we have
$\Z_{3}[C_{3} \times A_{4}] \simeq \Z_{3}[C_{3}] \otimes_{\Z_{3}} \Z_{3}[A_{4}]
\simeq \Z_{3}[C_{3} \times C_{3}] \oplus M_{3 \times 3}(\Z_{3}[C_{3}])$ where $\Z_{3}[C_{3}]$
and hence $M_{3 \times 3}(\Z_{3}[C_{3}])$ is not maximal.
\end{remark}

\section{Relative algebraic $K$-groups and weakly hybrid orders}\label{sec:rel-k-group-weak-hybrid}

For further details and background on algebraic $K$-theory used in this section, we refer the reader to
\cite{MR892316}, \cite{MR0245634}, \cite[\S 2]{breuning-thesis} or \cite{MR2564571}.

\subsection{Algebraic $K$-theory}\label{subsec:K-theory}
Let $R$ be a noetherian integral domain of characteristic $0$ with field of fractions $E$.
Let $A$ be a finite-dimensional semisimple $E$-algebra and let $\mathfrak{A}$ be an $R$-order in $A$.
Let $\PMod(\mathfrak{A})$ denote the category of finitely generated projective left $\mathfrak{A}$-modules.
We write $K_{0}(\mathfrak{A})$ for the Grothendieck group of $\PMod(\mathfrak{A})$
(see \cite[\S 38]{MR892316})
and $K_{1}(\mathfrak{A})$ for the Whitehead group (see \cite[\S 40]{MR892316}).
For any field extension $F$ of $E$ we set $A_{F} := F \otimes_{E} A$.
Let $K_{0}(\mathfrak{A}, F)$ denote the relative
algebraic $K$-group associated to the ring homomorphism $\mathfrak{A} \hookrightarrow A_{F}$.
We recall that $K_{0}(\mathfrak{A}, F)$ is an abelian group with generators $[X,g,Y]$ where
$X$ and $Y$ are finitely generated projective $\mathfrak{A}$-modules
and $g:F \otimes_{R} X \rightarrow F \otimes_{R} Y$ is an isomorphism of $A_{F}$-modules;
for a full description in terms of generators and relations, we refer the reader to \cite[p.\ 215]{MR0245634}.
Furthermore, there is a long exact sequence of relative $K$-theory
\begin{equation}\label{eq:long-exact-seq}
K_{1}(\mathfrak{A}) \longrightarrow K_{1}(A_{F}) \longrightarrow K_{0}(\mathfrak{A},F)
\longrightarrow K_{0}(\mathfrak{A}) \longrightarrow K_{0}(A_{F})
\end{equation}
(see  \cite[Chapter 15]{MR0245634}).
We define $DT(\mathfrak{A})$ to be the torsion subgroup of $K_{0}(\mathfrak{A},E)$.

\subsection{Functorialities}\label{subsec:K-func}
We follow \cite[\S 3.5]{MR1884523} in describing the functorial behaviour of relative algebraic $K$-groups.
Let $\mathfrak{B}$ be an $R$-order in a finite-dimensional semisimple $E$-algebra $B$
and let $\rho: \mathfrak{A} \rightarrow \mathfrak{B}$ be a ring homomorphism.
The scalar extension functor $\mathfrak{B} \otimes_{\mathfrak{A}} -$ induces a natural homomorphism
\[\
\rho_{*} : K_{0}(\mathfrak{A},F) \longrightarrow K_{0}(\mathfrak{B},F)
\]
which sends $[X,g,Y]$ to $[\mathfrak{B} \otimes_{\mathfrak{A}} X , 1 \otimes g, \mathfrak{B} \otimes_{\mathfrak{A}} Y]$.
If $\mathfrak{B}$ is a projective $\mathfrak{A}$-module via $\rho$, then there also exists a homomorphism in the reverse
direction
\[
\rho^{*} : K_{0}(\mathfrak{B},F) \longrightarrow K_{0}(\mathfrak{A},F)
\]
which is simply induced by restriction of scalars.

If $\mathfrak{A}$ is commutative and $\mathfrak{B} =M_{n}(\mathfrak{A})$ is a matrix algebra over $\mathfrak{A}$,
then we define $e \in \mathfrak{B}$ to be the matrix with the entry in the top left hand corner equal to $1$
and all other entries equal to zero. In this case the exact functor $X \mapsto eX$ induces an equivalence
of exact categories $\mu:\PMod(\mathfrak{B}) \rightarrow \PMod(\mathfrak{A})$ and hence also an isomorphism
\begin{equation}\label{eq:morita-isom}
\mu_{*} : K_{0}(\mathfrak{B},F) \stackrel{\sim}{\longrightarrow} K_{0}(\mathfrak{A},F).
\end{equation}

We now consider several special cases of particular interest. Let $G$ be a finite group with subgroup $H$ and normal subgroup $N$.
\begin{itemize}
\item Restriction: $\res^{G}_{H} := \rho^{*}$ where $\rho:R[H] \hookrightarrow R[G]$ is inclusion.
\item Induction:   $\ind^{G}_{H} := \rho_{*}$ where $\rho:R[H] \hookrightarrow R[G]$ is inclusion.
\item Quotient: $\quot^{G}_{G/N} := \rho_{*}$ where $\rho:R[G] \rightarrow R[G/N]$ is the natural projection.
\item Projection: $\proj^{G}_{G/N} := \rho_{*}$ where
$\mathfrak{A}$ is an $R$-order containing $R[G]$
such that $e_{N}\mathfrak{A}=e_{N}R[G] \simeq R[G/N]$
and $\rho:\mathfrak{A} \rightarrow R[G/N]$ is the natural projection.
\end{itemize}
In the case that $\mathfrak{A}=R[G]$ and $e_{N} \in R[G]$ the maps $\quot^{G}_{G/N}$
and $\proj^{G}_{G/N}$ coincide.

\subsection{Descriptions of torsion subgroups}
Let $p$ be a prime and let $A$ be a finite-dimensional semisimple $\Q_{p}$-algebra.
Let $\mathfrak{A}_{p}$ be a $\Z_{p}$-order contained in $A$.
Let $\nr:A^{\times} \longrightarrow \zeta(A)^{\times}$ denote the reduced norm map (see \cite[\S 7D]{MR632548}).


\begin{prop}\label{prop:torsion-units}
Let
$\mathfrak{M}_{p}$ be any maximal $\Z_{p}$-order such that $\mathfrak{A}_{p} \subseteq \mathfrak{M}_{p} \subseteq A$.
Then
\[
DT(\mathfrak{A}_{p}) \simeq \frac{\zeta(\mathfrak{M}_{p})^{\times}}{\nr(\mathfrak{A}_{p}^{\times})},
\]
and this group is finite. In particular, $DT(\mathfrak{M}_{p})$ is trivial.
\end{prop}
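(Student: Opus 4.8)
The plan is to compute $DT(\mathfrak{A}_p)$ directly from the long exact sequence of relative $K$-theory \eqref{eq:long-exact-seq} applied with $R = \Z_p$, $E = \Q_p$, and then identify the relevant $K_1$-terms via reduced norms. First I would observe that since $A$ is a finite-dimensional semisimple $\Q_p$-algebra, the group $K_0(A)$ is torsion-free (it is free abelian on the simple components), and likewise $K_0(\mathfrak{A}_p)$ has the property that the map $K_0(\mathfrak{A}_p) \to K_0(A)$ has torsion-free image; in fact the relevant point is that the connecting map $K_0(\mathfrak{A}_p, \Q_p) \to K_0(\mathfrak{A}_p)$ lands in $K_0(\mathfrak{A}_p)$, and the torsion of $K_0(\mathfrak{A}_p,\Q_p)$ must therefore come from the image of $K_1(A) \to K_0(\mathfrak{A}_p,\Q_p)$. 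So the exact sequence gives
\[
DT(\mathfrak{A}_p) \simeq \frac{K_1(A)}{\mathrm{image\ of\ } K_1(\mathfrak{A}_p)}\Big|_{\text{torsion part}},
\]
and I would make this precise by noting that $K_1(\mathfrak{A}_p) \to K_1(A)$ need not be injective but the cokernel's torsion subgroup is exactly what we want; more carefully, $DT(\mathfrak{A}_p)$ is the torsion subgroup of $\coker(K_1(\mathfrak{A}_p) \to K_1(A))$ because $K_0(\mathfrak{A}_p) \to K_0(A)$ is injective on torsion (indeed $K_0(A)$ is torsion-free).

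Next I would bring in the maximal order. Since $\mathfrak{A}_p \subseteq \mathfrak{M}_p$, functoriality gives a commutative diagram comparing the two exact sequences. The key classical input is that for a $\Z_p$-order, the reduced norm induces an isomorphism from $K_1(\mathfrak{M}_p)$ onto an open subgroup of $\zeta(\mathfrak{M}_p)^\times$, and that $K_1(A) \xrightarrow{\nr} \zeta(A)^\times$ is injective with image controllable by the Hasse–Schilling–Maass norm theorem — but over a $p$-adic field every element of $\zeta(A)^\times$ is a reduced norm locally, so $\nr: K_1(A) \xrightarrow{\sim} \zeta(A)^\times$ when $A$ is a $p$-adic semisimple algebra. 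Moreover, for the maximal order one has $\nr(K_1(\mathfrak{M}_p)) = \zeta(\mathfrak{M}_p)^\times$ exactly (this is the statement that $DT(\mathfrak{M}_p) = 0$, which is part of what we must prove, so I would instead cite the standard fact that $K_1(\mathfrak{M}_p) \to K_1(A)$ has image mapping onto $\zeta(\mathfrak{M}_p)^\times$ under $\nr$ — see \cite[\S 7, \S 45]{MR892316} or the references listed at the start of \S\ref{sec:rel-k-group-weak-hybrid}). Then I would use the comparison to write
\[
DT(\mathfrak{A}_p) \simeq \frac{K_1(A)}{\im(K_1(\mathfrak{A}_p))} \simeq \frac{\zeta(A)^\times}{\nr(\mathfrak{A}_p^\times)} \Big/ \frac{\zeta(\mathfrak{M}_p)^\times}{\nr(\mathfrak{A}_p^\times)} ?
\]
— more cleanly, since $\nr$ identifies $K_1(A)$ with $\zeta(A)^\times$ and the image of $K_1(\mathfrak{M}_p)$ with $\zeta(\mathfrak{M}_p)^\times$, and since $K_1(\mathfrak{A}_p) \to K_1(A)$ has image containing $\nr(\mathfrak{A}_p^\times)$, the torsion subgroup of $\coker(K_1(\mathfrak{A}_p)\to K_1(A))$ equals the torsion subgroup of $\zeta(A)^\times / \nr(\mathfrak{A}_p^\times)$; and because $\zeta(A)^\times / \zeta(\mathfrak{M}_p)^\times$ is torsion-free (it is a finitely generated free abelian group, being a product of value groups $\Z$ of the local fields $F_i$), that torsion subgroup is precisely $\zeta(\mathfrak{M}_p)^\times / \nr(\mathfrak{A}_p^\times)$.

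For finiteness, I would argue that $\nr(\mathfrak{A}_p^\times)$ is an open subgroup of $\zeta(\mathfrak{M}_p)^\times$: indeed $\mathfrak{A}_p^\times$ contains $1 + \mathfrak{p}^N \mathfrak{M}_p$ for $N$ large (since $\mathfrak{A}_p$ has finite index in $\mathfrak{M}_p$), and the reduced norm is continuous and open, so $\nr(\mathfrak{A}_p^\times)$ contains an open subgroup of $\zeta(\mathfrak{M}_p)^\times$; as $\zeta(\mathfrak{M}_p)^\times$ is a compact-by-($\Z^r$) group modulo its maximal compact subgroup... more simply, $\zeta(\mathfrak{M}_p)^\times = \prod_i \mathcal{O}_i^\times$ is a profinite group (a product of unit groups of rings of integers), hence compact, and an open subgroup of a compact group has finite index, giving finiteness of $DT(\mathfrak{A}_p)$. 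Finally, taking $\mathfrak{A}_p = \mathfrak{M}_p$ yields $\nr(\mathfrak{M}_p^\times) = \zeta(\mathfrak{M}_p)^\times$ and hence $DT(\mathfrak{M}_p) = 0$. The main obstacle I anticipate is pinning down precisely which classical results about reduced norms on $K_1$ of $p$-adic orders I am entitled to quote — in particular the surjectivity $\nr(K_1(\mathfrak{M}_p)) = \zeta(\mathfrak{M}_p)^\times$ and the injectivity of $\nr$ on $K_1(A)$ over a $p$-adic field — and making sure the identification of $DT$ with the torsion of the cokernel is done with the correct handling of the (possibly nontrivial) kernel of $K_1(\mathfrak{A}_p) \to K_1(A)$; everything else is bookkeeping with the exact sequence.
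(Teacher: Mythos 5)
Your proposal is correct in substance, but it takes a rather different route from what the paper actually does: the paper's proof of this proposition is a one-line citation, ``This is a special case of \cite[Theorem 2.4]{MR2564571}.'' What you have written is essentially a reconstruction of the argument underlying that cited theorem. Interestingly, the paper \emph{does} spell out this very argument, but only later and only for the special case $\mathfrak{A}_p = \Z_p[G]$: in the subsection ``Maps between torsion subgroups in the group ring case'' the authors derive the exact sequence
\begin{equation*}
(\Z_{p}[G])^{\times} \stackrel{\nr}{\longrightarrow} \zeta(\mathfrak{M}_{p}(G))^{\times} \longrightarrow DT(\Z_{p}[G]) \longrightarrow 0
\end{equation*}
from the long exact sequence of relative $K$-theory, using Swan's theorem (injectivity of $K_0(\Z_p[G]) \to K_0(\Q_p[G])$), the reduced norm isomorphism $K_1(\Q_p[G]) \simeq \zeta(\Q_p[G])^\times$, and the identity $\nr(K_1(\Z_p[G])) = \nr(\Z_p[G]^\times)$ --- exactly the three inputs you identify. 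They remark explicitly that this gives a proof of the proposition when $\mathfrak{A}_p = \Z_p[G]$.

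Your treatment of general $\Z_p$-orders is actually slightly cleaner than a naive extension of the group-ring argument, because you only need the torsion of $K_0(\mathfrak{A}_p, \Q_p)$ to die in $K_0(\mathfrak{A}_p)$, rather than full exactness on the right. However, the justification you give for this step is a little off: what you want is that $K_0(\mathfrak{A}_p)$ itself is torsion-free, which holds because $\mathfrak{A}_p$ is a semiperfect ring (being module-finite over the complete local ring $\Z_p$), so $K_0(\mathfrak{A}_p)$ is free abelian on the indecomposable projectives. Your statement that ``$K_0(A)$ is torsion-free'' is true but, as written, would only finish the argument if you also invoked injectivity of $K_0(\mathfrak{A}_p)\to K_0(A)$, which you flagged as something you wanted to avoid. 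Replacing that sentence with the semiperfectness observation closes the gap. The remaining ingredients --- surjectivity of $\nr$ on units of a maximal order over a local field, openness of $\nr(\mathfrak{A}_p^\times)$ in the compact group $\zeta(\mathfrak{M}_p)^\times = \prod_i \mathcal{O}_i^\times$, and the fact that $\zeta(A)^\times/\zeta(\mathfrak{M}_p)^\times$ is free abelian --- are all standard and correctly deployed.
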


\begin{proof}
This is a special case of \cite[Theorem 2.4]{MR2564571}.
\end{proof}

\begin{prop}\label{prop:torsion-kernel}
Let $G$ be a finite group and let $\mathfrak{M}_{p}(G)$ be any maximal $\Z_{p}$-order
such that $\Z_{p}[G] \subseteq \mathfrak{M}_{p}(G) \subseteq \Q_{p}[G]$.
Then
\[
DT(\Z_{p}[G]) = \ker(K_{0}(\Z_{p}[G], \Q_{p}) \stackrel{\theta}{\longrightarrow} K_{0}( \mathfrak{M}_{p}(G), \Q_{p}) )
\]
where $\theta = [\mathfrak{M}_{p}(G) \otimes_{\Z_{p}[G]} - ]$ is the map induced by extension of scalars.
\end{prop}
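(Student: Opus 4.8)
The plan is to combine the long exact sequence of relative $K$-theory \eqref{eq:long-exact-seq} for the order $\Z_p[G]$ with the fact that the torsion subgroup of $K_0(\Z_p[G],\Q_p)$ is exactly the part that dies under scalar extension to the maximal order. First I would write down the commutative diagram relating the two long exact sequences attached to $\Z_p[G] \hookrightarrow \Q_p[G]$ and $\mathfrak{M}_p(G) \hookrightarrow \Q_p[G]$, connected by the functorial map $\theta = (\iota)_*$ where $\iota:\Z_p[G]\hookrightarrow\mathfrak{M}_p(G)$; the vertical maps on the $K_1(\Q_p[G])$ and $K_1(A_F)$ pieces are identities, and the relevant exactness facts are that $K_0(\Z_p[G])\to K_0(\Q_p[G])$ is injective (no nonzero finitely generated projective $\Z_p[G]$-module becomes zero after rationalisation) and similarly for $\mathfrak{M}_p(G)$.

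The key input is Proposition \ref{prop:torsion-units}: applied to $\mathfrak{A}_p=\Z_p[G]$ with the same choice of maximal order $\mathfrak{M}_p(G)$, it gives
\[
DT(\Z_p[G]) \simeq \frac{\zeta(\mathfrak{M}_p(G))^\times}{\nr(\Z_p[G]^\times)},
\qquad DT(\mathfrak{M}_p(G)) = 0 .
\]
The same theorem (or rather the description of $K_0(\mathfrak{A}_p,\Q_p)$ in \cite{MR2564571} underlying it) identifies the relative $K$-group via the reduced norm with a subquotient of $\zeta(\Q_p[G])^\times$, and the map $\theta$ on relative groups corresponds, under this identification, to the identity on $\zeta(\Q_p[G])^\times$ modulo the respective images $\nr(\Z_p[G]^\times)\subseteq\nr(\mathfrak{M}_p(G)^\times)$. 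Hence $\ker\theta$ is precisely $\nr(\mathfrak{M}_p(G)^\times)/\nr(\Z_p[G]^\times)$, which is a torsion group (being a subquotient of the finite group $DT(\Z_p[G])$ computed above — more precisely it is all of $DT(\Z_p[G])$ once one checks the image of $DT(\mathfrak{M}_p(G))=0$ forces the containment in both directions). So I would argue: $\ker\theta$ is torsion, hence $\ker\theta\subseteq DT(\Z_p[G])$; conversely $DT(\Z_p[G])\subseteq\ker\theta$ because $\theta$ maps into the torsion-free part — indeed $DT(\mathfrak{M}_p(G))=0$, so any torsion class must die.

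Alternatively, and perhaps more cleanly, I would avoid the explicit reduced-norm bookkeeping: one inclusion is immediate, since $\theta(DT(\Z_p[G]))\subseteq DT(\mathfrak{M}_p(G))=0$ by functoriality of the torsion subgroup under the homomorphism $\theta$, giving $DT(\Z_p[G])\subseteq\ker\theta$. For the reverse inclusion, a diagram chase in the two-row commutative ladder of \eqref{eq:long-exact-seq} shows that $\ker\theta$ is a quotient of $\ker\big(K_1(\mathfrak{M}_p(G))\to K_1(\Q_p[G])\big)\big/\mathrm{im}\,K_1(\Z_p[G])$ — wait, more carefully, $\ker\theta$ sits in an exact sequence with $\coker(K_1(\Z_p[G])\to K_1(\mathfrak{M}_p(G)))$ on one side and a subgroup of $\ker(K_0(\Z_p[G])\to K_0(\mathfrak{M}_p(G)))$ on the other. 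Since $K_1(\mathfrak{M}_p(G))\to K_1(\Q_p[G])$ has image of finite index (the reduced norm image of a maximal order has finite index in $\nr(\Q_p[G]^\times)$) and $K_0(\Z_p[G])\to K_0(\mathfrak{M}_p(G))$ has finite kernel (both $K_0$'s are finitely generated and the map is rationally injective), $\ker\theta$ is finite, hence torsion, hence contained in $DT(\Z_p[G])$.

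The main obstacle I anticipate is the reverse inclusion, specifically verifying that $\ker\theta$ is torsion: this requires knowing that $\mathrm{coker}\big(K_1(\Z_p[G])\to K_1(\mathfrak{M}_p(G))\big)$ is finite (equivalently, that $\nr(\Z_p[G]^\times)$ has finite index in $\nr(\mathfrak{M}_p(G)^\times)$) and that the image of $DT(\mathfrak{M}_p(G))=0$ argument is deployed correctly in the ladder. Both are standard consequences of the structure theory recalled in \S\ref{subsec:K-theory} — in particular Proposition \ref{prop:torsion-units} already packages the relevant finiteness — so the cleanest write-up simply invokes Proposition \ref{prop:torsion-units} for both $\Z_p[G]$ and $\mathfrak{M}_p(G)$ and reads off the kernel of $\theta$ from the induced map on the reduced-norm quotients, which is visibly surjective with kernel equal to the full group $DT(\Z_p[G])$ since the target $DT(\mathfrak{M}_p(G))$ vanishes.
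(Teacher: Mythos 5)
Your proof is correct, but it takes a genuinely different route from the paper's. The paper disposes of this proposition with a one-line citation: "By \cite[Theorem 2.3(i)]{MR2564571} this is a special case of \cite[Theorem 2.4(iii)]{MR2564571}," deferring entirely to a general structure result in the cited reference. You instead reconstruct a direct proof from the long exact sequence \eqref{eq:long-exact-seq} and Proposition \ref{prop:torsion-units}. This reconstruction works and is essentially the machinery the authors themselves set up a little later (in the subsection on maps between torsion subgroups, equations \eqref{eq:group-ring-K-exact-seq}--\eqref{eq:group-ring-DT-seq}), where they even remark that the sequence \eqref{eq:group-ring-DT-seq} reproves Proposition \ref{prop:torsion-units} for $\mathfrak{A}_{p}=\Z_{p}[G]$. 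Your cleanest formulation is the second one: $DT(\Z_{p}[G])\subseteq\ker\theta$ is immediate because $\theta$ is a group homomorphism and $DT(\mathfrak{M}_{p}(G))=0$; for the reverse, Swan's theorem truncates both rows to $K_{1}(\Lambda)\to K_{1}(\Q_{p}[G])\to K_{0}(\Lambda,\Q_{p})\to 0$, so $\ker\theta=\mathrm{im}(K_{1}(\mathfrak{M}_{p}(G)))/\mathrm{im}(K_{1}(\Z_{p}[G]))$ inside $K_{0}(\Z_{p}[G],\Q_{p})$, which after applying the reduced norm (using that $\nr$ is surjective onto $\zeta(\mathfrak{M}_{p}(G))^{\times}$ for a maximal order) is $\zeta(\mathfrak{M}_{p}(G))^{\times}/\nr(\Z_{p}[G]^{\times})$, finite by Proposition \ref{prop:torsion-units}, hence contained in $DT(\Z_{p}[G])$. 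Your middle diagram-chase discussion has a couple of false starts and mislabels the relevant kernel/cokernel before you correct yourself, but the substance is right. What the student's approach buys is self-containedness; what the paper's buys is brevity and precise reference to the general theorem in \cite{MR2564571}, which also handles orders other than group rings.
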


\begin{proof}
By \cite[Theorem 2.3(i)]{MR2564571} this is a special case of
\cite[Theorem 2.4(iii)]{MR2564571}.
\end{proof}

\subsection{Maps between torsion subgroups in the group ring case}
Let $p$ be a prime and let $G$ be a finite group. 
By a well-known theorem of Swan (see \cite[Theorem (32.1)]{MR632548}) the 
map $K_{0}(\Z_{p}[G]) \longrightarrow K_{0}(\Q_{p}[G])$ induced by extension of scalars is injective. Thus from \eqref{eq:long-exact-seq} we obtain an exact sequence
\begin{equation}\label{eq:group-ring-K-exact-seq}
K_{1}(\Z_{p}[G]) \longrightarrow K_{1}(\Q_{p}[G]) \longrightarrow K_{0}(\Z_{p}[G],\Q_{p})
\longrightarrow 0,
\end{equation}
which is functorial with respect to restriction and quotient maps.
The reduced norm map induces an isomorphism $K_{1}(\Q_{p}[G]) \longrightarrow \zeta(\Q_{p}[G])^{\times}$
(use \cite[Theorem (45.3)]{MR892316}) and $\nr(K_{1}(\Z_{p}[G]))=\nr((\Z_{p}[G])^{\times})$ 
(this follows from  \cite[Theorem (40.31)]{MR892316}). 
Hence from \eqref{eq:group-ring-K-exact-seq} we obtain an exact sequence
\begin{equation}\label{eq:group-ring-units-seq}
(\Z_{p}[G])^{\times} \stackrel{\nr}{\longrightarrow} \zeta(\Q_{p}[G])^{\times} \longrightarrow K_{0}(\Z_{p}[G],\Q_{p})
\longrightarrow 0.
\end{equation}
Now let $\mathfrak{M}_{p}(G)$ be a maximal $\Z_{p}$-order such that $\Z_{p}[G] \subseteq \mathfrak{M}_{p}(G) \subseteq \Q_{p}[G]$.
Then by restricting the middle map of \eqref{eq:group-ring-units-seq} we obtain an exact sequence
\begin{equation}\label{eq:group-ring-DT-seq}
(\Z_{p}[G])^{\times} \stackrel{\nr}{\longrightarrow} \zeta(\mathfrak{M}_{p}(G))^{\times} \longrightarrow DT(\Z_{p}[G])
\longrightarrow 0,
\end{equation}
which is again functorial with respect to restriction and quotient maps; 
moreover, this sequence gives a proof of Proposition \ref{prop:torsion-units} in the case $\mathfrak{A}_{p}=\Z_{p}[G]$.

\begin{prop}\label{prop:quotient-surjective}
Let $p$ be a prime and let $G$ be a finite group with normal subgroup $N$. Then the quotient map
$\quot^{G}_{G/N}: DT(\Z_{p}[G]) \longrightarrow DT(\Z_{p}[G/N])$
is surjective.
\end{prop}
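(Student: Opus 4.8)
The plan is to exploit the functorial exact sequence \eqref{eq:group-ring-DT-seq} and compare it for $G$ and for $G/N$ via the quotient map. Concretely, write $\bar{G} = G/N$ and choose a maximal order $\mathfrak{M}_{p}(G)$ with $\Z_{p}[G] \subseteq \mathfrak{M}_{p}(G) \subseteq \Q_{p}[G]$; then for an appropriate choice of maximal order $\mathfrak{M}_{p}(\bar{G})$ the natural projection $\pi: \Q_{p}[G] \onto \Q_{p}[\bar{G}]$ carries $\mathfrak{M}_{p}(G)$ into (a maximal order containing) $\mathfrak{M}_{p}(\bar{G})$, and the diagram
\[
\xymatrix{
(\Z_{p}[G])^{\times} \ar[r]^-{\nr} \ar[d] & \zeta(\mathfrak{M}_{p}(G))^{\times} \ar[r] \ar[d] & DT(\Z_{p}[G]) \ar[r] \ar[d]^-{\quot^{G}_{\bar G}} & 0\\
(\Z_{p}[\bar G])^{\times} \ar[r]^-{\nr} & \zeta(\mathfrak{M}_{p}(\bar G))^{\times} \ar[r] & DT(\Z_{p}[\bar G]) \ar[r] & 0
}
\]
commutes, with exact rows. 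Since the bottom row is exact, in particular $\zeta(\mathfrak{M}_{p}(\bar G))^{\times} \onto DT(\Z_{p}[\bar G])$ is surjective, and it therefore suffices to show that the middle vertical arrow $\zeta(\mathfrak{M}_{p}(G))^{\times} \to \zeta(\mathfrak{M}_{p}(\bar G))^{\times}$ is itself surjective: a diagram chase then gives surjectivity of $\quot^{G}_{\bar G}$ on the quotients.

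First I would make the choice of maximal orders precise. The Wedderburn components of $\Q_{p}[\bar G]$ are exactly those components of $\Q_{p}[G]$ on which $N$ acts trivially, i.e. those indexed by $i$ with $e_{i} e_{N} = e_{i}$; equivalently $\Q_{p}[\bar G] \simeq \Q_{p}[G] e_{N}$. One can choose $\mathfrak{M}_{p}(G)$ of the form $\prod_{i} \Lambda_{i}$ with $\Lambda_{i}$ a maximal order in the $i$-th simple component $A_{i}$, and then take $\mathfrak{M}_{p}(\bar G) = \prod_{i : e_{i} e_{N} = e_{i}} \Lambda_{i}$, which is a maximal order in $\Q_{p}[\bar G]$ containing $\Z_{p}[\bar G]$. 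Under this identification the map on centres $\zeta(\mathfrak{M}_{p}(G))^{\times} = \prod_{i} \mathcal{O}_{i}^{\times} \to \zeta(\mathfrak{M}_{p}(\bar G))^{\times} = \prod_{i : e_{i} e_{N} = e_{i}} \mathcal{O}_{i}^{\times}$ induced by $\pi$ is just the projection onto those factors, which is visibly surjective. Together with the surjectivity of the bottom row of the diagram this completes the argument.

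The one point that needs care — and which I expect to be the main (though modest) obstacle — is verifying that $\pi$ really does send the chosen $\mathfrak{M}_{p}(G)$ onto the chosen $\mathfrak{M}_{p}(\bar G)$ compatibly with the inclusions of the group rings, and that the resulting square of $\nr$-maps and centre-maps commutes; this is where one uses that $\pi$ restricted to the components with $e_{i}e_{N} = e_{i}$ is an isomorphism onto the components of $\Q_{p}[\bar G]$, so that reduced norms and centres match up on the nose. Once that bookkeeping is in place the surjectivity is immediate. (Alternatively, one can avoid choosing compatible maximal orders by arguing directly with \eqref{eq:group-ring-units-seq}: the map $\zeta(\Q_{p}[G])^{\times} \to \zeta(\Q_{p}[\bar G])^{\times}$ induced by $\pi$ is a surjection of the centres of semisimple algebras — again because $\Q_{p}[\bar G]$ is a direct factor of $\Q_{p}[G]$ — and one then restricts to the maximal-order units as above; I would present whichever version is shortest.)
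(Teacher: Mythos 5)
Your proposal is correct and follows essentially the same route as the paper: both exploit the functoriality of \eqref{eq:group-ring-DT-seq} under the quotient map, observe that $\zeta(\mathfrak{M}_p(G))^\times \onto \zeta(\mathfrak{M}_p(G/N))^\times$ is surjective (the paper obtains this directly from the decomposition $\mathfrak{M}_p(G) = \mathfrak{M}_p(G)e_N \oplus \mathfrak{M}_p(G)(1-e_N)$, while you spell it out via the product of component maximal orders, which amounts to the same thing), and conclude by a diagram chase.
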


\begin{proof}
Let $\mathfrak{M}_{p}(G)$ be a maximal $\Z_{p}$-order such that $\Z_{p}[G] \subseteq \mathfrak{M}_{p}(G) \subseteq \Q_{p}[G]$.
Then $\mathfrak{M}_{p}(G)$ decomposes into
\[
\mathfrak{M}_{p}(G) = \mathfrak{M}_{p}(G) e_{N} \oplus \mathfrak{M}_{p}(G) (1-e_{N}),
\]
where $\mathfrak{M}_{p}(G/N) := \mathfrak{M}_{p}(G) e_{N}$ is a maximal order in $\Q_{p}[G] e_{N} \simeq \Q_{p}[G/N]$.
Hence we obtain a surjection $\zeta(\mathfrak{M}_{p}(G))\mal \onto \zeta(\mathfrak{M}_{p}(G/N))\mal$.
Since \eqref{eq:group-ring-DT-seq} is functorial with respect to quotient maps, 
we have a commutative diagram
\[
\xymatrix{
{\zeta(\mathfrak{M}_{p}(G))^{\times}} \ar@{->>}[r] \ar@{->>}[d] & DT(\Z_{p}[G]) \ar@{->}[d]^{\quot^{G}_{G/N}}\\
{\zeta(\mathfrak{M}_{p}(G/N))^{\times}} \ar@{->>}[r] & DT(\Z_{p}[G/N]).
}
\]
Hence we see that the right vertical arrow must also be surjective.
\end{proof}

\begin{prop}\label{prop:restriction-surjective}
Let $p$ be a prime and let $G$ be a finite group with a subgroup $H \simeq C_{p}$.
Then the restriction map
$
\res^{G}_{H}: DT(\Z_{p}[G]) \longrightarrow DT(\Z_{p}[H])
$
is surjective.
\end{prop}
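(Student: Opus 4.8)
The plan is to imitate the proof of Proposition \ref{prop:quotient-surjective}, using the functoriality of the exact sequence \eqref{eq:group-ring-DT-seq} with respect to restriction maps in place of quotient maps. Write $H \simeq C_p$ and choose compatible maximal orders: let $\mathfrak{M}_p(G)$ be a maximal $\Z_p$-order with $\Z_p[G] \subseteq \mathfrak{M}_p(G) \subseteq \Q_p[G]$, and let $\mathfrak{M}_p(H)$ be a maximal $\Z_p$-order with $\Z_p[H] \subseteq \mathfrak{M}_p(H) \subseteq \Q_p[H]$. The map $\res^G_H$ on relative $K$-groups is $\rho^*$ for the inclusion $\rho: \Z_p[H] \hookrightarrow \Z_p[G]$, which makes sense because $\Z_p[G]$ is free as a $\Z_p[H]$-module. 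Since \eqref{eq:group-ring-DT-seq} is functorial with respect to restriction, I would assemble the commutative diagram
\[
\xymatrix{
{\zeta(\mathfrak{M}_{p}(G))^{\times}} \ar@{->>}[r] \ar[d] & DT(\Z_{p}[G]) \ar[d]^{\res^{G}_{H}}\\
{\zeta(\mathfrak{M}_{p}(H))^{\times}} \ar@{->>}[r] & DT(\Z_{p}[H]),
}
\]
where the horizontal surjections come from \eqref{eq:group-ring-DT-seq} and the left vertical map is induced by the restriction of scalars on $K_1$, concretely the norm-type map $\zeta(\mathfrak{M}_p(G))^\times \to \zeta(\mathfrak{M}_p(H))^\times$ coming from $\Q_p[G]$ viewed as a $\Q_p[H]$-module. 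As in Proposition \ref{prop:quotient-surjective}, once the diagram commutes it suffices to show the left vertical arrow is surjective, for then the composite $\zeta(\mathfrak{M}_p(G))^\times \to DT(\Z_p[H])$ is surjective and hence so is $\res^G_H$.

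The main obstacle — and the only place the hypothesis $H \simeq C_p$ is really used — is proving that the left vertical map is surjective. Here I would exploit that $\Q_p[H] \simeq \Q_p \times \Q_p(\zeta_p)$, so a maximal order is $\mathfrak{M}_p(H) = \Z_p \times \Z_p[\zeta_p]$, whose centre is itself and whose unit group is $\Z_p^\times \times \Z_p[\zeta_p]^\times$. The restriction-of-scalars map $\zeta(\mathfrak{M}_p(G))^\times \to \Z_p^\times \times \Z_p[\zeta_p]^\times$ should be computed componentwise using the decomposition of $\Q_p[G]$ into simple components and the corresponding branching of $\zeta(\mathfrak{M}_p(G))$. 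The trivial-character component of $\Z_p[G]$ surjects onto the $\Z_p$-factor (the map $\Z_p^\times \to \Z_p^\times$ there is essentially the identity, via the trivial idempotent of $H$). For the $\Z_p[\zeta_p]$-factor, one uses that some simple component $A_i$ of $\Q_p[G]$ on which $H$ acts through a faithful character of $C_p$ maps onto $\Q_p(\zeta_p)$ after restriction; concretely, the character $\zeta$ of $C_p$ is a constituent of the restriction of some $\chi \in \Irr_{\C_p}(G)$, and tracking reduced norms one needs that the induced map on the centres of the maximal orders hits all of $\Z_p[\zeta_p]^\times$. Because $\Z_p[\zeta_p]/\Z_p$ is totally ramified of degree $p-1$ and the relevant norm/field-theoretic maps are surjective on unit groups in this local setting (the cokernel being measured by $DT$, which is trivial for maximal orders by Proposition \ref{prop:torsion-units}), surjectivity follows.

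Alternatively, and perhaps more cleanly, I would phrase the whole argument directly on $DT$: by Proposition \ref{prop:torsion-units}, $DT(\Z_p[H]) \simeq \zeta(\mathfrak{M}_p(H))^\times / \nr(\Z_p[H]^\times)$, and for $H = C_p$ this is a well-understood finite group (it is related to $\Z_p[\zeta_p]^\times / \mathrm{im}(\Z_p[H]^\times)$, which by classical computations is cyclic of small order, or one may simply use that it is generated by the class of any unit $u \in \Z_p[\zeta_p]^\times$). It then suffices to lift each such generator: given $u \in \zeta(\mathfrak{M}_p(H))^\times$, find $v \in \zeta(\mathfrak{M}_p(G))^\times$ restricting to $u$, which reduces to a statement about local fields since $\zeta(\mathfrak{M}_p(G))$ and $\zeta(\mathfrak{M}_p(H))$ are products of rings of integers in finite extensions of $\Q_p$ and the restriction map between them is, on each matching pair of factors, either the identity or induced by a field embedding for which the unit-group norm map is surjective. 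The verification of this last surjectivity — identifying which components of $\Q_p[G]$ lie over the $\Q_p(\zeta_p)$-component of $\Q_p[H]$ and checking the norm is onto the units there — is the technical heart, but it is elementary given the structure theory already developed in Section 2.
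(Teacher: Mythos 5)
Your setup is the same as the paper's: assemble the commutative square relating $\zeta(\mathfrak{M}_{p}(G))^{\times}$, $\zeta(\mathfrak{M}_{p}(H))^{\times}$, $DT(\Z_{p}[G])$ and $DT(\Z_{p}[H])$ via the functoriality of \eqref{eq:group-ring-DT-seq} under restriction, and then try to win by chasing around the bottom-left corner. However, there is a genuine gap at the decisive step. You claim that it suffices to show the left vertical map $\res^{G}_{H}:\zeta(\mathfrak{M}_{p}(G))^{\times}\to\zeta(\mathfrak{M}_{p}(H))^{\times}\simeq\Z_{p}^{\times}\times\Z_{p}[\zeta_{p}]^{\times}$ is surjective, and you then assert that it is because ``the relevant norm/field-theoretic maps are surjective on unit groups in this local setting.'' That assertion is false. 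The component maps on centres of maximal orders involve norms along extensions of local fields determined by the branching of characters, and norm maps on unit groups of \emph{ramified} local extensions are never surjective (if $E/F$ is a nontrivial abelian totally ramified extension, local class field theory forces $\mathcal{O}_{F}^{\times}/N_{E/F}(\mathcal{O}_{E}^{\times})$ to be nontrivial). Concretely, take $G=C_{p^{2}}$ and $H=C_{p}$: the $\Z_{p}[\zeta_{p}]$-factor of $\zeta(\mathfrak{M}_{p}(H))^{\times}$ receives only the image of $\Z_{p}[\zeta_{p^{2}}]^{\times}$ under the norm $N_{\Q_{p}(\zeta_{p^{2}})/\Q_{p}(\zeta_{p})}$, which is a proper subgroup since $\Q_{p}(\zeta_{p^{2}})/\Q_{p}(\zeta_{p})$ is totally ramified of degree $p$. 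So the left vertical map is \emph{not} surjective in general, and your argument does not close.

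The paper's proof sidesteps exactly this obstacle. Rather than trying to show the left vertical map is surjective, it shows directly that the composite $\psi:\zeta(\mathfrak{M}_{p}(G))^{\times}\to DT(\Z_{p}[C_{p}])$ is surjective. The key input (from \cite[Corollary 8.2]{MR2564571}) is that under the identification $\mathfrak{M}_{p}(C_{p})^{\times}\simeq K_{1}(\Z_{p})\times K_{1}(\Z_{p}[\zeta_{p}])$ and $DT(\Z_{p}[C_{p}])\simeq K_{1}(\F_{p})$, the surjection $\mathfrak{M}_{p}(C_{p})^{\times}\twoheadrightarrow DT(\Z_{p}[C_{p}])$ is $\rho_{*}(x,y)=K_{1}(\rho_{1})(x)\,K_{1}(\rho_{2})(y)^{-1}$, and that $K_{1}(\rho_{1}):K_{1}(\Z_{p})\to K_{1}(\F_{p})$ is already surjective on its own. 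Hence every class in $DT(\Z_{p}[C_{p}])$ is hit by something in the trivial-character factor $\Z_{p}^{\times}\times\{1\}$, and elements there lift trivially: for $z\in\Z_{p}^{\times}$, the element $ze_{G}+(1-e_{G})\in\zeta(\mathfrak{M}_{p}(G))^{\times}$ restricts to $ze_{C_{p}}+(1-e_{C_{p}})$, which maps to the desired class. The extra structural input about $DT(\Z_{p}[C_{p}])$ is essential; without it the naive analogue of the quotient-map argument does not go through.
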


\begin{proof}
We henceforth identify $H$ and $C_{p}$.
Let $\mathfrak{M}_{p}(C_{p})$ and $\mathfrak{M}_{p}(G)$ be maximal $\Z_{p}$-orders 
such that $\Z_{p}[C_{p}] \subseteq \mathfrak{M}_{p}(C_{p}) \subseteq \Q_{p}[C_{p}]$ and
$\Z_{p}[G] \subseteq \mathfrak{M}_{p}(G) \subseteq \Q_{p}[G]$.
By \cite[Corollary 8.2]{MR2564571} and its proof we have a commutative diagram
\[
\xymatrix{
{\mathfrak{M}_{p}(C_{p})^{\times}} \ar@{->>}[r] \ar@{->}[d]^{\simeq} & DT(\Z_{p}[C_{p}]) \ar@{->}[d]^{\simeq}\\
{K_{1}(\Z_{p}) \times K_{1}(\Z_{p}[\zeta_{p}])} \ar@{->>}[r]^{\quad \quad \rho_{\ast}} & K_{1}(\F_{p}),
}
\]
where $\rho_{\ast}$ is induced from $\rho_{1}: \Z_{p} \rightarrow \Z_{p} / p \Z_{p} \simeq \F_{p}$
and  $\rho_{2}: \Z_{p}[\zeta_{p}] \rightarrow \Z_{p}[\zeta_{p}] / (1-\zeta_{p}) \simeq \F_{p}$ by functoriality.
More precisely, we have $\rho_{\ast}(x,y) = K_{1}(\rho_{1})(x) K_{1}(\rho_{2})(y)^{-1}$
for all $x \in K_{1}(\Z_{p})$, $y \in K_{1}(\Z_{p}[\zeta_{p}])$. Observe that
$K_{1}(\rho_{1}): K_{1}(\Z_{p}) \rightarrow K_{1}(\F_{p})$ is still surjective.
By the functoriality of \eqref{eq:group-ring-DT-seq} with respect to restriction, 
we have a commutative diagram
\[
\xymatrix{
{\zeta(\mathfrak{M}_{p}(G))^{\times}} \ar@{->>}[r] \ar@{->}[d]^{\res^{G}_{C_{p}}} \ar@{->}[dr]^{\psi} & DT(\Z_{p}[G]) \ar@{->}[d]^{\res^{G}_{C_{p}}}\\
{\mathfrak{M}_{p}(C_{p})^{\times}} \ar@{->>}[r] & DT(\Z_{p}[C_{p}]).
}
\]
It suffices to show that $\psi$ is surjective. For this let $x \in DT(\Z_{p}[C_{p}]) \simeq K_{1}(\F_{p})$ be arbitrary.
Let $z \in \Z_{p}^{\times} \simeq K_{1}(\Z_{p})$ be a preimage under $K_{1}(\rho_{1}): K_{1}(\Z_{p}) \twoheadrightarrow K_{1}(\F_{p})$.
Then $z e_{G} + (1 - e_{G})$ lies in $\zeta(\mathfrak{M}_{p}(G))^{\times}$. Moreover, the explicit description of the restriction map
given in \cite[\S 6.1]{MR2564571} shows that
\[
    \res^{G}_{C_{p}}(z e_{G} + (1 - e_{G})) = z e_{C_{p}} + (1 - e_{C_{p}}).
\]
Therefore $\psi(z e_{G} + (1 - e_{G})) = x$ as desired.
\end{proof}

\begin{corollary}\label{cor:trivial-DT}
Let $p$ be a prime and let $G$ be a finite group.
If $DT(\Z_{p}[G])$ is a $p$-group (in particular, if $DT(\Z_{p}[G])=0$), 
then either $\Z_{p}[G]$ is maximal or $p=2$.
\end{corollary}

\begin{proof}
Assume that $\Z_{p}[G]$ is not maximal, or equivalently, that $p$ divides $|G|$.
Then $G$ has a subgroup of order $p$.
By \cite[Corollary 8.2]{MR2564571} we have $DT(\Z_{p}[C_{p}]) \simeq \F_{p}^{\times} \simeq C_{p-1}$,
and so Proposition \ref{prop:restriction-surjective} gives the desired result.
\end{proof}

\subsection{Weakly hybrid orders}\label{subsec:wh}
We now introduce the notion of a weakly hybrid order.

\begin{definition}\label{def:weakly-hybrid}
Let $p$ be a prime and let $G$ be a finite group with normal subgroup $N$.
Let $\mathfrak{A}_{p}$ be a $\Z_{p}$-order such that $\Z_{p}[G] \subseteq \mathfrak{A}_{p} \subseteq \Q_{p}[G]$.
The order $\mathfrak{A}_{p}$ is said to be \emph{weakly} $N$-\emph{hybrid} if
(i) $e_{N} \in \mathfrak{A}_{p}$, (ii) $e_{N}\mathfrak{A}_{p}=e_{N}\Z_{p}[G] \simeq \Z_{p}[G/N]$ and
(iii) $DT(\mathfrak{A}_{p}(1-e_{N}))$ is trivial.
\end{definition}

The following lemma shows that Definition \ref{def:weakly-hybrid} is a generalisation of
Definition \ref{def:max-comm-hybrid}.

\begin{lemma}
If $\mathfrak{A}_{p}$ is $N$-hybrid then it is weakly $N$-hybrid.
\end{lemma}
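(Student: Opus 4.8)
The plan is to verify the three defining conditions of a weakly $N$-hybrid order directly from the definition of an $N$-hybrid order, using the structural consequences recorded in Proposition \ref{prop:hybrid-properties}. Suppose $\mathfrak{A}_{p}$ is $N$-hybrid, so that $\mathfrak{A}_{p} = \Z_{p}[G]e_{N} \oplus \mathfrak{M}_{p}(G)(1-e_{N})$ for some maximal $\Z_{p}$-order $\mathfrak{M}_{p}(G)$ with $\Z_{p}[G] \subseteq \mathfrak{M}_{p}(G) \subseteq \Q_{p}[G]$. For condition (i), note that $e_{N} \in \mathfrak{A}_{p}$ is immediate: indeed $\mathfrak{A}_{p}$ is an $N$-hybrid order precisely by virtue of decomposing along the idempotent $e_{N}$, and more concretely $e_{N} = |N|^{-1}\sum_{\sigma \in N}\sigma \in \Z_{p}[G]$ because Proposition \ref{prop:hybrid-properties}(i) gives $p \nmid |N|$, whence $|N| \in \Z_{p}^{\times}$.

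For condition (ii), observe that $e_{N}\mathfrak{A}_{p} = \mathfrak{A}_{p}e_{N} = \Z_{p}[G]e_{N}$ by the very definition of the hybrid order, since multiplying the decomposition $\Z_{p}[G]e_{N} \oplus \mathfrak{M}_{p}(G)(1-e_{N})$ by $e_{N}$ kills the second summand and fixes the first. The ring isomorphism $\Z_{p}[G]e_{N} \simeq \Z_{p}[G/N]$ is then exactly the isomorphism recorded at the start of \S\,2.2. For condition (iii), note that $\mathfrak{A}_{p}(1-e_{N}) = \mathfrak{M}_{p}(G)(1-e_{N})$, again directly from the hybrid decomposition. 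This is a direct summand of the maximal order $\mathfrak{M}_{p}(G)$, hence is itself a maximal $\Z_{p}$-order in $\Q_{p}[G](1-e_{N})$; therefore $DT(\mathfrak{A}_{p}(1-e_{N}))$ is trivial by the final assertion of Proposition \ref{prop:torsion-units} (applied with $\mathfrak{A}_{p}$ there equal to this maximal order). This establishes all three conditions, so $\mathfrak{A}_{p}$ is weakly $N$-hybrid.

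There is essentially no obstacle here: the statement is a bookkeeping exercise unwinding the two definitions, with the only nontrivial input being that a direct summand of a maximal order is maximal and hence has trivial $DT$, which is covered by Proposition \ref{prop:torsion-units}. The one point to state carefully is that the isomorphisms in conditions (ii) and (iii) are the canonical ones, so that the functorial identifications used later in the paper (for the quotient and projection maps) are consistent; but this is immediate from the construction of $\mathfrak{M}_{p}(G,N)$.
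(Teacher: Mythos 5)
Your proof is correct and takes essentially the same approach as the paper's: write out the hybrid decomposition $\mathfrak{A}_{p} = \Z_{p}[G]e_{N} \oplus \mathfrak{M}_{p}(G)(1-e_{N})$, read off conditions (i) and (ii) directly, and for condition (iii) observe that $\mathfrak{A}_{p}(1-e_{N}) = \mathfrak{M}_{p}(G)(1-e_{N})$ is (a central-idempotent summand of, hence itself) a maximal $\Z_{p}$-order, so its $DT$ is trivial by the final assertion of Proposition \ref{prop:torsion-units}. One caveat: the ``more concretely'' clause for condition (i) is both superfluous and, as stated, unjustified. Proposition \ref{prop:hybrid-properties}(i) yields $p \nmid |N|$ only under the hypothesis that $\Z_{p}[G]$ \emph{itself} is $N$-hybrid, whereas the present lemma concerns a possibly larger order $\mathfrak{A}_{p} = \mathfrak{M}_{p}(G,N)$; such an order can be $N$-hybrid even when $p \mid |N|$ (for instance $G = N = C_{p}$, where $\mathfrak{M}_{p}(G,N)$ is simply the maximal order and $e_{N} \notin \Z_{p}[G]$). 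Your first argument --- that $e_{N} = 1 \cdot e_{N} \in \Z_{p}[G]e_{N} \subseteq \mathfrak{A}_{p}$ directly from the decomposition --- is the correct one and is what the paper relies on; the appeal to $p \nmid |N|$ should simply be dropped.
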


\begin{proof}
By  Definition \ref{def:max-comm-hybrid} we have $e_{N} \in \mathfrak{A}_{p}$ and $\mathfrak{A}_{p}(1-e_{N}) = \mathfrak{M}_{p}(1-e_{N})$
for some choice of maximal $\Z_{p}$-order $\mathfrak{M}_{p}$ such that  $\Z_{p}[G] \subseteq \mathfrak{A}_{p} \subseteq \mathfrak{M}_{p} \subseteq \Q_{p}[G]$.
So by Proposition \ref{prop:torsion-units} we see that $DT(\mathfrak{A}_{p}(1-e_{N}))$ vanishes.
\end{proof}

The following proposition is the key reason for the interest in weakly hybrid orders.

\begin{prop}\label{prop:wh-quot}
If $\mathfrak{A}_{p}$ is weakly $N$-hybrid then the map
\[
\proj_{G/N}^{G}: DT(\mathfrak{A}_{p}) \longrightarrow DT(\Z_{p}[G/N])
\]
is an isomorphism.  If in addition $\mathfrak{A}_{p}=\Z_{p}[G]$ then $\proj_{G/N}^{G}=\quot_{G/N}^{G}$.
\end{prop}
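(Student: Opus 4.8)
The plan is to exploit the direct sum decomposition $\mathfrak{A}_{p} = e_{N}\mathfrak{A}_{p} \oplus (1-e_{N})\mathfrak{A}_{p}$, which (using condition (i) of Definition \ref{def:weakly-hybrid}) induces a corresponding decomposition of relative $K$-groups and hence of torsion subgroups, namely $DT(\mathfrak{A}_{p}) \simeq DT(e_{N}\mathfrak{A}_{p}) \oplus DT((1-e_{N})\mathfrak{A}_{p})$. By condition (iii) the second summand vanishes, so the projection onto the first factor is an isomorphism $DT(\mathfrak{A}_{p}) \stackrel{\sim}{\longrightarrow} DT(e_{N}\mathfrak{A}_{p})$. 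By condition (ii) we have a ring isomorphism $e_{N}\mathfrak{A}_{p} = e_{N}\Z_{p}[G] \simeq \Z_{p}[G/N]$, which identifies $DT(e_{N}\mathfrak{A}_{p})$ with $DT(\Z_{p}[G/N])$. First I would check that under these identifications the composite $DT(\mathfrak{A}_{p}) \to DT(e_{N}\mathfrak{A}_{p}) \to DT(\Z_{p}[G/N])$ is precisely the map $\proj_{G/N}^{G}$ as defined in \S\ref{subsec:K-func}: indeed $\proj_{G/N}^{G} = \rho_{*}$ where $\rho:\mathfrak{A}_{p} \to \Z_{p}[G/N]$ is the natural projection, which factors as $\mathfrak{A}_{p} \twoheadrightarrow e_{N}\mathfrak{A}_{p} \stackrel{\sim}{\to} \Z_{p}[G/N]$; the first arrow induces on $K_{0}(-,\Q_{p})$ exactly the projection onto the $e_{N}$-summand, and the second is an isomorphism.

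The key technical point I would need to nail down is that the decomposition $\mathfrak{A}_{p} = e_{N}\mathfrak{A}_{p} \oplus (1-e_{N})\mathfrak{A}_{p}$ as a direct product of rings induces $K_{0}(\mathfrak{A}_{p},\Q_{p}) \simeq K_{0}(e_{N}\mathfrak{A}_{p},\Q_{p}) \oplus K_{0}((1-e_{N})\mathfrak{A}_{p},\Q_{p})$ compatibly with the torsion subgroups. This follows from the fact that $K_{0}(-,F)$ and $K_{0}(-)$ and $K_{1}(-)$ all commute with finite products of rings, together with the five lemma applied to the long exact sequence \eqref{eq:long-exact-seq} (each term splits as a direct sum over the two factors, and the maps respect the splitting). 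Since $DT$ is defined as the torsion subgroup of $K_{0}(-,\Q_{p})$, and torsion subgroups commute with direct sums, the decomposition descends to $DT$.

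The second assertion is immediate: if $\mathfrak{A}_{p} = \Z_{p}[G]$ then in particular $e_{N} \in \Z_{p}[G]$, and as recalled at the end of \S\ref{subsec:K-func}, in this case the maps $\proj_{G/N}^{G}$ and $\quot_{G/N}^{G}$ coincide by definition. The main obstacle, such as it is, is purely bookkeeping: verifying that the map produced by the abstract splitting argument genuinely agrees on the nose with $\proj_{G/N}^{G}$ rather than with $\proj_{G/N}^{G}$ composed with some automorphism — but since both are induced by the same ring homomorphism $\rho$ via the scalar-extension functor, this is forced, and no real difficulty arises.
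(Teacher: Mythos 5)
Your proof is correct and follows essentially the same route as the paper: decompose $\mathfrak{A}_{p}$ via the central idempotent $e_{N}$, deduce the corresponding splitting of $DT$, kill the $(1-e_{N})$-summand by hypothesis, and identify the surviving projection with $\proj_{G/N}^{G}$. The paper simply states this more tersely, treating the compatibility of the $K$-theoretic decomposition and the identification of the map as routine, which your extra verification confirms.
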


\begin{proof}
By Definition \ref{def:weakly-hybrid} we have $\mathfrak{A}_{p}=\Z_{p}[G]e_{N}\ \oplus \mathfrak{A}_{p}(1-e_{N})$, which induces a natural decomposition
\[
DT(\mathfrak{A}_{p})  = DT(\Z_{p}[G]e_{N}) \oplus DT(\mathfrak{A}_{p}(1-e_{N})).
\]
The map $\proj_{G/N}^{G}$ is projection onto the first summand and
the second summand is trivial by hypothesis.
The last claim is the observation made at the end of \S \ref{subsec:K-func}.
\end{proof}

\begin{lemma}\label{lem:weak-hybrid-p-does-not-divide-N}
If $\Z_{p}[G]$ is weakly $N$-hybrid then $p$ does not divide $|N|$.
\end{lemma}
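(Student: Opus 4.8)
The statement to prove is Lemma \ref{lem:weak-hybrid-p-does-not-divide-N}: if $\Z_{p}[G]$ is weakly $N$-hybrid then $p \nmid |N|$. My plan is to argue by contradiction, exploiting condition (ii) in Definition \ref{def:weakly-hybrid} together with the surjectivity result for quotient maps already established in Proposition \ref{prop:quotient-surjective}. Suppose $p \mid |N|$. Since $N$ is a finite group whose order is divisible by $p$, it contains a subgroup of order $p$, and in fact it has a nontrivial $p$-subgroup; I would like to produce from this a normal subgroup $K$ of $G$ with $K \leq N$ and $|K|$ divisible by $p$ — for instance, one could take $K$ to be the subgroup generated by the conjugates of a suitable element, or more simply observe that $N$ itself is a normal subgroup of $G$ contained in $N$ with $p \mid |N|$, so we may as well work directly with $K=N$.

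The core of the argument is the following. Condition (ii) says $e_{N}\Z_{p}[G] = e_{N}\mathfrak{A}_{p} \simeq \Z_{p}[G/N]$, so $\Z_{p}[G/N]$ occurs as a ring-direct factor of $\mathfrak{A}_{p}$, and hence — via the decomposition $\mathfrak{A}_{p} = \Z_{p}[G]e_{N} \oplus \mathfrak{A}_{p}(1-e_{N})$ used in the proof of Proposition \ref{prop:wh-quot} — we get $DT(\mathfrak{A}_{p}) \cong DT(\Z_{p}[G/N]) \oplus DT(\mathfrak{A}_{p}(1-e_{N}))$. Condition (iii) forces the second summand to vanish, so $\proj_{G/N}^{G}: DT(\mathfrak{A}_{p}) \to DT(\Z_{p}[G/N])$ is an isomorphism. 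Now I would also need a companion map in the other direction or a constraint coming from $N$ itself. The cleaner route: since $p \mid |N|$, the quotient $\Z_{p}[G] \to \Z_{p}[G/N']$ for an appropriate intermediate normal subgroup, or directly the fact that $\Z_{p}[G]$ surjects onto $DT(\Z_{p}[P])$ for $P$ a subgroup of order $p$ inside $N$, together with $DT(\Z_{p}[C_{p}]) \cong \F_{p}^{\times}$ being nontrivial when $p > 2$, should collide with the structure forced by (i)–(iii). So I expect the proof to split into the cases $p$ odd and $p = 2$.

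More precisely, here is the mechanism I would use. From (i) we have $e_{N} \in \mathfrak{A}_{p}$; write $e_{N} = |N|^{-1}\sum_{\sigma \in N}\sigma$. If $p \mid |N|$ then $|N|^{-1} \notin \Z_{p}$, so $e_{N} \notin \Z_{p}[G]$, but it lies in the possibly-larger order $\mathfrak{A}_{p}$, which is allowed. So (i) alone does not give the contradiction; the contradiction must come from combining (ii) and (iii). The key observation is that $\mathfrak{A}_{p}(1-e_{N})$ contains $\Z_{p}[G](1-e_{N})$ as an order of full rank in $\Q_{p}[G](1-e_{N})$, and the latter algebra is $\bigoplus_{i : e_{i}e_{N}=0} A_{i}$. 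If $p \mid |N|$, I claim $\Z_{p}[G](1-e_{N})$ is non-maximal: indeed its image contains $\Z_{p}[N](1-e_{N})$ (up to the relevant idempotent bookkeeping), and $\Z_{p}[N]$ is non-maximal since $p \mid |N|$, which should propagate to non-triviality of $DT$ via Propositions \ref{prop:restriction-surjective} and \ref{prop:trivial-DT}(ii) — in the odd case any non-maximality of a $\Z_{p}$-order sitting between $\Z_{p}[H]$ and $\Q_{p}[H]$ for $H$ containing an element of order $p$ yields nontrivial $DT$. For $p = 2$, Proposition \ref{prop:trivial-DT}(ii) leaves open the case $v_{2}(|N|)=1$, so I would need the sharper input that even then $DT(\mathfrak{A}_{p}(1-e_{N}))$ cannot vanish, presumably by exhibiting $C_{2}$ inside $N$ and tracing through the restriction map as in the proof of Proposition \ref{prop:restriction-surjective}.

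The main obstacle, as this sketch reveals, is the $p=2$ case and, more generally, making precise the passage "$p \mid |N|$ implies $DT(\mathfrak{A}_{p}(1-e_{N}))\neq 0$" when $\mathfrak{A}_{p}$ is an arbitrary intermediate order rather than $\Z_{p}[G]$ itself. The honest fix is probably to reduce first to the group-ring case: by condition (ii), $e_{N}$ being central means $\mathfrak{A}_{p}(1-e_{N})$ is an order between $\Z_{p}[G](1-e_{N})$ and its maximal overorder, and one checks directly that the natural surjection $\Z_{p}[G](1-e_{N}) \onto \Z_{p}[G/M]$ for $M \leq N$ the unique subgroup of order $p$ in a cyclic $p$-subgroup of $N$ factors through $\mathfrak{A}_{p}(1-e_{N})$ whenever $M$ is normal in $G$; combined with functoriality of \eqref{eq:group-ring-DT-seq} and Proposition \ref{prop:restriction-surjective} this forces $DT(\mathfrak{A}_{p}(1-e_{N}))$ to surject onto the nontrivial group $DT(\Z_{p}[C_{p}])$, contradicting (iii). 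If one cannot guarantee such a normal $M$, one restricts instead to a subgroup $H$ of $G$ of order $p$ contained in $N$ and uses the restriction functoriality directly. I expect the write-up to be short once the right reduction is chosen, with the only real care needed in the $2$-adic bookkeeping.
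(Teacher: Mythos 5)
The lemma is much easier than you think, and you have misread its hypothesis. The statement is that $\Z_{p}[G]$ itself is weakly $N$-hybrid, i.e.\ in Definition \ref{def:weakly-hybrid} one takes $\mathfrak{A}_{p}=\Z_{p}[G]$. Condition (i) of that definition then says $e_{N}\in\Z_{p}[G]$, and since $e_{N}=|N|^{-1}\sum_{\sigma\in N}\sigma$ has each coefficient on $\sigma\in N$ equal to $|N|^{-1}$, this forces $|N|^{-1}\in\Z_{p}$, i.e.\ $p\nmid|N|$. This is exactly the one-line argument given for Proposition \ref{prop:hybrid-properties}(i), and the paper's proof of the lemma is simply a pointer to it.

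Your write-up explicitly rejects this route: you say ``$e_{N}\notin\Z_{p}[G]$, but it lies in the possibly-larger order $\mathfrak{A}_{p}$, which is allowed.\ So (i) alone does not give the contradiction.'' That is wrong for the statement at hand, because there is no larger order: $\mathfrak{A}_{p}=\Z_{p}[G]$ by hypothesis. Everything after that sentence --- the appeal to Propositions \ref{prop:quotient-surjective}, \ref{prop:restriction-surjective} and \ref{prop:trivial-DT}, the case split at $p=2$, the search for a normal subgroup of order $p$ inside $N$ --- is machinery aimed at proving a stronger claim (that a \emph{general} weakly $N$-hybrid order between $\Z_{p}[G]$ and $\Q_{p}[G]$ forces $p\nmid|N|$), and that stronger claim is neither asserted by the lemma nor obviously true. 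In particular, your sketch never closes the $p=2$ case and already flags the gap ``cannot guarantee such a normal $M$''. Re-read the lemma: with $\mathfrak{A}_{p}=\Z_{p}[G]$, condition (i) finishes the proof immediately and conditions (ii), (iii) are not needed at all.
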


\begin{proof}
This is the same argument as that given in the proof of Proposition \ref{prop:hybrid-properties}(i).
\end{proof}

The following lemma is a generalisation of \cite[Lemma 8.2]{MR2005875}.

\begin{lemma}\label{lem:trivial-DT2}
Let $A$ be a finite abelian group of odd order and let $C_{2}$ act on $A$ by inversion.
Let $G$ be the semidirect product $A \rtimes C_{2}$.
In particular, one may take $G=C_{2}$ or $D_{2n}$ for $n$ odd.
Then $DT(\Z_{2}[G])$ is trivial.
\end{lemma}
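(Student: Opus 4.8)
The plan is to reduce, via the hybrid decomposition already at hand, to the single computation that $DT(\Z_2[C_2])$ is trivial. First dispose of the degenerate case $A = \{1\}$: here $G = C_2$, and $DT(\Z_2[C_2]) \simeq \F_2^{\times}$, which is trivial. This is the $p=2$ instance of the isomorphism $DT(\Z_p[C_p]) \simeq \F_p^{\times} \simeq C_{p-1}$ used in the proof of Proposition \ref{prop:trivial-DT} (coming from \cite[Corollary 8.2]{MR2564571}); alternatively it drops out of the exact sequence \eqref{eq:group-ring-DT-seq}, since $\mathfrak{M}_2(C_2) = \Z_2 \times \Z_2$ and the reduced norm already carries $\Z_2[C_2]^{\times}$ onto $\Z_2^{\times} \times \Z_2^{\times}$ (given odd $2$-adic units $u, v$, the element $\tfrac{1}{2}(u+v) + \tfrac{1}{2}(u-v)\sigma$ lies in $\Z_2[C_2]^{\times}$ and has reduced norm $(u,v)$).

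Now assume $A \neq \{1\}$. Since $|A|$ is odd, inversion is a fixed-point-free automorphism of $A$ (if $a = a^{-1}$ then $a^2 = 1$, hence $a = 1$), so $G = A \rtimes C_2$ is a Frobenius group with Frobenius kernel $A$ by Theorem \ref{thm:frob-equiv}(iii); this is exactly the setting of Example \ref{ex:inversion}, which tells us $\Z_2[G]$ is $A$-hybrid. Proposition \ref{prop:hybrid-properties}(iii) then gives a ring isomorphism
\[
\Z_2[G] \;\simeq\; \Z_2[G/A] \;\oplus\; \bigoplus_{i \in J_2(G,A)} M_{m_i \times m_i}(\mathcal{O}_i),
\]
and since $K_0(-, \Q_2)$, and hence $DT(-)$, is additive over ring direct summands, $DT(\Z_2[G])$ breaks up as the corresponding direct sum. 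Each $\mathcal{O}_i$ is the valuation ring of a finite extension of $\Q_2$ (unramified, by Proposition \ref{prop:hybrid-properties}(ii), though we shall not need this), so each $M_{m_i \times m_i}(\mathcal{O}_i)$ is a maximal $\Z_2$-order and contributes a trivial summand by Proposition \ref{prop:torsion-units}. Finally $G/A \simeq C_2$ because $C_2$ acts by inversion, so the one remaining summand $DT(\Z_2[G/A]) = DT(\Z_2[C_2])$ vanishes by the first paragraph. Hence $DT(\Z_2[G]) = 0$.

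The main obstacle, such as it is, is the base computation $DT(\Z_2[C_2]) = 0$; once that is in hand, everything else is formal — Example \ref{ex:inversion} supplies the hybrid decomposition, Proposition \ref{prop:hybrid-properties}(iii) turns it into a ring direct sum, and Proposition \ref{prop:torsion-units} kills the matrix-ring factors. The only other point needing (routine) care is the observation that $DT$ is additive over ring direct summands, which is what lets us discard those matrix-ring factors.
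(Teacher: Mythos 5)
Your proof is correct and takes essentially the same route as the paper: reduce via the $A$-hybrid structure of $\Z_2[G]$ (Example \ref{ex:inversion}) to $DT(\Z_2[C_2])=0$. The only cosmetic difference is that the paper invokes Proposition \ref{prop:wh-quot} directly, whereas you unpack it by hand using the ring decomposition of Proposition \ref{prop:hybrid-properties}(iii), additivity of $DT$, and Proposition \ref{prop:torsion-units} to kill the maximal-order summands; since Proposition \ref{prop:wh-quot} is proved by exactly that decomposition argument, the two are the same in substance.
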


\begin{proof}
It is well-known that $DT(\Z_{2}[C_{2}])$ is trivial
(this is a special case of \cite[Corollary 8.2]{MR2564571}, for example).
By Example \ref{ex:inversion}, $\Z_{2}[G]$ is $A$-hybrid when $A$ is nontrivial and so by
Proposition \ref{prop:wh-quot} we have  $DT(\Z_{2}[G]) \simeq DT(\Z_{2}[C_{2}])=0$.
\end{proof}

\begin{lemma}\label{lem:weak-hybrid-products}
Suppose that $\Z_{2}[G]$ is $N$-hybrid and that
for every $\chi \in \Irr_{\C_{2}}(G)$ such that $N \not \leq \ker \chi$
we have that $\Q_{2}(\chi)=\Q_{2}$. Let $H$ be a finite group such that $DT(\Z_{2}[H])$ is trivial.
Then $\Z_{2}[G \times H]$ is weakly $N \times \{ 1 \}$-hybrid.
If in addition $H$ is  of even order then $\Z_{2}[G \times H]$ is not $N \times \{ 1 \}$-hybrid.
\end{lemma}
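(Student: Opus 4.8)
The plan is to verify the three conditions of Definition \ref{def:weakly-hybrid} for $\mathfrak{A}_p = \Z_2[G \times H]$ with $N \times \{1\}$ in place of $N$, and then to deal with the final assertion about non-hybridness by a dimension/index count. First, the natural identification $\Z_2[G \times H] \simeq \Z_2[G] \otimes_{\Z_2} \Z_2[H]$ shows that $e_{N \times \{1\}} = e_N \otimes 1$, which lies in $\Z_2[G \times H]$ since $e_N \in \Z_2[G]$ by Proposition \ref{prop:hybrid-properties}(i) (as $\Z_2[G]$ is $N$-hybrid, $2 \nmid |N|$). This gives condition (i), and condition (ii) follows because $e_N \Z_2[G] \simeq \Z_2[G/N]$ forces $e_{N \times\{1\}}\Z_2[G\times H] \simeq \Z_2[G/N] \otimes_{\Z_2} \Z_2[H] \simeq \Z_2[(G/N) \times H]$, which is exactly $\Z_2[(G\times H)/(N\times\{1\})]$.

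The heart of the argument is condition (iii): that $DT(\mathfrak{A}_p(1 - e_{N\times\{1\}}))$ is trivial. Here I would use the block decomposition from Proposition \ref{prop:hybrid-properties}(iii). Since $\Z_2[G]$ is $N$-hybrid, $\Z_2[G](1-e_N) = \bigoplus_{i \in J_2(G,N)} M_{m_i \times m_i}(\mathcal O_i)$, and by Proposition \ref{prop:hybrid-properties}(ii) together with the extra hypothesis $\Q_2(\chi) = \Q_2$ for every $\chi$ with $N \not\leq \ker\chi$, each $F_i = \Q_2$, so each block is simply $M_{m_i \times m_i}(\Z_2)$. Tensoring with $\Z_2[H]$ over $\Z_2$, we get
\[
\mathfrak{A}_p(1 - e_{N\times\{1\}}) \simeq \bigoplus_{i \in J_2(G,N)} M_{m_i \times m_i}(\Z_2[H]).
\]
By the Morita-invariance isomorphism \eqref{eq:morita-isom}, $DT$ of each matrix block equals $DT(\Z_2[H])$, which is trivial by hypothesis; hence $DT(\mathfrak{A}_p(1-e_{N\times\{1\}}))$ vanishes, establishing weak $N\times\{1\}$-hybridness. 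I expect the main obstacle to be making the Morita step rigorous at the level of $DT$-groups, i.e. checking that $DT$ is insensitive to passing from $M_{m\times m}(\Z_2[H])$ to $\Z_2[H]$; this follows from \eqref{eq:morita-isom} applied with $R = \Z_2$, $\mathfrak{A} = \Z_2[H]$, $E = \Q_2$, but one should confirm that the equivalence of exact categories respects the torsion subgroup of the relative $K_0$, which it does since it is an isomorphism of groups.

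For the last claim, suppose $H$ has even order; I would show $\Z_2[G \times H]$ fails to be $N\times\{1\}$-hybrid by comparing it with the maximal-order description forced by Definition \ref{def:max-comm-hybrid}. If it were $N\times\{1\}$-hybrid, then $\mathfrak{A}_p(1 - e_{N\times\{1\}})$ would be a maximal $\Z_2$-order; but $\mathfrak{A}_p(1-e_{N\times\{1\}}) \simeq \bigoplus_i M_{m_i\times m_i}(\Z_2[H])$ is maximal if and only if $\Z_2[H]$ is maximal, which by the remark following Definition \ref{def:max-comm-hybrid} happens only when $2 \nmid |H|$ — contradicting the assumption that $H$ has even order. (Alternatively, one can argue directly that $e_{N\times\{1\}} \Z_2[G\times H]$ being $\Z_2[(G/N)\times H]$ leaves the complementary summand equal to a matrix ring over $\Z_2[H]$, which is non-maximal precisely when $2 \mid |H|$.) This completes the proof.
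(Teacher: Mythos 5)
Your proposal is correct and takes essentially the same approach as the paper: both decompose $\Z_2[G\times H](1-e_{N\times\{1\}})$ into matrix rings $M_{m_i\times m_i}(\Z_2[H])$ via the tensor product description, apply the Morita isomorphism \eqref{eq:morita-isom} to reduce each block's $DT$ to $DT(\Z_2[H])=0$, and observe that the summands are non-maximal when $2\mid|H|$. You are a bit more explicit in verifying conditions (i) and (ii) of Definition \ref{def:weakly-hybrid}, which the paper leaves implicit, but the core argument is the same.
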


\begin{remark}
Corollary \ref{cor:trivial-DT} shows that we need only consider the case $p=2$ in Lemma
\ref{lem:weak-hybrid-products}; in the case $p>2$ the analogous result reduces to Lemma \ref{lem:add-a-group}.
\end{remark}

\begin{proof}[Proof of Lemma \ref{lem:weak-hybrid-products}]
By Proposition \ref{prop:hybrid-properties} and the hypotheses we have that $\Z_{2}[G](1-e_{N})$ is a direct sum of
orders of the form $M_{n \times n}(\Z_{2})$.
Thus $\Z_{2}[G \times H](1-e_{N \times \{1 \}}) \simeq \Z_{2}[G](1-e_{N}) \otimes_{\Z_{2}} \Z_{2}[H]$
is a direct sum of orders of the form $M_{n \times n}(\Z_{2}) \otimes_{\Z_{2}} \Z_{2}[H] \simeq M_{n \times n}(\Z_{2}[H])$.
However, by \eqref{eq:morita-isom} we have
$DT(M_{n \times n}(\Z_{2}[H])) = DT(\Z_{2}[H])=0$. As $DT(-)$ respects direct sums, this gives the first claim.
If $H$ is of even order then $\Z_{2}[H]$ is not maximal and so the summands of the form $M_{n \times n}(\Z_{2}[H])$
are not maximal either.
\end{proof}

\begin{example} \label{ex:D12-weakly-hybrid}
By combining Lemmas \ref{lem:trivial-DT2} and \ref{lem:weak-hybrid-products} one can give many examples
of group rings $\Z_{2}[G]$ that are weakly $N$-hybrid but not $N$-hybrid. We give just one example here.
In Example \ref{ex:affine}, it was shown that $\Z_{2}[S_{3}]$ is $A_{3}$-hybrid and
$\Z_{2}[S_{3}] \simeq \Z_{2}[C_{2}] \oplus M_{2 \times 2}(\Z_{2})$.
Furthermore, $DT(\Z_{2}[C_{2}])=0$ and
$D_{12} \simeq S_{3} \times C_{2}$. Hence $\Z_{2}[D_{12}]$ is weakly $N$-hybrid but not $N$-hybrid
where $N$ is the unique normal subgroup of $D_{12}$ of order $3$.
\end{example}

\subsection{Decomposition into $p$-parts}\label{subsec:p-parts}
Let $G$ be a finite group and
let $\mathfrak{A}$ be a $\Z$-order such that $\Z[G] \subseteq \mathfrak{A} \subseteq \Q[G]$.
For each prime $p$, we set $\mathfrak{A}_{p} := \Z_{p} \otimes_{\Z} \mathfrak{A}$.
The canonical maps $K_{0}(\mathfrak{A}, \Q) \rightarrow K_{0}(\mathfrak{A}_{p}, \Q_{p})$
induce isomorphisms
\begin{equation}\label{eq:p-part-decomp}
K_{0}(\mathfrak{A}, \Q) \simeq \bigoplus_{p} K_{0}(\mathfrak{A}_{p}, \Q_{p})
\quad \textrm{ and } \quad
DT(\mathfrak{A}) \simeq \bigoplus_{p} DT(\mathfrak{A}_{p})
\end{equation}
where the sums range over all primes
(see the discussion following \cite[(49.12)]{MR892316}).
We note that for appropriate $\mathfrak{A}$ the maps of \S \ref{subsec:K-func}
respect both these decompositions.

\section{The equivariant Tamagawa number conjecture for Tate motives}\label{sec:ETNC_Tate}

\subsection{Preliminaries and notation}\label{subsec:ETNC-prelim}
We give a very brief description of the statement and properties of the equivariant Tamagawa number conjecture (ETNC) for Tate motives formulated by Burns and Flach \cite{MR1884523}; we
omit all details except those necessary for proofs in later sections.

Let $L/K$ be a finite Galois extension of number fields with Galois group $G$.
For each integer $r$ we set $\Q(r)_{L}:=h^{0}(\mathrm{Spec}(L))(r)$,
which we regard as a motive defined over $K$ and with coefficients in the semisimple algebra $\Q[G]$.
Let  $\mathfrak{A}$ be a $\Z$-order such that $\Z[G] \subseteq \mathfrak{A} \subseteq \Q[G]$.
The conjecture `$\ETNC(\Q(r)_{L}, \mathfrak{A})$' formulated in
\cite[Conjecture 4(iv)]{MR1884523}
for the pair $(\Q(r)_{L}, \mathfrak{A})$ asserts
that a certain canonical element $T\Omega(\Q(r)_{L}, \mathfrak{A})$  of $K_{0}(\mathfrak{A},\R)$ vanishes.
(As observed in \cite[\S 1]{MR1981031}, the element $T\Omega(\Q(r)_{L}, \mathfrak{A})$ is indeed well-defined.)

It will be convenient to explicitly specify the base field, and so
we shall henceforth denote $T\Omega(\Q(r)_{L}, \mathfrak{A})$ by
$T\Omega(L/K,\mathfrak{A},r)$ and say `$\ETNC(L/K,\mathfrak{A},r)$ holds' if this
element vanishes.
If $T\Omega(L/K,\mathfrak{A},r) \in K_{0}(\mathfrak{A},\Q)$
then the first decomposition in \eqref{eq:p-part-decomp} defines an element
$T\Omega(L/K,\mathfrak{A}_{p},r) \in K_{0}(\mathfrak{A}_{p},\Q_{p})$ for each prime $p$,
and we say `$\ETNC(L/K,\mathfrak{A}_{p},r)$' holds if this element vanishes.
We shall abbreviate $\ETNC(L/K,\Z[G],r)$ to $\ETNC(L/K,r)$ and a
subscript $p$ shall have the obvious meaning;
thus $\ETNC(L/K,r)$ holds if and only if $\ETNC_{p}(L/K,r)$ holds for all primes $p$.

\subsection{Functorial properties}\label{subsec:ETNC-funct-props}
We now recall some important functorial properties.
By \cite[Theorem 4.1]{MR1884523} the ETNC behaves well with respect to the maps defined in
\S \ref{subsec:K-func}. In particular, we have the following.

\begin{prop}\label{prop:ETNC-funct}
Let $L/K$ be a finite Galois extension of number fields with Galois group $G$.
Let $r \in \Z$ and suppose that $T\Omega(L/K, \Z[G],r) \in K_{0}(\Z[G],\Q)$.
Let $p$ be a prime.
\begin{enumerate}
\item If $H$ is a subgroup of $G$ then
\[
\res_{H}^{G}(T\Omega(L/K, \Z_{p}[G],r)) = T\Omega(L/L^{H}, \Z_{p}[H], r).
\]
\item If $N$ is a normal subgroup of $G$ then
\[
\quot^{G}_{G/N}(T\Omega(L/K, \Z_{p}[G], r)) = T\Omega(L^{N}/K, \Z_{p}[G/N], r).
\]
\item Let $\mathfrak{A}_{p}$ and $\mathfrak{B}_{p}$ be $\Z_{p}$-orders such that
$\Z_{p}[G] \subseteq \mathfrak{A}_{p} \subseteq \mathfrak{B}_{p} \subseteq \Q_{p}[G]$.
Let $\rho^{\mathfrak{B}_{p}}_{\mathfrak{A}_{p}}$ be the map induced by the extension of scalars functor
$\mathfrak{B}_{p} \otimes_{\mathfrak{A}_{p}} - $.
Then
\[
\rho^{\mathfrak{B}_{p}}_{\mathfrak{A}_{p}}(T\Omega(L/K, \mathfrak{A}_{p}, r))
= T\Omega(L/K, \mathfrak{B}_{p}, r).
\]
\item
Let $\mathfrak{A}_{p}$ be a $\Z_{p}$-order such that $\Z_{p}[G] \subseteq \mathfrak{A}_{p} \subseteq \Q_{p}[G]$
and let $N$ be a normal subgroup of $G$.
Suppose that $e_{N} \in \mathfrak{A}_{p}$
and $e_{N}\mathfrak{A}_{p} = e_{N}\Z_{p}[G] \simeq \Z_{p}[G/N]$. Then
\[
\proj^{G}_{G/N}(T\Omega(L/K, \mathfrak{A}_{p}, r)) =  T\Omega(L^{N}/K, \Z_{p}[G/N], r).
\]
\end{enumerate}
\end{prop}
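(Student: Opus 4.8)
The plan is to treat this as a purely formal consequence of the general functoriality of the ETNC element established in \cite[Theorem 4.1]{MR1884523}, combined with the compatibility of the motives $\Q(r)_L$ under restriction, inflation and change of coefficient ring. First I would recall that by \cite[Theorem 4.1]{MR1884523} the canonical element $T\Omega(M,\mathfrak{A})$ is functorial with respect to: (a) restriction of the Galois action along a subgroup $H \leq G$, under which the motive $\Q(r)_L$ viewed over $K$ with $\Q[G]$-coefficients corresponds to $\Q(r)_L$ viewed over $L^H$ with $\Q[H]$-coefficients; (b) taking $N$-invariants for a normal subgroup $N \unlhd G$, under which $\Q(r)_L$ with $\Q[G]$-coefficients maps to $\Q(r)_{L^N}$ with $\Q[G/N]$-coefficients; and (c) extension of scalars along an inclusion of orders $\mathfrak{A} \subseteq \mathfrak{B}$ inside a fixed semisimple algebra. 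Since we are assuming $T\Omega(L/K,\Z[G],r) \in K_0(\Z[G],\Q)$, the $p$-adic components $T\Omega(L/K,\Z_p[G],r)$ are well-defined via the decomposition \eqref{eq:p-part-decomp}, and all the maps of \S\ref{subsec:K-func} respect that decomposition (as noted at the end of \S\ref{subsec:p-parts}), so it suffices to verify each identity after the rational statement, which is exactly what \cite[Theorem 4.1]{MR1884523} supplies.

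For part (i), I would invoke the restriction functoriality of \cite[Theorem 4.1]{MR1884523} together with the identification of $\mathrm{Res}^G_H \Q(r)_L$ (over $K$) with $\Q(r)_L$ regarded over the intermediate field $L^H$; the map $\res^G_H$ on $K_0$ is precisely $\rho^*$ for the inclusion $\Z_p[H] \hookrightarrow \Z_p[G]$, and $\Z_p[G]$ is $\Z_p[H]$-free, so the hypothesis needed to define $\rho^*$ is met. For part (ii), I would similarly use the functoriality under passing to the quotient $G/N$: the natural projection $\Z_p[G] \to \Z_p[G/N]$ induces $\quot^G_{G/N} = \rho_*$, and on the level of motives this corresponds to replacing $L$ by $L^N$, giving $T\Omega(L^N/K,\Z_p[G/N],r)$. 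Part (iii) is the coefficient-ring functoriality: the scalar extension $\mathfrak{B}_p \otimes_{\mathfrak{A}_p} -$ is exactly the map $\rho_*$ for $\mathfrak{A}_p \hookrightarrow \mathfrak{B}_p$, and since both orders sit inside the same algebra $\Q_p[G]$ the relevant base-change statement in \cite[Theorem 4.1]{MR1884523} applies verbatim.

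Part (iv) is where a small genuine argument is needed, and I expect it to be the main (though still modest) obstacle. Here $\mathfrak{A}_p$ is an intermediate order with $e_N \in \mathfrak{A}_p$ and $e_N\mathfrak{A}_p = e_N\Z_p[G] \simeq \Z_p[G/N]$, so the natural projection $\rho: \mathfrak{A}_p \to \Z_p[G/N]$ factors as $\mathfrak{A}_p \twoheadrightarrow e_N\mathfrak{A}_p \xrightarrow{\ \sim\ } \Z_p[G/N]$. The strategy is to compose: apply part (iii) with $\mathfrak{B}_p = \mathfrak{A}_p$ and the inclusion $\Z_p[G] \subseteq \mathfrak{A}_p$ to move $T\Omega(L/K,\Z_p[G],r)$ up to $T\Omega(L/K,\mathfrak{A}_p,r)$ — wait, more precisely one first needs that $\proj^G_{G/N}$ on $K_0(\mathfrak{A}_p,\Q_p)$ agrees with $\quot^G_{G/N}$ applied after the base change, which follows because on the $e_N$-component $\mathfrak{A}_p$ and $\Z_p[G]$ already coincide. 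So I would factor $\proj^G_{G/N} = \quot^G_{G/N}\circ(\text{base change to }\mathfrak{A}_p)^{-1}$ appropriately, or more cleanly observe that $\proj^G_{G/N}$ on $T\Omega(L/K,\mathfrak{A}_p,r)$ equals $\quot^G_{G/N}$ on $T\Omega(L/K,\Z_p[G],r)$ because the projection $\mathfrak{A}_p \to \Z_p[G/N]$ restricted to $e_N\Z_p[G]$ is an isomorphism and $T\Omega$ of the $\mathfrak{A}_p$-version restricts on the $e_N$-block to the same thing as the $\Z_p[G]$-version (by part (iii), since base change is the identity on the $e_N$-component). Then part (ii) identifies $\quot^G_{G/N}(T\Omega(L/K,\Z_p[G],r))$ with $T\Omega(L^N/K,\Z_p[G/N],r)$, completing the proof. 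The only point requiring care is checking that the identification $e_N\mathfrak{A}_p = e_N\Z_p[G]$ makes the two routes — project first versus base-change first — literally commute, which is a direct diagram chase in $K_0$ using that all maps respect the block decomposition induced by $e_N$ and $1-e_N$.
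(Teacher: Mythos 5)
Your proposal is correct and follows essentially the same route as the paper: parts (i)--(iii) by taking $p$-parts of the functoriality results in Burns--Flach, and part (iv) by composing part (iii) (base change $\Z_p[G]\hookrightarrow\mathfrak{A}_p$) with part (ii), using the factorization $\quot^G_{G/N}=\proj^G_{G/N}\circ\rho^{\mathfrak{A}_p}_{\Z_p[G]}$. One small caution: the intermediate phrasing $\proj^G_{G/N}=\quot^G_{G/N}\circ(\text{base change})^{-1}$ is not literally meaningful since the base-change map is not invertible, but you immediately replace it with the correct forward factorization, so the final argument is sound.
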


\begin{proof}
Parts (i) and (ii) follow by taking `$p$-parts' in \cite[Proposition 4.1]{MR1884523}.
For part (iii), let
$\rho:\mathfrak{A}_{p} \hookrightarrow \mathfrak{B}_{p}$ be the natural inclusion and let
$\rho_{*} : K_{0}(\mathfrak{A}_{p},\Q_{p}) \longrightarrow K_{0}(\mathfrak{B}_{p},\Q_{p})$
be the map defined in \S \ref{subsec:K-func}.
Then $\rho^{\mathfrak{A}_{p}}_{\mathfrak{B}_{p}}=\rho_{*}$ and the result follows from
\cite[Theorem 4.1]{MR1884523}.
Part (iv) follows from parts (ii) and (iii) and the observation that $\quot^{G}_{G/N} = \proj^{G}_{G/N} \circ \rho^{\mathfrak{A}_{p}}_{\Z_{p}[G]}$.
\end{proof}

\begin{remark}
 In particular, by Proposition \ref{prop:ETNC-funct}(i) and (ii)
 if $\ETNC_{p}(L/K,r)$ holds then $\ETNC_{p}(F/E,r)$ also holds for every
Galois subextension  $F/E$ of $L/K$. Furthermore, if $\mathfrak{A}_{p}$ and $\mathfrak{B}_{p}$
are as in Proposition \ref{prop:ETNC-funct}(iii) then $\ETNC(L/K,\mathfrak{A}_{p},r)$ implies
 $\ETNC(L/K,\mathfrak{B}_{p},r)$.
\end{remark}

\subsection{The ETNC over group rings}\label{subsec:known-cases}
We list some known cases of $\ETNC(L/K,r)$.

\begin{enumerate}
\item If $L/\Q$ is abelian and $K$ is any subfield of $L$
then $\ETNC(L/K,r)$ holds for any $r \in \Z$. This result is due to Burns and Flach \cite{MR2290586} and builds on work of Burns and Greither \cite{MR1992015}; difficulties with the $2$-part are resolved by Flach \cite{MR2863902}.
\item Let $K$ be an imaginary quadratic field and let $p$ be an odd prime that splits in $K/\Q$ and does not divide the class number of $K$. 
If $L$ is a finite abelian extension of $K$, then $\ETNC_{p}(L/K,0)$ holds. 
This result is due to Bley \cite[Corollary 4.3]{MR2226270} (note that the proof of \cite[Theorem 4.2]{MR2226270} relies on
the main result of \cite{MR797675} on the vanishing of certain $\mu$-invariants, which requires that $p>3$; this result has recently been 
extended in particular to the case $p=3$ by \cite{mu_zero_for_p3}.)
\item Let $K$ be an imaginary quadratic field and let $p$ be an odd prime that splits in $K/\Q$.
If $L$ is a finite abelian extension of $K$, then $\ETNC_{p}(L/K,r)$ holds for any $r<0$.
This result is due to Johnson-Leung \cite[Main Theorem]{Johnson-ETNC} and relies on work of
Johnson-Leung and Kings \cite{MR2794626} (though stated for $p>3$, this result also holds for 
$p=3$ for the same reason as above).
\item If $L$ belongs to a certain infinite family of fields for which $\Gal(L/\Q) \simeq Q_{8}$ then
$\ETNC(L/\Q,r)$ holds for $r=0,1$. These cases are due to Burns and Flach
\cite[Theorem 4.1]{MR1981031} and \cite[(1) and Corollary 1.5]{MR2290586} and rely heavily on results
of Chinburg \cite{MR973358}.
\item If $L/K$ is any quadratic extension then $\ETNC(L/K,0)$ holds.
This is shown by Kim in \cite[\S 2.4, Remark i]{kim-thesis}; also see
\cite[\S 7]{MR3187901}.
\item In \cite[\S 11]{MR3187901}, Buckingham gives examples of biquadratic extensions $L/K$
(with $K/\Q$ imaginary quadratic and $L/\Q$ non-abelian) for which $\ETNC(L/K,0)$ holds.
\item In \cite{MR2268756}, Navilarekallu verifies $\ETNC(L/\Q,0)$ for a particular field $L$ with $\Gal(L/\Q) \simeq A_{4}$.
By Proposition \ref{prop:ETNC-funct}(i), this gives four intermediate fields $F$ with $\Gal(L/F) \simeq C_{3}$
and $F/\Q$ quartic and non-Galois such that $\ETNC(L/F,0)$ holds.
\item If $L/K$ is a Galois CM-extension and $p$ is an odd prime, then $\ETNC_{p}(L/K,0)$ naturally decomposes into
a plus and a minus part. Under certain restrictions, the second named author deduces
the minus part of $\ETNC_{p}(L/K,0)$  from the conjectural vanishing of certain $\mu$-invariants
 \cite{MR2822866, stickelberger}.
\item If $L/K$ is a finite Galois extension of totally real number fields and $r<0$ is odd,
then $\ETNC_{p}(L/K,r)$ holds for any odd prime $p$, provided that certain $\mu$-invariants vanish.
A similar result holds on minus parts if $L/K$ is a CM-extension and $r<0$ is even.
These results are due to Burns \cite[Corollary 2.10]{burns-mc}.
\end{enumerate}

\subsection{The ETNC over maximal orders}\label{subsec:ETNC-max}
Let $L/K$ be a Galois extension of number fields with Galois group $G$.
For a maximal $\Z$-order $\mathfrak{M}(G)$ with $\Z[G] \subseteq \mathfrak{M}(G) \subseteq \Q[G]$
we shall abbreviate
$\ETNC(L/K,\mathfrak{M}(G),r)$ to $\ETNC^{\max}(L/K,r)$; this is independent of
the choice of $\mathfrak{M}(G)$.
We define $\ETNC_{p}^{\max}(L/K,r)$ analogously.
We list some properties and known cases of
$\ETNC^{\max}(L/K,r)$ and $\ETNC_{p}^{\max}(L/K,r)$.

\begin{enumerate}
\item If $p \nmid |G|$ then $\Z_{p}[G]$ is a maximal $\Z_{p}$-order and so the
statements $\ETNC_{p}(L/K,r)$ and $\ETNC_{p}^{\max}(L/K,r)$ are equivalent in this case.
\item By Proposition \ref{prop:torsion-kernel} and Proposition \ref{prop:ETNC-funct}(iii),
$\ETNC_{p}^{\max}(L/K,r)$ is equivalent to $T\Omega(L/K,\Z_{p}[G],r) \in DT(\Z_{p}[G])$.
Thus $\ETNC_{p}(L/K,r)$ implies $\ETNC_{p}^{\max}(L/K,r)$, and so the latter holds in all
the cases listed in \S \ref{subsec:known-cases}.
\item Burns and Flach \cite[\S 3, Corollary 1]{MR1981031}
show that $\ETNC^{\max}(L/K,0)$ is equivalent
to the strong Stark conjecture (as formulated by Chinburg \cite[Conjecture 2.2]{MR724009})
for $L/K$. Thus we write $\SSC(L/K)$ for $\ETNC^{\max}(L/K,0)$ and
$\SSC_{p}(L/K)$ for $\ETNC^{\max}_{p}(L/K,0)$.
\item Let $K$ be an imaginary quadratic field and $L$ be a finite abelian extension of $K$.
Let $p$ be a prime such that either
$[L:K]$ is a power of $p$ or $p$ does not divide the class number of $K$.
Then $\SSC_{p}(L/K)$ holds by work of Bley \cite[Theorem 1.1 and Corollary 1.2]{werner-hab}.
\item $\SSC(L/K)$ can be broken
down into $\chi$-parts $\SSC(L/K)(\chi)$ where $\chi \in \Irr_{\C}(G)$.
If $\chi$ is rational-valued then $\SSC(L/K)(\chi)$ holds by a result of Tate
\cite[Chapter II, Theorem 6.8]{MR782485}.
If $L^{\ker(\chi)}/\Q$ is abelian then $\SSC(L/K)(\chi)$
holds by combining \S \ref{subsec:known-cases}(i) and \S \ref{subsec:ETNC-max}(iii);
outside the $2$-part this result was first established by Ritter and Weiss \cite[Theorem A]{MR1423032}.
\item If $L/K$ is a Galois extension of totally real number fields and $r<0$ is odd,
then $\ETNC^{\max}_{p}(L/K, r)$ holds for every odd prime $p$. This is a result of the second named author \cite[Corollary 6.2]{MR2801311}.
\item A result similar to (vi) holds on minus parts if $L/K$ is a CM-extension and $r<0$ is even.
This follows if one combines a result of Burns \cite[Corollary 2.10]{burns-mc} with a general induction argument of the second named author  \cite[Proposition 6.1(iii)]{MR2801311}.
\end{enumerate}

\subsection{The ETNC over weakly hybrid orders}
We show how weakly hybrid orders can be used to `break up' certain cases of the ETNC.

\begin{theorem}\label{thm:ETNC-break-down}
Let $L/K$ be a finite Galois extension of number fields with Galois group $G$ and let $r \in \Z$.
Suppose that $T\Omega(L/K, \Z[G],r) \in K_{0}(\Z[G],\Q)$.
Let $p$ be a prime and let $\mathfrak{A}_{p}$ be a $\Z_{p}$-order such that
$\Z_{p}[G] \subseteq \mathfrak{A}_{p} \subseteq \Q_{p}[G]$.
Suppose that $\mathfrak{A}_{p}$ is weakly $N$-hybrid.
Then
$\ETNC(L/K, \mathfrak{A}_{p}, r)$ holds if and only if both $\ETNC_{p}(L^{N}/K,r)$ and
$\ETNC^{\max}_{p}(L/K,r)$ hold.
\end{theorem}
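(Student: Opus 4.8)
The plan is to use the decomposition $\mathfrak{A}_p = \Z_p[G]e_N \oplus \mathfrak{A}_p(1-e_N)$ coming from the weakly $N$-hybrid hypothesis to split the element $T\Omega(L/K,\mathfrak{A}_p,r)$ into two pieces and analyse each separately. First I would recall from Proposition \ref{prop:wh-quot} (or rather its proof) that this ring decomposition induces a decomposition of relative $K$-groups $K_0(\mathfrak{A}_p,\Q_p) \simeq K_0(\Z_p[G]e_N,\Q_p) \oplus K_0(\mathfrak{A}_p(1-e_N),\Q_p)$, under which $T\Omega(L/K,\mathfrak{A}_p,r)$ corresponds to a pair $(\alpha,\beta)$. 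The key observation is that $\alpha$ is the image of $T\Omega(L/K,\mathfrak{A}_p,r)$ under $\proj^G_{G/N}$, which by Proposition \ref{prop:ETNC-funct}(iv) equals $T\Omega(L^N/K,\Z_p[G/N],r)$; hence $\alpha = 0$ if and only if $\ETNC_p(L^N/K,r)$ holds.

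For the second component $\beta$, I would argue as follows. Since $e_N\mathfrak{A}_p = e_N\Z_p[G]$ the order $\mathfrak{A}_p$ contains $\Z_p[G]$, so by Proposition \ref{prop:ETNC-funct}(iii) we have that $T\Omega(L/K,\mathfrak{A}_p,r)$ is the image of $T\Omega(L/K,\Z_p[G],r)$ under the scalar extension map, and in particular the hypothesis $T\Omega(L/K,\Z[G],r) \in K_0(\Z[G],\Q)$ ensures $T\Omega(L/K,\mathfrak{A}_p,r) \in K_0(\mathfrak{A}_p,\Q_p)$ so that the statement makes sense. Now choose a maximal order $\mathfrak{M}_p(G)$ containing $\mathfrak{A}_p$; applying Proposition \ref{prop:ETNC-funct}(iii) again along $\mathfrak{A}_p \subseteq \mathfrak{M}_p(G)$ and taking the $(1-e_N)$-component, the image of $\beta$ in $K_0(\mathfrak{M}_p(G)(1-e_N),\Q_p)$ is the $(1-e_N)$-part of $T\Omega(L/K,\mathfrak{M}_p(G),r)$. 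But the $e_N$-part of the latter is $T\Omega(L^N/K,\mathfrak{M}_p(G/N),r)$, which vanishes whenever $\ETNC_p(L^N/K,r)$ does (via Proposition \ref{prop:ETNC-funct} together with the maximal-order version of (iii)); so under the assumption that $\ETNC_p(L^N/K,r)$ holds, the full element $T\Omega(L/K,\mathfrak{M}_p(G),r)$ vanishes exactly when $\ETNC^{\max}_p(L/K,r)$ holds, and then its $(1-e_N)$-part, i.e. the image of $\beta$ in $K_0(\mathfrak{M}_p(G)(1-e_N),\Q_p)$, vanishes under the same condition.

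It therefore remains to see that $\beta$ itself (living in $K_0(\mathfrak{A}_p(1-e_N),\Q_p)$) vanishes if and only if its image in $K_0(\mathfrak{M}_p(G)(1-e_N),\Q_p)$ vanishes. This is where the weakly $N$-hybrid hypothesis (iii), namely $DT(\mathfrak{A}_p(1-e_N)) = 0$, is used: by Proposition \ref{prop:torsion-kernel} applied to the algebra $\Q_p[G](1-e_N)$ and the order $\mathfrak{A}_p(1-e_N)$, the kernel of the extension-of-scalars map $K_0(\mathfrak{A}_p(1-e_N),\Q_p) \to K_0(\mathfrak{M}_p(G)(1-e_N),\Q_p)$ is precisely $DT(\mathfrak{A}_p(1-e_N))$, which is trivial, so this map is injective. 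Putting the pieces together: if both $\ETNC_p(L^N/K,r)$ and $\ETNC^{\max}_p(L/K,r)$ hold then $\alpha = 0$ and $\beta = 0$, so $\ETNC(L/K,\mathfrak{A}_p,r)$ holds; conversely if $\ETNC(L/K,\mathfrak{A}_p,r)$ holds then $\alpha = 0$ gives $\ETNC_p(L^N/K,r)$, and then the vanishing of $\beta$ forces the vanishing of its image in the maximal-order group, which combined with the already-established vanishing of the $e_N$-part yields $\ETNC^{\max}_p(L/K,r)$. The main obstacle I anticipate is bookkeeping the compatibility of the various scalar-extension and projection maps with the idempotent decompositions — in particular making sure that "taking the $(1-e_N)$-part" commutes with passing from $\mathfrak{A}_p$ to $\mathfrak{M}_p(G)$ and with the functorial identities in Proposition \ref{prop:ETNC-funct} — but this should follow formally from the observations at the end of \S\ref{subsec:K-func} and \S\ref{subsec:p-parts}.
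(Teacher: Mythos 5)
Your proof is correct and amounts to the same argument as the paper's, which simply cites Proposition~\ref{prop:wh-quot} together with \S\ref{subsec:ETNC-max}(ii) and the functorialities of Proposition~\ref{prop:ETNC-funct} rather than unpacking the $e_N$/$(1-e_N)$ decomposition and the $DT(\mathfrak{A}_p(1-e_N))=0$ hypothesis inline as you do. One small economy in the converse direction: once $T\Omega(L/K,\mathfrak{A}_p,r)=0$, a single application of Proposition~\ref{prop:ETNC-funct}(iii) to an inclusion $\mathfrak{A}_p\subseteq\mathfrak{M}_p(G)$ already gives $\ETNC^{\max}_p(L/K,r)$, with no need to revisit the $(1-e_N)$-component.
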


\begin{proof}
Suppose $\ETNC_{p}^{\max}(L/K, r)$ holds. Then $T\Omega(L/K,\Z_{p}[G],r) \in DT(\Z_{p}[G])$
by \S \ref{subsec:ETNC-max}(ii). If we now further assume that $\ETNC_{p}(L^{N}/K,r)$ holds,
it follows from Proposition \ref{prop:wh-quot} and
Proposition \ref{prop:ETNC-funct}(iv) that $\ETNC(L/K, \mathfrak{A}_{p},r)$ holds.

Suppose conversely that $\ETNC(L/K, \mathfrak{A}_{p},r)$ holds.
Then $\ETNC_{p}(L^{N}/K,r)$ holds by Proposition \ref{prop:ETNC-funct}(iv)
and  $\ETNC_{p}^{\max}(L/K, r)$ holds by applying Proposition \ref{prop:ETNC-funct}(iii)
with $\mathfrak{B}_{p}$ equal to some maximal $\Z_{p}$-order
such that $\mathfrak{A}_{p} \subseteq \mathfrak{B}_{p} \subseteq \Q_{p}[G]$.
\end{proof}

\begin{corollary}\label{cor:ETNC-break-down}
Assume the situation and notation of Theorem \ref{thm:ETNC-break-down}.
Further suppose that $N=G'$ is the commutator subgroup of $G$ and that $L^{N}/\Q$ is abelian
(in particular, this is the case when $K=\Q$).
Then $\ETNC(L/K, \mathfrak{A}_{p}, r)$ holds if and only if $\ETNC^{\max}_{p}(L/K,r)$ holds.
\end{corollary}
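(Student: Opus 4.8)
The plan is to derive the corollary directly from Theorem~\ref{thm:ETNC-break-down} by showing that, under the extra hypotheses, one of the two conditions appearing there is automatic. Recall that Theorem~\ref{thm:ETNC-break-down} gives that $\ETNC(L/K,\mathfrak{A}_{p},r)$ holds if and only if \emph{both} $\ETNC_{p}(L^{N}/K,r)$ and $\ETNC^{\max}_{p}(L/K,r)$ hold. Hence the entire content to be added is that $\ETNC_{p}(L^{N}/K,r)$ holds unconditionally in the present setting.

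First I would observe that, since $N=G'$, the fixed field $L^{N}=L^{G'}$ is the maximal abelian subextension of $L/K$; in particular $\Gal(L^{N}/K)\simeq G^{\mathrm{ab}}$ is abelian, and, because $G'\leq G$, we have $K=L^{G}\subseteq L^{G'}=L^{N}$. By hypothesis $L^{N}/\Q$ is abelian, so $L^{N}$ is a finite abelian extension of $\Q$ having $K$ as a subfield. The case of the ETNC for such extensions recorded in \S\ref{subsec:known-cases}(i) (due to Burns and Flach, building on Burns--Greither and Flach) then gives that $\ETNC(L^{N}/K,r)$ holds for every $r\in\Z$; taking $p$-parts yields $\ETNC_{p}(L^{N}/K,r)$. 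This also disposes of any well-definedness concern in passing to $p$-parts, since $\ETNC(L^{N}/K,r)$ holding means in particular that $T\Omega(L^{N}/K,\Z[G/N],r)$ vanishes in the rational $K$-group.

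Feeding this back into Theorem~\ref{thm:ETNC-break-down}, the clause ``$\ETNC_{p}(L^{N}/K,r)$ holds'' may be dropped, so $\ETNC(L/K,\mathfrak{A}_{p},r)$ holds if and only if $\ETNC^{\max}_{p}(L/K,r)$ holds, which is the assertion. The parenthetical ``in particular this is the case when $K=\Q$'' needs nothing extra: if $K=\Q$ then $\Gal(L^{N}/\Q)=\Gal(L^{G'}/\Q)=G/G'=G^{\mathrm{ab}}$ is abelian, so the hypothesis on $L^{N}/\Q$ is satisfied automatically. There is no real obstacle in this argument; it is a formal consequence of Theorem~\ref{thm:ETNC-break-down} together with the known cases of the ETNC over $\Q$, the only point requiring a moment's care being the inclusion $K\subseteq L^{N}$ that licenses the use of \S\ref{subsec:known-cases}(i).
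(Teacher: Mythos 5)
Your proof is correct and matches the paper's approach exactly: the paper's one-line proof is "combine Theorem \ref{thm:ETNC-break-down} and \S\ref{subsec:known-cases}(i)," which is precisely the argument you have spelled out, including the observation that $K\subseteq L^{N}$ so that the Burns--Flach result applies to the abelian-over-$\Q$ field $L^{N}$ with base $K$.
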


\begin{proof}
This follows by combining Theorem \ref{thm:ETNC-break-down} and
\S \ref{subsec:known-cases}(i).
\end{proof}

We end this subsection with the following observation.

\begin{prop} \label{prop:ETNC2-reduction}
Let $A$ be a finite abelian group of odd order and let $C_{2}$ act on $A$ by inversion.
Put $G = A \rtimes C_{2}$ as in Example \ref{ex:inversion} and let $r \in \Z$.
Let $L/K$ be a Galois extension of number fields with $\Gal(L/K) \simeq G$.
Suppose that $T\Omega(L/K, \Z[G],r) \in K_{0}(\Z[G],\Q)$.
Then $\ETNC_{2}(L/K,r)$ holds if and only if $\ETNC^{\max}_{2}(L/K,r)$ holds.
\end{prop}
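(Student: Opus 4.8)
The plan is to observe that, under the standing hypothesis $T\Omega(L/K,\Z[G],r) \in K_{0}(\Z[G],\Q)$, both statements concern only the position of the single element $T\Omega(L/K,\Z_{2}[G],r)$ inside $K_{0}(\Z_{2}[G],\Q_{2})$, and that the relevant torsion subgroup vanishes. First I would record that the decomposition \eqref{eq:p-part-decomp} yields a well-defined element $T\Omega(L/K,\Z_{2}[G],r) \in K_{0}(\Z_{2}[G],\Q_{2})$, and that by definition $\ETNC_{2}(L/K,r)$ holds precisely when this element is zero.

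Next I would invoke \S\ref{subsec:ETNC-max}(ii), which says that $\ETNC_{2}^{\max}(L/K,r)$ is equivalent to the a priori weaker assertion $T\Omega(L/K,\Z_{2}[G],r) \in DT(\Z_{2}[G])$. The only remaining ingredient is that $DT(\Z_{2}[G])$ is trivial, and this is exactly Lemma \ref{lem:trivial-DT2}: its proof uses that $\Z_{2}[G]$ is $A$-hybrid by Example \ref{ex:inversion} (or is simply $\Z_{2}[C_{2}]$ when $A$ is trivial), together with Proposition \ref{prop:wh-quot}, to reduce to the known vanishing of $DT(\Z_{2}[C_{2}])$. Granting this, the containment $T\Omega(L/K,\Z_{2}[G],r) \in DT(\Z_{2}[G])$ is equivalent to $T\Omega(L/K,\Z_{2}[G],r) = 0$, so the two versions of the conjecture coincide.

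There is essentially no obstacle here, since the substantive work has already been carried out in establishing Lemma \ref{lem:trivial-DT2}; the proposition is really a packaging of that vanishing statement. I would note in passing that one could instead deduce the result from Theorem \ref{thm:ETNC-break-down} applied with $\mathfrak{A}_{p}=\Z_{2}[G]$ and $N=A$ (here $A=G'$, and $\Z_{2}[G]$ is even $A$-hybrid), the extra condition $\ETNC_{2}(L^{A}/K,r)$ for the quadratic subextension $L^{A}/K$ then being subsumed because $\ETNC_{2}^{\max}(L/K,r)$ already forces $T\Omega(L/K,\Z_{2}[G],r)=0$ and hence, via Proposition \ref{prop:ETNC-funct}, $\ETNC_{2}$ for every subextension; but the direct route through $DT(\Z_{2}[G])=0$ is the cleanest.
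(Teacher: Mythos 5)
Your proof is correct and takes essentially the same route as the paper: the paper's own proof is the one-line observation that $DT(\Z_{2}[G])$ is trivial by Lemma \ref{lem:trivial-DT2}, which, combined with \S\ref{subsec:ETNC-max}(ii), gives the equivalence immediately. Your write-up simply makes the invocation of \S\ref{subsec:ETNC-max}(ii) explicit; the aside about deducing the result from Theorem \ref{thm:ETNC-break-down} is somewhat circular as phrased but is not needed and does not affect the main argument.
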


\begin{proof}
This is immediate, as $DT(\Z_{2}[G])$ is trivial by Lemma \ref{lem:trivial-DT2}.
\end{proof}

\subsection{The case $r=0$}\label{subsec:case-r=0}
We now establish new cases of $\ETNC(L/K,0)$ and $\ETNC_{p}(L/K,0)$.
A brief discussion of the relation to other conjectures is given at the beginning of
\S \ref{sec:other-conjectures}.

\begin{theorem}\label{thm:affine-ETNC}
Let $q$ be a prime power. Let $G=\Aff(q)$ be the Frobenius group of order $q(q-1)$ defined in Example \ref{ex:affine}
and let $N$ be its Frobenius kernel.
Let $L/K$ be a finite Galois extension of number fields with $\Gal(L/K) \simeq G$.
Suppose that $L^{N}/\Q$ is abelian (in particular, this is the case when $K=\Q$).
Then $\SSC(L/K)$ holds and
$\ETNC_{p}(L/K, 0)$ holds for all primes $p$ not dividing $q$.
\end{theorem}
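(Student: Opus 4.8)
The plan is to prove the two assertions in turn, using nothing beyond the hybrid machinery already set up and the known cases of the strong Stark conjecture collected in \S\ref{subsec:ETNC-max}. First I would establish $\SSC(L/K)$ by working $\chi$-part by $\chi$-part, and then I would deduce $\ETNC_p(L/K,0)$ for each prime $p\nmid q$ by feeding $\SSC(L/K)$ into Corollary \ref{cor:ETNC-break-down}.

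For the first assertion, recall from Example \ref{ex:affine} that $G=\Aff(q)$ has exactly one non-linear irreducible complex character, which is rational-valued, all of its other irreducible characters being linear. By the $\chi$-part description of \S\ref{subsec:ETNC-max}(v), $\SSC(L/K)(\chi)$ holds for the rational-valued character by Tate's theorem. If instead $\chi$ is linear, then $\ker\chi\supseteq G'=N$, so $L^{\ker\chi}$ is a subfield of $L^N$ and is therefore abelian over $\Q$ by hypothesis; hence $\SSC(L/K)(\chi)$ holds by the second half of \S\ref{subsec:ETNC-max}(v). Running over all $\chi\in\Irr_{\C}(G)$ we obtain $\SSC(L/K)$, equivalently $\ETNC^{\max}(L/K,0)$ by \S\ref{subsec:ETNC-max}(iii). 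In particular $T\Omega(L/K,\Z[G],0)$ is torsion, hence lies in $K_0(\Z[G],\Q)$, and $\ETNC_p^{\max}(L/K,0)$ holds for every prime $p$.

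Now fix $p\nmid q$. Then $G$ is a Frobenius group with Frobenius kernel $N=G'$ of order $q$ coprime to $p$, so $\Z_p[G]$ is $N$-hybrid by Example \ref{ex:affine} (or Proposition \ref{prop:frob-N-hybrid}) and therefore weakly $N$-hybrid. Since $T\Omega(L/K,\Z[G],0)\in K_0(\Z[G],\Q)$, since $N=G'$, and since $L^N/\Q$ is abelian, Corollary \ref{cor:ETNC-break-down} applies with $\mathfrak{A}_p=\Z_p[G]$ and yields that $\ETNC_p(L/K,0)=\ETNC(L/K,\Z_p[G],0)$ holds if and only if $\ETNC_p^{\max}(L/K,0)$ holds; the latter holds by the previous paragraph, completing the proof. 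I do not anticipate a genuine obstacle: the arithmetic content is entirely imported, namely Chinburg's strong Stark conjecture for rational-valued and for abelian-over-$\Q$ characters together with the hybrid reduction of Corollary \ref{cor:ETNC-break-down}. The only point calling for care is checking that $T\Omega(L/K,\Z[G],0)$ lies in the rational relative $K$-group before invoking the functoriality statements, and this is exactly what $\SSC(L/K)$ provides; the real moral is that the $N$-hybrid structure of $\Z_p[G]$ lets one replace the (otherwise unknown) $\ETNC_p(L/K,0)$ by $\ETNC_p^{\max}(L/K,0)$, which follows from strong Stark.
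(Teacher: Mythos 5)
Your proof is correct and follows essentially the same path as the paper's: establish $\SSC(L/K)$ character-by-character via \S\ref{subsec:ETNC-max}(v) (Tate for the rational-valued non-linear character, abelian-over-$\Q$ reduction for the linear ones), then use the $N$-hybrid structure of $\Z_p[G]$ for $p\nmid q$ together with Corollary \ref{cor:ETNC-break-down}. You have simply made explicit the step that $\SSC(L/K)$ supplies the hypothesis $T\Omega(L/K,\Z[G],0)\in K_0(\Z[G],\Q)$ needed to invoke the corollary, which the paper leaves implicit.
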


\begin{proof}
The only non-linear irreducible character of $G$ is rational-valued and all linear characters factor
through $\Gal(L^{N}/K)$, so $\SSC(L/K)$ holds by \S \ref{subsec:ETNC-max}(v).
By Example \ref{ex:affine} the group ring $\Z_{p}[G]$ is $N$-hybrid for all primes $p$ not dividing $q$.
The result now follows by applying Corollary \ref{cor:ETNC-break-down}.
\end{proof}

\begin{remark}
Up until now there has been no known example
of a finite non-abelian group $G$ and an odd prime $p$ dividing the order of $G$
such that $\ETNC_{p}(L/\Q,0)$ has been shown to hold for every extension
$L/\Q$ with $\Gal(L/\Q) \simeq G$; Theorem \ref{thm:affine-ETNC} gives infinitely many such examples.
(Note that the condition that $p$ be odd here is due to Proposition \ref{prop:ETNC-S3}, the result of
which is well-known.)
\end{remark}

\begin{corollary}\label{cor:rel-cyclic-ETNC}
Assume the situation and notation of Theorem \ref{thm:affine-ETNC}.
Let $H$ be any subgroup of any choice of Frobenius complement in $G$ (hence $H$ is isomorphic to a subgroup
of $\F_{q}^{\times} \simeq C_{q-1}$). Then $\ETNC(L/L^{H},0)$ holds.
\end{corollary}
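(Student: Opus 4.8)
The plan is to reduce $\ETNC(L/L^{H},0)$ to Theorem~\ref{thm:affine-ETNC} prime by prime, using the restriction functoriality of the ETNC element recorded in Proposition~\ref{prop:ETNC-funct}(i). First I would collect the relevant structural facts about $H$: every Frobenius complement of $G=\Aff(q)$ is a conjugate of $\F_{q}^{\times}\simeq C_{q-1}$, so $H$ is cyclic of order dividing $q-1$; writing $\ell$ for the rational prime such that $q$ is a power of $\ell$, this forces $\ell\nmid|H|$. I would also note that $L/L^{H}$ is Galois with group $H$, and that $\SSC(L/K)$ holds by Theorem~\ref{thm:affine-ETNC}, so that $T\Omega(L/K,\Z[G],0)$ lies in $DT(\Z[G])\subseteq K_{0}(\Z[G],\Q)$ by \S\ref{subsec:ETNC-max}(ii)--(iii). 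In particular the $p$-parts $T\Omega(L/K,\Z_{p}[G],0)\in DT(\Z_{p}[G])$ are defined via \eqref{eq:p-part-decomp}, Proposition~\ref{prop:ETNC-funct} is applicable, and for every prime $p$ one has $\res^{G}_{H}(T\Omega(L/K,\Z_{p}[G],0))=T\Omega(L/L^{H},\Z_{p}[H],0)$.

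The argument then splits according to whether $p$ divides $q$. For $p\neq\ell$ I would simply invoke Theorem~\ref{thm:affine-ETNC} to get $\ETNC_{p}(L/K,0)$, i.e.\ $T\Omega(L/K,\Z_{p}[G],0)=0$; the displayed restriction identity then forces $T\Omega(L/L^{H},\Z_{p}[H],0)=0$, so $\ETNC_{p}(L/L^{H},0)$ holds. For the remaining prime $p=\ell$, where $\ETNC_{\ell}(L/K,0)$ is not available, I would instead exploit that $\ell\nmid|H|$: then $\Z_{\ell}[H]$ is a maximal $\Z_{\ell}$-order, so $DT(\Z_{\ell}[H])=0$ by Proposition~\ref{prop:torsion-units}, and by \S\ref{subsec:ETNC-max}(i)--(ii) it is enough to show $T\Omega(L/L^{H},\Z_{\ell}[H],0)\in DT(\Z_{\ell}[H])$. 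But this element is the image under the group homomorphism $\res^{G}_{H}$ of $T\Omega(L/K,\Z_{\ell}[G],0)$, which lies in the torsion group $DT(\Z_{\ell}[G])$, and homomorphisms carry torsion to torsion, so the image lies in $DT(\Z_{\ell}[H])=0$. Assembling the two cases over all $p$ yields $\ETNC(L/L^{H},0)$.

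I do not anticipate a real obstacle here: the whole proof is bookkeeping with $p$-parts and functoriality. The only point requiring attention is the prime $\ell\mid q$, for which the ETNC of $L/K$ itself is unknown; the argument nonetheless goes through there precisely because a subgroup of a Frobenius complement necessarily has order prime to $\ell$, so that $\Z_{\ell}[H]$ is maximal and the relevant torsion element --- whose existence is guaranteed by the (known) strong Stark conjecture for $L/K$ --- is automatically annihilated by restriction.
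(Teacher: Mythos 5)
Your argument is correct and is essentially identical to the paper's proof: both split by whether $p$ divides $q$, use Proposition~\ref{prop:ETNC-funct}(i) together with Theorem~\ref{thm:affine-ETNC} for $p\nmid q$, and for $p\mid q$ use that $\SSC(L/K)$ places the element in $DT(\Z_p[G])$, that restriction (being a group homomorphism) carries torsion to torsion, and that $DT(\Z_p[H])=0$ because $p\nmid|H|$ makes $\Z_p[H]$ maximal. The only cosmetic difference is your invocation of \S\ref{subsec:ETNC-max}(i)--(ii), which is a slightly roundabout way of saying the same thing.
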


\begin{proof}
By Proposition \ref{prop:ETNC-funct}(i) we see that $\ETNC_{p}(L/L^{H},0)$ holds for all primes $p$
not dividing $q$. Now suppose $p$ divides $q$. Then as $\SSC(L/K)$ holds
$T\Omega(L/K, \Z_{p}[G],0) \in DT(\Z_{p}[G])$ by \S \ref{subsec:ETNC-max}(ii).
Thus by Proposition \ref{prop:ETNC-funct}(i) $T\Omega(L/L^{H}, \Z_{p}[H],0) \in DT(\Z_{p}[H])$.
But $DT(\Z_{p}[H])$ is trivial as $p \nmid |H|$ and so $\Z_{p}[H]$ is maximal
(see Proposition \ref{prop:torsion-units}).
\end{proof}

\begin{remark}\label{rmk:inf-many-rel-cyclic}
Fix a natural number $n$.
By using \v{S}aferevi\v{c}'s Theorem on the realisability of soluble groups as Galois groups over global fields
(see the account in \cite[Chapter IX, \S 6]{MR2392026}) and
Dirichlet's theorem on primes in arithmetic progressions together
with Corollary \ref{cor:rel-cyclic-ETNC},
we see that there is an infinite family of Galois extensions of number fields $L/F$ with $\Gal(L/F) \simeq C_{n}$
and $F/\Q$ non-abelian (indeed non-Galois) such that $\ETNC(L/F,0)$ holds.
To date the only examples $L/F$ with $F/\Q$ non-abelian for which $\ETNC(L/F,0)$
is known to hold have been either trivial, quadratic, or cubic (see \S \ref{subsec:known-cases}).
\end{remark}

\begin{remark}
Let $G$ be the Frobenius group of order $72$ described in Example \ref{ex:frob72}.
Let $N$ be the Frobenius kernel of $G$ and let $H$ be some choice of Frobenius complement.
Let $L/K$ be a Galois extension of number fields with $\Gal(L/K) \simeq G$.
As every complex irreducible character of $G$ is rational-valued, $\SSC(L/K)$ holds by  \S \ref{subsec:ETNC-max}(v).
Furthermore Theorem \ref{thm:ETNC-break-down} shows that $\ETNC_{2}(L/K,0)$ holds if and only if
$\ETNC_{2}(L^{N}/K,0)$ holds.

Note that $\Gal(L^{N}/K) \simeq H \simeq Q_{8}$.
Recall from \S \ref{subsec:known-cases}(iv) that there is an infinite family of extensions $F/\Q$ such that
$\Gal(F/\Q) \simeq Q_{8}$ and $\ETNC(F/\Q,0)$ holds.
Thus if the appropriate Galois embedding problem can be solved, one can give examples of extensions $L/\Q$
with $\Gal(L/\Q) \simeq G$ such that $\ETNC_{2}(L/\Q,0)$ and $\SSC(L/\Q)$ both hold.
Essentially the same argument can be given when $G$ is replaced by any member of the infinite family
of Frobenius groups given in \cite{MR2104937}.
\end{remark}

\begin{prop}\label{prop:res-D2n-Cn}
Let $n$ be an odd integer and let $p$ be an odd prime. Then
\[
\res^{D_{2n}}_{C_{n}}: DT(\Z_{p}[D_{2n}]) \longrightarrow DT(\Z_{p}[C_{n}])
\]
is injective (note that there is a unique subgroup of $D_{2n}$ isomorphic to $C_{n}$).
\end{prop}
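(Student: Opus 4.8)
The plan is to use the exact sequence \eqref{eq:group-ring-DT-seq} together with its functoriality under restriction, and to reduce everything to a computation with reduced norms over the relevant local fields. First I would fix maximal $\Z_p$-orders $\mathfrak{M}_p(D_{2n})$ and $\mathfrak{M}_p(C_n)$ and write down the presentations
\[
DT(\Z_p[D_{2n}]) \simeq \frac{\zeta(\mathfrak{M}_p(D_{2n}))^{\times}}{\nr((\Z_p[D_{2n}])^{\times})},
\qquad
DT(\Z_p[C_n]) \simeq \frac{\zeta(\mathfrak{M}_p(C_n))^{\times}}{(\Z_p[C_n])^{\times}}
\]
coming from Proposition \ref{prop:torsion-units} (the second one simplifies because $\Z_p[C_n]$ is commutative). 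Since $p$ is odd and $n$ is odd, $\Z_p[C_n]$ is a product of (unramified, since $n$ is prime to $p$ would not be needed — but in any case one controls the ramification) rings of integers $\mathcal{O}_j$ in extensions $F_j/\Q_p$, indexed by the $\Q_p$-irreducible characters of $C_n$. For $D_{2n}$ with $n$ odd, the $\Q_p$-irreducible characters are: the trivial and sign characters (both with $N = C_n \not\leq \ker$ only for sign... actually $C_n \leq \ker$ of both one-dimensional characters), and the dihedral (two-dimensional) characters induced from characters of $C_n$. So $\zeta(\Q_p[D_{2n}])^{\times}$ is a product of copies of $\Q_p^{\times}$ (two of them, from the linear characters) and copies of $F_j^{\times}$ for the various two-dimensional pieces.

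The heart of the argument is then to compare, piece by piece, the image of $\res^{D_{2n}}_{C_n}$ on centres with the subgroups $\nr((\Z_p[D_{2n}])^{\times})$ and $(\Z_p[C_n])^{\times}$. Concretely: restriction of a two-dimensional dihedral representation of $D_{2n}$ to $C_n$ splits as $\psi \oplus \bar\psi$ for a character $\psi$ of $C_n$; on centres this is the diagonal-type map $F_j^{\times} \to F_j^{\times} \times F_{\bar j}^{\times}$. One checks that an element of $\zeta(\mathfrak{M}_p(D_{2n}))^{\times}$ maps into $(\Z_p[C_n])^{\times} \cdot \zeta(\mathfrak{M}_p(C_n))^{\times, \mathrm{other\ factors}}$ only when it was already in $\nr((\Z_p[D_{2n}])^{\times})$. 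For the reduced norm computation on the $2\times 2$ matrix (quaternion) blocks of $\Q_p[D_{2n}]$, since $p$ is odd and $n$ is odd the relevant Schur indices are $1$, so these blocks are genuine matrix algebras $M_2(\mathcal{O}_j)$ over unramified $\mathcal{O}_j$, and $\nr$ on $\Gl_2$ is surjective onto units of the centre; this is precisely what makes the kernel of $\res$ trivial. I would organize this as: (a) reduce to a single Galois-orbit block of two-dimensional characters plus the contribution of linear characters; (b) in each such block identify the restriction map on centres explicitly; (c) invoke surjectivity of reduced norms from $\mathfrak{M}_p$-units in matrix blocks to kill the corresponding cokernel contributions; (d) assemble.

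The step I expect to be the main obstacle is tracking the interaction between the linear characters of $D_{2n}$ and the characters of $C_n$ under restriction on the level of $\nr$-images: the sign character of $D_{2n}$ restricts trivially to $C_n$, so on the level of centres the map $\zeta(\Q_p[D_{2n}])^{\times} \to \zeta(\Q_p[C_n])^{\times}$ is not injective, and one must verify that the \emph{kernel} on centres is entirely absorbed into $\nr((\Z_p[D_{2n}])^{\times})$ modulo the relations defining $DT$. Equivalently, one needs that an element $u \in \zeta(\mathfrak{M}_p(D_{2n}))^{\times}$ whose restriction to $\zeta(\mathfrak{M}_p(C_n))^{\times}$ lands in $(\Z_p[C_n])^{\times}$ must itself lie in $\nr((\Z_p[D_{2n}])^{\times})$. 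This is where the hypothesis that both $n$ and $p$ are odd is essential: it forces the $2\times 2$ blocks to be split matrix rings over unramified local rings, so the reduced-norm constraints are as weak as possible, and it makes $\Z_p[C_n]$ a maximal order with $DT(\Z_p[C_n]) = 0$ on the components hit by the two-dimensional characters — in fact $DT(\Z_p[C_n]) = 0$ entirely, which, combined with the explicit restriction formula for the one-dimensional parts, forces the claimed injectivity. One could alternatively shortcut the whole computation by observing that $DT(\Z_p[C_n]) = 0$ for $n, p$ odd (a special case of \cite[Corollary 8.2]{MR2564571}), but then the statement as phrased would be vacuous; since the source $DT(\Z_p[D_{2n}])$ need not vanish, the genuine content is the explicit identification above, so I would present the full block-by-block argument rather than the shortcut, and cite Breuning's original computation \cite[Proposition 3.2(2)]{MR2031413} for the delicate reduced-norm verification.
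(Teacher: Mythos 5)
The paper's proof is a bare citation: ``This is \cite[Proposition 3.2(2)]{MR2031413}.'' Since you also end up citing that same result of Breuning for ``the delicate reduced-norm verification'', at the level of what is actually invoked the two proofs coincide. The problem is that the exposition you wrap around the citation rests on a false premise that would unravel any attempt to carry the sketch through.

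You assert --- both as an alleged ``shortcut'' and implicitly throughout the block-by-block plan --- that $DT(\Z_{p}[C_{n}])=0$ for $n,p$ odd, and that $\Z_{p}[C_{n}]$ is a product of rings of integers $\mathcal{O}_{j}$ of finite extensions of $\Q_{p}$. Both assertions are false precisely when $p$ divides $n$, and this is the only case in which the proposition has any content: if $p\nmid n$, then since $p$ is odd we have $p\nmid 2n$, so $\Z_{p}[D_{2n}]$ is already a maximal order, $DT(\Z_{p}[D_{2n}])=0$, and injectivity is trivial. When $p\mid n$, on the other hand, $\Z_{p}[C_{n}]$ is a proper suborder of $\prod_{j}\mathcal{O}_{j}$ (the Wedderburn components are glued) and $DT(\Z_{p}[C_{n}])\neq 0$. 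Indeed \cite[Corollary 8.2]{MR2564571} says the opposite of what you attribute to it: it gives $DT(\Z_{p}[C_{p}])\simeq\F_{p}^{\times}$, a nontrivial group of order $p-1$ for every odd prime $p$ (this is exactly what is used in the proof of Proposition \ref{prop:restriction-surjective}); combined with Proposition \ref{prop:quotient-surjective}, this forces $DT(\Z_{p}[C_{n}])\neq 0$ whenever $p\mid n$. Consequently the steps you propose --- reading off the target $DT$ as $\prod_{j}\mathcal{O}_{j}^{\times}$ modulo $(\Z_{p}[C_{n}])^{\times}$, deducing it vanishes, and then using surjectivity of reduced norms from the matrix blocks of $\mathfrak{M}_{p}(D_{2n})$ to conclude --- simply do not apply in the one interesting case, and the actual content of Breuning's computation, which you need anyway, lies precisely in analysing the non-maximal orders $\Z_{p}[D_{2n}]$ and $\Z_{p}[C_{n}]$ when $p\mid n$.
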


\begin{proof}
This is \cite[Proposition 3.2(2)]{MR2031413}.
\end{proof}

We now combine results of Bley \cite[Corollary 4.3]{MR2226270} (see \S \ref{subsec:known-cases})
and of Breuning (Proposition \ref{prop:res-D2n-Cn})
to prove the following result.

\begin{theorem}\label{thm:ETNC-dihedral}
Let $L/\Q$ be a Galois extension with $\Gal(L/\Q) \simeq D_{2n}$ for some odd $n$.
Let $K/\Q$ be the unique quadratic subextension of $L/\Q$ and suppose that $K$ is imaginary.
Let $p$ be a prime and suppose that $p$ does not divide the class number of $K$.
If $p$ divides $n$, further suppose that $p$ is odd and splits in $K/\Q$.
Then $\ETNC_{p}(L/\Q,0)$ holds.
\end{theorem}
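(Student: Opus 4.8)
The plan is to split the argument according to whether $p$ divides $n$ or not, and in each case reduce $\ETNC_p(L/\Q,0)$ to a known result via the functorial machinery developed above. First I would record the structural facts about $D_{2n}$ for $n$ odd: the unique subgroup $C_n \leq D_{2n}$ is the commutator subgroup, $D_{2n}/C_n \simeq C_2$, and $K = L^{C_n}$ is precisely the quadratic subextension, which is imaginary by hypothesis. Since every irreducible character of $D_{2n}$ is either linear (factoring through $C_2$) or rational-valued of degree $2$, the strong Stark conjecture $\SSC(L/\Q)$ holds by \S\ref{subsec:ETNC-max}(v); equivalently $T\Omega(L/\Q,\Z[G],0) \in K_0(\Z[G],\Q)$, so that the $p$-part decomposition \eqref{eq:p-part-decomp} makes sense, and moreover $T\Omega(L/\Q,\Z_p[G],0) \in DT(\Z_p[G])$ for every prime $p$ by \S\ref{subsec:ETNC-max}(ii).

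Case $p \nmid n$: here I would distinguish $p$ odd and $p = 2$. If $p$ is odd and $p \nmid n$ then $p \nmid |D_{2n}|$, so $\Z_p[D_{2n}]$ is maximal, $DT(\Z_p[D_{2n}])$ is trivial by Proposition \ref{prop:torsion-units}, and since $T\Omega(L/\Q,\Z_p[G],0)$ lies in this group it vanishes. If $p = 2$ then $G = D_{2n} = C_n \rtimes C_2$ with $C_2$ acting by inversion on the odd-order group $C_n$, so Lemma \ref{lem:trivial-DT2} gives $DT(\Z_2[D_{2n}]) = 0$ and again $T\Omega(L/\Q,\Z_2[G],0) = 0$. (This matches Proposition \ref{prop:ETNC2-reduction}, which would reduce the $2$-part to $\ETNC_2^{\max}$, itself a consequence of $\SSC$.)

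Case $p \mid n$ (so $p$ is odd and splits in $K/\Q$): the idea is to push $T\Omega(L/\Q,\Z_p[G],0)$ into $DT(\Z_p[C_n])$ via restriction and then identify it there. Concretely, since $T\Omega(L/\Q,\Z_p[G],0) \in DT(\Z_p[G])$, Proposition \ref{prop:ETNC-funct}(i) gives $\res^{D_{2n}}_{C_n}(T\Omega(L/\Q,\Z_p[G],0)) = T\Omega(L/K,\Z_p[C_n],0) \in DT(\Z_p[C_n])$. Now $L/K$ is an abelian extension of the imaginary quadratic field $K$, and $p$ is an odd prime splitting in $K/\Q$ and not dividing the class number of $K$, so Bley's result \cite[Corollary 4.3]{MR2226270} (see \S\ref{subsec:known-cases}(ii)) gives $\ETNC_p(L/K,0)$, i.e.\ $T\Omega(L/K,\Z_p[C_n],0) = 0$. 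Therefore $T\Omega(L/\Q,\Z_p[G],0)$ lies in the kernel of $\res^{D_{2n}}_{C_n}$ on $DT$, which is trivial by Proposition \ref{prop:res-D2n-Cn} (Breuning's injectivity statement, valid since $n$ is odd and $p$ is odd). Hence $T\Omega(L/\Q,\Z_p[G],0) = 0$, i.e.\ $\ETNC_p(L/\Q,0)$ holds.

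The step I expect to be the crux is verifying that the hypotheses of Bley's theorem are exactly met — in particular that $L/K$ is abelian (which is where $n$ odd and the identification of $C_n$ as the commutator subgroup enter), and that the splitting and class-number conditions on $p$ are precisely what \S\ref{subsec:known-cases}(ii) requires; everything else is a bookkeeping application of the functoriality in Proposition \ref{prop:ETNC-funct}(i), the fact that $T\Omega$ is a torsion element (from $\SSC$), and the two injectivity/vanishing inputs for $DT$. No new $p$-adic group ring computation is needed here because $D_{2n}$ is not assumed to be hybrid at $p$ when $p \mid n$; the whole point is that Breuning's restriction result substitutes for a hybrid decomposition.
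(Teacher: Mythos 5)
Your case analysis for $p \nmid n$ and $p \mid n$ matches the paper's second paragraph essentially verbatim, but the preliminary step establishing $T\Omega(L/\Q,\Z_p[G],0) \in DT(\Z_p[G])$ has a genuine gap. You assert that ``every irreducible character of $D_{2n}$ is either linear (factoring through $C_2$) or rational-valued of degree $2$,'' and deduce $\SSC(L/\Q)$ from \S\ref{subsec:ETNC-max}(v) alone. That claim is false for odd $n>3$: the degree-$2$ characters $\chi_k$ of $D_{2n}$ take the value $\zeta_n^k + \zeta_n^{-k}$ on a generating rotation, and this lies in $\Q$ only when $[\Q(\zeta_n^{k}):\Q]\le 2$, which (for $n$ odd) forces the underlying cyclic quotient to have order $1$ or $3$. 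So already for $D_{10}$ the two degree-$2$ characters take values in $\Q(\sqrt5)$, and Tate's rationality result does not apply to them.

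The paper's proof closes this gap with two additional inputs that you omitted. First, it invokes \S\ref{subsec:ETNC-max}(iv) (Bley) to get $\SSC_p(L/K)$ for the abelian extension $L/K$ of the imaginary quadratic $K$ — this is where the class-number hypothesis is actually used for primes $p \nmid n$, not merely for bookkeeping. Second, since $D_{2n}$ is a Frobenius group with kernel $C_n$, Theorem \ref{thm:frob-kernel}(iv) says every non-linear $\chi\in\Irr_\C(D_{2n})$ is induced from a nontrivial character of $\Gal(L/K)$, and then Ritter--Weiss \cite[Proposition 9(c)]{MR1423032} transports $\SSC_p(L/K)$ to $\SSC_p(L/\Q)(\chi)$ for those characters. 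Only the two linear characters are handled by the rationality result. Putting these together yields $\SSC_p(L/\Q)$ (note: the $p$-part only, not the global statement you wrote) and hence $T\Omega(L/\Q,\Z_p[G],0)\in DT(\Z_p[G])$. Once that is in place, your argument in the three sub-cases ($p\nmid 2n$ by maximality, $p=2$ by Lemma \ref{lem:trivial-DT2}, $p\mid n$ by Bley's $\ETNC_p(L/K,0)$ plus Breuning's injectivity of $\res^{D_{2n}}_{C_n}$) is correct and is exactly the paper's.
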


\begin{proof}
By \S \ref{subsec:ETNC-max}(iv) $\SSC_{p}(L/K)$ holds.
There are two linear characters of $\Gal(L/\Q)$,
both of which are rational-valued; hence $\SSC(L/\Q)(\chi)$
holds for these characters by \S \ref{subsec:ETNC-max}(v).
Since $\Gal(L/\Q) \simeq D_{2n}$ is a Frobenius group,
all non-linear irreducible characters are induced from nontrivial characters of $\Gal(L/K)$
by Theorem \ref{thm:frob-kernel}(iv);
hence $\SSC_{p}(L/\Q)(\chi)$ holds for these characters by \cite[Proposition 9(c)]{MR1423032}.
Thus we have established $\SSC_{p}(L/\Q)$.
Hence by \S \ref{subsec:ETNC-max}(ii) we have
$T\Omega(L/\Q,\Z_{p}[G],0) \in DT(\Z_{p}[G])$.

By \S \ref{subsec:ETNC-max}(i) it remains to verify $\ETNC_{p}(L/\Q,0)$ in the case that $p$ divides $2n$.
By Lemma \ref{lem:trivial-DT2} (or by \cite[Proposition 3.2(1)]{MR2031413}) $DT(\Z_{2}[D_{2n}])$ is trivial,
giving the $p=2$ case.
Now suppose that $p$ is odd and divides $n$.
Then by \S \ref{subsec:known-cases}(ii) $\ETNC_{p}(L/K,0)$ holds.
Now by Proposition \ref{prop:res-D2n-Cn} the restriction map $DT(\Z_{p}[\Gal(L/\Q)]) \rightarrow DT(\Z_{p}[\Gal(L/K)])$ is injective.
Hence the desired result now follows by Proposition \ref{prop:ETNC-funct}(i).
\end{proof}

\begin{corollary}\label{cor:ETNC-dihedral}
Assume the setting and notation of Theorem \ref{thm:ETNC-dihedral}.
Suppose that $K$ has class number $1$, $n$ is odd,
and every prime $p$ dividing $n$ splits in $K/\Q$.
Then  $\ETNC(L/\Q,0)$ holds.
\end{corollary}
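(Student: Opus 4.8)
The plan is to derive Corollary~\ref{cor:ETNC-dihedral} from Theorem~\ref{thm:ETNC-dihedral} by checking that, under the stronger hypotheses imposed here, \emph{every} prime $p$ is admissible in the sense of that theorem. Since $\ETNC(L/\Q,0)$ holds if and only if $\ETNC_{p}(L/\Q,0)$ holds for every prime $p$ (once $T\Omega(L/\Q,\Z[G],0)$ is known to lie in $K_{0}(\Z[G],\Q)$, on which see below), it suffices to produce $\ETNC_{p}(L/\Q,0)$ for each $p$ separately.

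First I would fix an arbitrary prime $p$ and confront the hypotheses of Theorem~\ref{thm:ETNC-dihedral}. The condition that $p$ does not divide the class number of $K$ is vacuous because $K$ has class number $1$. If $p$ divides $n$, then $p$ is necessarily odd, since $n$ is odd, and $p$ splits in $K/\Q$ by the standing assumption on the primes dividing $n$; so the supplementary hypothesis of Theorem~\ref{thm:ETNC-dihedral} for such $p$ is also met. Hence Theorem~\ref{thm:ETNC-dihedral} applies to this $p$ and gives $\ETNC_{p}(L/\Q,0)$. As $p$ was arbitrary, $\ETNC(L/\Q,0)$ follows.

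The only point that I expect to need an explicit word --- and hence the closest thing here to an obstacle --- is the global rationality statement $T\Omega(L/\Q,\Z[G],0)\in K_{0}(\Z[G],\Q)$, which is required for the $p$-adic decomposition \eqref{eq:p-part-decomp} to give the equivalence of $\ETNC(L/\Q,0)$ with the family of local statements $\ETNC_{p}(L/\Q,0)$. This is Stark's conjecture for $L/\Q$; it is in any case already presupposed by the formulation of Theorem~\ref{thm:ETNC-dihedral} (the symbol $\ETNC_{p}(L/\Q,0)$ only makes sense once it holds), and it genuinely holds here because $D_{2n}$ is a monomial group: each irreducible complex character of $D_{2n}$ is rational-valued or induced from a linear character of a cyclic subgroup, so Stark's conjecture for $L/\Q$ reduces, via its behaviour under induction, to the abelian case. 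Granting this, nothing further is needed: the corollary is precisely the observation that the two conditional clauses of Theorem~\ref{thm:ETNC-dihedral} become automatic once $h_{K}=1$, $n$ is odd, and every prime dividing $n$ splits in $K/\Q$.
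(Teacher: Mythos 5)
Your proof is correct and is precisely the paper's (unstated but clearly intended) argument: under the hypotheses, every prime $p$ satisfies the conditions of Theorem~\ref{thm:ETNC-dihedral} (the class-number condition is automatic for all $p$, and for $p\mid n$ the splitting and oddness hypotheses are supplied), so $\ETNC_p(L/\Q,0)$ holds for every $p$, whence $\ETNC(L/\Q,0)$ via the decomposition $K_0(\Z[G],\Q)\simeq\bigoplus_p K_0(\Z_p[G],\Q_p)$. Your aside about the rationality $T\Omega(L/\Q,\Z[G],0)\in K_0(\Z[G],\Q)$ being needed for the $p$-part decomposition to make sense is a valid and useful observation, and the monomiality argument you give for it is sound; it is also already implicit in the proof of Theorem~\ref{thm:ETNC-dihedral}, which establishes the stronger statement $\SSC_p(L/\Q)$.
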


\begin{remark}\label{rmk:infinite-ETNC-family}
Let $K/\Q$ be a quadratic extension and let $p$ be a prime.
Then \cite[Theorem I.2.1]{MR680538} says that there are infinitely many fields $L$ such that
$\Gal(L/\Q) \simeq D_{2p}$ and $K \subseteq L$. (The authors are grateful to James Newton for providing the reference for this result.)
Now let $K=\Q(i)$ and fix a prime $p$ such that $p \equiv 1 \pmod 4$.
Then $K$ has class number $1$ and $p$ splits in $K/\Q$.
Therefore by Corollary \ref{cor:ETNC-dihedral} there are infinitely many extensions $L/\Q$ with $\Gal(L/\Q) \simeq D_{2p}$
such that $K \subseteq L$ and $\ETNC(L/\Q,0)$ holds.
(Clearly it is possible to obtain similar results by replacing $K$ with other imaginary quadratic fields with class number $1$.)
Since there are infinitely many primes $p \equiv 1 \pmod 4$,
we have given an explicit infinite family of finite non-abelian groups with the property that for each member $G$
there are infinitely many extensions $L/\Q$ with $\Gal(L/\Q) \simeq G$ such that $\ETNC(L/\Q,0)$ holds.
Up until now, the only known family of finite non-abelian groups with this property has been that containing the single group $Q_{8}$
(see \S \ref{subsec:known-cases}(iv)).
\end{remark}

We now make the following observations, the first of which is well-known.

\begin{prop}\label{prop:ETNC-S3}
Let $L/K$ be a Galois extension of number fields with $\Gal(L/K) \simeq S_{3}$.
Then $\SSC(L/K)$ holds and $\ETNC_p(L/K,0)$ is true for every prime $p \neq 3$.
\end{prop}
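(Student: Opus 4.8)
The plan is to reduce everything to the validity of the strong Stark conjecture together with the vanishing of $DT(\Z_p[S_3])$ for every prime $p \neq 3$; the extension $L/K$ itself plays almost no role beyond fixing $G \simeq S_3$.

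First I would establish $\SSC(L/K)$. The group $S_3$ has precisely three irreducible complex characters --- the trivial and sign characters, which are inflated from $\Gal(L^{G'}/K) \simeq C_2$ where $G' = A_3 \simeq C_3$, and the standard $2$-dimensional character --- and all three are rational-valued. Hence $\SSC(L/K)(\chi)$ holds for every $\chi \in \Irr_{\C}(G)$ by the theorem of Tate recalled in \S\ref{subsec:ETNC-max}(v), and therefore $\SSC(L/K) = \ETNC^{\max}(L/K,0)$ holds. In particular $T\Omega(L/K,\Z[G],0)$ lies in $K_0(\Z[G],\Q)$, so that by \S\ref{subsec:ETNC-max}(ii) we have $T\Omega(L/K,\Z_p[G],0) \in DT(\Z_p[G])$ for each prime $p$.

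It then suffices to show $DT(\Z_p[S_3]) = 0$ for every $p \neq 3$. For $p \nmid 6$ the group ring $\Z_p[G]$ is a maximal $\Z_p$-order, so $DT(\Z_p[G]) = 0$ by Proposition \ref{prop:torsion-units} --- equivalently, $\ETNC_p(L/K,0)$ and $\ETNC^{\max}_p(L/K,0)$ coincide by \S\ref{subsec:ETNC-max}(i). For $p = 2$, the identification $S_3 \simeq D_6$ exhibits $G$ as a semidirect product $A_3 \rtimes C_2$ with $C_2$ acting by inversion, so $DT(\Z_2[S_3]) = 0$ by Lemma \ref{lem:trivial-DT2}; alternatively Example \ref{ex:affine} gives $\Z_2[S_3] \simeq \Z_2[C_2] \oplus M_{2 \times 2}(\Z_2)$, whence $DT(\Z_2[S_3]) \simeq DT(\Z_2[C_2]) = 0$ via Proposition \ref{prop:wh-quot}. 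In either case $T\Omega(L/K,\Z_p[G],0) = 0$, i.e. $\ETNC_p(L/K,0)$ holds, for all $p \neq 3$.

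No step here is really an obstacle --- all the needed inputs are already in place, which is why the result is described as well-known. The one genuinely hard prime, deliberately excluded, is $p = 3 = \ell$: since $3 \mid |A_3|$, the group ring $\Z_3[S_3]$ is not $A_3$-hybrid and $DT(\Z_3[S_3])$ does not vanish, so $\ETNC_3(L/K,0)$ cannot be reduced to an abelian case by the methods of this section; its $3$-primary aspect is addressed later by the denominator-ideal techniques used for Burns' conjecture.
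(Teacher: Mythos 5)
Your proof is correct and follows essentially the same route as the paper's: establish $\SSC(L/K)$ from the rationality of $\Irr_{\C}(S_3)$ via Tate's theorem (\S\ref{subsec:ETNC-max}(v)), then for $p \neq 3$ reduce to the vanishing of $DT(\Z_p[S_3])$, which is trivial for $p > 3$ by maximality and for $p = 2$ by Lemma \ref{lem:trivial-DT2}. The paper simply packages the $p = 2$ step as a citation to Proposition \ref{prop:ETNC2-reduction}, which itself rests on Lemma \ref{lem:trivial-DT2}, so the underlying argument coincides with yours.
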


\begin{proof}
As all characters of $S_{3}$ are rational-valued, $\SSC(L/K)$ holds by \S \ref{subsec:ETNC-max}(v).
This also implies $\ETNC_{p}(L/K,0)$ for every $p > 3$.
The case $p=2$ is a consequence of Proposition \ref{prop:ETNC2-reduction} with $A=C_{3}$.
\end{proof}

\begin{prop} \label{prop:ETNC-D12}
Let $L/K$ be a Galois extension of number fields with $\Gal(L/K) \simeq D_{12}$.
Let $N\simeq C_{3}$ be the commutator subgroup and suppose that $L^{N}/\Q$ is abelian
(in particular, this is the case when $K = \Q$).
Then $\SSC(L/K)$ holds and $\ETNC_{p}(L/K,0)$ is true for every prime $p \neq 3$.
\end{prop}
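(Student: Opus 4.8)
The plan is to reduce everything to the maximal-order conjecture $\ETNC_p^{\max}(L/K,0)$ and then establish the latter for all $p$. First I would record that $D_{12} \simeq S_3 \times C_2$, so that every irreducible complex character of $G$ is the product of an irreducible character of $S_3$ and one of $C_2$; as all such characters are rational-valued, so is every $\chi \in \Irr_{\C}(G)$. Hence $\SSC(L/K)$ holds by \S\ref{subsec:ETNC-max}(v). This in turn shows that $T\Omega(L/K,\Z[G],0)$ lies in $K_0(\Z[G],\Q)$ (equivalently, the Stark conjecture holds for $L/K$) and that $\ETNC_p^{\max}(L/K,0)$ holds for every prime $p$.

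For every prime $p \neq 2,3$ the argument closes at once: such $p$ does not divide $|G| = 12$, so $\Z_p[G]$ is a maximal $\Z_p$-order, and $\ETNC_p(L/K,0)$ is equivalent to $\ETNC_p^{\max}(L/K,0)$ by \S\ref{subsec:ETNC-max}(i); the latter was just established.

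The one genuinely interesting case is $p = 2$. Unlike for $S_3$ in Proposition \ref{prop:ETNC-S3}, one cannot here appeal to Proposition \ref{prop:ETNC2-reduction}, because the decomposition $D_{12} \simeq C_6 \rtimes C_2$ has a kernel of even order. Instead I would use Example \ref{ex:D12-weakly-hybrid}, which (via Lemma \ref{lem:weak-hybrid-products}, starting from the facts that $\Z_2[S_3]$ is $A_3$-hybrid with the relevant character rational and that $DT(\Z_2[C_2])=0$) shows that $\Z_2[D_{12}]$ is weakly $N$-hybrid for $N = G' \simeq C_3$. Since $N = G'$ and $L^N/\Q$ is abelian by hypothesis --- which is automatic when $K = \Q$, as then $\Gal(L^N/\Q) \simeq G/G' \simeq C_2 \times C_2$ --- Corollary \ref{cor:ETNC-break-down}, applied with $\mathfrak{A}_2 = \Z_2[G]$ and $r = 0$, yields that $\ETNC_2(L/K,0)$ is equivalent to $\ETNC_2^{\max}(L/K,0)$. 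The latter holds by the first paragraph, which finishes the proof.

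The hardest part --- indeed the only step that is not routine bookkeeping --- is the $p = 2$ case, and even there the real work has already been packaged in Example \ref{ex:D12-weakly-hybrid} and Corollary \ref{cor:ETNC-break-down}; the remaining ingredients (rationality of the characters of $D_{12}$, the reductions of \S\ref{subsec:ETNC-max}, and the maximality of $\Z_p[G]$ when $p \nmid |G|$) are direct citations, so no real obstacle is anticipated.
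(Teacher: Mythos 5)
Your proposal is correct and follows essentially the same route as the paper: rationality of all characters of $D_{12}\simeq S_3\times C_2$ gives $\SSC(L/K)$ and hence $\ETNC_p^{\max}(L/K,0)$, which settles $p>3$; the $p=2$ case is handled by the weak $N$-hybridity of $\Z_2[D_{12}]$ from Example \ref{ex:D12-weakly-hybrid} together with Corollary \ref{cor:ETNC-break-down}. Your aside explaining why Proposition \ref{prop:ETNC2-reduction} is unavailable (the kernel $C_6$ has even order) is a correct and useful clarification, but it does not change the argument.
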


\begin{proof}
As all characters of $D_{12} \simeq S_{3} \times C_{2}$ are rational-valued, $\SSC(L/K)$ holds by \S \ref{subsec:ETNC-max}(v).
This also implies $\ETNC_{p}(L/K,0)$ for every $p > 3$.
Furthermore $\Z_{2}[D_{12}]$ is weakly $N$-hybrid by Example \ref{ex:D12-weakly-hybrid}, and so $\ETNC_{2}(L/K,0)$ follows from Corollary \ref{cor:ETNC-break-down}.
\end{proof}

\begin{prop}\label{prop:ETNC-S3-S4}
Let $L/K$ be a Galois extension of number fields with $\Gal(L/K) \simeq S_{4}$.
Then $\SSC(L/K)$ holds and $\ETNC_{3}(L/K,0)$ holds if and only if $\ETNC_{3}(L^{V_{4}}/K,0)$ holds.	
(Note that $S_{4}/V_{4} \simeq S_{3}$ and so $\Gal(L^{V_{4}}/K) \simeq S_{3}$.)
\end{prop}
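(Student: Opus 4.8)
The plan is to follow the pattern of the proofs of Propositions \ref{prop:ETNC-S3} and \ref{prop:ETNC-D12}, exploiting the fact that $S_{4}$ has only rational-valued irreducible characters together with Example \ref{ex:S4-V4}, which asserts that $\Z_{3}[S_{4}]$ is $V_{4}$-hybrid.

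First I would record that all five irreducible complex characters of $S_{4}$ are rational-valued; this is classical. Hence, breaking $\SSC(L/K)$ into its $\chi$-parts and applying Tate's theorem, $\SSC(L/K)$ holds by \S \ref{subsec:ETNC-max}(v). Since $\SSC(L/K)$ is by definition $\ETNC^{\max}(L/K,0)$ (see \S \ref{subsec:ETNC-max}(iii)), the element $T\Omega(L/K,\Z[G],0)$ vanishes over any maximal order; in particular it is torsion, hence lies in $K_{0}(\Z[G],\Q)$ (equivalently, Stark's conjecture holds for $L/K$), and by \S \ref{subsec:ETNC-max}(ii) we have $T\Omega(L/K,\Z_{p}[G],0)\in DT(\Z_{p}[G])$ for every prime $p$, so $\ETNC_{p}^{\max}(L/K,0)$ holds for all $p$. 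In particular $\ETNC_{3}^{\max}(L/K,0)$ holds.

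Next I would invoke Example \ref{ex:S4-V4}: $\Z_{3}[S_{4}]$ is $V_{4}$-hybrid, hence weakly $V_{4}$-hybrid by the lemma following Definition \ref{def:weakly-hybrid}. Now apply Theorem \ref{thm:ETNC-break-down} with $G=S_{4}$, $N=V_{4}$, $p=3$, $r=0$ and $\mathfrak{A}_{3}=\Z_{3}[S_{4}]$ (the hypothesis $T\Omega(L/K,\Z[G],0)\in K_{0}(\Z[G],\Q)$ having just been verified): it gives that $\ETNC(L/K,\Z_{3}[S_{4}],0)$ --- which is by definition $\ETNC_{3}(L/K,0)$ --- holds if and only if both $\ETNC_{3}(L^{V_{4}}/K,0)$ and $\ETNC_{3}^{\max}(L/K,0)$ hold. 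Since $\ETNC_{3}^{\max}(L/K,0)$ is already known from the previous step, this collapses to the asserted equivalence: $\ETNC_{3}(L/K,0)$ holds if and only if $\ETNC_{3}(L^{V_{4}}/K,0)$ holds, where $\Gal(L^{V_{4}}/K)\simeq S_{4}/V_{4}\simeq S_{3}$.

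There is no serious obstacle here: the proof is essentially an assembly of inputs that have all been prepared, namely the (classical) rationality of the characters of $S_{4}$, the hybrid decomposition of Example \ref{ex:S4-V4}, and the breaking-up mechanism of Theorem \ref{thm:ETNC-break-down}. The only point that needs a little care is making explicit that $\SSC(L/K)$ simultaneously (i) guarantees the ``$T\Omega$ is rational'' hypothesis required to invoke Theorem \ref{thm:ETNC-break-down} and (ii) supplies the maximal-order case $\ETNC_{3}^{\max}(L/K,0)$ for free, so that the two-sided condition produced by that theorem reduces to the single equivalence in the statement.
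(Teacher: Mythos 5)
Your proof is correct and follows the same route as the paper: establish $\SSC(L/K)$ via the rationality of the characters of $S_{4}$ and \S\ref{subsec:ETNC-max}(v), then combine the $V_{4}$-hybrid decomposition of $\Z_{3}[S_{4}]$ from Example~\ref{ex:S4-V4} with the breaking-up mechanism of Theorem~\ref{thm:ETNC-break-down}. You are simply more explicit than the paper about verifying that $T\Omega(L/K,\Z[G],0)\in K_{0}(\Z[G],\Q)$ and that the maximal-order condition produced by Theorem~\ref{thm:ETNC-break-down} is automatically satisfied, which is a useful clarification but not a different argument.
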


\begin{proof}
As all characters of $S_{4}$ are rational-valued, $\SSC(L/K)$ holds by \S \ref{subsec:ETNC-max}(v).
The remaining claim now follows by combining Example \ref{ex:S4-V4} and Theorem \ref{thm:ETNC-break-down}.
\end{proof}

We now combine the above partial results to give the following theorem.

\begin{theorem}
Let $G$ be a finite group containing a subgroup $H$ such that $[G:H]=2$.
Let $L/\Q$ be a finite Galois extension with $\Gal(L/\Q) \simeq G$ and let $K=L^{H}$.
Suppose that $K$ is imaginary, $3$ splits in $K/\Q$, and $3$ does not divide the class number of $K$.
\begin{enumerate}
\item If $G = S_{3}$ and $H=A_{3}$ then $\ETNC(L/\Q,0)$ holds.
\item If $G = D_{12} = S_{3} \times C_{2}$ and $H = A_{3} \times C_{2}$ then $\ETNC(L/\Q,0)$ holds.
\item If $G=S_{4}$ and $H=A_{4}$ then  $\ETNC(L/\Q,0)$ holds outside the $2$-part.
\item If $G=S_{4} \times C_{2}$ and $H=A_{4} \times C_{2}$ then  $\ETNC(L/\Q,0)$ holds outside the $2$-part.
\end{enumerate}
\end{theorem}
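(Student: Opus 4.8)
The plan is to fix a prime $p$ and argue case by case, reducing everything to the prime $p=3$. In all four cases every complex irreducible character of $G$ is rational-valued (this holds for $S_3$ and $S_4$, hence also for $S_3\times C_2$ and $S_4\times C_2$), so $\SSC(L/\Q)$ holds by \S\ref{subsec:ETNC-max}(v); consequently $\ETNC^{\max}_p(L/\Q,0)$ holds and $T\Omega(L/\Q,\Z_p[G],0)\in DT(\Z_p[G])$ for every $p$ by \S\ref{subsec:ETNC-max}(ii). In particular $\ETNC_p(L/\Q,0)$ is automatic when $p\nmid|G|$, since then $\Z_p[G]$ is maximal (\S\ref{subsec:ETNC-max}(i)); this disposes of all $p\notin\{2,3\}$. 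For $p=2$, case (i) is Proposition \ref{prop:ETNC-S3} with base field $\Q$ and case (ii) is Proposition \ref{prop:ETNC-D12}, while the $2$-part of (iii) and (iv) is not claimed. Thus in every case it remains to establish $\ETNC_3(L/\Q,0)$.

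For (i), write $G=S_3\simeq D_6$, so $n=3$ is odd and $K=L^{A_3}$ is the unique quadratic subextension of $L/\Q$; by hypothesis it is imaginary, $3$ splits in $K/\Q$ and $3\nmid h_K$, so Theorem \ref{thm:ETNC-dihedral} gives $\ETNC_3(L/\Q,0)$. For (ii), identify $G=S_3\times C_2$ with $C_2=\langle\tau\rangle$ central, so that $H=A_3\times C_2\simeq C_6$ is the cyclic subgroup of order $6$. Since $2\in\Z_3^{\times}$ the idempotent $e_{\langle\tau\rangle}$ lies in $\Z_3[G]$, giving ring isomorphisms $\Z_3[G]\simeq\Z_3[S_3]\times\Z_3[S_3]$ and $\Z_3[H]\simeq\Z_3[C_3]\times\Z_3[C_3]$ compatible with the inclusion $\Z_3[H]\hookrightarrow\Z_3[G]$, under which $\res^G_H$ becomes $\res^{S_3}_{C_3}\times\res^{S_3}_{C_3}$; each factor is injective on $DT(-)$ by Proposition \ref{prop:res-D2n-Cn} (the case $n=3$, noting $S_3\simeq D_6$), so $\res^G_H$ is injective on $DT(\Z_3[G])$. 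On the other hand $L/K$ is abelian with group $H\simeq C_6$ over the imaginary quadratic field $K$, with $3$ odd, split in $K/\Q$ and not dividing $h_K$, so $\ETNC_3(L/K,0)$ holds by \S\ref{subsec:known-cases}(ii). By Proposition \ref{prop:ETNC-funct}(i) we then have $\res^G_H(T\Omega(L/\Q,\Z_3[G],0))=T\Omega(L/K,\Z_3[H],0)=0$, and since $T\Omega(L/\Q,\Z_3[G],0)\in DT(\Z_3[G])$ and $\res^G_H$ is injective there, $\ETNC_3(L/\Q,0)$ follows.

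For (iii), Proposition \ref{prop:ETNC-S3-S4} reduces $\ETNC_3(L/\Q,0)$ to $\ETNC_3(L^{V_4}/\Q,0)$; here $\Gal(L^{V_4}/\Q)\simeq S_4/V_4\simeq S_3$ and the unique quadratic subextension of $L^{V_4}/\Q$ is $(L^{V_4})^{A_4/V_4}=L^{A_4}=K$, so part (i) applies to $L^{V_4}/\Q$. For (iv), Example \ref{ex:S4-V4} and Lemma \ref{lem:add-a-group} show that $\Z_3[S_4\times C_2]$ is $(V_4\times\{1\})$-hybrid, hence weakly hybrid, so Theorem \ref{thm:ETNC-break-down} together with $\ETNC^{\max}_3(L/\Q,0)=\SSC_3(L/\Q)$ reduces $\ETNC_3(L/\Q,0)$ to $\ETNC_3(L^{V_4\times\{1\}}/\Q,0)$; here $\Gal(L^{V_4\times\{1\}}/\Q)\simeq (S_4/V_4)\times C_2\simeq D_{12}$ and the index-$2$ subgroup $(A_4/V_4)\times C_2$ has fixed field $L^{A_4\times C_2}=K$, so part (ii) applies to $L^{V_4\times\{1\}}/\Q$. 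In both cases this proves the $3$-part, hence (iii) and (iv) outside the $2$-part.

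The main obstacle will be the $p=3$ argument, and within it the injectivity of the restriction map $\res^{D_{12}}_{C_6}$ on torsion subgroups: Proposition \ref{prop:res-D2n-Cn} is only available for $D_{2n}$ with $n$ odd, whereas $D_{12}=D_{2\cdot6}$ has $n=6$ even, so one is forced to exploit the splitting $D_{12}\simeq S_3\times C_2$ to reduce to the odd case $D_6=S_3$. A secondary point, to be checked at each reduction step in (iii) and (iv), is that the quadratic subextension remains exactly $K$, so that the arithmetic hypotheses (imaginary, $3$ split, $3\nmid h_K$) continue to apply.
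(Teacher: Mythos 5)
Your proof is correct and, for cases (i), (iii), and (iv), follows essentially the same route as the paper: $\SSC(L/\Q)$ from the rationality of all complex irreducible characters, which handles every $p\nmid|G|$ and reduces everything to $DT(\Z_p[G])$; then the reduction of (iii) to (i) via Proposition \ref{prop:ETNC-S3-S4} and of (iv) to (ii) via the $V_4\times\{1\}$-hybrid structure of $\Z_3[S_4\times C_2]$ (Lemma \ref{lem:add-a-group} applied to Example \ref{ex:S4-V4}) together with Theorem \ref{thm:ETNC-break-down}; and Theorem \ref{thm:ETNC-dihedral} for the $S_3\simeq D_6$ case at $p=3$.

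For case (ii) you have correctly identified a genuine subtlety. The paper's proof simply asserts that ``case (ii) follows from Proposition \ref{prop:ETNC-D12} and Theorem \ref{thm:ETNC-dihedral} with $p=3$'', but Theorem \ref{thm:ETNC-dihedral} as stated requires $\Gal(L/\Q)\simeq D_{2n}$ with $n$ \emph{odd}, while $D_{12}=D_{2\cdot 6}$ has $n=6$ even (indeed $D_{12}$ does not even have a unique quadratic subextension). Correspondingly Breuning's injectivity result, Proposition \ref{prop:res-D2n-Cn}, which drives the proof of Theorem \ref{thm:ETNC-dihedral}, also requires $n$ odd. Your workaround --- using that $2\in\Z_3^{\times}$ to split $\Z_3[D_{12}]\simeq\Z_3[S_3]\times\Z_3[S_3]$ and $\Z_3[C_6]\simeq\Z_3[C_3]\times\Z_3[C_3]$ compatibly with the inclusion, so that $\res^{D_{12}}_{C_6}$ on $DT$ factors as $\res^{S_3}_{C_3}\times\res^{S_3}_{C_3}$ and is therefore injective by two applications of Proposition \ref{prop:res-D2n-Cn} with $n=3$ --- is precisely what is needed, and combined with $T\Omega(L/\Q,\Z_3[G],0)\in DT(\Z_3[G])$, Bley's result \S\ref{subsec:known-cases}(ii) for the abelian extension $L/K$, and the functoriality of restriction (Proposition \ref{prop:ETNC-funct}(i)), this yields $\ETNC_3(L/\Q,0)$. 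You were also right to verify at each reduction step in (iii) and (iv) that the designated quadratic subfield is still $K$ (namely $(L^{V_4})^{A_4/V_4}=L^{A_4}=K$ and analogously in (iv)), since the arithmetic hypotheses on $K$ must persist. In short, your argument matches the paper's plan but supplies a missing step in the treatment of $D_{12}$.
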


\begin{proof}
Case (i) follows from Proposition \ref{prop:ETNC-S3} and Theorem \ref{thm:ETNC-dihedral} with $p=3$.
Similarly, case (ii) follows from Proposition \ref{prop:ETNC-D12} and Theorem \ref{thm:ETNC-dihedral} with $p=3$.
Case (iii) follows from case (i) and Proposition \ref{prop:ETNC-S3-S4}. 
Finally, case (iv) follows from case (ii) and an analogous version of Proposition \ref{prop:ETNC-S3-S4} 
that uses the fact that $\Z_{3}[S_{4} \times C_{2}]$ is $V_{4} \times \{ 1 \}$-hybrid 
(to see this, apply Lemma \ref{lem:add-a-group} to Example \ref{ex:S4-V4}).
\end{proof}

\subsection{The case $r<0$} \label{subsec:case-r<0}
We now establish new cases of $\ETNC(L/K,r)$ and $\ETNC_{p}(L/K,r)$ in the case $r<0$.

\begin{theorem}\label{thm:hybrid-tot-real-r-less-than-zero}
Let $L/K$ be a finite Galois extension of totally real number fields with Galois group $G$.
Let $p$ be an odd prime and let $N=G'$ be the commutator subgroup of $G$.
Suppose that $\Z_{p}[G]$ is weakly $N$-hybrid and that $L^{N} / \Q$ is abelian
(in particular, this is the case when $K=\Q$).
Then $\ETNC_{p}(L/K,r)$ holds for every odd $r<0$.
\end{theorem}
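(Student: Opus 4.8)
The plan is to obtain this as a formal consequence of Corollary \ref{cor:ETNC-break-down}, applied with $\mathfrak{A}_{p}=\Z_{p}[G]$ itself, combined with the known unconditional case of the ETNC over maximal orders for totally real fields at negative odd integers. First observe that $\Z_{p}[G]$ is an admissible choice: it is weakly $N$-hybrid by hypothesis, and $\ETNC(L/K,\Z_{p}[G],r)$ is by definition exactly $\ETNC_{p}(L/K,r)$. The one standing hypothesis of Theorem \ref{thm:ETNC-break-down}, inherited by Corollary \ref{cor:ETNC-break-down}, that still needs checking is that $T\Omega(L/K,\Z[G],r)\in K_{0}(\Z[G],\Q)$. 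I would dispatch this by recalling that the ``rationality part'' of the ETNC for $\Q(r)_{L}$ is known for every integer $r<0$: it follows from Borel's computation of the ranks of the higher algebraic $K$-groups of $\mathcal{O}_{L}$, from the rationality of the leading terms at $s=r<0$ of the relevant Artin $L$-functions (which in the present totally real case are the nonzero values $L(\chi,r)$), and from the $G$-equivariant compatibility of the Beilinson --- equivalently Borel --- regulator with these $L$-values; compare the discussion in \cite{MR1884523} and the set-up underlying \cite{burns-mc} and \cite[\S 6]{MR2801311}.

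With the rationality in hand, Corollary \ref{cor:ETNC-break-down} --- whose further hypotheses hold here, namely $N=G'$ is the commutator subgroup and $L^{N}/\Q$ is abelian by assumption --- gives that $\ETNC_{p}(L/K,r)$ holds if and only if $\ETNC^{\max}_{p}(L/K,r)$ holds. It therefore suffices to verify the latter. But $L/K$ is a finite Galois extension of totally real number fields, $p$ is an odd prime, and $r<0$ is odd, so $\ETNC^{\max}_{p}(L/K,r)$ holds by \S \ref{subsec:ETNC-max}(vi), i.e.\ by \cite[Corollary 6.2]{MR2801311}. Combining this with the preceding equivalence yields $\ETNC_{p}(L/K,r)$, completing the proof.

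I do not anticipate a real obstacle: the genuine arithmetic has already been packaged into the hybrid-order formalism (Theorem \ref{thm:ETNC-break-down}, Corollary \ref{cor:ETNC-break-down}) and into Nickel's unconditional verification of $\ETNC^{\max}$ at negative odd integers for totally real fields, so the present statement is essentially a matter of assembly. The only points requiring attention are that $\Z_{p}[G]$ itself is a legitimate $\mathfrak{A}_{p}$ in Corollary \ref{cor:ETNC-break-down} and that the $r<0$ rationality input is invoked correctly. One could instead route the argument through Theorem \ref{thm:ETNC-break-down} directly, additionally citing \S \ref{subsec:known-cases}(i) to handle $\ETNC_{p}(L^{N}/K,r)$ since $L^{N}/\Q$ is abelian; Corollary \ref{cor:ETNC-break-down} simply absorbs that step.
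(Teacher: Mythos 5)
Your proof is correct and is exactly the paper's own argument: apply Corollary \ref{cor:ETNC-break-down} with $\mathfrak{A}_{p}=\Z_{p}[G]$ and then invoke the unconditional result $\ETNC^{\max}_{p}(L/K,r)$ from \S \ref{subsec:ETNC-max}(vi). Your extra discussion of the rationality hypothesis $T\Omega(L/K,\Z[G],r)\in K_{0}(\Z[G],\Q)$ is welcome but essentially implicit in the paper, since the statement of \S \ref{subsec:ETNC-max}(vi) (Nickel's result) already presupposes this rationality in order for its $p$-parts to be meaningful.
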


\begin{proof}
This is just the combination of Corollary \ref{cor:ETNC-break-down} and \S \ref{subsec:ETNC-max}(vi).
\end{proof}

\begin{theorem} \label{thm:ETNC-Frobenius-groups}
Let $G = N \rtimes H$ be a Frobenius group.
Suppose that $L/K$ is a finite Galois extension of totally real number fields with $\Gal(L/K) \simeq G$ and that 
$L^{N}/\Q$ is abelian (in particular, this is the case when $K=\Q$ and $H$ is abelian).
Then $\ETNC_{p}(L/K,r)$ holds for every odd $r<0$ and every prime $p \nmid 2|N|$.
If in addition $N$ is an $\ell$-group for a prime $\ell$, then $\ETNC(L/K,r)$ holds outside its $2$-part for every odd $r < 0$.
In particular, this applies in the following cases:
\begin{itemize}
    \item
    $G \simeq \Aff(q)$, where $q$ is a prime power (see Example \ref{ex:affine}).
    \item
    $G \simeq P \rtimes C_{2}$, where $P$ is an abelian $\ell$-group  (with $\ell$ odd) on which $C_{2}$ acts by inversion
    (see Example \ref{ex:inversion}).
    \item
    $G \simeq C_{\ell} \rtimes C_{p}$, where $p<\ell$ are distinct primes such that $p \mid (\ell-1)$ and $C_{p}$ acts on $C_{\ell}$ via an embedding $C_{p} \hookrightarrow \Aut(C_{\ell})$ (see Example \ref{ex:metacyclic}).
\end{itemize}
\end{theorem}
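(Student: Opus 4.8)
The plan is to treat the two assertions in turn, obtaining the first from the hybrid machinery of Section~\ref{sec:rel-k-group-weak-hybrid} together with the known case of the ETNC over maximal orders, and reducing the second, at the single remaining prime, to the results of Burns \cite{burns-mc}.

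\emph{First assertion.} Fix a prime $p \nmid 2|N|$ and an odd integer $r < 0$. I would first record that $N = G'$: since $L^{N}/\Q$ is abelian its subgroup $\Gal(L^{N}/K) \simeq G/N$ is abelian, so $G' \subseteq N$; conversely, for the Frobenius group $G = N \rtimes H$ one has $[N,H]=N$, because for each $1 \ne h \in H$ the map $n \mapsto n^{-1}n^{h}$ of $N$ is injective (if it identifies $n$ and $m$ then $mn^{-1}$ is fixed by $h$, hence trivial by fixed-point-freeness), so bijective, whence $G' \supseteq [N,H]=N$. Next, as $p \nmid |N|$, Proposition~\ref{prop:frob-N-hybrid} shows that $\Z_{p}[G]$ is $N$-hybrid, hence weakly $N$-hybrid. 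Since $p$ is odd, all hypotheses of Theorem~\ref{thm:hybrid-tot-real-r-less-than-zero} are met and it yields $\ETNC_{p}(L/K,r)$. (Equivalently, Theorem~\ref{thm:ETNC-break-down} reduces $\ETNC_{p}(L/K,r)$ to the conjunction of $\ETNC_{p}(L^{N}/K,r)$ --- true by \S\ref{subsec:known-cases}(i), as $L^{N}/\Q$ is abelian and $K \subseteq L^{N}$ --- and $\ETNC^{\max}_{p}(L/K,r)$ --- true by \S\ref{subsec:ETNC-max}(vi), as $p$ is odd and $L/K$ is totally real.)

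\emph{Second assertion.} Suppose in addition that $N$ is an $\ell$-group, and fix an odd $r < 0$. If $\ell = 2$, every odd prime $p$ satisfies $p \nmid 2|N|$ and the claim is already contained in the first assertion; so assume $\ell$ is odd, and it remains only to prove $\ETNC_{\ell}(L/K,r)$. At $p = \ell$ the ring $\Z_{\ell}[G]$ fails to be $N$-hybrid, so the hybrid reduction no longer applies; instead I would appeal to Burns \cite[Corollary~2.10]{burns-mc}, which for $L/K$ totally real and $r<0$ odd derives $\ETNC_{\ell}(L/K,r)$ from $\ETNC^{\max}_{\ell}(L/K,r)$ --- supplied by \S\ref{subsec:ETNC-max}(vi) --- together with the vanishing of the relevant $\mu$-invariant at $\ell$. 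The point I would then argue is that this $\mu$-invariant hypothesis is automatic here: since $|N|$ is a power of $\ell$ while $[G:N]$ is prime to $\ell$ (Theorem~\ref{thm:frob-kernel}(i)), $N$ is the normal Sylow $\ell$-subgroup of $G$ and $L^{N}/\Q$ is abelian, so the relevant $\mu$-invariant reduces to the cyclotomic $\mu$-invariant of the abelian field $L^{N}$, which vanishes by Ferrero--Washington. This gives $\ETNC_{\ell}(L/K,r)$ and hence the main statement. The three itemised families follow at once: in each, $G = N \rtimes H$ is a Frobenius group with $N$ an $\ell$-group (Examples~\ref{ex:affine},~\ref{ex:inversion},~\ref{ex:metacyclic}) and $H$ abelian, so $L^{N}/\Q$ is abelian whenever $K = \Q$.

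\emph{Main obstacle.} The substantive step is the prime $p = \ell$ in the second assertion: one must show that the $\mu$-invariant hypothesis which makes Burns's version of this result conditional is, in the Frobenius situation with $N$ an $\ell$-group, superfluous --- because it can be traced back to the vanishing of the cyclotomic $\mu$-invariant over the abelian base $L^{N}$, i.e.\ to Ferrero--Washington. Everything else amounts to verifying hypotheses of results already available: the hybrid structure of $\Z_{p}[G]$ via Proposition~\ref{prop:frob-N-hybrid}, the identity $N = G'$, the rationality of $T\Omega(L/K,\Z[G],r)$ for $r < 0$, and the parity and sign conditions on $p$ and $r$.
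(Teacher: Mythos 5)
Your proof is correct and follows essentially the same route as the paper: Proposition \ref{prop:frob-N-hybrid} gives the hybrid structure at primes $p \nmid |N|$, Theorem \ref{thm:hybrid-tot-real-r-less-than-zero} handles the first assertion, and the remaining prime $\ell$ (when $N$ is an $\ell$-group) is dealt with by combining Burns \cite[Corollary 2.10]{burns-mc} with Ferrero--Washington, using that $L/L^{N}$ is an $\ell$-extension of the abelian field $L^{N}$. The one place you go beyond the paper's terse proof is your explicit verification that $N = G'$, via the standard observation that the map $n \mapsto [n,h]$ on $N$ is injective (hence bijective) for $1 \ne h \in H$, so $N \subseteq [N,H] \subseteq G'$; the paper applies Theorem \ref{thm:hybrid-tot-real-r-less-than-zero}, whose statement takes $N$ to be the commutator subgroup, without pausing to note this identity. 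Your addition genuinely tightens the argument as literally stated, since the theorem being invoked is phrased for $N = G'$ specifically, even though its proof (via Theorem \ref{thm:ETNC-break-down} and the abelian case of the ETNC) actually only needs $L^{N}/\Q$ abelian.
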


\begin{proof}
As $\Z_{p}[G]$ is $N$-hybrid for every prime $p \nmid |N|$ by Proposition \ref{prop:frob-N-hybrid},
the first claim follows immediately from Theorem \ref{thm:hybrid-tot-real-r-less-than-zero}.
Now assume that $N$ is an $\ell$-group. If $\ell=2$, there is nothing to show.
If $\ell$ is odd, the extension $L/L^{N}$ has $\ell$-power degree and $L^{N}/\Q$ is abelian by assumption.
Then $\ETNC_{\ell}(L/K,r)$ follows from \cite[Corollary 2.10]{burns-mc},
since the $\mu$-invariant occurring therein vanishes by a result of Ferrero and Washington \cite{MR528968}.
\end{proof}

The same arguments as in the proof of Corollary \ref{cor:rel-cyclic-ETNC} lead to the following result.

\begin{corollary}
Let $L$ be a totally real Galois extension of $\Q$ with Galois group $G$.
Assume that $G$ is a Frobenius group with abelian Frobenius complement.
Let $H$ be any subgroup of any choice of Frobenius complement in $G$.
Then $\ETNC(L/L^H,r)$ holds outside its $2$-part for every odd $r<0$.
\end{corollary}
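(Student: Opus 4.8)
The plan is to follow the proof of Corollary \ref{cor:rel-cyclic-ETNC} almost verbatim, splitting the primes into those coprime to $2|N|$ and the odd primes dividing $|N|$. Write $G = N \rtimes H'$ with $H'$ an abelian Frobenius complement containing $H$, so that $|H|$ divides $[G:N]=|H'|$. Note that $\Gal(L/L^{H})\simeq H$ and that $L^{H}$, being a subfield of the totally real field $L$, is again totally real; note also that $\Gal(L^{N}/\Q)\simeq G/N\simeq H'$ is abelian, so that $L^{N}/\Q$ is abelian. Throughout we use that $T\Omega(L/\Q,\Z[G],r)\in K_{0}(\Z[G],\Q)$ for $r<0$, so that the decomposition \eqref{eq:p-part-decomp} is available and Proposition \ref{prop:ETNC-funct} applies.

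First I would dispose of the primes $p\nmid 2|N|$. Since $L^{N}/\Q$ is abelian, Theorem \ref{thm:ETNC-Frobenius-groups} gives that $\ETNC_{p}(L/\Q,r)$ holds for every odd $r<0$ and every such $p$. Applying $\res^{G}_{H}$ and Proposition \ref{prop:ETNC-funct}(i) then yields $T\Omega(L/L^{H},\Z_{p}[H],r)=0$, i.e.\ $\ETNC_{p}(L/L^{H},r)$ holds, for all $p\nmid 2|N|$.

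It remains to treat the odd primes $p$ dividing $|N|$, and here I would argue exactly as in Corollary \ref{cor:rel-cyclic-ETNC}. By \S \ref{subsec:ETNC-max}(vi), $\ETNC^{\max}_{p}(L/\Q,r)$ holds for every odd prime $p$ (as $L/\Q$ is a Galois extension of totally real number fields and $r<0$ is odd), so $T\Omega(L/\Q,\Z_{p}[G],r)\in DT(\Z_{p}[G])$ by \S \ref{subsec:ETNC-max}(ii). Since $\res^{G}_{H}$ is a group homomorphism (equivalently, by the functoriality of \eqref{eq:group-ring-DT-seq} with respect to restriction), Proposition \ref{prop:ETNC-funct}(i) gives $T\Omega(L/L^{H},\Z_{p}[H],r)\in DT(\Z_{p}[H])$. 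But $|H|$ divides $[G:N]=|H'|$, which is coprime to $|N|$ by Theorem \ref{thm:frob-kernel}(i), so $p\nmid|H|$; hence $\Z_{p}[H]$ is a maximal $\Z_{p}$-order and $DT(\Z_{p}[H])$ is trivial by Proposition \ref{prop:torsion-units}. Therefore $T\Omega(L/L^{H},\Z_{p}[H],r)=0$, i.e.\ $\ETNC_{p}(L/L^{H},r)$ holds. Combining the two cases shows that $\ETNC_{p}(L/L^{H},r)$ holds for every odd prime $p$, which is the assertion. There is no genuine obstacle here; the only point requiring care is the bookkeeping of which primes are supplied by the Frobenius/hybrid input (those away from $2|N|$) and which require the maximal-order input together with the implication $p\mid|N|\Rightarrow p\nmid|H|$ forced by the coprimality of $|N|$ and $|H'|$ in a Frobenius group.
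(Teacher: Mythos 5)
Your proof is correct and is exactly the argument the paper intends: the paper's proof is simply the statement that ``the same arguments as in the proof of Corollary \ref{cor:rel-cyclic-ETNC} lead to the result,'' and your two-case split (primes $p\nmid 2|N|$ via Theorem \ref{thm:ETNC-Frobenius-groups} and restriction, odd primes $p\mid|N|$ via $\ETNC^{\max}_{p}$ and triviality of $DT(\Z_{p}[H])$) is precisely the $r<0$ analogue of that proof, with $\SSC(L/K)$ replaced by \S\ref{subsec:ETNC-max}(vi). The only point worth noting is that you correctly identify the need for $L^{N}/\Q$ to be abelian (supplied here by the abelian Frobenius complement hypothesis) and for the rationality $T\Omega(L/\Q,\Z[G],r)\in K_{0}(\Z[G],\Q)$, both of which are in place.
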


\begin{remark}
Fix a natural number $n$. The same reasoning used in Remark \ref{rmk:inf-many-rel-cyclic} shows that there exists
an infinite family of Galois extensions of number fields $L/F$ with $\Gal(L/F) \simeq C_{n}$ such that
$F/\Q$ is non-Galois and $\ETNC(L/F,r)$ holds outside its $2$-part for every odd integer $r<0$.
\end{remark}

\begin{theorem}\label{thm:ETNC-S4-negative-r}
Let $L/K$ be a finite Galois extension of totally real number fields with $\Gal(L/K) \simeq S_{4}$.
Suppose that $L^{A_{4}} / \Q$ is abelian (in particular, this is the case when $K=\Q$).
Then $\ETNC(L/K,r)$ holds outside its $2$-part for every odd $r<0$.
\end{theorem}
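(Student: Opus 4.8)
The plan is to verify $\ETNC_{p}(L/K,r)$ for every odd prime $p$ and every odd integer $r<0$; the prime $2$ is genuinely out of reach here (there is no useful hybrid decomposition of $\Z_{2}[S_{4}]$, and the input of \cite{burns-mc} is unavailable at $p=2$), which is why the conclusion is stated only away from $2$. Throughout I would use that $T\Omega(L/K,\Z[G],r)\in K_{0}(\Z[G],\Q)$ for $r<0$, so that $p$-parts are defined and both the functorialities of \S\ref{subsec:ETNC-funct-props} and Theorem \ref{thm:ETNC-break-down} apply; this rationality is in force here exactly as it is (implicitly) in Theorem \ref{thm:hybrid-tot-real-r-less-than-zero}.

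First I would dispose of the odd primes $p\neq 3$: since $|S_{4}|=24$, such $p$ do not divide $|G|$, so $\Z_{p}[S_{4}]$ is a maximal $\Z_{p}$-order and $\ETNC_{p}(L/K,r)$ is equivalent to $\ETNC^{\max}_{p}(L/K,r)$ by \S\ref{subsec:ETNC-max}(i). The latter holds because $L/K$ is a Galois extension of totally real number fields and $r<0$ is odd, by \S\ref{subsec:ETNC-max}(vi).

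The crux is $p=3$, which I would handle just as in the case $r=0$ of Proposition \ref{prop:ETNC-S3-S4}. By Example \ref{ex:S4-V4} the group ring $\Z_{3}[S_{4}]$ is $V_{4}$-hybrid, hence weakly $V_{4}$-hybrid, so Theorem \ref{thm:ETNC-break-down} applied with $N=V_{4}$ and $\mathfrak{A}_{3}=\Z_{3}[S_{4}]$ shows that $\ETNC_{3}(L/K,r)$ holds if and only if both $\ETNC_{3}(L^{V_{4}}/K,r)$ and $\ETNC^{\max}_{3}(L/K,r)$ hold; the second again holds by \S\ref{subsec:ETNC-max}(vi). So everything reduces to $\ETNC_{3}(L^{V_{4}}/K,r)$. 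Now $V_{4}\unlhd S_{4}$ with $S_{4}/V_{4}\simeq S_{3}$, so $L^{V_{4}}/K$ is a Galois extension of totally real number fields with group $S_{3}\simeq\Aff(3)$; its Frobenius kernel is $A_{4}/V_{4}\simeq C_{3}$, a $3$-group, and the subfield of $L^{V_{4}}$ fixed by this kernel is $L^{A_{4}}$, which is abelian over $\Q$ by hypothesis. Theorem \ref{thm:ETNC-Frobenius-groups}, applied to $L^{V_{4}}/K$ in the case $G\simeq\Aff(q)$ with $q=3$, therefore gives that $\ETNC(L^{V_{4}}/K,r)$ holds outside its $2$-part, and in particular $\ETNC_{3}(L^{V_{4}}/K,r)$ holds, completing the argument.

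I do not anticipate a serious obstacle: the proof is an assembly of Example \ref{ex:S4-V4}, Theorem \ref{thm:ETNC-break-down}, \S\ref{subsec:ETNC-max}(vi) and Theorem \ref{thm:ETNC-Frobenius-groups}, paralleling Propositions \ref{prop:ETNC-S3-S4} and \ref{prop:ETNC-D12}. The steps needing care are the bookkeeping in the subfield lattice of $L/K$ — in particular the identifications $S_{4}/V_{4}\simeq S_{3}$, the Frobenius kernel of $S_{4}/V_{4}$ being $A_{4}/V_{4}$, and its fixed field in $L^{V_{4}}$ being $L^{A_{4}}$ — together with checking that the hypotheses of Theorem \ref{thm:ETNC-Frobenius-groups} (totally real, the correct abelian subquotient over $\Q$, Frobenius kernel an $\ell$-group) are literally met. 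If one prefers not to route through Theorem \ref{thm:ETNC-Frobenius-groups}, the $p=3$ reduction can instead be finished by applying \cite[Corollary 2.10]{burns-mc} directly to $L^{V_{4}}/K$, the relevant $\mu$-invariant vanishing by Ferrero--Washington \cite{MR528968} since $L^{V_{4}}/L^{A_{4}}$ has $3$-power degree and $L^{A_{4}}/\Q$ is abelian.
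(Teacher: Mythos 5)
Your proof is correct and follows essentially the same route as the paper: handle odd $p\neq 3$ via $\S$\ref{subsec:ETNC-max}(i) and (vi), use Example \ref{ex:S4-V4} with Theorem \ref{thm:ETNC-break-down} to reduce $p=3$ to $\ETNC_{3}(L^{V_{4}}/K,r)$, and conclude by Theorem \ref{thm:ETNC-Frobenius-groups} applied to $\Gal(L^{V_{4}}/K)\simeq S_{3}\simeq\Aff(3)$ with $(L^{V_{4}})^{A_{3}}=L^{A_{4}}$. Your bookkeeping of the subgroup/subfield lattice and the hypothesis checks match the paper's proof, which simply states these steps more tersely.
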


\begin{proof}
By \S \ref{subsec:ETNC-max}(vi) we know that $\ETNC^{\max}_{p}(L/K,r)$ holds for every odd prime $p$.
Thus it only remains to verify $\ETNC_{3}(L/K,r)$.
By combining Example \ref{ex:S4-V4} and Theorem \ref{thm:ETNC-break-down}, we are reduced
to showing $\ETNC_{3}(L'/K,r)$, where $L' = L^{V_{4}}$.
As $\Gal(L'/K) \simeq S_{3}$ and $L^{A_{4}} = (L')^{A_{3}}$,
the latter is a special case of Theorem \ref{thm:ETNC-Frobenius-groups}.
\end{proof}

\begin{remark}
In Theorems \ref{thm:hybrid-tot-real-r-less-than-zero} and \ref{thm:ETNC-Frobenius-groups} and their corollaries, and in Theorem \ref{thm:ETNC-S4-negative-r},
we have considered extensions of totally real fields and odd integers $r<0$.
However, we note that all these results have analogues for CM-extensions and even integers $r<0$:
one simply has to restrict to minus parts and replace each occurrence of a Galois group $G$ by $G \times C_{2}$, where $C_{2}$ is generated by complex conjugation.
Although, as mentioned in \S \ref{subsec:known-cases}(ix), all these results follow from the conjectural vanishing of certain $\mu$-invariants as established by Burns \cite[Corollary 2.10]{burns-mc}, the advantage of our approach is that it leads to unconditional results.
\end{remark}

\begin{theorem}\label{thm:ETNC-dihedral<0}
Let $L/\Q$ be a Galois extension with $\Gal(L/\Q) \simeq D_{2n}$ for some odd $n$.
Let $K/\Q$ be the unique quadratic subextension of $L/\Q$ and suppose that $K$ is imaginary.
Let $p$ be an odd prime that splits in $K/\Q$.
Then $\ETNC_{p}(L/\Q,r)$ holds for every integer $r<0$.
\end{theorem}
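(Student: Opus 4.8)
The plan is to mirror the proof of Theorem \ref{thm:ETNC-dihedral} (the case $r=0$), replacing each ingredient by its $r<0$ counterpart. Write $G=\Gal(L/\Q)\simeq D_{2n}$, so that $C_{n}=\Gal(L/K)$ is the unique index-$2$ subgroup of $G$ and $\Gal(K/\Q)\simeq C_{2}$. For $r<0$ it is known that $T\Omega(L/\Q,\Z[G],r)\in K_{0}(\Z[G],\Q)$, so the $p$-part $T\Omega(L/\Q,\Z_{p}[G],r)$ is defined; since $p$ is odd, $\Z_{p}[G]$ is a maximal $\Z_{p}$-order exactly when $p\nmid n$, and I will treat the cases $p\nmid n$ and $p\mid n$ slightly differently at the end.

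\textbf{Step 1.} I would first show that $\ETNC^{\max}_{p}(L/\Q,r)$ holds, equivalently (by \S\ref{subsec:ETNC-max}(ii)) that $T\Omega(L/\Q,\Z_{p}[G],r)\in DT(\Z_{p}[G])$. By Theorem \ref{thm:frob-kernel}(iv) every irreducible character of $D_{2n}$ is either linear --- hence inflated from $\Gal(K/\Q)\simeq C_{2}$ --- or induced from a nontrivial irreducible character of $C_{n}=\Gal(L/K)$. Since $K/\Q$ is abelian, $\ETNC_{p}(K/\Q,r)$ holds by \S\ref{subsec:known-cases}(i), which disposes of the linear characters; and since $L/K$ is abelian, $K$ is imaginary quadratic, and $p$ is odd and splits in $K/\Q$, Johnson-Leung's theorem recalled in \S\ref{subsec:known-cases}(iii) shows that $\ETNC_{p}(L/K,r)$ holds, and hence so does $\ETNC^{\max}_{p}(L/K,r)$. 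One then assembles $\ETNC^{\max}_{p}(L/\Q,r)$ from these two cases via the Artin formalism of the maximal-order ETNC: this is the analogue for $r<0$ of the appeal to \cite[Proposition 9(c)]{MR1423032} in the proof of Theorem \ref{thm:ETNC-dihedral}, and for $r<0$ it is supplied by the functorial properties of Proposition \ref{prop:ETNC-funct} together with the induction argument of \cite[Proposition 6.1(iii)]{MR2801311}.

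\textbf{Step 2.} To upgrade this to $\ETNC_{p}(L/\Q,r)$: if $p\nmid n$ then $p\nmid 2n=|G|$ (as $p$ is odd), so $\Z_{p}[G]$ is maximal and $DT(\Z_{p}[G])=0$ by Proposition \ref{prop:torsion-units}, whence Step 1 already gives $T\Omega(L/\Q,\Z_{p}[G],r)=0$. If instead $p\mid n$, then Proposition \ref{prop:ETNC-funct}(i) gives
\[
\res^{G}_{C_{n}}\bigl(T\Omega(L/\Q,\Z_{p}[G],r)\bigr)=T\Omega(L/K,\Z_{p}[C_{n}],r),
\]
and the right-hand side vanishes because $\ETNC_{p}(L/K,r)$ holds (Johnson-Leung, as in Step 1). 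Since $n$ and $p$ are both odd, Proposition \ref{prop:res-D2n-Cn} says that $\res^{G}_{C_{n}}\colon DT(\Z_{p}[G])\to DT(\Z_{p}[C_{n}])$ is injective; combined with the membership $T\Omega(L/\Q,\Z_{p}[G],r)\in DT(\Z_{p}[G])$ established in Step 1, this forces $T\Omega(L/\Q,\Z_{p}[G],r)=0$, i.e. $\ETNC_{p}(L/\Q,r)$.

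The hard part is Step 1, and within it the two-dimensional, non-rational-valued irreducible characters of $D_{2n}$. For $r=0$ the rational-valued characters are handled by Tate's theorem and the induced ones by Ritter--Weiss; for $r<0$ one must verify carefully that the maximal-order ETNC element is compatible with induction from $C_{n}$ and inflation from $\Gal(K/\Q)$, so that the known abelian case over $\Q$ and Johnson-Leung's case over the imaginary quadratic field $K$ really do combine to the required statement over $\Q$. Everything beyond this is a formal consequence of the functorialities in Proposition \ref{prop:ETNC-funct}, Breuning's injectivity result (Proposition \ref{prop:res-D2n-Cn}), and the triviality of $DT$ for maximal orders (Proposition \ref{prop:torsion-units}).
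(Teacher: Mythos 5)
Your proposal is correct and follows the same route as the paper: the paper's own proof is a one-line pointer to the proof of Theorem \ref{thm:ETNC-dihedral}, with Bley's results (\S\ref{subsec:known-cases}(ii) and \S\ref{subsec:ETNC-max}(iv)) replaced by Johnson-Leung's (\S\ref{subsec:known-cases}(iii)). Your more detailed spelling-out of the adaptation --- using the abelian ETNC for $K/\Q$ to handle the linear characters (since Tate's theorem is an $r=0$ statement), the general induction argument of \cite[Proposition 6.1(iii)]{MR2801311} in place of Ritter--Weiss for the induced characters, and then Breuning's injectivity of $\res^{D_{2n}}_{C_n}$ on $DT$ --- is exactly what the paper's terse reference entails.
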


\begin{proof}
This is essentially the same proof as that of Theorem \ref{thm:ETNC-dihedral}, but now we use \S \ref{subsec:known-cases}(iii) rather than \S \ref{subsec:known-cases}(ii) and \S \ref{subsec:ETNC-max}(iv).
\end{proof}

\section{Other conjectures on vanishing of elements in relative $K$-groups}\label{sec:other-conjectures}

The results of \S \ref{sec:rel-k-group-weak-hybrid} can be applied to other conjectures concerning
the vanishing of certain elements in relative algebraic $K$-groups, provided these elements satisfy
certain functorial properties.
In principle, new results in the style of \S \ref{sec:ETNC_Tate} can be obtained in this way, though this
depends on an adequate supply of existing results on which to build.
Some of the conjectures which may be considered include the following.

\begin{enumerate}
\item The equivariant Tamagawa number conjecture as formulated in \cite[Conjecture 4(iv)]{MR1884523}
(in \S \ref{sec:ETNC_Tate} we considered the special case of Tate motives).
\item The global equivariant epsilon constant conjecture of \cite{MR2005875}.
\item The local equivariant epsilon constant conjecture of \cite{MR2078894}.
\item The leading term conjecture at $s=0$ of \cite[\S 4]{MR2371375}.
\item The leading term conjecture at $s=1$ of \cite[\S 3]{MR2371375}.
\end{enumerate}

Note that the Lifted Root Number Conjecture of Gruenberg, Ritter and Weiss \cite{MR1687551}, the vanishing of $T\Omega(L/K,0)$
defined in \cite[\S 2.1]{MR1863302}, the relevant special case of the ETNC, and (iv) above are all
equivalent (see \cite[Theorems 2.3.3 and 2.4.1]{MR1863302} and \cite[Remarks 4.3 and 4.5]{MR2371375}).
In \cite{MR2804251} it is shown that assuming Leopoldt's conjecture, (v) is equivalent to
the relevant special case of the ETNC. The relation between (ii) and (iii) is explained in
\S \ref{subsec:gec} (see \cite[\S 4]{MR2078894} for more details).
In \cite[\S 5]{MR2371375} it is shown that the compatibility of (iv) and (v) is equivalent to (ii).

\subsection{The Local Epsilon Constant Conjecture}\label{subsec:lec}
We examine applications to the conjecture formulated by Breuning \cite{MR2078894}.
Let $p$ be a prime and let $\Q_{p}^{c}$ be an algebraic closure of $\Q_{p}$.
Let $L/K$ be a finite Galois extension of $p$-adic fields with Galois group $G$.
An element $R_{L/K} \in K_{0}(\Z_{p}[G],\Q_{p}^{c})$ incorporating local epsilon constants
and algebraic invariants associated to the extension $L/K$ is defined.
The conjecture is that $R_{L/K}$ always vanishes.
We recall the following properties.

\begin{theorem}[\cite{MR2078894}]\label{thm:lec-properties}
\hspace*{\fill}
\begin{enumerate}
\item If $N$ is a normal subgroup of $G$ then $\quot_{G/N}^{G}(R_{L/K})=R_{L^{N}/K}$.
\item If $L/K$ is at most tamely ramified then $R_{L/K}=0$.
\item We always have $R_{L/K} \in DT(\Z_{p}[G])$.
\end{enumerate}
\end{theorem}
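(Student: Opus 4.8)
All three assertions are proved by Breuning in \cite{MR2078894}; the plan is to recall the structure of each argument. Recall that $R_{L/K}$ is defined as an alternating sum of three contributions: a cohomological Euler characteristic built from the ring of integers $\mathcal{O}_{L}$ together with the local fundamental class of $L/K$; a term $\varepsilon_{L/K}$ assembled from the local Galois epsilon constants of the ($\Q_{p}^{c}$-valued) characters of $G$; and an unramified correction term involving the Frobenius automorphism and the unramified characteristic. Only the first of these lives over $\Q_{p}$ a priori, which is why the ambient group is taken over $\Q_{p}^{c}$.

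For (i), the plan is to check that $\quot_{G/N}^{G}$ carries each of the three contributions for $L/K$ to the corresponding contribution for $L^{N}/K$, and then to invoke compatibility of $\quot_{G/N}^{G}$ with the presentation of the relative $K$-group via relative $K_{1}$. The cohomological contribution is functorial because $\mathcal{O}_{L^{N}} = (\mathcal{O}_{L})^{N}$ and the local fundamental class of $L^{N}/K$ is the inflation of that of $L/K$; the term $\varepsilon_{L/K}$ is functorial because the epsilon constants attached to $L^{N}/K$ are precisely those attached to the characters of $G$ inflated from $G/N$; and the correction term transforms correctly for the same reason. Combining these yields $\quot_{G/N}^{G}(R_{L/K}) = R_{L^{N}/K}$.

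For (ii), in the tame case $\mathcal{O}_{L}$ is a projective (hence cohomologically trivial) $\Z_{p}[G]$-module, so the cohomological contribution becomes completely explicit. I would first verify the vanishing when $G$ is abelian — where, via part (i) and the character-wise decomposition of the relative $K$-group, it suffices to treat cyclic $G$, and there both the cohomological side and $\varepsilon_{L/K}$ reduce to classical Gauss sum identities. The general tame case then follows from the abelian case by a Brauer induction argument, using the behaviour of the three contributions under $\ind$ and $\res$ together with part (i). I expect this to be the main obstacle: the cohomological and correction terms do not behave under induction as transparently as $\varepsilon_{L/K}$ does, so matching the two sides requires an honest explicit computation rather than formal manipulation.

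For (iii), one shows that the particular combination of epsilon constants entering $\varepsilon_{L/K}$, together with the Galois-equivariance of the other two contributions, is fixed by the natural action of $\Gal(\Q_{p}^{c}/\Q_{p})$ on $K_{0}(\Z_{p}[G],\Q_{p}^{c})$, so that $R_{L/K}$ descends to $K_{0}(\Z_{p}[G],\Q_{p})$. By Proposition \ref{prop:torsion-kernel}, membership of $R_{L/K}$ in $DT(\Z_{p}[G])$ is then equivalent to the vanishing of its image in $K_{0}(\mathfrak{M}_{p}(G),\Q_{p})$ under extension of scalars to a maximal order; this last statement is handled directly, using the rationality properties of local root numbers, which over a maximal order are strong enough for the cohomological and epsilon contributions to cancel.
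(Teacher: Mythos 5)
Note first that the paper itself gives no proof of Theorem \ref{thm:lec-properties}: the bracketed \cite{MR2078894} in its header marks all three parts as being recalled from Breuning's article, and the surrounding text moves on immediately without argument. There is therefore no proof in the paper against which to compare your sketch.

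As a reconstruction of Breuning's own arguments, your sketch is broadly in the right shape. The decomposition of $R_{L/K}$ into a cohomological term, an epsilon-constant term and correction terms, the term-by-term functoriality argument for (i), and the two-step approach to (iii) --- Galois descent to $\Q_{p}$-coefficients, then vanishing over a maximal order via Proposition \ref{prop:torsion-kernel} --- all match the structure of \cite{MR2078894}. Where your account is weakest is (ii): the essential input in Breuning's tame case is Fr\"{o}hlich--Taylor theory, in particular Taylor's theorem relating the Galois module structure of tame rings of integers to Galois Gauss sums, not a Brauer-induction reduction to cyclic $G$ together with elementary Gauss-sum identities. You correctly flag the behaviour of the cohomological and correction terms under induction as the main obstacle, but what resolves it is that deep theorem rather than an honest explicit computation.
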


Further work concerning the conjecture can be found in
\cite{MR2078894}, \cite{MR2546007} and \cite{MR3073206}.
We now make the following straightforward observation.

\begin{prop}
Let $A$ be a nontrivial finite abelian group of odd order and let $C_{2}$ act on $A$ by inversion.
Let $G=A \rtimes C_{2}$ as in Example \ref{ex:inversion}.
Then the local epsilon constant conjecture holds
for every finite Galois extension $L/K$ of $2$-adic fields with $\Gal(L/K) \simeq G$.
\end{prop}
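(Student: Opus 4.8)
The plan is to observe that this proposition is an immediate consequence of two facts already at our disposal: the general structural property that the local epsilon constant element always lies in the torsion subgroup of the relative $K$-group, and the vanishing of that torsion subgroup for the groups $G$ in question. In particular, no case analysis on the ramification of $L/K$ is needed, and the proof is entirely formal.

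First I would note that since $L/K$ is a finite Galois extension of $2$-adic fields, the relevant prime in Breuning's construction is $p=2$, so that $R_{L/K}$ is an element of $K_{0}(\Z_{2}[G],\Q_{2}^{c})$. By Theorem \ref{thm:lec-properties}(iii) we then have, unconditionally, that $R_{L/K} \in DT(\Z_{2}[G])$. On the other hand, $G = A \rtimes C_{2}$ with $A$ a nontrivial finite abelian group of odd order on which $C_{2}$ acts by inversion, so Lemma \ref{lem:trivial-DT2} applies and gives $DT(\Z_{2}[G]) = 0$. Combining these, $R_{L/K} = 0$, which is precisely the assertion of the local epsilon constant conjecture for $L/K$.

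There is essentially no obstacle to overcome: the two inputs are Theorem \ref{thm:lec-properties}(iii) and Lemma \ref{lem:trivial-DT2}, the latter resting on the fact that $\Z_{2}[G]$ is $A$-hybrid (Example \ref{ex:inversion}), hence weakly $A$-hybrid, so that $\proj_{G/A}^{G}$ identifies $DT(\Z_{2}[G])$ with $DT(\Z_{2}[C_{2}])$ by Proposition \ref{prop:wh-quot}, and the latter group is trivial. One could also phrase the argument so that it applies verbatim to the case $A$ trivial, i.e. $G \simeq C_{2}$, since $DT(\Z_{2}[C_{2}])=0$ as well; in that degenerate case the conjecture is in any event known because such extensions are at most tamely ramified, by Theorem \ref{thm:lec-properties}(ii). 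The only mild point worth spelling out in the write-up is that the hypothesis "$2$-adic fields" forces $p=2$, so that Lemma \ref{lem:trivial-DT2}, which concerns $DT(\Z_{2}[G])$, is exactly the group appearing in Theorem \ref{thm:lec-properties}(iii).
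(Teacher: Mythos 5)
Your proposal is correct and follows exactly the paper's own two-line argument: Theorem \ref{thm:lec-properties}(iii) places $R_{L/K}$ in $DT(\Z_{2}[G])$, and Lemma \ref{lem:trivial-DT2} shows this group is trivial. The extra remarks you include (on how Lemma \ref{lem:trivial-DT2} rests on Example \ref{ex:inversion} and Proposition \ref{prop:wh-quot}, and on the degenerate case $A$ trivial) are accurate but not needed, since the proposition explicitly assumes $A$ nontrivial.
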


\begin{proof}
We have $R_{L/K} \in DT(\Z_{2}[G])$ by Theorem \ref{thm:lec-properties}(iii).
However, Lemma \ref{lem:trivial-DT2} shows that $DT(\Z_{2}[G])$ is trivial in this case.
\end{proof}

\begin{prop}\label{prop:local-epsilon-hybrid}
Fix a prime $p$ and let $L/K$ be a finite Galois extension of $p$-adic fields with Galois group $G$.
Suppose that $\Z_{p}[G]$ is weakly $N$-hybrid.
Then the local epsilon constant conjecture holds for $L/K$ if and only if it holds for $L^{N}/K$.
\end{prop}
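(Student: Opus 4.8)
The plan is to mimic the proof of Theorem \ref{thm:ETNC-break-down}, using the projection map $\proj_{G/N}^{G}$ together with the key property that $R_{L/K}$ always lies in $DT(\Z_{p}[G])$. Since $\Z_{p}[G]$ is weakly $N$-hybrid, by Proposition \ref{prop:wh-quot} (applied with $\mathfrak{A}_{p} = \Z_{p}[G]$) the map $\proj_{G/N}^{G} = \quot_{G/N}^{G}$ restricts to an isomorphism $DT(\Z_{p}[G]) \stackrel{\sim}{\longrightarrow} DT(\Z_{p}[G/N])$. By Theorem \ref{thm:lec-properties}(iii) we have $R_{L/K} \in DT(\Z_{p}[G])$, and by Theorem \ref{thm:lec-properties}(i) we have $\quot_{G/N}^{G}(R_{L/K}) = R_{L^{N}/K}$, which lies in $DT(\Z_{p}[G/N])$.

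First I would note that, by the remark following Definition \ref{def:weakly-hybrid} (or directly by Lemma \ref{lem:weak-hybrid-p-does-not-divide-N}), the hypothesis that $\Z_{p}[G]$ is weakly $N$-hybrid forces $p \nmid |N|$, so $e_{N} \in \Z_{p}[G]$ and $L^{N}/K$ is a genuine Galois extension of $p$-adic fields with Galois group $G/N$. Then the chain of implications is immediate: if the local epsilon constant conjecture holds for $L/K$, then $R_{L/K} = 0$, hence $R_{L^{N}/K} = \quot_{G/N}^{G}(R_{L/K}) = 0$, so the conjecture holds for $L^{N}/K$. Conversely, if $R_{L^{N}/K} = 0$, then since $R_{L/K} \in DT(\Z_{p}[G])$ and $\quot_{G/N}^{G}$ is \emph{injective} on $DT(\Z_{p}[G])$ (being an isomorphism onto $DT(\Z_{p}[G/N])$), we conclude $R_{L/K} = 0$.

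There is essentially no obstacle here: the proof is a direct application of the machinery already assembled, and the only point requiring a word of care is that the relevant functorial property of $R_{L/K}$ is stated for $\quot_{G/N}^{G}$ rather than $\proj_{G/N}^{G}$, but these coincide on $\Z_{p}[G]$ when $e_{N} \in \Z_{p}[G]$, which holds by the weakly hybrid hypothesis. This is the local analogue of Corollary \ref{cor:ETNC-break-down} and Theorem \ref{thm:ETNC-break-down}, simplified by the fact that $R_{L/K}$ unconditionally lies in $DT(\Z_{p}[G])$, so no separate input corresponding to $\ETNC^{\max}_{p}(L/K,r)$ is needed.

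\begin{proof}
Since $\Z_{p}[G]$ is weakly $N$-hybrid, Lemma \ref{lem:weak-hybrid-p-does-not-divide-N} shows that $p \nmid |N|$; in particular $e_{N} \in \Z_{p}[G]$ and $L^{N}/K$ is a finite Galois extension of $p$-adic fields with Galois group $G/N$.
By Proposition \ref{prop:wh-quot} (applied with $\mathfrak{A}_{p} = \Z_{p}[G]$) the map
\[
\quot_{G/N}^{G} = \proj_{G/N}^{G} : DT(\Z_{p}[G]) \longrightarrow DT(\Z_{p}[G/N])
\]
is an isomorphism.
By Theorem \ref{thm:lec-properties}(iii) we have $R_{L/K} \in DT(\Z_{p}[G])$, and by Theorem \ref{thm:lec-properties}(i) we have $\quot_{G/N}^{G}(R_{L/K}) = R_{L^{N}/K}$.
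If the local epsilon constant conjecture holds for $L/K$ then $R_{L/K} = 0$, whence $R_{L^{N}/K} = \quot_{G/N}^{G}(R_{L/K}) = 0$, so the conjecture holds for $L^{N}/K$.
Conversely, if $R_{L^{N}/K} = 0$, then $\quot_{G/N}^{G}(R_{L/K}) = 0$; since $\quot_{G/N}^{G}$ is injective on $DT(\Z_{p}[G])$ and $R_{L/K} \in DT(\Z_{p}[G])$, it follows that $R_{L/K} = 0$, so the conjecture holds for $L/K$.
\end{proof}
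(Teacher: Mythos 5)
Your proof is correct and takes essentially the same approach as the paper: the paper's proof consists of a one-line remark that the result follows by combining Theorem \ref{thm:lec-properties}(i) and (iii) with Proposition \ref{prop:wh-quot}, which is exactly the argument you have spelled out in detail (the extra observation that $e_N \in \Z_p[G]$ via Lemma \ref{lem:weak-hybrid-p-does-not-divide-N} is a harmless clarification).
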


\begin{proof}
This follows by combining Theorem \ref{thm:lec-properties}(i) and (iii) with
Proposition \ref{prop:wh-quot}.
\end{proof}

\begin{remark}\label{rmk:lec-frobenius}
Fix a prime $p$ and let $L/K$ be a finite Galois extension of $p$-adic fields with Galois group $G$.
Suppose that $G$ is a Frobenius group with Frobenius kernel $N$ and that $p \nmid |N|$.
Then $\Z_{p}[G]$ is $N$-hybrid by Proposition \ref{prop:frob-N-hybrid} and so the local epsilon constant conjecture holds
for $L/K$ if and only if it holds for $L^{N}/K$.
Let $G_{1} \unlhd G$ be the wild inertia subgroup of $G$.
Then by Theorem \ref{thm:frob-kernel}(iii) either $G_{1}  \unlhd N$ or $N  \unlhd G_{1}$.
Since $|G_{1}|$ is a power of $p$ and $p \nmid |N|$, we see that $G_{1}$ is trivial.
Thus, $L/K$ is at most tamely ramified.
However, the local epsilon constant conjecture is already known in this case by Theorem \ref{thm:lec-properties}(ii)
and so Proposition \ref{prop:local-epsilon-hybrid} does not provide new information in this situation.
\end{remark}

\begin{remark}\label{rmk:lec-S4-V4}
In Example \ref{ex:S4-V4} it was shown that $\Z_{3}[S_{4}]$ is $V_{4}$-hybrid but that $S_{4}$ is not a
Frobenius group.
One may be tempted to use Proposition \ref{prop:local-epsilon-hybrid} to deduce new cases of
the local epsilon conjecture for extensions $L/K$ of $3$-adic fields with $\Gal(L/K) \simeq S_{4}$
from wildly ramified subextensions $F/K$ with $\Gal(F/K) \simeq S_{4}/V_{4} \simeq S_{3}$.
However, some care is needed: there are no extensions $L/K$ of $3$-adic fields with $\Gal(L/K) \simeq S_{4}$
even though there are extensions $F/K$ with $\Gal(F/K) \simeq S_{3}$
(see \cite{MR2322733}, for example).
\end{remark}

\begin{prop}\label{prop:lecc-D12}
The local epsilon constant conjecture holds for all Galois extensions $L/\Q_{2}$ with $\Gal(L/\Q_{2}) \simeq	D_{12}$.
\end{prop}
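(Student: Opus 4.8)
The plan is to use the weakly hybrid machinery developed in \S\ref{subsec:wh} together with the properties of the local epsilon constant conjecture recorded in \S\ref{subsec:lec} in order to reduce the statement to the case of biquadratic extensions of $\Q_{2}$, for which the conjecture is already known.

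Recall from Example \ref{ex:D12-weakly-hybrid} that $\Z_{2}[D_{12}]$ is weakly $N$-hybrid, where $N \simeq C_{3}$ is the unique normal subgroup of $D_{12} \simeq S_{3} \times C_{2}$ of order $3$; equivalently, $N$ is the commutator subgroup of $D_{12}$. Hence for any Galois extension $L/\Q_{2}$ with $\Gal(L/\Q_{2}) \simeq D_{12}$, Proposition \ref{prop:local-epsilon-hybrid} shows that the local epsilon constant conjecture holds for $L/\Q_{2}$ if and only if it holds for $L^{N}/\Q_{2}$. Since $D_{12}/N \simeq C_{2} \times C_{2}$, the field $L^{N}$ is a Galois extension of $\Q_{2}$ with group $C_{2} \times C_{2}$. (Concretely, $DT(\Z_{2}[D_{12}]) \simeq DT(\Z_{2}[C_{2} \times C_{2}])$ is cyclic of order $2$ by Proposition \ref{prop:wh-quot}, and the reduction identifies the obstruction class $R_{L/\Q_{2}}$ with $R_{L^{N}/\Q_{2}}$; the reduction also covers the case, if it occurs, in which no such $L$ exists, the statement then being vacuous.)

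It therefore remains to verify the local epsilon constant conjecture for every Galois extension of $\Q_{2}$ with group $C_{2} \times C_{2}$. There are only finitely many such extensions --- each is the compositum of two of the seven quadratic extensions of $\Q_{2}$ --- and the conjecture has been checked for each of them; see the verified cases recorded in \cite{MR2078894}, \cite{MR2546007} and \cite{MR3073206}. This last step is the only genuine input beyond the formal reduction: the hybrid-order techniques merely transport the conjecture from $D_{12}$ down to $C_{2} \times C_{2}$, and since some biquadratic extensions of $\Q_{2}$ are wildly ramified, Theorem \ref{thm:lec-properties}(ii) alone does not suffice. Hence the main obstacle is not the reduction itself but having an adequate supply of known cases for biquadratic extensions of $\Q_{2}$, which is why we appeal to the literature at this point.
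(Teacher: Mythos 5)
Your proposal follows essentially the same route as the paper: reduce via Example \ref{ex:D12-weakly-hybrid} and Proposition \ref{prop:local-epsilon-hybrid} from $D_{12}$ to $D_{12}/N \simeq C_{2} \times C_{2}$, and then invoke the known verification of the local epsilon constant conjecture for biquadratic extensions of $\Q_{2}$ (the paper cites \cite[Theorem 1(b)]{MR3073206} for precisely this input). The extra remarks you add --- that $DT(\Z_{2}[D_{12}]) \simeq DT(\Z_{2}[C_{2}\times C_{2}])$ has order $2$, and that wild ramification means tameness (Theorem \ref{thm:lec-properties}(ii)) is not enough --- are correct but not part of the paper's argument; the paper makes the latter observation in the remark preceding the proposition.
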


\begin{remark}
Note that extensions $L/\Q_{2}$ with $\Gal(L/\Q_{2}) \simeq D_{12}$ do indeed exist (see \cite{MR2194887}, for example) and are necessarily wildly ramified. However, this particular case of the conjecture
has recently been established by Bley and Debeerst in \cite[Theorem 1(b)]{MR3073206} using computational methods,
and our proof relies on a different case of their results.
\end{remark}

\begin{proof}[Proof of Proposition \ref{prop:lecc-D12}]
In Example \ref{ex:D12-weakly-hybrid} it was shown that $\Z_{2}[D_{12}]$ is weakly $N$-hybrid where $N$
is the unique normal subgroup of $D_{12}$ of order $3$.
By \cite[Theorem 1(b)]{MR3073206}, the local epsilon constant conjecture is known for all biquadratic extensions
of $\Q_{2}$.
Thus noting that $D_{12}/N \simeq C_{2} \times C_{2}$, Proposition \ref{prop:local-epsilon-hybrid} shows that
the local epsilon constant conjecture holds for $L/\Q_{2}$.
\end{proof}

\subsection{The Global Epsilon Constant Conjecture}\label{subsec:gec}
We examine applications to the conjecture formulated by Bley and Burns in \cite{MR2005875}.
Let $L/K$ be a finite Galois extension of number fields with Galois group $G$.
An element $T\Omega^{\loc}(L/K,1) \in K_{0}(\Z[G],\R)$ is defined and it is shown that
in fact we always have $T\Omega^{\loc}(L/K,1) \in K_{0}(\Z[G],\Q)$.
The conjecture is that this element always vanishes.
The decomposition $K_{0}(\Z[G],\Q) \simeq \oplus_{p} K_{0}(\Z_{p}[G],\Q_{p})$
(see \S \ref{subsec:p-parts}) splits $T\Omega^{\loc}(L/K,1)$ into $p$-parts
 $T\Omega^{\loc}_{p}(L/K,1) \in K_{0}(\Z_{p}[G],\Q_{p})$.
 We recall the following properties.

\begin{theorem}[\cite{MR2005875}]\label{thm:gec-properties}
Let $p$ be a prime.
\begin{enumerate}
\item If $N$ is a normal subgroup of $G$ then
$\quot_{G/N}^{G}(T\Omega^{\loc}_{p}(L/K,1))=T\Omega^{\loc}_{p}(L^{N}/K,1)$.
\item If $L/K$ is at most tamely ramified then $T\Omega^{\loc}_{p}(L/K,1)=0$.
\item We always have $T\Omega^{\loc}_{p}(L/K,1) \in DT(\Z_{p}[G])$.
\end{enumerate}
\end{theorem}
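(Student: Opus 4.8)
The plan is to deduce all three assertions from the work of Bley and Burns \cite{MR2005875}, in the same way that Theorem \ref{thm:lec-properties} is deduced from \cite{MR2078894}; since this is a theorem cited from the literature, the real task is to indicate which inputs are used and why each part holds.

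For part (i), I would observe that it is a formal consequence of the construction of $T\Omega^{\loc}(L/K,1)$: this element is an Euler characteristic assembled from \'{e}tale cohomology complexes together with local epsilon constants, and every ingredient is compatible with the ring homomorphism $\Z_{p}[G] \twoheadrightarrow \Z_{p}[G/N]$. Concretely, the cohomological data for the quotient extension $L^{N}/K$ is obtained by applying $e_{N} \otimes_{\Z_{p}[G]} -$, while the epsilon constants attached to characters of $G/N$ coincide with those of their inflations to $G$; hence $\quot_{G/N}^{G}$ carries the equivariant invariant of $L/K$ to that of $L^{N}/K$. This compatibility is exactly the functoriality statement recorded in loc.\ cit.

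Part (ii) is the substantive point, and I expect it to be the main obstacle were one to reprove the theorem from scratch. Here I would invoke the explicit computation of \cite{MR2005875}: when $L/K$ is at most tamely ramified, the relevant complexes admit a simple perfect representative and the equivariant epsilon constant reduces, after cancellation with the purely algebraic contribution, to a product of Galois Gauss sums, which then cancels as well. (Alternatively, one might try to reduce to the already-known tame case of the local epsilon constant conjecture, Theorem \ref{thm:lec-properties}(ii), place by place, but it is the direct Gauss-sum computation in loc.\ cit.\ that is actually carried out.)

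Finally, for part (iii): it is already known that $T\Omega^{\loc}(L/K,1) \in K_{0}(\Z[G],\Q)$, so the decomposition of \S \ref{subsec:p-parts} yields a well-defined $p$-part in $K_{0}(\Z_{p}[G],\Q_{p})$, and by Proposition \ref{prop:torsion-kernel} the claim $T\Omega^{\loc}_{p}(L/K,1) \in DT(\Z_{p}[G])$ is equivalent to the vanishing of its image under extension of scalars to a maximal $\Z_{p}$-order $\mathfrak{M}_{p}(G)$. That vanishing is the `epsilon constant conjecture over maximal orders', established in \cite{MR2005875} from known results on the equivariant behaviour of Galois Gauss sums and local root numbers. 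Thus the only genuinely hard content is the tame-case computation underlying part (ii); parts (i) and (iii) are formal, respectively reduce to a statement already available over maximal orders.
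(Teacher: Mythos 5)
The paper does not give a proof of this theorem: it is stated with the bracketed citation \cite{MR2005875} precisely to record results established by Bley and Burns, and the subsequent text only uses (i)--(iii) as black boxes. So there is no in-paper argument against which to compare. That said, your sketch is a faithful summary of what the cited paper actually does: (i) is the formal functoriality of the Euler-characteristic construction with respect to the ring surjection $\Z_p[G]\to\Z_p[G/N]$ (though note that $\quot^G_{G/N}$ is induced by $\Z_p[G/N]\otimes_{\Z_p[G]}-$, not literally by multiplication by $e_N$, which need not lie in $\Z_p[G]$); (ii) is the tame-case computation via Galois Gauss sums and Fr\"ohlich--Taylor theory; and (iii) is correctly reduced, via Proposition~\ref{prop:torsion-kernel}, to the vanishing of the image over a maximal order, which Bley--Burns establish using the known equivariant behaviour of Gauss sums and local root numbers. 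The only caveat I would add is that (iii) is not much lighter than (ii): both ultimately draw on the same Gauss-sum machinery, so I would not single out (ii) as the uniquely ``substantive'' point.
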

The subscripts $p$ may be omitted from all the statements of Theorem \ref{thm:gec-properties}.
Further work concerning the conjecture can be found in \cite{MR2031413}, \cite{MR2078894}, \cite{MR2546007}
and \cite{MR3073206}.
The global epsilon constant conjecture in fact pre-dates the local epsilon constant conjecture, and the relationship
between the two conjectures  (see \cite[Theorem 4.1]{MR2078894}) is given by the equation
\begin{equation}\label{eq:lec-gec}
T\Omega^{\loc}_{p}(L/K,1) = \sum_{v} \ind^{G}_{G_{w}}(R_{L_{w}/K_{v}})
\end{equation}
where $v$ runs through all places of $K$ above $p$, $w$ is a fixed place of $L$ above $v$,
$G_{w}$ denotes the decomposition group and $\ind^{G}_{G_{w}}$ is the induction map defined in \S \ref{subsec:K-func}. Thus cases of one conjecture can often be used to establish cases of the other.
However, some care is needed because, for example, of issues at the prime $p=2$ or the fact that
 the induction map may be trivial in certain cases (see \cite[\S 4]{MR2078894}).
We now give the analogues of the results of \S \ref{subsec:lec}.

\begin{prop}
Let $A$ be a nontrivial finite abelian group of odd order and let $C_{2}$ act on $A$ by inversion.
Let $G=A \rtimes C_{2}$ as in Example \ref{ex:inversion}.
Then the $2$-part of the global epsilon constant conjecture holds
for every finite Galois extension $L/K$ of number fields with $\Gal(L/K) \simeq G$.
\end{prop}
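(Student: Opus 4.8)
The plan is to mimic the proof of the local analogue given just above in \S\ref{subsec:lec}: the point is simply that the relevant relative $K$-theoretic element is forced into a group that we already know to be trivial.

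First I would recall that, by Theorem \ref{thm:gec-properties}(iii), for any finite Galois extension $L/K$ of number fields with Galois group $G$ and any prime $p$ we have $T\Omega^{\loc}_{p}(L/K,1) \in DT(\Z_{p}[G])$. Specialising to $p=2$ and to our group $G = A \rtimes C_{2}$, this says $T\Omega^{\loc}_{2}(L/K,1) \in DT(\Z_{2}[G])$.

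Next I would invoke Lemma \ref{lem:trivial-DT2}, which asserts precisely that $DT(\Z_{2}[G])$ is trivial when $A$ is a finite abelian group of odd order and $C_{2}$ acts on $A$ by inversion (this is exactly the hypothesis of the present proposition, and covers $G = D_{2n}$ for $n$ odd as well as the degenerate case $A$ trivial). Combining the two facts, $T\Omega^{\loc}_{2}(L/K,1)$ lies in the trivial group and hence vanishes; since the $2$-part of the global epsilon constant conjecture for $L/K$ is by definition the assertion $T\Omega^{\loc}_{2}(L/K,1)=0$, this is exactly what we wanted.

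There is essentially no obstacle here: the argument is immediate once Theorem \ref{thm:gec-properties}(iii) and Lemma \ref{lem:trivial-DT2} are in place, and it does not even require the functoriality statement Theorem \ref{thm:gec-properties}(i) (unlike the hybrid-order arguments used elsewhere in \S\ref{sec:other-conjectures}). The only mild point worth a remark is that, because we are working with number fields rather than local fields, we could alternatively deduce this from the local result via \eqref{eq:lec-gec}, summing the induced contributions $\ind^{G}_{G_{w}}(R_{L_{w}/K_{v}})$; but that detour is unnecessary, and invoking $DT(\Z_{2}[G])=0$ directly is cleaner.
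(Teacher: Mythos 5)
Your proof is correct and is essentially identical to the paper's: both deduce $T\Omega^{\loc}_{2}(L/K,1) \in DT(\Z_{2}[G])$ from Theorem \ref{thm:gec-properties}(iii) and then conclude via Lemma \ref{lem:trivial-DT2} that this group is trivial. Your observation that the functoriality statement is not needed here is also accurate.
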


\begin{proof}
We have $T\Omega_{p}^{\loc}(L/K,1) \in DT(\Z_{2}[G])$ by Theorem \ref{thm:gec-properties}(iii).
However, Lemma \ref{lem:trivial-DT2} shows that $DT(\Z_{2}[G])$ is trivial in this case.
\end{proof}

\begin{prop}\label{prop:global-epsilon-hybrid}
Fix a prime $p$ and let $L/K$ be a finite Galois extension of number fields with Galois group $G$.
Suppose that $\Z_{p}[G]$ is weakly $N$-hybrid.
Then the $p$-part of the global epsilon constant conjecture holds for $L/K$ if and only if it holds for $L^{N}/K$.
\end{prop}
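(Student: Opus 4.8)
The plan is to argue exactly as in the proof of Proposition~\ref{prop:local-epsilon-hybrid}, replacing the local invariant $R_{L/K}$ by the global invariant $T\Omega^{\loc}_{p}(L/K,1)$ and Theorem~\ref{thm:lec-properties} by Theorem~\ref{thm:gec-properties}. First I would record that by Theorem~\ref{thm:gec-properties}(iii) the element $T\Omega^{\loc}_{p}(L/K,1)$ always lies in the torsion subgroup $DT(\Z_{p}[G])$ of $K_{0}(\Z_{p}[G],\Q_{p})$, and similarly $T\Omega^{\loc}_{p}(L^{N}/K,1)$ lies in $DT(\Z_{p}[G/N])$. Thus the $p$-part of the global epsilon constant conjecture for each of $L/K$ and $L^{N}/K$ is the assertion that a specific element of a torsion group vanishes.

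Next, since $\Z_{p}[G]$ is assumed weakly $N$-hybrid, Proposition~\ref{prop:wh-quot} (applied with $\mathfrak{A}_{p}=\Z_{p}[G]$, so that $\proj^{G}_{G/N}=\quot^{G}_{G/N}$) shows that
\[
\quot^{G}_{G/N}\colon DT(\Z_{p}[G]) \longrightarrow DT(\Z_{p}[G/N])
\]
is an isomorphism. On the other hand, Theorem~\ref{thm:gec-properties}(i) asserts that this same quotient map sends $T\Omega^{\loc}_{p}(L/K,1)$ to $T\Omega^{\loc}_{p}(L^{N}/K,1)$. Combining these two facts, $T\Omega^{\loc}_{p}(L/K,1)=0$ if and only if $T\Omega^{\loc}_{p}(L^{N}/K,1)=0$, which is precisely the claimed equivalence.

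I do not expect a genuine obstacle here: the statement is a formal consequence of the functoriality in Theorem~\ref{thm:gec-properties}(i), the integrality statement Theorem~\ref{thm:gec-properties}(iii), and the isomorphism of Proposition~\ref{prop:wh-quot}. The one point worth flagging is that the weakly $N$-hybrid hypothesis is used essentially for \emph{both} directions of the equivalence: surjectivity of $\quot^{G}_{G/N}$ on $DT$ holds unconditionally by Proposition~\ref{prop:quotient-surjective}, but the injectivity needed to deduce vanishing for $L/K$ from vanishing for $L^{N}/K$ is exactly what the weakly hybrid assumption buys us via Proposition~\ref{prop:wh-quot}.
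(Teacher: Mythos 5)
Your proof is correct and follows exactly the same route as the paper's, which simply cites the combination of Theorem~\ref{thm:gec-properties}(i) and (iii) with Proposition~\ref{prop:wh-quot}. Your remark that surjectivity would already follow from Proposition~\ref{prop:quotient-surjective} while the weakly hybrid hypothesis is needed precisely for injectivity is a useful clarification, but it is the same argument unpacked.
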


\begin{proof}
This follows by combining Theorem \ref{thm:gec-properties}(i) and (iii) with
Proposition \ref{prop:wh-quot}.
\end{proof}

\begin{remark}
The phenomena mentioned in Remarks \ref{rmk:lec-frobenius} and \ref{rmk:lec-S4-V4} do not directly apply to Proposition \ref{prop:global-epsilon-hybrid}.
However, it is informative to consider the following example.
In Example \ref{ex:S4-V4} it was shown that $\Z_{3}[S_{4}]$ is $V_{4}$-hybrid.
Since $S_{4}/V_{4} \simeq S_{3}$ and the global epsilon constant conjecture is known for all Galois extensions $K/\Q$ with
$\Gal(K/\Q) \simeq S_{3}$ (see \cite[Theorem 1.1]{MR2031413}), Proposition \ref{prop:global-epsilon-hybrid} allows us to conclude that the $3$-part of the global epsilon constant conjecture holds for all Galois extensions $L/\Q$ with
$\Gal(L/\Q) \simeq S_{4}$. Note that in fact one can establish this result by instead using \eqref{eq:lec-gec} together with an analysis of possible decomposition groups (there are no Galois extensions $L/\Q_{3}$ with
$\Gal(L/\Q_{3}) \simeq S_{4}$ or $A_{4}$) and known cases of the local epsilon constant conjecture.
However, one advantage of using Proposition \ref{prop:global-epsilon-hybrid} is that no such analysis of possible
decomposition groups is necessary.
\end{remark}

\subsection{The leading term conjecture at $s=1$}
We examine applications to the leading term conjecture at $s=1$ formulated by Breuning and Burns in
\cite[\S 3]{MR2371375}.
Let $L/K$ be a finite Galois extension of number fields with Galois group $G$.
An element $T\Omega(L/K,1) \in K_{0}(\Z[G],\R)$ is defined and the conjecture
is that this element always vanishes.
Note that $T\Omega(L/K,1)$ should not be confused with
$T\Omega(L/K,\Z[G],1)$ used in the statement of the ETNC given in \S \ref{subsec:ETNC-prelim}.
However, the main result of \cite{MR2804251} is that the vanishing of each of these elements is equivalent if we assume Leopoldt's conjecture.

If $T\Omega(L/K,1) \in K_{0}(\Z[G],\Q)$ then the decomposition
$K_{0}(\Z[G],\Q) \simeq \oplus_{p} K_{0}(\Z_{p}[G],\Q_{p})$
(see \S \ref{subsec:p-parts}) splits $T\Omega(L/K,1)$ into $p$-parts
$T\Omega_{p}(L/K,1) \in K_{0}(\Z_{p}[G],\Q_{p})$.
We say that `$\LTC_{p}(L/K,1)$ holds' if $T\Omega_{p}(L/K,1)$ vanishes.
We give an analogue of Corollary \ref{cor:ETNC-break-down}
(for simplicity, the stated version is slightly less general than it could be).

\begin{theorem}\label{thm:ltc-s=1}
Let $L/K$ be a finite Galois extension of number fields with Galois group $G$.
Let $N=G'$ be the commutator subgroup of $G$.
Let $p$ be a prime and suppose that $\Z_{p}[G]$ is weakly $N$-hybrid.
Suppose that $L^{N}/\Q$ is abelian
(in particular, this is the case when $K=\Q$) and that $\SSC(L/K)$ holds.
Then $\LTC_{p}(L/K,1)$ holds.
\end{theorem}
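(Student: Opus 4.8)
The plan is to prove the relevant direction of the analogue of Theorem~\ref{thm:ETNC-break-down} for the leading term conjecture at $s=1$ and then argue as in the proof of Corollary~\ref{cor:ETNC-break-down}. Since $\Z_{p}[G]$ is weakly $N$-hybrid we have $e_{N}\in\Z_{p}[G]$ and $p\nmid|N|$ by Lemma~\ref{lem:weak-hybrid-p-does-not-divide-N}, so the maps $\quot^{G}_{G/N}$ and $\proj^{G}_{G/N}$ coincide on $K_{0}(\Z_{p}[G],\Q_{p})$, and Proposition~\ref{prop:wh-quot} shows that the induced map $\quot^{G}_{G/N}\colon DT(\Z_{p}[G])\to DT(\Z_{p}[G/N])$ is an isomorphism, hence in particular injective on $DT(\Z_{p}[G])$. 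The input on $T\Omega(L/K,1)$ that the plan requires, all due to Breuning and Burns \cite{MR2371375} (and discussed in \S\ref{sec:other-conjectures}), is: (a) $T\Omega(L/K,1)\in K_{0}(\Z[G],\Q)$ whenever Stark's conjecture for $L/K$ holds --- and $\SSC(L/K)$ implies Stark's conjecture for $L/K$ --- so that $T\Omega_{p}(L/K,1)$ and the statement $\LTC_{p}(L/K,1)$ are meaningful; (b) the functorial identity $\quot^{G}_{G/N}(T\Omega_{p}(L/K,1))=T\Omega_{p}(L^{N}/K,1)$; and (c) the relation of \cite[\S5]{MR2371375} between the leading term conjectures at $s=0$ and $s=1$, namely that $T\Omega_{p}(L/K,1)$ differs by $\pm T\Omega^{\loc}_{p}(L/K,1)$ from the leading term element at $s=0$, the latter being identified with the ETNC element $T\Omega(L/K,\Z_{p}[G],0)$ via \cite[Theorems 2.3.3 and 2.4.1]{MR1863302} and \cite[Remarks 4.3 and 4.5]{MR2371375}.

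The key step is to deduce from $\SSC(L/K)$ that $T\Omega_{p}(L/K,1)\in DT(\Z_{p}[G])$, which plays the role of the maximal order part of the $s=1$ leading term conjecture. Fix a maximal $\Z_{p}$-order $\mathfrak{M}_{p}(G)$ with $\Z_{p}[G]\subseteq\mathfrak{M}_{p}(G)\subseteq\Q_{p}[G]$ and let $\rho_{*}\colon K_{0}(\Z_{p}[G],\Q_{p})\to K_{0}(\mathfrak{M}_{p}(G),\Q_{p})$ be the map induced by extension of scalars; by Proposition~\ref{prop:torsion-kernel} we have $DT(\Z_{p}[G])=\ker\rho_{*}$, so it suffices to show $\rho_{*}(T\Omega_{p}(L/K,1))=0$. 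By Theorem~\ref{thm:gec-properties}(iii), $T\Omega^{\loc}_{p}(L/K,1)\in DT(\Z_{p}[G])=\ker\rho_{*}$, so input (c) gives $\rho_{*}(T\Omega_{p}(L/K,1))=\pm\,\rho_{*}(T\Omega(L/K,\Z_{p}[G],0))$. Since $\SSC(L/K)$ holds, so does $\SSC_{p}(L/K)$ for every prime $p$, and by \S\ref{subsec:ETNC-max}(ii),(iii) this says precisely that $T\Omega(L/K,\Z_{p}[G],0)\in DT(\Z_{p}[G])=\ker\rho_{*}$. Hence $\rho_{*}(T\Omega_{p}(L/K,1))=0$, as required.

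It remains to record that $\LTC_{p}(L^{N}/K,1)$ holds. As $N=G'$, the extension $L^{N}/K$ is abelian with $\Gal(L^{N}/K)\simeq G^{\mathrm{ab}}$, and $K$ is a subfield of $L^{N}$, which is abelian over $\Q$ by hypothesis; in particular Leopoldt's conjecture is known for $L^{N}$, so by \cite{MR2804251} the statement $\LTC(L^{N}/K,1)$ is equivalent to the relevant special case of the ETNC, which holds by \S\ref{subsec:known-cases}(i). Thus $T\Omega_{p}(L^{N}/K,1)=0$. Combining the three paragraphs: $T\Omega_{p}(L/K,1)\in DT(\Z_{p}[G])$, on which $\quot^{G}_{G/N}$ is injective, and $\quot^{G}_{G/N}(T\Omega_{p}(L/K,1))=T\Omega_{p}(L^{N}/K,1)=0$; hence $T\Omega_{p}(L/K,1)=0$, i.e.\ $\LTC_{p}(L/K,1)$ holds. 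I expect the main obstacle to lie in the second paragraph: one must check that the $s=1$ leading term element and the ETNC element for $\Q(0)_{L}$ have the \emph{same} image in $K_{0}(\mathfrak{M}_{p}(G),\Q_{p})$ --- equivalently, differ by an element of $DT(\Z_{p}[G])$ --- rather than merely having equivalent vanishing; this requires extracting the precise form of the compatibility results of \cite[\S4 and \S5]{MR2371375} together with the identifications of \cite{MR1863302}. Everything else is a formal consequence of the weakly hybrid formalism of \S\ref{sec:rel-k-group-weak-hybrid} and the functorial and known-case input above.
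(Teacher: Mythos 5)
Your proposal is correct and follows essentially the same route as the paper's proof: establish $T\Omega_{p}(L/K,1)\in DT(\Z_{p}[G])$ from $\SSC(L/K)$, establish $\LTC_{p}(L^{N}/K,1)$, push through the quotient functoriality, and use Proposition~\ref{prop:wh-quot} (i.e.\ the weakly hybrid hypothesis) to conclude. The only divergence is bookkeeping: the paper cites \cite[Proposition~4.4 and Theorem~5.2]{MR2371375} as a black box for the first step, whereas you reconstruct that inclusion from the relation between $T\Omega(L/K,1)$, $T\Omega^{\loc}(L/K,1)$ and the $s=0$ element; similarly the paper cites \cite[Corollary~1.3]{MR2804251} directly for $\LTC_{p}(L^{N}/K,1)$ where you rederive it from Leopoldt plus the equivalence with the ETNC. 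You correctly flag that the only subtle point in your reconstruction is pinning down the precise form of the compatibility formula in \cite[\S\S4--5]{MR2371375}, which is exactly the content the paper's citation supplies.
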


\begin{proof}
Given that $\SSC(L/K)$ holds, \cite[Proposition 4.4 and Theorem 5.2]{MR2371375} show that
$T\Omega(L/K,1) \in DT(\Z[G])$. Thus $T\Omega_{p}(L/K,1) \in DT(\Z_{p}[G])$.
Furthermore $\LTC_{p}(L^{N}/K,1)$ holds by \cite[Corollary 1.3]{MR2804251}.
However,
$\quot^{G}_{G/N}(T\Omega_{p}(L/K,1))=T\Omega_{p}(L^{N}/K,1)$
by \cite[Proposition 3.5(ii)]{MR2371375} and so the result now follows from Proposition \ref{prop:wh-quot}.
\end{proof}

\begin{remark}
Using Theorem \ref{thm:ltc-s=1} and minor variants of its proof, it is straightforward to prove analogues of Theorem \ref{thm:affine-ETNC}, Corollary \ref{cor:rel-cyclic-ETNC},
and Propositions \ref{prop:ETNC-S3}, \ref{prop:ETNC-D12}, and \ref{prop:ETNC-S3-S4}, in which
the relevant cases of  $\LTC_{p}(L/K,1)$ are established.
\end{remark}

\section{Denominator Ideals}

\subsection{Generalised adjoint matrices} 
Let $G$ be a finite group and let $R$ be either $\Z$ or $\Z_{p}$ for some prime $p$.
Let $F=\Q$ or $\Q_{p}$ be the field of fractions of $R$ and
let $\mathfrak{M}(G)$ be a maximal $R$-order such that $R[G] \subseteq \mathfrak{M}(G) \subseteq F[G]$.

Choose $n \in \N$ and let $H \in M_{n \times n}(R[G])$.
Then slightly generalising the notation of \S \ref{subsec:idempotents} in an obvious way (for the case $R=\Z$),
we may decompose $H$ into
\[
H = \sum_{i=1}^{t} H_{i} \in M_{n \times n}(\mathfrak{M}(G)) = \bigoplus_{i=1}^{t}  M_{n \times n}(\mathfrak{M}(G) e_{i}),
\]
where $H_{i} := He_{i}$.
The reduced norm map $\nr:F[G] \longrightarrow \zeta(F[G])$
is defined componentwise and extends to matrix rings over $F[G]$ (see \cite[\S 7D]{MR632548}).
The reduced characteristic polynomial $f_{i}(X) = \sum_{j=0}^{n_{i} n} \alpha_{ij}X^{j}$ of $H_{i}$
has coefficients in $\mathcal{O}_{i}$.
Moreover, the constant term $\alpha_{i0}$ is equal to $\nr(H_{i}) \cdot (-1)^{n_{i} n}$.
We put
\[
H_{i}^{\ast} := (-1)^{n_{i} n +1} \cdot \sum_{j=1}^{n_{i} n} \alpha_{ij}H_{i}^{j-1}, \quad H^{\ast} := \sum_{i=1}^{t} H_{i}^{\ast}.
\]
Note that this definition of $H^{\ast}$ differs slightly from the definition in \cite[\S 4]{MR2609173},
but follows the conventions in \cite{MR3092262}.
Let $ 1_{n \times n}$ denote the $n \times n$ identity matrix.

\begin{lemma}\label{lem:ast}
We have $H^{\ast} \in M_{n\times n} (\mathfrak{M}(G))$ and $H^{\ast} H = H H^{\ast} = \nr(H) \cdot 1_{n \times n}$.
\end{lemma}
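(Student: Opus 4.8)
The plan is to establish both assertions one block at a time and then reassemble using the orthogonality of the central idempotents $e_{i}$. Fix $i$ and work inside the central simple $F_{i}$-algebra $M_{n\times n}(F[G]e_{i}) = M_{n \times n}(A_{i})$, in which $H_{i} = He_{i}$ lies in the maximal $R$-order $M_{n\times n}(\mathfrak{M}(G)e_{i})$. Recall that the reduced characteristic polynomial $f_{i}(X) = \sum_{j=0}^{n_{i}n}\alpha_{ij}X^{j}$ is, after passing to a splitting field $E_{i}/F_{i}$ and an isomorphism $E_{i}\otimes_{F_{i}}M_{n\times n}(A_{i}) \simeq M_{n_{i}n \times n_{i}n}(E_{i})$ sending $H_{i}$ to a matrix $\widetilde{H}_{i}$, simply the ordinary characteristic polynomial $\det(X\cdot 1 - \widetilde{H}_{i})$; in particular, as already recorded in the text above, $\alpha_{i0} = (-1)^{n_{i}n}\nr(H_{i})$ and all $\alpha_{ij}$ lie in $\mathcal{O}_{i}$.

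The first key step is Cayley--Hamilton. Applying it to $\widetilde{H}_{i}$ gives $f_{i}(\widetilde{H}_{i}) = 0$, and since the scalar-extension map $M_{n\times n}(A_{i}) \hookrightarrow M_{n_{i}n \times n_{i}n}(E_{i})$ is injective we obtain the identity $f_{i}(H_{i}) = 0$ in $M_{n\times n}(A_{i})$. Since every power of $H_{i}$ commutes with $H_{i}$, this can be rewritten as $\alpha_{i0}\cdot 1_{n\times n} = -\big(\sum_{j=1}^{n_{i}n}\alpha_{ij}H_{i}^{j-1}\big)H_{i} = -H_{i}\big(\sum_{j=1}^{n_{i}n}\alpha_{ij}H_{i}^{j-1}\big)$; multiplying by $(-1)^{n_{i}n+1}$ and using $\alpha_{i0} = (-1)^{n_{i}n}\nr(H_{i})$ yields $H_{i}^{\ast}H_{i} = H_{i}H_{i}^{\ast} = \nr(H_{i})\cdot 1_{n\times n}$. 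For integrality, note that a maximal order contains the maximal order $\mathcal{O}_{i}$ of its centre $F_{i}$ (indeed $\mathcal{O}_{i}\cdot\mathfrak{M}(G)e_{i}$ is an order containing $\mathfrak{M}(G)e_{i}$, hence equals it by maximality); as the $\alpha_{ij}$ lie in $\mathcal{O}_{i}\subseteq\zeta(\mathfrak{M}(G)e_{i})$ and the powers $H_{i}^{j-1}$ lie in $M_{n\times n}(\mathfrak{M}(G)e_{i})$, so does $H_{i}^{\ast}$.

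Finally I would reassemble. Because $e_{i}e_{k} = \delta_{ik}e_{i}$ we have $H_{i}H_{k} = 0$ for $i \neq k$, and since $H_{i}^{\ast}$ lies in $M_{n\times n}(\mathfrak{M}(G)e_{i})$ also $H_{i}^{\ast}H_{k} = 0$ for $i \neq k$. Hence
\[
H^{\ast}H = \sum_{i,k} H_{i}^{\ast}H_{k} = \sum_{i} H_{i}^{\ast}H_{i} = \sum_{i}\nr(H_{i})\cdot 1_{n\times n} = \nr(H)\cdot 1_{n\times n},
\]
using that $\nr$ is computed componentwise, so $\sum_{i}\nr(H_{i}) = \nr(H)$ in $\zeta(F[G]) = \bigoplus_{i}F_{i}$; the same computation gives $HH^{\ast} = \nr(H)\cdot 1_{n\times n}$. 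The containment $H^{\ast}\in M_{n\times n}(\mathfrak{M}(G))$ then follows from the blockwise containments together with $\mathfrak{M}(G) = \bigoplus_{i}\mathfrak{M}(G)e_{i}$. The only genuinely delicate point is the sign bookkeeping: one must check that the factor $(-1)^{n_{i}n}$ relating $\alpha_{i0}$ to $\nr(H_{i})$ cancels against the factor $(-1)^{n_{i}n+1}$ built into the definition of $H_{i}^{\ast}$, and keep track of the fact that $H_{i}^{\ast}$ genuinely belongs to the $i$-th block so that all cross terms vanish upon reassembly. Everything else is a routine application of Cayley--Hamilton together with the standard integrality of the reduced characteristic polynomial on a maximal order, both already quoted in the surrounding text.
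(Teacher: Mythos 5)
Your proof is correct and takes essentially the same approach as the paper: apply the fact that $f_{i}(H_{i})=0$ (Cayley--Hamilton for reduced characteristic polynomials) in each block, unwind the sign conventions, and reassemble via orthogonality of the $e_{i}$. The paper's version is terser — it dismisses integrality as ``clear by the above considerations'' and gives only the one-line block computation — while you fill in the standard details (the splitting-field characterisation of the reduced characteristic polynomial, the injectivity of scalar extension, and the inclusion $\mathcal{O}_{i}\subseteq\zeta(\mathfrak{M}(G)e_{i})$ via maximality), but the underlying argument is identical.
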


\begin{proof}
The first assertion is clear by the above considerations.
Since $f_{i}(H_{i}) = 0$, we find that
\[
H_{i}^{\ast} \cdot H_{i} = H_{i} \cdot H_{i}^{\ast}  = (-1)^{n_{i} n+1} (-\alpha_{i0}) = \nr(H_{i}),
\]
as desired.
\end{proof}

\subsection{Denominator ideals and central conductors}\label{subsec:denom-central-conductors}
We define
\begin{eqnarray*}
    \mathcal{H}(R[G]) & := & \{ x \in \zeta(R[G]) \mid xH^{\ast} \in M_{n \times n}(R[G]) \, \forall H \in M_{n \times n}(R[G]) \, \forall n \in \N \},\\
    \mathcal{I}(R[G]) & := & \langle \nr(H) \mid H \in  M_{n \times n}(R[G]), \,  n \in \N\rangle_{\zeta(R[G])}.
\end{eqnarray*}
Since $x \cdot \nr(H) = xH^{\ast}H \in \zeta(R[G])$ by Lemma \ref{lem:ast}, in particular we have
\begin{equation}\label{eq:denom-ideal}
\mathcal{H}(R[G]) \cdot \mathcal{I}(R[G]) = \mathcal{H}(R[G]) \subseteq \zeta(R[G]).
\end{equation}
Hence $\mathcal{H}(R[G])$ is an ideal in the commutative $R$-order
$\mathcal{I}(R[G])$. We will refer to $\mathcal{H}(R[G])$ as the \emph{denominator ideal} of the group ring $R[G]$.
For convenience, we put
\begin{eqnarray*}
\mathcal{I}(G) := \mathcal{I}(\Z[G]), & &  \mathcal{I}_{p}(G) := \mathcal{I}(\Z_{p}[G]), \\
\mathcal{H}(G) := \mathcal{H}(\Z[G]), & &  \mathcal{H}_{p}(G) := \mathcal{H}(\Z_{p}[G]).
\end{eqnarray*}
The following result determines the primes $p$
for which the denominator ideal $\mathcal{H}_{p}(G)$ is best possible.

\begin{prop}\label{prop:best-denominators}
We have $\mathcal{H}_{p}(G) = \zeta(\Z_{p}[G])$ if and only if $p$ does not divide the order of the commutator subgroup $G'$ of $G$. Furthermore, when this is the case $\mathcal{I}_{p}(G) = \zeta(\Z_{p}[G])$.
\end{prop}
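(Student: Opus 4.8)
The plan is to reduce the whole statement to the single assertion that $\mathcal{H}_{p}(G)=\zeta(\Z_{p}[G])$ if and only if $1\in\mathcal{H}_{p}(G)$, equivalently, if and only if $H^{\ast}\in M_{n\times n}(\Z_{p}[G])$ for \emph{every} $n\in\N$ and every $H\in M_{n\times n}(\Z_{p}[G])$. This holds because $\mathcal{H}_{p}(G)$ is an ideal of the commutative ring $\zeta(\Z_{p}[G])$ (if $x\in\mathcal{H}_{p}(G)$ and $z\in\zeta(\Z_{p}[G])$ then $(xz)H^{\ast}=z(xH^{\ast})\in M_{n\times n}(\Z_{p}[G])$), and an ideal equals the whole ring exactly when it contains $1$; moreover $1\in\mathcal{H}_{p}(G)$ means precisely that $1\cdot H^{\ast}=H^{\ast}\in M_{n\times n}(\Z_{p}[G])$ for all $H$. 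Granting this reduction, the two implications are proved separately below, and the final sentence about $\mathcal{I}_{p}(G)$ then comes essentially for free.

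For the implication ``$p\nmid|G'|\ \Rightarrow\ \mathcal{H}_{p}(G)=\zeta(\Z_{p}[G])$'' I would use the structural fact already invoked in \S\ref{sec:rel-k-group-weak-hybrid} (see the remark following Example~\ref{ex:S4-V4}): by \cite[Corollary]{MR704622}, when $p\nmid|G'|$ there is a ring isomorphism $\Z_{p}[G]\cong\bigoplus_{k}M_{d_{k}}(\Lambda_{k})$ with each $\Lambda_{k}$ a commutative $\Z_{p}$-order. Given $H\in M_{n\times n}(\Z_{p}[G])$, write $H=\bigoplus_{k}H^{(k)}$ with $H^{(k)}\in M_{n\times n}(M_{d_{k}}(\Lambda_{k}))\cong M_{nd_{k}}(\Lambda_{k})$. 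Over the commutative ring $\Lambda_{k}$ the reduced norm is the ordinary determinant, and (comparing with $\Lambda_{k}\otimes_{\Z_{p}}\Q_{p}$) the generalised adjoint $(H^{(k)})^{\ast}$ coincides with the classical adjugate matrix $\mathrm{adj}(H^{(k)})$, whose entries are integral polynomials in the entries of $H^{(k)}$ and hence lie in $\Lambda_{k}$. Summing over $k$ gives $H^{\ast}\in M_{n\times n}(\Z_{p}[G])$, so $1\in\mathcal{H}_{p}(G)$. Since in this decomposition each $\nr(H)$ is a blockwise determinant, we have $\mathcal{I}_{p}(G)\subseteq\bigoplus_{k}\Lambda_{k}=\zeta(\Z_{p}[G])$; as $\mathcal{H}_{p}(G)$ is an ideal of $\mathcal{I}_{p}(G)$ (hence $\mathcal{H}_{p}(G)\subseteq\mathcal{I}_{p}(G)$), the equality $\mathcal{H}_{p}(G)=\zeta(\Z_{p}[G])$ now forces $\mathcal{I}_{p}(G)=\zeta(\Z_{p}[G])$ as well.

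For the converse I would argue contrapositively and exhibit, when $p\mid|G'|$, an explicit element with non-integral generalised adjoint. Take $n=1$ and $H=x:=\sum_{g\in G}g=|G|\,e_{1_{G}}\in\Z_{p}[G]$, where $e_{1_{G}}=|G|^{-1}\sum_{g}g$ is the central primitive idempotent attached to the trivial character. For every index $i$ whose Galois conjugacy class $\mathcal{C}_{i}$ is not $\{1_{G}\}$ one has $x_{i}=xe_{i}=0$; the reduced characteristic polynomial of the zero element of the $i$-th component is $X^{n_{i}}$, and the defining formula for $x_{i}^{\ast}$ then yields $x_{i}^{\ast}=0$ if $n_{i}\geq 2$ and $x_{i}^{\ast}=e_{i}$ if $n_{i}=1$; for the trivial component itself one has reduced characteristic polynomial $X-|G|$, whence again $x_{i}^{\ast}=e_{i}$. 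Therefore $x^{\ast}=\sum_{i\,:\,n_{i}=1}e_{i}$, which is the trace idempotent $e_{G'}=|G'|^{-1}\sum_{h\in G'}h$ of the commutator subgroup, the indices with $n_{i}=1$ being exactly those whose characters are linear, i.e.\ factor through $G^{\mathrm{ab}}$. If $p\mid|G'|$ then the coefficient $|G'|^{-1}$ of $e_{G'}$ lies outside $\Z_{p}$, so $x^{\ast}\notin M_{1\times 1}(\Z_{p}[G])$; hence $1\notin\mathcal{H}_{p}(G)$ and $\mathcal{H}_{p}(G)\neq\zeta(\Z_{p}[G])$.

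The only non-elementary ingredient is the cited decomposition $\Z_{p}[G]\cong\bigoplus_{k}M_{d_{k}}(\Lambda_{k})$ for $p\nmid|G'|$, on which the forward implication rests entirely; everything else is a direct computation with reduced characteristic polynomials. The one point requiring a little care in the converse is the verification that $\sum_{g\in G}g$ annihilates every Wedderburn component attached to non-linear characters, so that its generalised adjoint collapses precisely to $e_{G'}$, whose integrality is governed exactly by whether $p$ divides $|G'|$.
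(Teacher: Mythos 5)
Your proof is correct, and it takes a genuinely different route from the paper. The paper dispatches the first assertion with a one‑line citation to \cite[Proposition 4.8]{MR3092262}, which in turn leans on the structure theorem \cite[Corollary]{MR704622}; you instead reconstruct a self‑contained argument from the same structural input. Your forward direction — decomposing $\Z_{p}[G]\cong\bigoplus_{k}M_{d_{k}}(\Lambda_{k})$ when $p\nmid|G'|$, observing that on each (necessarily split, Schur index $1$) component the generalised adjoint coincides with the classical adjugate, and concluding integrality from the polynomiality of minors — is a detailed version of what the cited reference packages. Your converse is a nice direct computation that the paper does not spell out: taking $H=\sum_{g\in G}g$, noting $He_{i}=0$ unless $e_{i}=e_{1_{G}}$, correctly reading off from the reduced characteristic polynomials that $H_{i}^{\ast}=e_{i}$ precisely when $n_{i}=1$ (and $=0$ otherwise), and identifying $\sum_{n_{i}=1}e_{i}$ with $e_{G'}$, whose $p$‑integrality is governed exactly by $v_{p}(|G'|)$. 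For the final sentence about $\mathcal{I}_{p}(G)$, the paper argues directly from \eqref{eq:denom-ideal} (if $\mathcal{H}_{p}(G)=\zeta(\Z_{p}[G])$ then $\zeta(\Z_{p}[G])\cdot\mathcal{I}_{p}(G)=\zeta(\Z_{p}[G])$, and the left side is $\mathcal{I}_{p}(G)$); your version, bounding $\mathcal{I}_{p}(G)$ above by $\bigoplus_{k}\Lambda_{k}=\zeta(\Z_{p}[G])$ via determinants and below by $\mathcal{H}_{p}(G)$, also works and is harmless, though the paper's route avoids reusing the decomposition. In short: both arguments ultimately hinge on \cite[Corollary]{MR704622}; the paper outsources the derivation, while you make it explicit, and in doing so supply an explicit witness ($\sum_{g}g$) for the failure of $\mathcal{H}_{p}(G)=\zeta(\Z_{p}[G])$ when $p\mid|G'|$.
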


\begin{proof}
The first assertion is a special case of \cite[Proposition 4.8]{MR3092262}, the proof of which depends heavily on
\cite[Corollary]{MR704622}. The second assertion follows from \eqref{eq:denom-ideal}.
\end{proof}

\begin{corollary}\label{cor:wh-best-hpg}
If $\Z_{p}[G]$ is weakly $G'$-hybrid then $\mathcal{H}_{p}(G) = \mathcal{I}_{p}(G)= \zeta(\Z_{p}[G])$.
\end{corollary}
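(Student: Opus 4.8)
The plan is to reduce the statement immediately to Proposition \ref{prop:best-denominators}, whose hypothesis is precisely that $p$ does not divide $|G'|$. First I would invoke Lemma \ref{lem:weak-hybrid-p-does-not-divide-N} with $N = G'$: since $\Z_{p}[G]$ is assumed to be weakly $G'$-hybrid, that lemma yields $p \nmid |G'|$. (The lemma itself is proved by the same argument as Proposition \ref{prop:hybrid-properties}(i), namely that $e_{G'} = |G'|^{-1}\sum_{\sigma \in G'}\sigma$ lies in the order and hence $|G'|$ must be a unit in $\Z_{p}$.)

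Having established that $p$ does not divide the order of the commutator subgroup, I would then apply Proposition \ref{prop:best-denominators} directly: under exactly this hypothesis it gives $\mathcal{H}_{p}(G) = \zeta(\Z_{p}[G])$, and its final clause then gives $\mathcal{I}_{p}(G) = \zeta(\Z_{p}[G])$ as well. Chaining the two equalities produces $\mathcal{H}_{p}(G) = \mathcal{I}_{p}(G) = \zeta(\Z_{p}[G])$, which is the assertion of the corollary. There is no genuine obstacle here: the entire content lies in the two cited results, and the corollary is simply the remark that weak $G'$-hybridness of $\Z_{p}[G]$ is strong enough to force the sharp numerical condition $p \nmid |G'|$ under which the denominator ideal is as large as possible.
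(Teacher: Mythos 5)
Your proof is correct and follows exactly the paper's own argument: the paper's proof of this corollary is simply the statement that it is ``the combination of Proposition \ref{prop:best-denominators} and Lemma \ref{lem:weak-hybrid-p-does-not-divide-N},'' which is precisely the reduction you carry out.
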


\begin{proof}
This is the combination of Proposition \ref{prop:best-denominators} and
Lemma \ref{lem:weak-hybrid-p-does-not-divide-N}.
\end{proof}

\begin{remark}\label{rmk:cond-in-denom-ideal}
Let $\mathcal{F}_{p}(G)$ be the central conductor given in \eqref{eq:conductor-formula};
then $\mathcal{F}_{p}(G) \subseteq \mathcal{H}_{p}(G)$ (this follows easily from  Lemma \ref{lem:ast} and the definitions).
\end{remark}

\begin{remark}\label{rmk:interest-in-hpg}
The main reason for interest in the denominator ideal $\mathcal{H}_{p}(G)$ is that it often plays a role in annihilation results. Let $M$ be a finitely generated $\Z_{p}[G]$-module.
The `noncommutative Fitting invariant' $\Fit^{\max}_{\Z_{p}[G]}(M)$ is introduced in \cite{MR2609173}
and further developed in \cite{MR3092262}. We have the following annihilation result
(\cite[Theorem 3.6]{MR3092262})
\[
\mathcal{H}_{p}(G) \cdot \Fit^{\max}_{\Z_{p}[G]}(M) \subseteq \Ann_{\zeta(\Z_{p}[G])}(M).
\]
Furthermore, one can give annihilation results that involve $\mathcal{H}_{p}(G)$ but do not
explicitly use noncommutative Fitting invariants (see \cite[Lemma 5.1.4]{MR2845620}, for example).
\end{remark}

\begin{remark}
Let $L/K$ be a finite Galois extension of number fields and with Galois group $G$.
Fix a prime $p$ dividing $|G|$ and suppose that $\Z_{p}[G]$ is weakly $G'$-hybrid.
As we have seen in \S \ref{sec:ETNC_Tate}, we can often prove $\ETNC_{p}(L/K,r)$ for some $r \leq 0$ in this situation.
Moreover, Corollary \ref{cor:wh-best-hpg} shows that  $\mathcal{H}_{p}(G) = \zeta(\Z_{p}[G])$ is the best possible in this case. Thus the $p$-part of certain arithmetic annihilation results (such as \cite[Theorem 7.1]{MR2609173})
are both unconditional and (by Remark \ref{rmk:interest-in-hpg}) are in some sense the best possible in this situation.
In \S \ref{sec:apps-to-ann} we shall use explicit results on $\mathcal{H}_{p}(G)$ to establish sharp and explicit arithmetic annihilation results, even in certain cases where $\ETNC_{p}(L/K,r)$ is not known and
$\mathcal{H}_{p}(G)$ is strictly contained in $\zeta(\Z_{p}[G])$.
\end{remark}

\subsection{Affine transformation groups}
We compute the denominator ideals of $\Z_{p}[\Aff(q)]$, where $p$ is any prime,
$q$ is a prime power, and $\Aff(q)$ is the group of affine transformations on $\F_{q}$ defined in Example \ref{ex:affine}.

\begin{prop} \label{prop:affine-denominators}
Let $p$ be a prime and let $q = \ell^{n}$ be a prime power.
Let $\mathfrak{M}_{p}(\Aff(q))$ be a maximal $\Z_{p}$-order such that $\Z_{p}[\Aff(q)] \subseteq \mathfrak{M}_{p}(\Aff(q)) \subseteq \Q_{p}[\Aff(q)]$.
Then
    \[
        \mathcal{H}_{p}(\Aff(q)) = \left\{ \begin{array}{lll}
            \zeta(\Z_{p}[\Aff(q)]) & \mbox{ if } & p \neq \ell \\
            \mathcal{F}_{p}(\Aff(q)) & \mbox{ if } & p=\ell \neq 2;
        \end{array}\right.
    \]
    \[
        \mathcal{I}_{p}(\Aff(q)) = \left\{ \begin{array}{lll}
            \zeta(\Z_{p}[\Aff(q)]) & \mbox{ if } & p \neq \ell\\
            \zeta(\mathfrak{M}_{p}(\Aff(q))) & \mbox{ if } & p=\ell \neq 2.
        \end{array}\right.
    \]
    If $p=\ell=2$, then we have containments
    \[
        2\mathcal{H}_{2}(\Aff(q)) \subseteq \mathcal{F}_{2}(\Aff(q)) \subseteq \mathcal{H}_{2}(\Aff(q)),
    \]
    \[
        2\zeta(\mathfrak{M}_{2}(\Aff(q))) \subseteq \mathcal{I}_{2}(\Aff(q)) \subseteq \zeta(\mathfrak{M}_{2}(\Aff(q))).
    \]
\end{prop}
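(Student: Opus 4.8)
The plan is to split according to the relation between $p$ and $\ell$, exploiting throughout the Wedderburn decomposition of $\Q_p[\Aff(q)]$ provided by Example \ref{ex:affine}. In the case $p\neq\ell$ there is nothing to do: here $p\nmid q=|N|=|G'|$, and by Example \ref{ex:affine} the ring $\Z_p[\Aff(q)]$ is $N$-hybrid, hence weakly $G'$-hybrid, so Corollary \ref{cor:wh-best-hpg} (or just Proposition \ref{prop:best-denominators}, since $|G'|=q$) gives $\mathcal H_p(\Aff(q))=\mathcal I_p(\Aff(q))=\zeta(\Z_p[\Aff(q)])$ at once.

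So assume $p=\ell$. By Example \ref{ex:affine} the group $\Aff(q)$ has a single non-linear irreducible character $\chi$, of degree $q-1$, rational-valued and of Schur index $1$ (being induced from a linear character of $N$); hence $\Q_p[\Aff(q)]\cong\Q_p[C_{q-1}]\oplus M_{q-1}(\Q_p)$. As $p=\ell\nmid q-1$, the ring $\Z_p[C_{q-1}]$ is already maximal, so I would take $\mathfrak M_p(\Aff(q))=\Z_p[C_{q-1}]\oplus M_{q-1}(\Z_p)$, the matrix factor being realised through the action $\rho_\chi$ of $\Aff(q)$ on the augmentation ideal of $\Z_p[N]$; put $\Lambda:=\rho_\chi(\Z_p[\Aff(q)])=\Z_p[\Aff(q)]e_\chi\subseteq M_{q-1}(\Z_p)$. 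Then $e_N\Z_p[\Aff(q)]=e_N\mathfrak M_p(\Aff(q))\cong\Z_p[C_{q-1}]$, so $\zeta(\mathfrak M_p(\Aff(q)))=\Z_p[C_{q-1}]\oplus\Z_p$, and a direct check gives $\mathcal F_p(\Aff(q))=\ker\rho_\chi\oplus\mathfrak f$, where $\ker\rho_\chi$ is the full-rank ideal of $\Z_p[C_{q-1}]$ consisting of the elements of $\Z_p[\Aff(q)]$ annihilated by $\rho_\chi$, and $\mathfrak f=\{y\in\Z_p:y\,M_{q-1}(\Z_p)\subseteq\Lambda\}$.

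Next I would compute $\mathcal I_p(\Aff(q))$. Reduced norms of integral matrices land in $\zeta(\mathfrak M_p(\Aff(q)))$, so $\mathcal I_p(\Aff(q))\subseteq\zeta(\mathfrak M_p(\Aff(q)))$; conversely $\mathcal I_p(\Aff(q))$ is a $\Z_p$-subalgebra of $\zeta(\mathfrak M_p(\Aff(q)))$ containing $\zeta(\Z_p[\Aff(q)])$ and all $\nr(g)$, $g\in\Aff(q)$. For $g$ of order prime to $p$ (for instance $g\in\F_q^\times$) the element $\nr(g)$ is a unit of $\zeta(\mathfrak M_p(\Aff(q)))$; combining such norms with $\nr(1)=1$, with $\nr$ of suitable non-units, and with the class sum $-\sum_{1\neq n\in N}n\in\zeta(\Z_p[\Aff(q)])$ (whose $\chi$-component is the identity, since $\sum_{1\neq\psi\in\widehat N}\psi(n)=-1$), and using crucially that $2\in\Z_p^\times$, one recovers every minimal idempotent of $\zeta(\mathfrak M_p(\Aff(q)))$; since $\mathcal I_p(\Aff(q))$ is a ring this forces $\mathcal I_p(\Aff(q))=\zeta(\mathfrak M_p(\Aff(q)))$ when $p$ is odd. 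When $p=2$ the division by $2$ is unavailable — and $\Aff(2)\simeq C_2$ shows it cannot be dispensed with — so one obtains only $2\,\zeta(\mathfrak M_2(\Aff(q)))\subseteq\mathcal I_2(\Aff(q))$, which with the reverse inclusion is the stated containment. Turning to $\mathcal H_p(\Aff(q))$: one always has $\mathcal F_p(\Aff(q))\subseteq\mathcal H_p(\Aff(q))$ by Remark \ref{rmk:cond-in-denom-ideal}. Assuming $p$ odd, so $\mathcal I_p(\Aff(q))=\zeta(\mathfrak M_p(\Aff(q)))$, relation \eqref{eq:denom-ideal} makes $\mathcal H_p(\Aff(q))$ a $\zeta(\mathfrak M_p(\Aff(q)))$-module, hence a direct sum $\mathcal H_C\oplus\mathcal H_D$ along $e_N$; since the $(-)^{\ast}$ operation is compatible with projection onto a set of Wedderburn components, testing against block-diagonal matrices unwinds the defining condition to $\mathcal H_C=\ker\rho_\chi=\mathcal F_C$ (because $\Z_p[C_{q-1}]$ is a maximal commutative order) and to $\mathcal H_D=\{y\in\Z_p:y\,\mathrm{adj}(M)\in M_n(\Lambda)\ \forall M\in M_n(\Lambda),\ \forall n\}$. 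The crux is to show this last ideal equals $\mathfrak f$; for this one proves that the $\Z_p$-span of the blocks of all the adjugates $\mathrm{adj}(M)$, $M\in M_n(\Lambda)$, is the whole of $M_{q-1}(\Z_p)$ — here one uses the crossed-product structure of $\Lambda$, whose deviation from maximality measures precisely the failure of the relevant rings of integers $\Z_p[\zeta_{p^k}]$ to admit normal integral bases. This yields $\mathcal H_p(\Aff(q))=\mathcal F_p(\Aff(q))$ for $p$ odd; at $p=2$ the same analysis, carried out integrally, loses at worst a factor $2$, giving $2\,\mathcal H_2(\Aff(q))\subseteq\mathcal F_2(\Aff(q))\subseteq\mathcal H_2(\Aff(q))$.

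The genuinely hard step is the last one — the integral spanning statement for adjugates of matrices over $\Lambda$, equivalently that the denominator ideal of $\Lambda$ relative to $M_{q-1}(\Z_p)$ does not exceed its conductor $\mathfrak f$ — together with the careful bookkeeping of the factor $2$ at $p=\ell=2$. Everything else is formal manipulation of the Wedderburn decomposition, Example \ref{ex:affine}, \eqref{eq:denom-ideal}, and the results of \S\ref{subsec:idempotents} and \S\ref{subsec:denom-central-conductors}.
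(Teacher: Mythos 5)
Your treatment of the case $p\neq\ell$ is correct and is exactly the paper's (reduce to Proposition \ref{prop:best-denominators} since $p\nmid|G'|=q$), and for $p=\ell$ your computation of $\mathcal I_p(\Aff(q))$ is in the right spirit, though vaguer than the paper's: instead of your informal argument with class sums and ``suitable non-units,'' the paper writes down explicit witnesses --- $\nr(-1)=-e_{G'}+e_{nl}$ when $p=\ell>2$ (using that $q-1$ is even), and $\nr(1+\sigma)=2e_{G'}$ for $\sigma\in G'$ of order $2$ when $p=\ell=2$ (using that $(1+\sigma)e_{nl}$ is a non-unit, so its reduced norm vanishes) --- to show $2e_{G'}\in\mathcal I_p$, and then produces the rest of $2\zeta(\mathfrak M_p)$ by lifting elements of $\Z_p[C_{q-1}]$ and multiplying.

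The genuine gap is in the $\mathcal H_p$ computation, and you have in fact flagged it yourself: your plan requires showing that the $\Z_p$-span of the adjugates of matrices over $\Lambda=\Z_p[\Aff(q)]e_{nl}$ fills out $M_{q-1}(\Z_p)$, i.e.\ that the denominator ideal of $\Lambda$ in $M_{q-1}(\Z_p)$ is no bigger than its conductor $\mathfrak f$. You offer no proof of this --- the appeal to the ``crossed-product structure of $\Lambda$'' and to normal integral bases of $\Z_p[\zeta_{p^k}]$ is a gesture, not an argument, and the factor-$2$ bookkeeping at $p=\ell=2$ is likewise only asserted. The paper avoids this hard adjugate analysis entirely. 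Having already shown $\mathcal I_p(G)=\zeta(\mathfrak M_p(G))$ (for $p=\ell\neq 2$), it invokes the relation $\mathcal H_p(G)\cdot\mathcal I_p(G)\subseteq\zeta(\Z_p[G])$ from \eqref{eq:denom-ideal} to deduce directly that
\[
\mathcal H_p(G)\subseteq\mathcal F\bigl(\zeta(\mathfrak M_p(G)),\zeta(\Z_p[G])\bigr),
\]
the central conductor of $\zeta(\mathfrak M_p(G))$ into $\zeta(\Z_p[G])$; it then cites \cite[Corollary~6.20]{MR3092262}, which applies because $\chi_{nl}(1)=q-1$ is prime to $p$, to identify this commutative conductor with $\mathcal F_p(G)$, and closes the circle with $\mathcal F_p(G)\subseteq\mathcal H_p(G)$ (Remark \ref{rmk:cond-in-denom-ideal}). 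The same argument with $2\mathcal H_2$ in place of $\mathcal H_p$ handles $p=\ell=2$. So the key idea you are missing is to deduce the upper bound on $\mathcal H_p$ from the already-computed $\mathcal I_p$ via \eqref{eq:denom-ideal}, rather than to attack the adjugates head on; as written, your proposal leaves the crux unproven.
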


\begin{proof}
Let us write $G := \Aff(q) = \F_{q} \rtimes \F_{q}^{\times}$.
Then the commutator subgroup $G'$ of $G$ is isomorphic to $\F_{q}$ and hence the claim
follows from Proposition \ref{prop:best-denominators} in the case $\ell \neq p$.

Now assume $\ell=p$. We first establish the assertions regarding $\mathcal{I}_{p}(G)$.
Recall that $e_{G'} := |G'|^{-1} \sum_{\sigma \in G'} \sigma$.
Let $\chi_{nl} \in \Irr_{\C_{p}}(G)$ denote the unique
non-linear character and let $e_{nl}$ be the corresponding central idempotent.
Then $\Q_{p}[G]$ decomposes into
\[
\Q_{p}[G] =  \Q_{p}[G] e_{G'} \oplus \Q_{p}[G] e_{nl} \simeq \Q_{p}[C_{q-1}] \oplus M_{(q-1) \times (q-1)}(\Q_{p}),
\]
where we identify $G/G' \simeq C_{q-1}$.
Let $\mathfrak{M}_{p}(G)$ be a maximal $\Z_{p}$-order such that $\Z_{p}[G] \subseteq \mathfrak{M}_{p}(G) \subseteq \Q_{p}[G]$.
Then $\mathfrak{M}_{p}(G) \simeq \Z_{p}[C_{q-1}] \oplus M_{(q-1) \times (q-1)}(\Z_{p})$.
As we always have $\mathcal{I}_{p}(G) \subseteq \zeta(\mathfrak{M}_{p}(G))$ and $2$ is invertible in $\mathcal{I}_{p}(G)$
if $p \neq 2$, it suffices to show that $2\zeta(\mathfrak{M}_{p}(G)) \subseteq \mathcal{I}_{p}(G)$.
 In fact, it is sufficient to show that $2e_{G'}$ belongs to $\mathcal{I}_{p}(G)$, since in this case we also have
$2 e_{nl} = 2 - 2 e_{G'} \in \mathcal{I}_{p}(G)$ and for any $\overline{x} \in \Z_{p}[C_{q-1}]$ we may choose a lift $x \in \Z_{p}[G]$
such that $\nr(x) \cdot 2 e_{G'} = \overline{x} \cdot 2 e_{G'} \in \mathcal{I}_{p}(G)$.
If $p=\ell=2$, let $\sigma \in G'$ be an element of order $2$.
Then $(1+\sigma)(1-\sigma) = 0$, i.e., $1+\sigma \in \Z_{2}[G]$ is a zerodivisor.
In particular, $1+\sigma \not\in \Q_{2}[G]\mal$.
But $1+\sigma = 2 e_{G'} + (1+\sigma)e_{nl}$, so $(1+\sigma) e_{nl}$ is not invertible in $\Q_{2}[G] e_{nl}$.
Thus $2 e_{G'} = \nr(1+\sigma) \in \mathcal{I}_{2}(G)$ as desired.
Now assume $p=\ell>2$.
Then since $q-1$ is even we see that $\nr(-1) = - e_{G'} + (-1)^{(q-1)} e_{nl} = -e_{G'} + e_{nl}$
belongs to $\mathcal{I}_{p}(G)$. But then we also have $2 e_{G'} = 1 - \nr(-1) \in \mathcal{I}_{p}(G)$.

We now prove the assertions regarding $\mathcal{H}_{p}(G)$ in the case $\ell=p$.
By \eqref{eq:denom-ideal} we have $\mathcal{H}_{p}(G) \mathcal{I}_{p}(G) \subseteq \zeta(\Z_{p}[G])$.
If $p = \ell \neq 2$ then $\mathcal{I}_{p}(G)=\zeta(\mathfrak{M}_{p}(G))$ and so
\begin{equation}\label{eq:cc-cond-contain}
\mathcal{H}_{p}(G) \subseteq \mathcal{F}(\zeta(\mathfrak{M}_{p}(G)),\zeta(\Z_{p}[G])) :=
\{ x \in \zeta(\mathfrak{M}_{p}(G)) \mid x \zeta(\mathfrak{M}_{p}(G)) \subseteq \zeta(\Z_{p}[G] )\}.
\end{equation}
Since $\chi_{nl}(1)=q-1$ is prime to $p=\ell$,
\cite[Corollary 6.20]{MR3092262} shows that the right hand side of \eqref{eq:cc-cond-contain}
is in fact equal to $\mathcal{F}_{p}(G)$.
However, $\mathcal{F}_{p}(G) \subseteq \mathcal{H}_{p}(G)$ by Remark \ref{rmk:cond-in-denom-ideal},
and so the desired result follows.
If $p=\ell=2$ then $2\zeta(\mathfrak{M}_{2}(G)) \subseteq \mathcal{I}_{2}(G)$ and so
\eqref{eq:cc-cond-contain} holds but with $\mathcal{H}_{p}(G)$ replaced by $2\mathcal{H}_{2}(G)$,
and the desired result follows as before.
\end{proof}

\subsection{The symmetric group on four letters}

For any prime $p$ let $\mathfrak{M}_{p}(S_{4})$ be a maximal $\Z_{p}$-order
such that $\Z_{p}[S_{4}] \subseteq \mathfrak{M}_{p}(S_{4}) \subseteq \Q_{p}[S_{4}]$.
Recall that for a normal subgroup $N$ of $S_{4}$, the $N$-hybrid order $\mathfrak{M}_{p}(S_{4},N)$ was defined to be $\Z_{p}[S_{4}] e_{N} \oplus \mathfrak{M}_{p}(S_{4})(1 - e_{N})$.
Then
\begin{eqnarray*}
    \mathfrak{M}_{2}(S_{4},V_{4}) & \simeq & \Z_{2}[S_{3}] \oplus M_{3 \times 3}(\Z_{2}) \oplus M_{3 \times 3}(\Z_{2})\\
    & \simeq & \Z_{2}[C_{2}] \oplus M_{2 \times 2}(\Z_{2}) \oplus M_{3 \times 3}(\Z_{2}) \oplus M_{3 \times 3}(\Z_{2})\\
    & \simeq & \mathfrak{M}_{2}(S_{4},A_{4}),
\end{eqnarray*}
where we have used Example \ref{ex:affine} and that $S_{4} / V_{4} \simeq S_{3}$.
Let $\mathcal{F}_{2}(S_{4},A_{4})$ denote the central conductor of $\mathfrak{M}_{2}(S_{4},A_{4})$ into $\Z_{2}[S_{4}]$.

\begin{prop} \label{prop:S_4-denominators}
If $p$ is an odd prime then
$\mathcal{I}_{p}(S_{4}) = \zeta(\mathfrak{M}_{p}(S_{4}))$
and $\mathcal{H}_{p}(S_{4}) = \mathcal{F}_{p}(S_{4})$.
If $p=2$ then we have containments
\[
2\mathcal{H}_{2}(S_{4}) \subseteq \mathcal{F}_{2}(S_{4}) \subsetneqq \mathcal{F}_{2}(S_{4},A_{4}) \subseteq \mathcal{H}_{2}(S_{4}),
\]
\[
2\zeta(\mathfrak{M}_{2}(S_{4})) \subseteq \mathcal{I}_{2}(S_{4}) \subseteq
\zeta(\mathfrak{M}_{2}(S_{4},A_{4})) \subsetneqq \zeta(\mathfrak{M}_{2}(S_{4})).
\]
\end{prop}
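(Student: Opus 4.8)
The plan is to treat the odd primes and the prime $2$ separately, using throughout the Wedderburn decomposition $\Q_{p}[S_{4}] \simeq \Q_{p} \oplus \Q_{p} \oplus M_{2}(\Q_{p}) \oplus M_{3}(\Q_{p}) \oplus M_{3}(\Q_{p})$ (all five irreducible characters of $S_{4}$ are rational-valued of Schur index $1$). Write $e_{1},\dots,e_{5}$ for the corresponding central primitive idempotents of $\zeta(\Q_{p}[S_{4}])$, numbered so that $e_{1},e_{2}$ are the linear characters with $e_{1}+e_{2}=e_{A_{4}}$, and $e_{3}$ is the degree-$2$ character (inflated from $S_{4}/V_{4}\simeq S_{3}$) with $e_{1}+e_{2}+e_{3}=e_{V_{4}}$. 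I will repeatedly use two elementary observations: that $\mathcal{I}$, $\mathcal{H}$ and the central conductor $\mathcal{F}(-,-)$ are additive along ring direct sums; and that for a matrix ring $M_{k}(\Z_{p})$ or a commutative $\Z_{p}$-order such as $\Z_{p}[C_{2}]$ the corresponding invariants all equal the centre, since reduced norms and the generalised adjoints $H^{\ast}$ of Lemma \ref{lem:ast} of matrices over such rings are already computed inside the ring (the adjugate having integral polynomial entries).

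For odd $p$ the result is immediate. If $p\geq 5$ then $\Z_{p}[S_{4}]$ is maximal, so all three invariants equal $\zeta(\mathfrak{M}_{p}(S_{4}))$, which coincides with $\mathcal{F}_{p}(S_{4})$ as the central conductor of a maximal order into itself is trivial. If $p=3$ then Example \ref{ex:S4-V4} gives a ring isomorphism $\Z_{3}[S_{4}] \simeq \Z_{3}[S_{3}] \oplus M_{3}(\Z_{3}) \oplus M_{3}(\Z_{3})$ with $S_{3}\simeq\Aff(3)$, and combining additivity, the matrix-ring identities, and Proposition \ref{prop:affine-denominators} for $\Aff(3)$ in the case $p=\ell=3\neq 2$ yields $\mathcal{I}_{3}(S_{4}) = \zeta(\mathfrak{M}_{3}(\Aff(3))) \oplus \Z_{3} \oplus \Z_{3} = \zeta(\mathfrak{M}_{3}(S_{4}))$ and $\mathcal{H}_{3}(S_{4}) = \mathcal{F}_{3}(\Aff(3)) \oplus \Z_{3} \oplus \Z_{3} = \mathcal{F}_{3}(S_{4})$.

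For $p=2$ one must argue component-wise, since $\Z_{2}[S_{4}]$ does not split. Recall from the discussion preceding the statement that $\mathfrak{M}_{2}(S_{4},A_{4}) = \Z_{2}[S_{4}]e_{A_{4}} \oplus \mathfrak{M}_{2}(S_{4})(1-e_{A_{4}}) \simeq \Z_{2}[C_{2}] \oplus M_{2}(\Z_{2}) \oplus M_{3}(\Z_{2}) \oplus M_{3}(\Z_{2})$, where $\Z_{2}[S_{4}]e_{A_{4}}=\Z_{2}[S_{4}/A_{4}]\simeq\Z_{2}[C_{2}]$. Since $\Z_{2}[S_{4}]\subseteq\mathfrak{M}_{2}(S_{4},A_{4})$, every reduced norm of a matrix over $\Z_{2}[S_{4}]$ is a reduced norm over $\mathfrak{M}_{2}(S_{4},A_{4})$ and hence lies in $\zeta(\mathfrak{M}_{2}(S_{4},A_{4}))$, giving $\mathcal{I}_{2}(S_{4})\subseteq\zeta(\mathfrak{M}_{2}(S_{4},A_{4}))$; and $\zeta(\mathfrak{M}_{2}(S_{4},A_{4}))$ has index $2$ in $\zeta(\mathfrak{M}_{2}(S_{4}))$ because only the $\Z_{2}[C_{2}]$-summand fails to be maximal. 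For the reverse inclusion $2\zeta(\mathfrak{M}_{2}(S_{4}))\subseteq\mathcal{I}_{2}(S_{4})$ I would compute explicit reduced norms: for a transposition $\tau$ one gets $\nr(1+\tau)=2e_{1}$ and $\nr(\tau-1)=-2e_{2}$; for any odd permutation $g$, $\nr(g)=e_{1}-e_{2}-e_{3}-e_{4}+e_{5}$, so $1+\nr(g)=2e_{1}+2e_{5}$ and $1-\nr(g)=2e_{2}+2e_{3}+2e_{4}$ lie in $\mathcal{I}_{2}(S_{4})$; for a $3$-cycle $\rho$, $\nr(\rho-1)=3e_{3}$, while Jacobinski's formula \eqref{eq:conductor-formula} gives $4e_{3}\in\mathcal{F}_{2}(S_{4})\subseteq\zeta(\Z_{2}[S_{4}])\subseteq\mathcal{I}_{2}(S_{4})$, so $e_{3}=4e_{3}-3e_{3}\in\mathcal{I}_{2}(S_{4})$; assembling these and using that $\mathcal{I}_{2}(S_{4})$ is a $\Z_{2}$-module gives $2e_{i}\in\mathcal{I}_{2}(S_{4})$ for all $i$. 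Next I would show $\mathcal{F}_{2}(S_{4},A_{4})\subseteq\mathcal{H}_{2}(S_{4})$ by checking that for $H\in M_{n}(\Z_{2}[S_{4}])$ the generalised adjoint $H^{\ast}$ already lies in $M_{n}(\mathfrak{M}_{2}(S_{4},A_{4}))$: its $e_{3},e_{4},e_{5}$-blocks are there because $\mathfrak{M}_{2}(S_{4},A_{4})$ equals the maximal order in those blocks, and its $e_{A_{4}}$-block is the pair of adjugates of the two $\Z_{2}$-reductions of the $\Z_{2}[C_{2}]$-matrix $He_{A_{4}}$, which are congruent modulo $2$ and hence glue back into a matrix over $\Z_{2}[C_{2}]$; so $x\in\mathcal{F}_{2}(S_{4},A_{4})$ forces $xH^{\ast}\in M_{n}(\Z_{2}[S_{4}])$. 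The containment $\mathcal{F}_{2}(S_{4})\subseteq\mathcal{F}_{2}(S_{4},A_{4})$ is automatic from $\Z_{2}[S_{4}]\subseteq\mathfrak{M}_{2}(S_{4},A_{4})\subseteq\mathfrak{M}_{2}(S_{4})$, and it is strict because $4e_{A_{4}}=\frac{1}{3}\sum_{g\in A_{4}}g\in\zeta(\Z_{2}[S_{4}])$ satisfies $4e_{A_{4}}\,\mathfrak{M}_{2}(S_{4},A_{4})=\frac{1}{3}\bigl(\sum_{g\in A_{4}}g\bigr)\Z_{2}[S_{4}]\subseteq\Z_{2}[S_{4}]$ (as $\frac{1}{3}\in\Z_{2}$), whereas $4e_{A_{4}}\notin\mathcal{F}_{2}(S_{4})$ since Jacobinski's formula forces the degree-$1$ components of $\mathcal{F}_{2}(S_{4})$ to lie in $8\Z_{2}$. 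Finally $2\mathcal{H}_{2}(S_{4})\subseteq\mathcal{F}_{2}(S_{4})$: by \eqref{eq:denom-ideal} $\mathcal{H}_{2}(S_{4})\,\mathcal{I}_{2}(S_{4})\subseteq\zeta(\Z_{2}[S_{4}])$, so with the lower bound above $2\mathcal{H}_{2}(S_{4})\cdot\zeta(\mathfrak{M}_{2}(S_{4}))\subseteq\zeta(\Z_{2}[S_{4}])$, i.e.\ $2\mathcal{H}_{2}(S_{4})\subseteq\mathcal{F}(\zeta(\mathfrak{M}_{2}(S_{4})),\zeta(\Z_{2}[S_{4}]))$, and one identifies this central conductor with $\mathcal{F}_{2}(S_{4})$ by computing $\zeta(\Z_{2}[S_{4}])\cap\Z_{2}e_{i}$ from the class sums.

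The main obstacle is the $p=2$ case, and within it two steps require genuine care. First, the lower bound $2\zeta(\mathfrak{M}_{2}(S_{4}))\subseteq\mathcal{I}_{2}(S_{4})$: reduced norms of group units only span $1$ and $e_{1}-e_{2}-e_{3}-e_{4}+e_{5}$, so one genuinely needs the non-unit $3$-cycle computation $\nr(\rho-1)=3e_{3}$, combined with $4e_{3}\in\zeta(\Z_{2}[S_{4}])$, to reach the degree-$2$ component. Second, the identification $\mathcal{F}(\zeta(\mathfrak{M}_{2}(S_{4})),\zeta(\Z_{2}[S_{4}]))=\mathcal{F}_{2}(S_{4})$: the criterion \cite[Corollary 6.20]{MR3092262} used in the proof of Proposition \ref{prop:affine-denominators} requires the non-linear irreducible characters to have degree prime to $p$, which fails here since $\chi(1)=2$ for the $S_{3}$-inflated character, so this equality has to be verified directly from the explicit structure of $\zeta(\Z_{2}[S_{4}])$.
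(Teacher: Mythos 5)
Your proof is correct and follows the same overall strategy as the paper's: odd primes via the hybrid structure, and the $p=2$ case via the decomposition $\mathfrak{M}_{2}(S_{4},A_{4})\simeq\Z_{2}[C_{2}]\oplus M_{2}(\Z_{2})\oplus M_{3}(\Z_{2})\oplus M_{3}(\Z_{2})$, explicit reduced-norm computations, and Jacobinski's formula. Where you differ is in replacing several citations to the general machinery of \cite{MR3092262} by direct elementary arguments: you obtain $\mathcal{I}_{2}(S_{4})\subseteq\zeta(\mathfrak{M}_{2}(S_{4},A_{4}))$ from the simple observation that reduced norms over $\Z_{2}[S_{4}]$ are computed inside the larger order (the paper cites \cite[Proposition~6.2]{MR3092262}); you prove $\mathcal{F}_{2}(S_{4},A_{4})\subseteq\mathcal{H}_{2}(S_{4})$ via a clean mod-$2$ gluing argument for adjugates over $\Z_{2}[C_{2}]$ (the paper cites \cite[Proposition~4.3]{MR3092262}); and you propose verifying $\mathcal{F}(\zeta(\mathfrak{M}_{2}(S_{4})),\zeta(\Z_{2}[S_{4}]))=\mathcal{F}_{2}(S_{4})$ by a direct class-sum computation, where the paper uses \cite[Proposition~6.19]{MR3092262} --- so your diagnosis that Corollary~6.20 of that paper is inapplicable is right, but the paper bypasses it with Proposition~6.19 rather than by hand. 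Your set of reduced-norm generators ($1+\tau$, $\tau-1$, an odd $g$, and $\rho-1$ plus Jacobinski) differs from the paper's ($\nr(-1)$, $\nr(\tau)$, $\nr(1+\sigma+\sigma^{2})$, $\nr(\tau(1+\sigma+\sigma^{2}))$), but both suffice and your computations check out; your route is more self-contained, the paper's is shorter. You also make explicit the strictness of $\mathcal{F}_{2}(S_{4})\subsetneqq\mathcal{F}_{2}(S_{4},A_{4})$ and $\zeta(\mathfrak{M}_{2}(S_{4},A_{4}))\subsetneqq\zeta(\mathfrak{M}_{2}(S_{4}))$, which the paper treats as obvious. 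One small inaccuracy in your commentary: you say one ``genuinely needs'' the $3$-cycle computation $\nr(\rho-1)=3e_{3}$ to reach the degree-$2$ component, but $2e_{3}=1+\nr(-1)$ is already accessible from the scalar unit $-1$, as the paper shows; the $3$-cycle is only forced by your particular choice of generators.
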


\begin{proof}
The result is clear if $p> 3$ as $\Z_{p}[G]$ is a maximal order in this case.
If $p=3$, then
\[
\Z_{3}[S_{4}] \simeq \Z_{3}[S_{3}] \oplus M_{3 \times 3}(\Z_{3}) \oplus M_{3 \times 3}(\Z_{3}) \simeq \mathfrak{M}_{3}(S_{4},V_{4}),
\]
as we have seen in Example \ref{ex:S4-V4}.
Since $S_{3} \simeq \Aff(3)$, the result follows from Proposition \ref{prop:affine-denominators} in this case.

Now assume $p=2$.
As we have seen above, $\mathfrak{M}_{2}(S_{4},A_{4})$ is a direct sum of matrix rings over commutative rings,
hence is `nice' in the terminology of \cite{MR3092262}.
Then \cite[Proposition 4.3 and Proposition 6.2]{MR3092262} imply
that
\[
\mathcal{F}_{2}(S_{4},A_{4}) \subseteq \mathcal{H}_{2}(S_{4})
\quad \textrm{ and } \quad \mathcal{I}_{2}(S_{4}) \subseteq \zeta(\mathfrak{M}_{2}(S_{4},A_{4})).
\]
Let $e_{1}$ and $e_{2}$ be the central primitive idempotents corresponding to the two irreducible linear characters of $S_{4}$.
Suppose that $e_{1} = e_{S_{4}}$ corresponds to the trivial representation.
Let $e_{3} \in \mathfrak{M}_{2}(S_{4},A_{4})$ be the central idempotent which corresponds to the unique irreducible representation of degree $2$ (which is inflated from the standard representation of $S_{3}$).
Further let $e_{4}, e_{5} \in \mathfrak{M}_{2}(S_{4},A_{4})$ be the central idempotents which correspond to the characters of the standard representation of $S_{4}$ and of the tensor product of the standard representation and the sign representation of $S_{4}$, respectively.
Then
\[
\zeta(\mathfrak{M}_{2}(S_{4},A_{4})) \simeq \Z_{2}[C_{2}] \oplus \Z_{2} \oplus \Z_{2} \oplus \Z_{2},
\]
where the first summand corresponds to the idempotent $e_{A_{4}} = e_{1} + e_{2}$, and the other three summands have idempotents $e_{3}$, $e_{4}$ and $e_{5}$, respectively.
In order to establish $2\zeta(\mathfrak{M}_{2}(S_{4})) \subseteq \mathcal{I}_{2}(S_{4})$,
it suffices to show that $2 e_{i}$, $1\leq i \leq 5$, belong to $\mathcal{I}_{2}(S_{4})$.
As $\nr(-1) = -e_{A_{4}} + e_{3} - e_{4} - e_{5}$, we find that this is true for $2 e_{3} = 1 + \nr(-1)$.
Now let $\tau$ be a transposition.
Then a computation shows that
$\nr(\tau) = e_{1} - e_{2} - e_{3} - e_{4} + e_{5}$ and thus also
$2 (e_{1} + e_{5}) = \nr(\tau)+1$ and $2(e_{2}+e_{4}) = 1 - \nr(\tau) - 2 e_{3}$ lie in $\mathcal{I}_{2}(S_{4})$.
If $\sigma$ is a $3$-cycle, then one can compute that $\nr(1+\sigma+\sigma^{2}) = 3 e_{A_{4}}$.
But $3$ is invertible in $\Z_{2}$ and thus $e_{A_{4}} \in \mathcal{I}_{2}(S_{4})$.
Similarly, $\nr(\tau \cdot (1+\sigma+\sigma^{2})) = 3(e_{1}-e_{2})$ such that $e_{1}-e_{2}$ and
thus also $2 e_{1}$ and $2 e_{2}$ belong to $\mathcal{I}_{2}(S_{4})$.
But then also $2 e_{4} = 2(e_{2} + e_{4}) - 2 e_{2}$ and similarly $2e_{5}$ lie in $\mathcal{I}_{2}(S_{4})$.

Let $\chi_{3} \in \Irr_{\C_{2}}(S_{4})$ be the unique character of degree $2$.
Let $g$ be a lift of the $3$-cycle $(123)$ via $S_{4}/V_{4} \simeq S_{3}$.
Then $\chi_{3}(g)=-1$. Since the degrees of all other elements of $\Irr_{\C_{2}}(S_{4})$ are odd,
\cite[Proposition 6.19]{MR3092262} and Jacobinski's central conductor formula
\eqref{eq:conductor-formula} together show that
$\mathcal{F}(\zeta(\mathfrak{M}_{2}(G)),\zeta(\Z_{2}[G]))
= \mathcal{F}(\mathfrak{M}_{2}(G),\Z_{2}[G])$.
That $2\mathcal{H}_{2}(S_{4}) \subseteq \mathcal{F}_{2}(S_{4})$ now follows in the same way as
in the end of the proof of Proposition \ref{prop:affine-denominators}.
\end{proof}

\subsection{Dihedral groups of order $2\ell$}
We compute the denominator ideals of $\Z_{p}[D_{2\ell}]$, where $p$ and $\ell$ are primes and $\ell$ is odd.
In the case $\ell=3$, this result overlaps with that of Proposition \ref{prop:affine-denominators} since
$D_{6} \simeq S_{3} \simeq \Aff(3)$.

\begin{prop} \label{prop:D_2l-denominators}
Let $p$ and $\ell$ be primes with $\ell$ odd. Then
\[
\mathcal{H}_{p}(D_{2\ell}) = \left\{ \begin{array}{lll}
\zeta(\Z_{p}[D_{2\ell}]) & \mbox{ if } & p\neq \ell \\
\mathcal{F}_{p}(D_{2\ell}) & \mbox{ if } & p=\ell;
\end{array}\right.
\]
\[
\mathcal{I}_{p}(D_{2\ell}) = \left\{ \begin{array}{lll}
\zeta(\Z_{p}[D_{2\ell}]) & \mbox{ if } & p \neq \ell \\
\zeta(\mathfrak{M}_{p}(D_{2\ell})) & \mbox{ if } & p=\ell.
\end{array}\right.
\]
\end{prop}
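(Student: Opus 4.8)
The plan is to treat the two cases $p\neq\ell$ and $p=\ell$ separately, exactly as in the statement. The case $p\neq\ell$ is immediate: the commutator subgroup of $D_{2\ell}$ is the cyclic group $C_\ell$ of rotations, which has order $\ell$ prime to $p$, so Proposition \ref{prop:best-denominators} gives both $\mathcal{H}_p(D_{2\ell})=\zeta(\Z_p[D_{2\ell}])$ and $\mathcal{I}_p(D_{2\ell})=\zeta(\Z_p[D_{2\ell}])$ at once. (When $\ell=3$ the remaining case $p=\ell=3$ is also covered by Proposition \ref{prop:affine-denominators}, since $D_6\simeq S_3\simeq\Aff(3)$, but I would give a uniform argument valid for every odd prime $\ell$.) So assume $p=\ell$; note that $2\in\Z_\ell^{\times}$. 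First I would record the Wedderburn decomposition $\Q_\ell[D_{2\ell}]\simeq\Q_\ell\times\Q_\ell\times M_2(\Q_\ell(\zeta_\ell)^{+})$, where the two linear characters are rational-valued, the $(\ell-1)/2$ characters of degree $2$ form a single $\Gal(\Q_\ell^{c}/\Q_\ell)$-orbit and have Schur index $1$, and $\Q_\ell(\zeta_\ell)^{+}:=\Q_\ell(\zeta_\ell+\zeta_\ell^{-1})$ is totally ramified over $\Q_\ell$ of degree $(\ell-1)/2$. Writing $e_1,e_2$ for the idempotents of the trivial and sign characters, $e_3:=1-e_1-e_2$ and $e_{G'}=e_1+e_2$, a maximal order $\mathfrak{M}_\ell(D_{2\ell})$ then satisfies $\zeta(\mathfrak{M}_\ell(D_{2\ell}))\simeq\Z_\ell\times\Z_\ell\times\mathcal{O}$, where $\mathcal{O}=\Z_\ell[\zeta_\ell+\zeta_\ell^{-1}]$ is the valuation ring of $\Q_\ell(\zeta_\ell)^{+}$.

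Next I would prove $\mathcal{I}_\ell(D_{2\ell})=\zeta(\mathfrak{M}_\ell(D_{2\ell}))$. The inclusion $\mathcal{I}_\ell(D_{2\ell})\subseteq\zeta(\mathfrak{M}_\ell(D_{2\ell}))$ always holds, since reduced characteristic polynomials of integral matrices have integral coefficients. For the reverse inclusion I would compute three reduced norms in $\Z_\ell[D_{2\ell}]$: the central element $-1$ has $\nr(-1)=-e_1-e_2+e_3=1-2e_{G'}$ (its image in the degree-$2$ block is $-I_2$, of determinant $1$), so $2e_{G'}=1-\nr(-1)\in\mathcal{I}_\ell(D_{2\ell})$; a reflection $\tau$ has $\nr(\tau)=e_1-e_2-e_3$ (its image in the degree-$2$ block is a reflection, of determinant $-1$), so $2e_1=1+\nr(\tau)\in\mathcal{I}_\ell(D_{2\ell})$; and a rotation $\sigma$ of order $\ell$ gives $\nr(1+\sigma)=2e_1+2e_2+(2+\zeta_\ell+\zeta_\ell^{-1})e_3$, whence $(\zeta_\ell+\zeta_\ell^{-1})e_3=\nr(1+\sigma)-2\cdot 1\in\mathcal{I}_\ell(D_{2\ell})$. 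Since $2$ is a unit, $e_1$, $e_2=e_{G'}-e_1$ and $e_3=1-e_{G'}$ all lie in $\mathcal{I}_\ell(D_{2\ell})$, and because $\mathcal{I}_\ell(D_{2\ell})$ is a ring containing $e_3$ and $(\zeta_\ell+\zeta_\ell^{-1})e_3$ it contains $\Z_\ell[\zeta_\ell+\zeta_\ell^{-1}]e_3=\mathcal{O}e_3$; together with $\Z_\ell e_1,\Z_\ell e_2\subseteq\mathcal{I}_\ell(D_{2\ell})$ this yields $\zeta(\mathfrak{M}_\ell(D_{2\ell}))\subseteq\mathcal{I}_\ell(D_{2\ell})$.

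Finally, for $\mathcal{H}_\ell(D_{2\ell})$: Remark \ref{rmk:cond-in-denom-ideal} gives $\mathcal{F}_\ell(D_{2\ell})\subseteq\mathcal{H}_\ell(D_{2\ell})$. Conversely, by \eqref{eq:denom-ideal} and the equality just proved, $\mathcal{H}_\ell(D_{2\ell})=\mathcal{H}_\ell(D_{2\ell})\cdot\mathcal{I}_\ell(D_{2\ell})=\mathcal{H}_\ell(D_{2\ell})\cdot\zeta(\mathfrak{M}_\ell(D_{2\ell}))\subseteq\zeta(\Z_\ell[D_{2\ell}])$, so $\mathcal{H}_\ell(D_{2\ell})$ is contained in $\mathcal{F}(\zeta(\mathfrak{M}_\ell(D_{2\ell})),\zeta(\Z_\ell[D_{2\ell}]))$. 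All irreducible characters of $D_{2\ell}$ have degree $1$ or $2$, hence prime to $\ell$, so, exactly as in the proofs of Propositions \ref{prop:affine-denominators} and \ref{prop:S_4-denominators}, \cite[Corollary 6.20]{MR3092262} (together with Jacobinski's formula \eqref{eq:conductor-formula}) shows that $\mathcal{F}(\zeta(\mathfrak{M}_\ell(D_{2\ell})),\zeta(\Z_\ell[D_{2\ell}]))=\mathcal{F}(\mathfrak{M}_\ell(D_{2\ell}),\Z_\ell[D_{2\ell}])=\mathcal{F}_\ell(D_{2\ell})$, and hence $\mathcal{F}_\ell(D_{2\ell})\subseteq\mathcal{H}_\ell(D_{2\ell})\subseteq\mathcal{F}_\ell(D_{2\ell})$, giving $\mathcal{H}_\ell(D_{2\ell})=\mathcal{F}_\ell(D_{2\ell})$.

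The step I expect to be the main obstacle is the reduced-norm bookkeeping in the degree-$2$ Wedderburn block, namely computing the determinant of the image of $\tau$ (and of $1+\sigma$) in $M_2(\Q_\ell(\zeta_\ell)^{+})$ without diagonalising over $\Q_\ell(\zeta_\ell)^{+}$, together with verifying that the hypotheses of \cite[Corollary 6.20]{MR3092262} are met so that the central conductor $\mathcal{F}(\zeta(\mathfrak{M}_\ell(D_{2\ell})),\zeta(\Z_\ell[D_{2\ell}]))$ collapses onto the full conductor $\mathcal{F}_\ell(D_{2\ell})$; the remaining manipulations with the idempotents $e_1,e_2,e_3$ and with the identity $\mathcal{H}\cdot\mathcal{I}=\mathcal{H}$ are formal.
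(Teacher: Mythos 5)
Your proof is correct. For $p\neq\ell$ you use Proposition \ref{prop:best-denominators}, exactly as the paper does. For $p=\ell$, however, you take a genuinely different route: the paper simply cites \cite[Example 2.2]{MR3092262} and gives no argument, whereas you construct a self-contained computational proof modeled on the paper's own Propositions \ref{prop:affine-denominators} and \ref{prop:S_4-denominators}. Concretely, you write down the Wedderburn decomposition $\Q_\ell[D_{2\ell}]\simeq\Q_\ell\times\Q_\ell\times M_2(\Q_\ell(\zeta_\ell)^+)$, compute the reduced norms of $-1$, a reflection $\tau$, and $1+\sigma$ to show that each primitive central idempotent and $(\zeta_\ell+\zeta_\ell^{-1})e_3$ lie in $\mathcal{I}_\ell(D_{2\ell})$ (using that $2\in\Z_\ell^\times$), deduce $\mathcal{I}_\ell(D_{2\ell})=\zeta(\mathfrak{M}_\ell(D_{2\ell}))$, and then close the argument for $\mathcal{H}_\ell$ by the chain $\mathcal{F}_\ell\subseteq\mathcal{H}_\ell\subseteq\mathcal{F}(\zeta(\mathfrak{M}_\ell),\zeta(\Z_\ell[D_{2\ell}]))=\mathcal{F}_\ell$, the last equality via \cite[Corollary 6.20]{MR3092262} since all character degrees of $D_{2\ell}$ are $1$ or $2$, hence prime to $\ell$. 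This buys a reader a transparent, reference-independent verification, at the cost of length; the paper's citation is shorter but opaque without consulting \cite{MR3092262}. All the reduced-norm bookkeeping in the degree-$2$ block checks out (determinant $1$ for $-I_2$, determinant $-1$ for a reflection, determinant $2+\zeta_\ell+\zeta_\ell^{-1}$ for $1+\sigma$), and the use of the ring structure of $\mathcal{I}_\ell(D_{2\ell})$ to obtain $\mathcal{O}e_3\subseteq\mathcal{I}_\ell(D_{2\ell})$ is justified by the discussion following \eqref{eq:denom-ideal}.
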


\begin{proof}
In the case that $p \neq \ell$, the result follows from Proposition \ref{prop:best-denominators}.
In the case $p=\ell$, the result is established in \cite[Example 2.2]{MR3092262}.
\end{proof}

\section{Applications to annihilation conjectures}\label{sec:apps-to-ann}

\subsection{Equivariant $L$-values}
Let $L/K$ be a finite Galois extension of number fields with Galois group $G$.
For each place $v$ of $K$ we fix a place $w$ of $L$ above $v$ and write $G_{w}$ and $I_{w}$ for the decomposition
group and inertia subgroup of $L/K$ at $w$, respectively.
We choose a lift $\phi_{w} \in G_{w}$ of the Frobenius automorphism at $w$ and
write $\mathrm{N}(v)$ for the cardinality of the residue field of $v$.

Let $S$ be a finite set of places of $K$ containing the set $S_{\infty}$ of archimedean places.
For  $\chi \in \Irr_{\C}(G)$, we denote the $S$-truncated Artin $L$-function
attached to $\chi$ and $S$ by $L_{S}(s,\chi)$ and
we write $e_{\chi}$ for the primitive central idempotent $\chi(1)|G|^{-1}\sum_{g \in G}\chi(g^{-1})g$ of the complex group algebra $\C[G]$.
Recall that these idempotents induce a
canonical isomorphism $\zeta(\C [G]) \simeq \prod_{\chi \in \Irr_{\C} (G)} \C$.
We define the equivariant Artin $L$-function to be the meromorphic $\zeta(\C[G])$-valued function
\[
      L_{S}(s) := (L_{S}(s,\chi))_{\chi \in \Irr_{\C} (G)}.
\]

We shall henceforth assume that $S$ contains all (of the finitely many) primes that ramify in $L/K$.
Let $\nr_{\C[G]}:\C[G] \rightarrow \zeta(\C[G])$ denote the reduced norm map (see \cite[\S 7D]{MR632548})
and let $^{\#}: \C[G] \rightarrow \C[G]$ denote the involution induced by $g \mapsto g^{-1}$.
If $T$ is a second finite set of places of $K$ such that $S \cap T = \emptyset$, we define
\[
\delta_{T}(s) := \prod_{v \in T} \nr_{\C[G]}(1 - \mathrm{N}(v)^{1-s} \phi_{w}^{-1}) \quad \textrm{and} \quad \theta_{S,T}(s) := \delta_{T}(s) \cdot L_{S}(s)^{\#}.
\]
For each integer $m \geq 0$ we define an $m$-th order Stickelberger function by setting
\[
   \theta^{(m)}_{S,T}(s) := \left( \sum_{\chi \in \Irr_{\C} (G)} s^{-m \chi(1)} e_{\chi}\right) \theta_{S,T}(s).
\]
If $T$ is empty, we shall drop it from the notation.

\subsection{Regulators and class groups}\label{subsec:regs-and-classgroups}
For any finite set $S$ of places of $K$, we write $S(L)$ for the set of places of $L$ that lie above a place in $S$.
We write $\mathcal{O}_{S}$ for the ring of $S(L)$-integers in $L$.
We denote by $E_{S}$ and $\cl_{S}$ the unit group and the ideal class group of $\mathcal{O}_{S}$, respectively.
If $T$ is a second finite set of places of $K$ that is disjoint from $S$, we write $E_{S}^{T}$ for the subgroup of $E_{S}$ comprising those elements which are congruent to $1$ modulo all places in $T(L)$.
We shall always assume that $T$ is chosen so that $E_{S}^{T}$ is torsion-free;
in particular, this condition is satisfied if $T$ contains primes of two different residue characteristics or one prime of sufficiently large norm.
Moreover, we write $Y_{S}$ for the free abelian group on $S(L)$ and $X_{S}$
for the kernel of the augmentation map $Y_{S} \onto \Z$ which sends each place $w \in S(L)$ to $1$.
We let
\[
\lambda_{S}: \R \otimes_{\Z} E_{S} \rightarrow \R \otimes_{\Z} X_{S}, \quad 1 \otimes\epsilon \mapsto - \sum_{w \in S(L)} \log |\epsilon|_{w} \cdot w
\]
be the negative of the usual Dirichlet map, where $|\cdot|_{w}$ is the normalised absolute value at $w$.
For each homomorphism $\phi \in \Hom_{G}(E_{S},X_{S})$ we define a $\zeta(\R[G])$-valued regulator by
\[
    R(\phi) := \nr_{\R[G]}(\lambda_{S}^{-1} \circ (\R \otimes_{\Z} \phi)).
\]
Here we choose a finitely generated $\R[G]$-module $N$ such that $(\R \otimes_{\Z }E_{S}) \oplus N$ is a free $\R[G]$-module
and let $\nr_{\R[G]}(\lambda_{S}^{-1} \circ (\R \otimes_{\Z} \phi))$ denote the reduced norm of the matrix of
$\lambda_{S}^{-1} \circ (\R \otimes_{\Z} \phi) \oplus \mathrm{id}_{N}$ with respect to some basis of $(\R \otimes_{\Z }E_{S}) \oplus N$.
(It is easily checked that this definition is independent of the choice of $N$ and the choice of basis).

\subsection{Statement of the conjecture}
Let $0 \leq r < |S|$ be an integer and let $S_{r}$ be a subset of $S$ of cardinality $r$.
We shall say that $S$ satisfies Hypothesis $H_{r}$ if each place in $S_{r}$ splits completely in $L/K$.
If $S$ satisfies $H_{r}$, then \cite[Lemma 2.2.1]{MR2845620} implies that
$\theta_{S,T}^{(r)}(s)$ is holomorphic at $s=0$.
For each $v \in S$ we write $\Z[G/G_{w}]$ for the left $\Z[G]$-module $\Z[G] \otimes_{\Z[G_{w}]} \Z$.
We now state the following conjecture due to Burns \cite{MR2845620}.

\begin{conj}[Burns]\label{conj:Burns-conjecture}
Let $L/K$ be a Galois extension of number fields with Galois group $G$.
Assume that $S$ satisfies Hypothesis $H_{r}$ and let $S'$ be a non-empty subset of $S$ that contains both $S_{\infty}$
and $S_{r}$.
Let $a \in \mathcal{H}(G)$.
Then for any $\phi \in \Hom_{G}(E_{S},X_{S})$, we have $ \theta_{S,T}^{(r)}(0)R(\phi) \in \mathcal{I}(G)$
and therefore $a \theta_{S,T}^{(r)}(0)R(\phi)$ belongs to $\Z[G]$.
Moreover, if $r=0$ or $S'=S$, then $a\theta_{S,T}^{(r)}(0)R(\phi)$ belongs to $\Ann_{\Z[G]}(\cl_{S'})$.
In all other cases, $ba\theta_{S,T}^{(r)}(0)R(\phi)$ belongs to $\Ann_{\Z[G]}(\cl_{S'})$ for
every $b$ in $\bigcup_{v \in S \setminus S'} \Ann_{\Z[G]}(\Z[G/G_{w}])$.
\end{conj}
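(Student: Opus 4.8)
The plan is to deduce Conjecture \ref{conj:Burns-conjecture} from the equivariant Tamagawa number conjecture $\ETNC(L/K,0)$, and to observe that both the integrality assertions and the annihilation assertions already follow from the weaker strong Stark conjecture $\SSC(L/K)=\ETNC^{\max}(L/K,0)$ once it is combined with the denominator-ideal formalism of \S 5 and the annihilation results built on noncommutative Fitting invariants (Remark \ref{rmk:interest-in-hpg}, \cite[Lemma 5.1.4]{MR2845620}); this is in essence the reduction carried out by Burns in \cite{MR2845620}. One then makes the statement unconditional for a large class of Galois groups by invoking the known cases of $\SSC$ (\S\ref{subsec:ETNC-max}(v)), the cases of $\ETNC_p(L/K,0)$ established by the hybrid-order reductions of \S\ref{subsec:case-r=0}, and, at the finitely many bad primes $p\mid|G'|$, the explicit formulas for $\mathcal{H}_p(G)$ and $\mathcal{I}_p(G)$ obtained in \S 5.

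For the integrality part one checks $\theta_{S,T}^{(r)}(0)R(\phi)\in\mathcal{I}(G)$ prime-by-prime via \eqref{eq:p-part-decomp}. At a prime $p$ for which $\ETNC_p(L/K,0)$ has been established (by \S\ref{subsec:case-r=0}, this includes all $p\nmid|G'|$ for the groups considered there) the term $\theta_{S,T}^{(r)}(0)R(\phi)$ lies in $\zeta(\Z_p[G])\supseteq\mathcal{I}_p(G)$, and in fact in $\mathcal{I}_p(G)$ since it is the reduced norm of a transition morphism of a based perfect complex of $\Z_p[G]$-lattices. At the remaining primes $p\mid|G'|$, $\SSC(L/K)$ gives $T\Omega(L/K,\Z_p[G],0)\in DT(\Z_p[G])$ (see \S\ref{subsec:ETNC-max}(ii) and (iii)) and hence places $\theta_{S,T}^{(r)}(0)R(\phi)$ in $\zeta(\mathfrak{M}_p(G))$, which by the computations of \S 5 equals $\mathcal{I}_p(G)$ in the relevant cases (Propositions \ref{prop:affine-denominators}, \ref{prop:S_4-denominators}, \ref{prop:D_2l-denominators}). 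Reassembling over $p$ yields $\theta_{S,T}^{(r)}(0)R(\phi)\in\mathcal{I}(G)$, whereupon the final clause of the first half of the conjecture is immediate from \eqref{eq:denom-ideal}: for $a\in\mathcal{H}(G)$, Lemma \ref{lem:ast} gives $a\cdot\theta_{S,T}^{(r)}(0)R(\phi)\in\mathcal{H}(G)\cdot\mathcal{I}(G)=\mathcal{H}(G)\subseteq\zeta(\Z[G])\subseteq\Z[G]$.

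For the annihilation part I would feed the description of $\cl_{S'}$ supplied by $\ETNC(L/K,0)$ (where it is available) or, at the bad primes, by $\SSC(L/K)$ together with the Fitting-invariant annihilation formalism of Remark \ref{rmk:interest-in-hpg}, into the chain $\mathcal{H}_p(G)\cdot\Fit^{\max}_{\Z_p[G]}(M)\subseteq\Ann_{\zeta(\Z_p[G])}(M)$, taken with $M$ the $p$-part of $\cl_{S'}$: the leading-term invariant of the perfect complex computing $\cl_{S'}$ equals $\theta_{S,T}^{(r)}(0)$ modulo the regulator $R(\phi)$, so $\theta_{S,T}^{(r)}(0)R(\phi)$ lies in $\Fit^{\max}_{\Z_p[G]}$ of that $p$-part, up to the correction coming from passing between $X_S$ and $X_{S'}$. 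When $r=0$ or $S'=S$ there is no such correction and, reassembling $p$-parts via \eqref{eq:p-part-decomp}, one obtains $a\theta_{S,T}^{(r)}(0)R(\phi)\in\Ann_{\Z[G]}(\cl_{S'})$ for $a\in\mathcal{H}(G)$; in the remaining cases the $X_S$-to-$X_{S'}$ correction is killed precisely by the modules $\Z[G/G_w]$ for $v\in S\setminus S'$, which forces the extra factor $b\in\bigcup_{v\in S\setminus S'}\Ann_{\Z[G]}(\Z[G/G_w])$.

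To remove the conditional hypothesis one restricts to $G$ for which $\SSC(L/K)$ is known: by \S\ref{subsec:ETNC-max}(v) this holds whenever every $\chi\in\Irr_{\C}(G)$ is rational-valued or satisfies that $L^{\ker\chi}/\Q$ is abelian, which covers $G\simeq\Aff(q)$, $S_3$, $S_4$, $D_{2n}$ and many others under the appropriate hypotheses on subfields; for such $G$ the hybrid machinery of \S\ref{subsec:case-r=0} supplies $\ETNC_p(L/K,0)$ at all $p\nmid|G'|$, where moreover $\mathcal{H}_p(G)=\mathcal{I}_p(G)=\zeta(\Z_p[G])$ by Proposition \ref{prop:best-denominators}. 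The real work, and the main obstacle, is at the finitely many bad primes $p\mid|G'|$: here $\mathcal{H}_p(G)$ is a proper ideal and one must use its explicit shape together with the containment $\mathcal{F}_p(G)\subseteq\mathcal{H}_p(G)$ (Remark \ref{rmk:cond-in-denom-ideal}) to check that multiplication by $a\in\mathcal{H}_p(G)$ already absorbs the torsion class of $T\Omega(L/K,\Z_p[G],0)$ in $DT(\Z_p[G])$. This is only partially achievable: for $p=2$ the conductor and denominator computations hold merely up to a factor of $2$ (the factor-of-$2$ containments in Propositions \ref{prop:affine-denominators} and \ref{prop:S_4-denominators}), so the unconditional conclusion at $2$ is correspondingly weaker, and a proof valid for an arbitrary Galois group $G$ would still require the currently open $\ETNC(L/K,0)$.
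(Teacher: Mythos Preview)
The fundamental issue is that the statement you are attempting to prove is a \emph{conjecture}, not a theorem. The paper does not prove Conjecture \ref{conj:Burns-conjecture}; it states it (attributing it to Burns \cite{MR2845620}) and then establishes it only in special cases. There is no ``paper's own proof'' of this statement to compare against.

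What the paper actually does is the following. Theorem \ref{thm:ETNC-implies-Burns} records that $\ETNC_p(L/K,0)$ implies the $p$-part of Conjecture \ref{conj:Burns-conjecture}, and that $\SSC_p(L/K)$ implies the $p$-part of the weaker Conjecture \ref{conj:weak-Burns-conjecture}; both implications are due to Burns. The paper then combines these implications with its new cases of $\ETNC_p$ and its explicit denominator-ideal computations to obtain unconditional results for specific Galois groups: $\Aff(q)$, $S_3$, $D_{2p}$, $S_4$, $D_{12}$ (Theorems 7.6--7.10). For a general Galois extension $L/K$ the conjecture remains open, as you yourself acknowledge in your final sentence.

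Your write-up conflates several distinct things: the conditional implication $\ETNC \Rightarrow$ Conjecture \ref{conj:Burns-conjecture} (which is Burns' result, not something being proved here), the unconditional verification for particular groups (which is what the paper contributes), and a purported general argument. The sketch you give of the ``integrality part'' and ``annihilation part'' is roughly the shape of Burns' proof of Theorem \ref{thm:ETNC-implies-Burns}, but it is not a proof of the conjecture, since the input hypothesis $\ETNC(L/K,0)$ is itself conjectural. If your intent was to prove Theorem \ref{thm:ETNC-implies-Burns} rather than the conjecture, you should say so explicitly; if your intent was to prove the conjecture unconditionally, that is not possible with current methods, and the paper makes no such claim.
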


\begin{conj}[Weak version of Conjecture \ref{conj:Burns-conjecture}]\label{conj:weak-Burns-conjecture}
Let $\mathfrak{M}(G)$ be a maximal order such that  $\Z[G] \subseteq \mathfrak{M}(G) \subseteq \Q[G]$
and let $\mathcal{F}(G)$ denote the central conductor of $\mathfrak{M}(G)$ into $\Z[G]$.
We now obtain a weak version of Conjecture \ref{conj:Burns-conjecture} by
replacing each instance of $\mathcal{I}(G)$ and $\mathcal{H}(G)$ by $\zeta(\mathfrak{M}(G))$ and $\mathcal{F}(G)$, respectively.
\end{conj}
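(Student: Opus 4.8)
The plan is to prove that Conjecture~\ref{conj:weak-Burns-conjecture} holds for $L/K$ whenever the strong Stark conjecture $\SSC(L/K)$ holds. Recall from the introduction and \cite{MR2845620} that the full Conjecture~\ref{conj:Burns-conjecture} is a consequence of $\ETNC(L/K,0)$; the idea is to re-run that derivation with the integral group ring $\Z[G]$ replaced throughout by a maximal order $\mathfrak{M}(G)\supseteq\Z[G]$, so that only the weaker input $\SSC(L/K)=\ETNC^{\max}(L/K,0)$ is needed, and to absorb the resulting loss of integrality into the central conductor $\mathcal{F}(G)$. Since $\SSC(L/K)$ is known in many situations --- for instance whenever every irreducible character of $G$ is rational-valued, and more generally in the cases listed in \S\ref{subsec:ETNC-max}(v) --- this yields unconditional cases of the weak conjecture; and, combined with the denominator ideal computations above (Propositions~\ref{prop:best-denominators}, \ref{prop:affine-denominators}, \ref{prop:S_4-denominators} and \ref{prop:D_2l-denominators}, which pin down the primes $p$ at which $\mathcal{H}_p(G)=\mathcal{F}_p(G)$), it will also yield cases of the full Conjecture~\ref{conj:Burns-conjecture} at primes where $\ETNC_p(L/K,0)$ is unknown.

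First I would reduce to the maximal order. After decomposing into $p$-parts (\S\ref{subsec:p-parts}), it suffices to verify each assertion of Conjecture~\ref{conj:weak-Burns-conjecture} $p$-locally for each prime $p$. By Proposition~\ref{prop:torsion-kernel} and \S\ref{subsec:ETNC-max}(ii), the hypothesis $\SSC(L/K)$ is equivalent to $T\Omega(L/K,\Z_p[G],0)\in DT(\Z_p[G])$ for every $p$, hence --- using the isomorphism $DT(\Z_p[G])\simeq\zeta(\mathfrak{M}_p(G))^{\times}/\nr((\Z_p[G])^{\times})$ and the surjectivity $\nr((\mathfrak{M}_p(G))^{\times})=\zeta(\mathfrak{M}_p(G))^{\times}$, both from Proposition~\ref{prop:torsion-units} --- to the vanishing of the image of $T\Omega(L/K,\Z[G],0)$ in $K_0(\mathfrak{M}(G),\R)$, i.e.\ to $\ETNC(L/K,\mathfrak{M}(G),0)$. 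It therefore remains to run Burns' Euler characteristic computation over $\mathfrak{M}(G)$.

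That computation attaches to $\cl_{S'}$, the $S$-units $E_{S}^{T}$ and the local terms $\Z[G/G_{w}]$ a finitely generated $\Z_p[G]$-module $M$ with a presentation matrix $H\in M_{n\times n}(\Z_p[G])$, and identifies the ETNC element with the class of $\theta_{S,T}^{(r)}(0)R(\phi)\cdot\nr(H)^{-1}$ in $\zeta(\R[G])^{\times}$ modulo $\nr((\Z_p[G])^{\times})$. The content of $\ETNC_p(L/K,0)$ would be triviality of this class modulo $\nr((\Z_p[G])^{\times})$; the weaker hypothesis $\ETNC(L/K,\mathfrak{M}_p(G),0)$ gives only triviality modulo $\zeta(\mathfrak{M}_p(G))^{\times}$, i.e.\ an equality $\theta_{S,T}^{(r)}(0)R(\phi)=u\cdot\nr(H)$ with $u\in\zeta(\mathfrak{M}_p(G))^{\times}$. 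As $\nr(H)\in\zeta(\mathfrak{M}_p(G))$, this is already the first assertion $\theta_{S,T}^{(r)}(0)R(\phi)\in\zeta(\mathfrak{M}_p(G))$. For the remaining assertions, fix $a\in\mathcal{F}_p(G)$ and put $b:=au$; since the central conductor $\mathcal{F}_p(G)$ is an ideal of $\zeta(\mathfrak{M}_p(G))$ we still have $b\in\mathcal{F}_p(G)\subseteq\mathcal{H}_p(G)$, and $a\cdot\theta_{S,T}^{(r)}(0)R(\phi)=b\cdot\nr(H)$. This element lies in $\Z_p[G]$ because $b\in\mathcal{F}_p(G)$ and $\nr(H)\in\mathfrak{M}_p(G)$, and it annihilates $M=\coker(H)$: writing $H^{\ast}$ for the generalised adjoint of Lemma~\ref{lem:ast} we have $b\cdot\nr(H)\cdot 1_{n\times n}=(b\,H^{\ast})H$ with $b\,H^{\ast}\in M_{n\times n}(\Z_p[G])$ (as $b\in\mathcal{H}_p(G)$), so $b\cdot\nr(H)$ carries $\Z_p[G]^{n}$ into the relations of $M$ --- this is the mechanism underlying the inclusion $\mathcal{H}_p(G)\cdot\Fit^{\max}_{\Z_p[G]}(M)\subseteq\Ann_{\zeta(\Z_p[G])}(M)$ recorded in Remark~\ref{rmk:interest-in-hpg}. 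That $a\cdot\theta_{S,T}^{(r)}(0)R(\phi)$ then annihilates $\cl_{S'}$ when $r=0$ or $S'=S$, and that the extra multiplier from $\bigcup_{v\in S\setminus S'}\Ann_{\Z[G]}(\Z[G/G_{w}])$ is needed otherwise, follows exactly as in Burns' treatment of Conjecture~\ref{conj:Burns-conjecture}, since the passage from $\Ann_{\Z[G]}(M)$ to annihilators of $\cl_{S'}$ is unaffected by working over $\mathfrak{M}(G)$.

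The main obstacle is the third step: one must isolate within \cite{MR2845620} (and the constructions of \cite{MR1884523} on which it rests) the precise intermediate statement identifying the ETNC element with the class of $\theta_{S,T}^{(r)}(0)R(\phi)\cdot\nr(H)^{-1}$ for an explicit module $M=\coker(H)$ assembled from $\cl_{S'}$, $E_{S}^{T}$ and the local terms, and check that this identification --- together with the comparison of $\Ann_{\Z[G]}(M)$ with annihilators of $\cl_{S'}$ --- is compatible with extension of scalars to $\mathfrak{M}(G)$; equivalently, that moving from $\Z[G]$ to $\mathfrak{M}(G)$ costs exactly the indeterminacy measured by $DT(\Z_p[G])$, which the conductor $\mathcal{F}_p(G)$ then absorbs. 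Once this bookkeeping is done, the remaining manipulations --- the reduction to $p$-parts, the unit absorption $b=au\in\mathcal{F}_p(G)$, and the adjoint-matrix argument --- are the formal calculus assembled above.
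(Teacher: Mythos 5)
First, note that what you are actually proving is not Conjecture~\ref{conj:weak-Burns-conjecture} itself (it remains a conjecture) but the implication recorded as Theorem~\ref{thm:ETNC-implies-Burns}(ii): if $\SSC_{p}(L/K)$ holds (together with $T\Omega(L/K,\Z[G],0)\in K_0(\Z[G],\Q)$, i.e.\ Stark's conjecture) then the $p$-part of Conjecture~\ref{conj:weak-Burns-conjecture} is true. The paper's proof of that statement is not a re-derivation at all: it simply cites Burns' Theorem 4.3.1(i) of \cite{MR2845620} and then observes that Lemma~\ref{lem:ann-without-extra-|G|-factor} can be substituted for Burns' Lemma 11.1.2(i) to strip away an extraneous factor of $|G|$. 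All the Euler-characteristic machinery is outsourced; the only new content supplied by the paper is the improved twisting lemma.

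Your sketch has the right high-level shape ($\SSC$ is by definition $\ETNC^{\max}$, and the central conductor $\mathcal{F}(G)$ is the correct scale at which a maximal-order statement becomes an integral one), but there are two genuine gaps. The first you flag yourself: the identification of the ETNC element with a class of the form $\theta_{S,T}^{(r)}(0)R(\phi)\cdot\nr(H)^{-1}$ for a single integral presentation matrix $H$ whose cokernel surjects onto $\cl_{S'}$ is not an off-the-shelf intermediate statement in \cite{MR2845620}; the argument there goes through a perfect complex constructed from a Tate sequence and a $\chi$-by-$\chi$ twisting procedure, not through one global Fitting-invariant presentation, and reconstructing that translation is precisely the unresolved content. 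The second gap is that you never engage with the factor $|G|$. Burns' Theorem 4.3.1(i) only yields the weak annihilation statement after multiplying by $|G|$, because his Lemma 11.1.2(i) moves from $\Ann_{\mathcal{O}_{\chi}}(M_{\chi})$ to $\Ann_{\Z[G]}(M)$ at that cost (recall $\pr_{\chi}=|G|\chi(1)^{-1}e_{\chi}$); the paper's Lemma~\ref{lem:ann-without-extra-|G|-factor} removes it via Pontryagin duality and the contragredient character $\check\chi$. Your Fitting-invariant heuristic might conceivably bypass the twisting and hence the extra $|G|$, but because you then defer to ``Burns' treatment'' for the passage from $\Ann_{\Z[G]}(M)$ to $\Ann_{\Z[G]}(\cl_{S'})$, you reintroduce exactly the part of his argument in which that factor arises, so as written the improvement the paper actually proves is not obtained.
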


\begin{remark}
\hspace*{\fill}
\begin{enumerate}
\item Conjecture \ref{conj:weak-Burns-conjecture} is indeed a weakening of Conjecture \ref{conj:Burns-conjecture} since
$\mathcal{I}(G) \subseteq \zeta(\mathfrak{M}(G))$ and $\mathcal{F}(G) \subseteq \mathcal{H}(G)$ (see \S \ref{subsec:denom-central-conductors}).
\item
Note that $\theta_{S,T}^{(r)}(0)R(\phi) \in \zeta(\mathbb{Q}[G])$ if and only if Stark's conjecture holds for all $\chi \in \Irr_{\C}(G)$
such that the vanishing order of $L_{S}(s,\chi)$ at $s=0$ equals $r \chi(1)$.
In this case Conjecture \ref{conj:Burns-conjecture} naturally decomposes into $p$-parts if one replaces each instance of $\mathcal{I}(G)$, $\mathcal{H}(G)$, $\Z$ and $\cl_{S'}$ by $\mathcal{I}_{p}(G)$, $\mathcal{H}_{p}(G)$, $\Z_{p}$ and $\Z_{p} \otimes_{\Z} \cl_{S'}$, respectively.
In the same situation Conjecture \ref{conj:weak-Burns-conjecture} similarly decomposes into $p$-parts.
\item
In fact, \cite[Conjecture 2.4.1]{MR2845620} states that $\theta_{S,T}^{(r)}(0)R(\phi)$ equals the reduced norm of an integral matrix with further properties
(or  a sum of such elements if $G$ has symplectic characters).
We will not treat this issue here.
Nevertheless, in each case in which we will prove the $p$-part of Conjecture \ref{conj:Burns-conjecture} via the validity of $\ETNC_{p}(L/K,0)$, this more precise statement will also hold.
\item
Let $n$ be a positive integer.
We will say that Conjecture \ref{conj:Burns-conjecture} (or Conjecture \ref{conj:weak-Burns-conjecture})
holds up to multiplication by $n$ if all statements are true after replacing each instance of $\theta_{S,T}^{(r)}(0)$ by $n \cdot \theta_{S,T}^{(r)}(0)$.
\end{enumerate}
\end{remark}

\subsection{$\chi$-twists and annihilation}
We review some material on $\chi$-twists and annihilation results due to Burns.
Let $G$ be a finite group and let $M$ be a finitely generated (left) $\Z[G]$-module.
Then we write $M^{\vee}$ for its Pontryagin dual $\Hom(M, \Q/\Z)$ which we endow with the contragredient $G$-action.
Let $\chi \in \Irr_{\C}(G)$ and let $E_{\chi}$ be a subfield of $\C$ 
that is both Galois and of finite degree over $\Q$ and over which $\chi$ can be realised.
We put $\pr_{\chi} :=\sum_{\sigma \in G} \chi(\sigma^{-1}) \sigma=|G|\chi(1)^{-1}e_{\chi}$.
We write $\mathcal{O}_{\chi}$ for the ring of integers of $E_{\chi}$ and choose a maximal $\mathcal{O}_{\chi}$-order $\mathfrak{M}_{\chi}(G)$ in $E_{\chi}[G]$ which contains $\mathcal{O}_{\chi}[G]$.
We fix an indecomposable idempotent $f_{\chi}$ of $e_{\chi} \mathfrak{M}_{\chi}(G)$ and define an $\mathcal{O}_{\chi}$-torsionfree right $\mathcal{O}_{\chi}[G]$-module by setting $T_{\chi} := f_{\chi} \mathfrak{M}_{\chi}(G)$.
The associated right $E_{\chi}[G]$-module $V_{\chi} := E_{\chi} \otimes_{\mathcal{O}_{\chi}} T_{\chi}$ has character $\chi$ and
$T_{\chi}$ is locally free of rank $\chi(1)$ over $\mathcal{O}_{\chi}$.
For any left $\Z[G]$-module $M$ we set $M[\chi] := T_{\chi} \otimes_{\Z} M$, upon which $G$ acts on the left by
$t \otimes m \mapsto tg^{-1} \otimes g(m)$ for $t \in T_{\chi}$, $m \in M$ and $g \in G$.
We obtain a left exact functor $M \mapsto M^{\chi}$ and a right exact functor
$M \mapsto M_{\chi}$ from the category of left $\Z[G]$-modules to the category of $\mathcal{O}_{\chi}$-modules
by setting $M^{\chi} := M[\chi]^{G}$ and $M_{\chi} := M[\chi]_{G}$.

\begin{lemma}\label{lem:ann-without-extra-|G|-factor}
Let $M$ be a finitely generated (left) $\Z[G]$-module and let $\chi \in \Irr_{\C}(G)$.
Then for any $y \in \Ann_{\mathcal{O}_{\chi}}(M_{\chi})e_{\chi}$ the element
$y \pr_{\chi}$ belongs to $\mathcal{O}_{\chi} \otimes_{\Z} \Ann_{\Z[G]}(M)$.
\end{lemma}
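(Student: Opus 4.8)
The plan is to reduce the assertion to an annihilation statement over $\mathcal{O}_{\chi}[G]$ and then to exhibit a commutative square relating multiplication by $\pr_{\chi}$ on $M$ to the functor $(-)_{\chi}$. Since $\mathcal{O}_{\chi}$ is free of finite rank over $\Z$, extension of scalars identifies $\mathcal{O}_{\chi}\otimes_{\Z}\Ann_{\Z[G]}(M)$ with $\Ann_{\mathcal{O}_{\chi}[G]}(\mathcal{O}_{\chi}\otimes_{\Z}M)$; hence, writing $y=ae_{\chi}$ with $a\in\Ann_{\mathcal{O}_{\chi}}(M_{\chi})$ and using $e_{\chi}\pr_{\chi}=\pr_{\chi}$, it is enough to prove that $a\pr_{\chi}$ annihilates $\mathcal{O}_{\chi}\otimes_{\Z}M$. (There is nothing to do unless $M_{\chi}$ is $\mathcal{O}_{\chi}$-torsion, equivalently unless $\chi$ fails to occur in $\Q\otimes_{\Z}M$, in which case $\pr_{\chi}=|G|\chi(1)^{-1}e_{\chi}$ already carries $M$ into the $\Z$-torsion of $\mathcal{O}_{\chi}\otimes_{\Z}M$.)

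Using the canonical identification $M_{\chi}=T_{\chi}\otimes_{\Z[G]}M$ (with $T_{\chi}$ viewed as a right $\Z[G]$-module), there is a natural $\mathcal{O}_{\chi}[G]$-linear comparison map
\[
\widetilde{\nu}\colon\mathcal{O}_{\chi}\otimes_{\Z}M\longrightarrow\Hom_{\mathcal{O}_{\chi}}(T_{\chi},M_{\chi}),\qquad \alpha\otimes m\longmapsto\bigl(t\mapsto\alpha\,(t\otimes m)\bigr),
\]
which, under the identification $\Hom_{\mathcal{O}_{\chi}}(T_{\chi},M_{\chi})\cong\End_{\mathcal{O}_{\chi}}(T_{\chi})\otimes_{\Z[G]}M$, is the map $r\otimes_{\Z[G]}\mathrm{id}_{M}$, where $r\colon\mathcal{O}_{\chi}[G]\to\End_{\mathcal{O}_{\chi}}(T_{\chi})$ sends $x$ to right multiplication by $x$. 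Since $a$ kills $M_{\chi}$ and acts on $\Hom_{\mathcal{O}_{\chi}}(T_{\chi},M_{\chi})$ through $M_{\chi}$, it kills the target of $\widetilde{\nu}$, so $a\cdot\bigl(\mathcal{O}_{\chi}\otimes_{\Z}M\bigr)\subseteq\ker\widetilde{\nu}$. Thus everything reduces to the single claim that $\pr_{\chi}$ annihilates $\ker\widetilde{\nu}$: granting it, $a\pr_{\chi}(1\otimes m)=\pr_{\chi}\bigl(a\,(1\otimes m)\bigr)=0$ for all $m$, as required.

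To prove $\pr_{\chi}\cdot\ker\widetilde{\nu}=0$ I would first argue rationally: after $-\otimes_{\Z}\Q$ the target of $\widetilde{\nu}$ is $\chi$-isotypic of the expected multiplicity, $\widetilde{\nu}\otimes\Q$ is injective on the $e_{\chi}$-component (a nonzero vector in the $\chi$-isotypic part of $\Q\otimes_{\Z}M$ generates a copy of the simple module, which is detected by $\widetilde{\nu}$ via $\mathrm{id}_{T_{\chi}}$), and $\pr_{\chi}=|G|\chi(1)^{-1}e_{\chi}$ annihilates the complementary $(1-e_{\chi})$-component; hence $\pr_{\chi}$ kills $\ker(\widetilde{\nu}\otimes\Q)$ and $\pr_{\chi}\cdot\ker\widetilde{\nu}$ is $\Z$-torsion. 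One then checks this torsion vanishes one rational prime $p$ at a time and after completing $\mathcal{O}_{\chi}$ at a prime above $p$; over the resulting complete discrete valuation ring $T_{\chi}$ is free, so $G$ acts on it through a matrix representation $\rho$ of character $\chi$, and one expresses $\pr_{\chi}m$ through the entries of $\rho$ and feeds in Schur's orthogonality relations for matrix coefficients of an irreducible representation together with the fact (Jacobinski's formula \eqref{eq:conductor-formula}) that the image of the group ring in $\End_{\mathcal{O}_{\chi}}(T_{\chi})$ has central conductor containing $|G|\chi(1)^{-1}$, in order to conclude that $\pr_{\chi}$ annihilates $\ker\widetilde{\nu}$.

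I expect this last local computation to be the main obstacle. A naive variant of it — replacing $T_{\chi}$ by $e_{\chi}\mathcal{O}_{\chi}[G]$, passing through a maximal order, and using right exactness of $\otimes_{\Z[G]}M$ — produces a spurious extra factor, showing only that $|G|\chi(1)^{-1}\,y\pr_{\chi}$ (rather than $y\pr_{\chi}$) lies in $\mathcal{O}_{\chi}\otimes_{\Z}\Ann_{\Z[G]}(M)$. Removing this factor is exactly why one must work with the projective $\mathcal{O}_{\chi}$-lattice $T_{\chi}=f_{\chi}\mathfrak{M}_{\chi}(G)$ and the functor $M\mapsto M_{\chi}$ rather than with $e_{\chi}\mathcal{O}_{\chi}[G]$, and it is here that the orthogonality relations must be applied with care.
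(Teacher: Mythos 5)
Your approach is genuinely different from the paper's, and it contains a real gap at precisely the point you flag as the "main obstacle".

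The paper's proof is a short duality argument: it passes to the Pontryagin dual $M^{\vee}$ and the contragredient character $\check{\chi}$, using the natural isomorphism $(M^{\vee})^{\check{\chi}} \cong (M_{\chi})^{\vee}$ to convert the statement about the right exact functor $(-)_{\chi}$ into one about the left exact functor $(-)^{\check{\chi}}$. The latter case is already established in [Lemma 11.1] of \cite{MR2771125} (equivalently, the relevant part of the proof of [Lemma 11.1.2(i)] of \cite{MR2845620}) without an extraneous factor, and one then unwinds the duality using $\Ann_{\Z[G]}(M^{\vee}) = \Ann_{\Z[G]}(M)^{\#}$. This completely sidesteps the integral/local computation you propose.

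Your direct route --- reducing to the claim that $\pr_{\chi}$ annihilates $\ker\widetilde{\nu}$ and then arguing rationally and then locally --- stops exactly where the hard work begins. The rational step only shows $\pr_{\chi}\cdot\ker\widetilde{\nu}$ is $\Z$-torsion; killing that torsion integrally is precisely where the spurious $|G|\chi(1)^{-1}$ factor would normally appear, and you do not actually carry out the Schur-orthogonality-plus-Jacobinski computation that you hope removes it. Saying one "must work with $T_{\chi}$" and that the orthogonality relations "must be applied with care" identifies the difficulty but does not resolve it. Note also that Jacobinski's formula \eqref{eq:conductor-formula} involves the inverse different $\mathfrak{D}^{-1}(\mathcal{O}_{i}/\Z_{p})$, so the bound you cite on the central conductor is not automatically sharp enough for the claim as stated. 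Until that local step is written out and checked, the proposal does not establish the lemma; the paper's duality trick is the mechanism by which it avoids this computation altogether.
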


\begin{proof}
This is \cite[Lemma 11.1.2(i)]{MR2845620}, but with a factor $|G|$ removed.
In \cite[top of p.32]{burns-p-adic-derivatives}, Burns explains how to remove this extra
factor to obtain the desired result.
We briefly explain this here for the convenience of the reader.

Let $\check{\chi} \in \Irr_{\C}(G)$ denote the character contragredient to $\chi$.
Then we can and do assume that $E_{\check{\chi}}=E_{\chi}$ and we have a natural isomorphism
\begin{equation}\label{eq:pont-twist}
(M^{\vee})^{\check{\chi}} \cong (M_{\chi})^{\vee}.
\end{equation}
We write $x \mapsto x^{ \#}$ for the $\Z$-linear involution of $\Z[G]$ that sends each element of $G$
to its inverse. Write $y'$ for the element of $\Ann_{\mathcal{O}_{\chi}}(M_{\chi})$ that is defined by $y=y'e_{\chi}$. Similarly, write $y''$ for the element of $\mathcal{O}_{\check{\chi}}$ that is defined by
$y^{ \#} = y'' e_{\check{\chi}}$. Then by \eqref{eq:pont-twist} we have
$y'' \in \Ann_{\mathcal{O}_{\check{\chi}}}((M^{\vee})^{\check{\chi}})$.
So by \cite[Lemma 11.1]{MR2771125}
(or the relevant part of the proof of \cite[Lemma 11.1.2(i)]{MR2845620}) we have
$y'' \pr_{\check{\chi}} \in \mathcal{O}_{\check\chi} \otimes_{\Z} \Ann_{\Z[G]}(M^{\vee})$.
The desired result now follows by noting that $y'' \pr_{\check{\chi}} = y^{ \#} \pr_{\check{\chi}}$
and $\Ann_{\Z[G]}(M^{\vee})=\Ann_{\Z[G]}(M)^{\#}$.
\end{proof}

\subsection{Relation to the ETNC}
We will frequently make use of the following theorem.
As is apparent from the proof, this result is due to Burns.
However, to the best of the authors' knowledge, part (ii) is not explicitly stated in the literature.

\begin{theorem}\label{thm:ETNC-implies-Burns}
Let $L/K$ be a finite Galois extension of number fields with Galois group $G$.
Let $p$ be a prime and suppose that $T\Omega(L/K, \Z[G],0) \in K_0(\Z[G], \Q)$.
\begin{enumerate}
\item If $\ETNC_p(L/K,0)$ holds, then the $p$-part of Conjecture \ref{conj:Burns-conjecture} is true.
\item If $\SSC_{p}(L/K)$ holds, then the $p$-part of Conjecture \ref{conj:weak-Burns-conjecture} is true.
\end{enumerate}
\end{theorem}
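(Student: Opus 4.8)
The plan is to reduce everything to Burns' own work \cite{MR2845620} and then to localise at $p$, exploiting the decomposition $K_{0}(\Z[G],\Q) \simeq \bigoplus_{p} K_{0}(\Z_{p}[G],\Q_{p})$ of \S\ref{subsec:p-parts}.

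For part (i), I would begin from the fact, proved by Burns in \cite{MR2845620}, that the full $\ETNC(L/K,0)$ implies the integral Conjecture \ref{conj:Burns-conjecture}. The point is that, after fixing $S$, $T$, $r$, $S'$ and $\phi$, the element $\theta_{S,T}^{(r)}(0)R(\phi)$ is built out of the same cohomological Euler characteristic and the same regulator $R(\phi)$ that enter the definition of $T\Omega(L/K,\Z[G],0)$; vanishing of $T\Omega$ then forces $\theta_{S,T}^{(r)}(0)R(\phi) \in \mathcal{I}(G)$, whence $a\theta_{S,T}^{(r)}(0)R(\phi) \in \zeta(\Z[G]) \subseteq \Z[G]$ for $a \in \mathcal{H}(G)$ by \eqref{eq:denom-ideal}, and the annihilation of $\cl_{S'}$ comes from the cohomology of $\mathcal{O}_{S'}$ together with the $\chi$-twist machinery of \S\ref{subsec:denom-central-conductors}--\S\ref{subsec:regs-and-classgroups} (in particular Lemma \ref{lem:ann-without-extra-|G|-factor}, which is exactly what removes the spurious $|G|$ and gives the sharp conclusion). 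The only thing to add is compatibility with $p$-parts: under the standing hypothesis $T\Omega(L/K,\Z[G],0) \in K_{0}(\Z[G],\Q)$, i.e. Stark's conjecture, one has $\theta_{S,T}^{(r)}(0)R(\phi) \in \zeta(\Q[G])$, the validity of $\ETNC_{p}(L/K,0)$ is precisely the vanishing of the $p$-component of $T\Omega$, and for $x \in \zeta(\Q[G])$ one has $x \in \Z[G]$ iff $x \in \Z_{p}[G]$ for all $p$, while finiteness of $\cl_{S'}$ gives $\Ann_{\Z[G]}(\cl_{S'}) = \bigcap_{p} \Ann_{\Z_{p}[G]}(\Z_{p}\otimes_{\Z}\cl_{S'})$. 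Since $\mathcal{H}_{p}(G) = \Z_{p}\otimes_{\Z}\mathcal{H}(G)$ and likewise for $\mathcal{I}$, running Burns' argument $p$-component by $p$-component delivers exactly the $p$-part of Conjecture \ref{conj:Burns-conjecture}.

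For part (ii), I would first use \S\ref{subsec:ETNC-max}(ii)--(iii) and Proposition \ref{prop:torsion-kernel} to rewrite $\SSC_{p}(L/K) = \ETNC^{\max}_{p}(L/K,0)$ as the assertion $T\Omega(L/K,\Z_{p}[G],0) \in DT(\Z_{p}[G]) = \ker\big(K_{0}(\Z_{p}[G],\Q_{p}) \to K_{0}(\mathfrak{M}_{p}(G),\Q_{p})\big)$; applying $\rho^{\mathfrak{M}_{p}(G)}_{\Z_{p}[G]}$ and Proposition \ref{prop:ETNC-funct}(iii) this says that the ETNC holds $p$-locally \emph{over the maximal order}, i.e. $T\Omega(L/K,\mathfrak{M}_{p}(G),0)=0$. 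Since Conjecture \ref{conj:weak-Burns-conjecture} is obtained from Conjecture \ref{conj:Burns-conjecture} by replacing $\mathcal{I}(G)$, $\mathcal{H}(G)$ by $\zeta(\mathfrak{M}(G))$, $\mathcal{F}(G) = \mathcal{F}(\mathfrak{M}(G),\Z[G])$, a multiple $a\theta_{S,T}^{(r)}(0)R(\phi)$ with $a \in \mathcal{F}(G)$ lies in $\Z[G]$ as soon as $\theta_{S,T}^{(r)}(0)R(\phi) \in \zeta(\mathfrak{M}(G))$, and it annihilates $\cl_{S'}$ as soon as it annihilates $\mathfrak{M}(G)\otimes_{\Z[G]}\cl_{S'}$. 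Thus the plan is to run the argument of part (i) verbatim with $\Z_{p}[G]$ replaced throughout by $\mathfrak{M}_{p}(G)$ (noting $\mathcal{I}(\mathfrak{M}_{p}(G)) = \zeta(\mathfrak{M}_{p}(G))$), obtaining $\theta_{S,T}^{(r)}(0)R(\phi) \in \zeta(\mathfrak{M}_{p}(G))$ and the corresponding annihilation over $\mathfrak{M}_{p}(G)$, and then descend to $\Z[G]$ via the central conductor and patch over all $p$ exactly as before.

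I expect the main obstacle to be precisely part (ii), which (as the excerpt notes) is not explicitly in the literature: one must verify that Burns' derivation of Conjecture \ref{conj:Burns-conjecture} from $\ETNC(L/K,0)$ nowhere secretly uses that the coefficient ring is the group ring itself, so that it goes through with $\Z_{p}[G]$ replaced by an arbitrary $\Z_{p}$-order between $\Z_{p}[G]$ and $\Q_{p}[G]$, applied to $\mathfrak{M}_{p}(G)$. The functoriality of $T\Omega(L/K,-,0)$ under change of order (Proposition \ref{prop:ETNC-funct}(iii)) and the kernel description of Proposition \ref{prop:torsion-kernel} are the tools that make this work, but the careful bookkeeping of reduced norms, $\chi$-twists and the regulator over a general order is where the real work lies.
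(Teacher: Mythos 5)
Your part (i) essentially matches the paper's: the paper simply cites Burns' own result (\cite[Theorem 7.2]{burns-p-adic-derivatives}, building on the proof of \cite[Theorem 4.1.1]{MR2845620}) and observes that the argument localises: ``one simply has to apply the exact functor $\Z_{p}\otimes_{\Z}-$ and all proofs go through.'' Your description of this step is a bit laboured but is the same idea. One small correction: the reference for the sharp version without the extraneous $|G|$ is the later paper \cite{burns-p-adic-derivatives}, not \cite{MR2845620}; Lemma \ref{lem:ann-without-extra-|G|-factor} is \emph{not} invoked in part (i).

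Where you diverge substantially, and where I think there is a genuine gap, is part (ii). You propose to re-run the entire ETNC-implies-Burns machinery with $\Z_{p}[G]$ replaced throughout by $\mathfrak{M}_{p}(G)$, verifying along the way that none of Burns' arguments secretly depends on the coefficient ring being the group ring. This is far from an ``obvious analogue'' --- you would have to track reduced norms, $\chi$-twists, the regulator construction and the cohomological Euler-characteristic comparison over a general order, and then descend via the central conductor, and you yourself flag this as the real obstacle. The paper does something much more economical and correct: part (ii) is precisely \cite[Theorem 4.3.1(i)]{MR2845620}, which Burns already proved assuming only the strong Stark conjecture (i.e.\ $\ETNC^{\max}$), \emph{except} that Burns' statement carries an extra factor of $|G|$. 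The paper removes that factor by substituting Lemma \ref{lem:ann-without-extra-|G|-factor} for \cite[Lemma 11.1.2(i)]{MR2845620} in Burns' proof. So the only ``new'' content is the single replacement lemma, not a wholesale re-derivation over maximal orders. Your proposal misallocates Lemma \ref{lem:ann-without-extra-|G|-factor} to part (i) (where it is not needed) and misses entirely that it is the key to part (ii); without it, your route to part (ii) remains speculative.
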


\begin{proof}
We shall refer to results which are formulated globally and not on $p$-parts. However, everything has an obvious analogue on $p$-parts: one simply has to apply the exact functor $\Z_{p} \otimes_{\Z} - $ and all proofs go through.

Part (i) is \cite[Theorem 7.2]{burns-p-adic-derivatives}, which builds on the proof of
\cite[Theorem 4.1.1]{MR2845620}.
Part (ii) is  \cite[Theorem 4.3.1 (i)]{MR2845620}, but with an extra factor $|G|$ removed.
The proof can be adapted to remove this extra factor by using
Lemma \ref{lem:ann-without-extra-|G|-factor} instead of  \cite[Lemma 11.1.2(i)]{MR2845620}.
\end{proof}

\subsection{Unconditional annihilation results}
We now put our results together to prove Conjecture \ref{conj:Burns-conjecture} in several interesting cases.
A key point is that in certain cases the $p$-part of Conjecture \ref{conj:Burns-conjecture} holds for an extension $L/K$
even when $\ETNC_{p}(L/K,0)$ is not known to hold.

\begin{theorem}
Let $q = \ell^{n}$ be a prime power and let $L/K$ be a Galois extension of number fields with $\Gal(L/K) \simeq \Aff(q)$.
Let $N$ be the Frobenius kernel of $\Aff(q)$ and suppose that $L^N/\Q$ is abelian (in particular, this is the case when $K = \Q$).
Then Conjecture \ref{conj:Burns-conjecture} holds for $L/K$
(resp.\ holds up to multiplication by $2$) if $q$ is odd (resp.\ even).
\end{theorem}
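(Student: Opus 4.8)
The plan is to assemble the statement from results already in the paper, working one prime at a time. Write $G \simeq \Aff(q)$ with $q = \ell^{n}$, and let $N = G'$ be the Frobenius kernel. By Theorem~\ref{thm:affine-ETNC}, $\SSC(L/K)$ holds and $\ETNC_{p}(L/K,0)$ holds for every prime $p \nmid q$, i.e.\ for every $p \neq \ell$. Since $\SSC(L/K)$ holds, Stark's conjecture holds for $L/K$, so $T\Omega(L/K,\Z[G],0) \in K_{0}(\Z[G],\Q)$ and the elements $\theta_{S,T}^{(r)}(0)R(\phi)$ lie in $\zeta(\Q[G])$; hence, by the remark following Conjecture~\ref{conj:weak-Burns-conjecture}, both Conjecture~\ref{conj:Burns-conjecture} and Conjecture~\ref{conj:weak-Burns-conjecture} decompose into $p$-parts, and it suffices to establish the relevant $p$-part for each prime $p$ and then reassemble.

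First I would dispose of the primes $p \neq \ell$: since $\ETNC_{p}(L/K,0)$ holds, the $p$-part of Conjecture~\ref{conj:Burns-conjecture} holds by Theorem~\ref{thm:ETNC-implies-Burns}(i). It remains to treat $p = \ell$. Here $\SSC_{p}(L/K)$ holds (as $\SSC(L/K)$ does), so Theorem~\ref{thm:ETNC-implies-Burns}(ii) gives the $p$-part of the weaker Conjecture~\ref{conj:weak-Burns-conjecture}. If $q$ is odd then $\ell = p$ is odd, and Proposition~\ref{prop:affine-denominators} yields the equalities $\mathcal{H}_{p}(G) = \mathcal{F}_{p}(G)$ and $\mathcal{I}_{p}(G) = \zeta(\mathfrak{M}_{p}(G))$; thus the $p$-part of Conjecture~\ref{conj:weak-Burns-conjecture} \emph{is} the $p$-part of Conjecture~\ref{conj:Burns-conjecture}, and the latter holds. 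Combining with the primes $p \neq \ell$ and reassembling, Conjecture~\ref{conj:Burns-conjecture} holds for $L/K$ when $q$ is odd.

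If $q$ is even, then $p = \ell = 2$ and Proposition~\ref{prop:affine-denominators} provides only the containments $2\mathcal{H}_{2}(G) \subseteq \mathcal{F}_{2}(G)$ and $2\zeta(\mathfrak{M}_{2}(G)) \subseteq \mathcal{I}_{2}(G)$. I would then argue directly. The $2$-part of Conjecture~\ref{conj:weak-Burns-conjecture} gives $\theta_{S,T}^{(r)}(0)R(\phi) \in \zeta(\mathfrak{M}_{2}(G))$, whence $2\theta_{S,T}^{(r)}(0)R(\phi) \in 2\zeta(\mathfrak{M}_{2}(G)) \subseteq \mathcal{I}_{2}(G)$; and for any $a \in \mathcal{H}_{2}(G)$ we have $2a \in \mathcal{F}_{2}(G)$, so that $a \cdot \bigl(2\theta_{S,T}^{(r)}(0)\bigr)R(\phi) = (2a)\theta_{S,T}^{(r)}(0)R(\phi)$ lies in $\Z_{2}[G]$ and annihilates $\Z_{2} \otimes_{\Z} \cl_{S'}$ (after multiplication by the relevant $b$ in the split cases) by the $2$-part of Conjecture~\ref{conj:weak-Burns-conjecture}. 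This is precisely the $2$-part of Conjecture~\ref{conj:Burns-conjecture} with $\theta_{S,T}^{(r)}(0)$ replaced throughout by $2\theta_{S,T}^{(r)}(0)$; at the odd primes the full $p$-part of Conjecture~\ref{conj:Burns-conjecture} holds and so, a fortiori, also holds after this replacement. Reassembling gives that Conjecture~\ref{conj:Burns-conjecture} holds up to multiplication by $2$.

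The genuinely routine verifications are the compatibility of the ideals $\mathcal{H}(G)$ and $\mathcal{I}(G)$ with the decomposition into $p$-parts (so that holding the $p$-part for every $p$ indeed yields the global statement) and, in the case $p = \ell = 2$, the bookkeeping that propagates the factor $2$ through the containments of Proposition~\ref{prop:affine-denominators}; this last point, though elementary, is the only place where care is needed.
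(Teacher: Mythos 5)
Your proposal is correct and matches the paper's proof essentially verbatim: reduce to $p$-parts, apply Theorem \ref{thm:affine-ETNC} and Theorem \ref{thm:ETNC-implies-Burns}(i) at primes $p \neq \ell$, apply Theorem \ref{thm:ETNC-implies-Burns}(ii) at $p = \ell$ to get the weak version, and then invoke Proposition \ref{prop:affine-denominators} to upgrade to the full conjecture (for $\ell$ odd) or to propagate a single factor of $2$ (for $\ell = 2$). The only difference is that you spell out the factor-of-$2$ bookkeeping, which the paper leaves implicit.
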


\begin{proof}
By Theorem \ref{thm:affine-ETNC} we know $\SSC(L/K)$ as well as
$\ETNC_{p}(L/K,0)$ for every prime $p \neq \ell$. Thus
we have the $p$-part of Conjecture \ref{conj:Burns-conjecture} for $p \neq \ell$ by Theorem \ref{thm:ETNC-implies-Burns}(i),
and the $\ell$-part of Conjecture \ref{conj:weak-Burns-conjecture} by Theorem \ref{thm:ETNC-implies-Burns}(ii).
However, if $\ell \neq 2$, the $\ell$-parts of the two conjectures are
equivalent by Proposition \ref{prop:affine-denominators}, and if $\ell=2$, they differ at most by a factor of $2$.
\end{proof}

%

\begin{theorem}
Let $L/K$ be a Galois extension of number fields with $\Gal(L/K) \simeq S_{3}$.
Then Conjecture \ref{conj:Burns-conjecture} is true for $L/K$.
\end{theorem}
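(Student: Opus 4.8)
The plan is to run the argument of the preceding theorem for $\Aff(q)$ in the special case $q=\ell=3$, where $\Aff(3)\simeq S_{3}$; the point is that for this group the hypothesis ``$L^{N}/\Q$ abelian'' can be dispensed with, and, because $\ell=3$ is odd, the conclusion is the full conjecture rather than the conjecture up to a factor of $2$.

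First I would invoke Proposition~\ref{prop:ETNC-S3}: since every complex irreducible character of $S_{3}$ is rational-valued, $\SSC(L/K)$ holds, and moreover $\ETNC_{p}(L/K,0)$ holds for every prime $p\neq 3$. In particular $T\Omega(L/K,\Z[G],0)\in K_{0}(\Z[G],\Q)$, so that Conjecture~\ref{conj:Burns-conjecture} decomposes into $p$-parts (this decomposition being legitimate precisely because $\SSC(L/K)$, and hence Stark's conjecture for every $\chi\in\Irr_{\C}(S_{3})$, holds, forcing $\theta_{S,T}^{(r)}(0)R(\phi)\in\zeta(\Q[G])$). Applying Theorem~\ref{thm:ETNC-implies-Burns}(i) at each prime $p\neq 3$ then yields the $p$-part of Conjecture~\ref{conj:Burns-conjecture} for all such $p$.

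It remains to treat $p=3$, where $\ETNC_{3}(L/K,0)$ is not known. Here I would instead use Theorem~\ref{thm:ETNC-implies-Burns}(ii): since $\SSC_{3}(L/K)$ holds, the $3$-part of the weak conjecture, Conjecture~\ref{conj:weak-Burns-conjecture}, is true. To upgrade this I would appeal to Proposition~\ref{prop:affine-denominators} with $q=\ell=3$ (equivalently Proposition~\ref{prop:D_2l-denominators} with $\ell=3$, using $S_{3}\simeq D_{6}\simeq\Aff(3)$): since $3$ is odd one obtains the honest equalities $\mathcal{H}_{3}(S_{3})=\mathcal{F}_{3}(S_{3})$ and $\mathcal{I}_{3}(S_{3})=\zeta(\mathfrak{M}_{3}(S_{3}))$. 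Hence, after localising at $3$, replacing $\mathcal{H}(G)$ by $\mathcal{F}(G)$ and $\mathcal{I}(G)$ by $\zeta(\mathfrak{M}(G))$ changes nothing, so the $3$-part of Conjecture~\ref{conj:weak-Burns-conjecture} coincides with the $3$-part of Conjecture~\ref{conj:Burns-conjecture}. Combining the two ranges of primes completes the proof.

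The main thing to be careful about — and essentially the only obstacle — is the absence of any residual integer factor: because $\ell=3$ is odd, Proposition~\ref{prop:affine-denominators} supplies exact equalities rather than the two-sided containments that occur when $\ell=2$, so no factor of $2$ (or $3$) is lost in passing between the weak and full conjectures at $p=3$, and at the remaining primes the full conjecture comes directly from the relevant cases of the ETNC via Theorem~\ref{thm:ETNC-implies-Burns}(i). I would also verify that the statement of Theorem~\ref{thm:ETNC-implies-Burns} applies uniformly in $r$ and in the auxiliary data $(S,T,S',\phi)$, so that no case of Conjecture~\ref{conj:Burns-conjecture} is left untreated.
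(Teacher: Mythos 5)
Your proof is correct and follows essentially the same route as the paper: apply Proposition \ref{prop:ETNC-S3} and Theorem \ref{thm:ETNC-implies-Burns}(i) to handle every prime $p\neq 3$, then use $\SSC(L/K)$ with Theorem \ref{thm:ETNC-implies-Burns}(ii) to get the $3$-part of the weak conjecture, and finally invoke Proposition \ref{prop:affine-denominators} (via $S_{3}\simeq\Aff(3)$ with $\ell=3$ odd) to upgrade the weak conjecture to the full one at $p=3$. Your remarks explaining why the ``$L^{N}/\Q$ abelian'' hypothesis is unnecessary here, and why no factor of $2$ is lost since $\ell=3$ is odd, correctly reflect the structure of the argument.
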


\begin{proof}
For $p\neq3$, the $p$-part of Conjecture \ref{conj:Burns-conjecture} follows from Proposition
\ref{prop:ETNC-S3}
and Theorem \ref{thm:ETNC-implies-Burns}(i). As $\SSC(L/K)$ holds, we have the $3$-part
of Conjecture \ref{conj:weak-Burns-conjecture} by Theorem \ref{thm:ETNC-implies-Burns}(ii).
However, since $S_{3} \simeq \Aff(3)$ this is equivalent to the $3$-part of  Conjecture \ref{conj:Burns-conjecture}
by Proposition \ref{prop:affine-denominators}.
\end{proof}

\begin{theorem}\label{thm:D_2p-ann}
Let $L/\Q$ be a Galois extension with $\Gal(L/\Q) \simeq D_{2p}$ for some odd prime $p$.
Let $K/\Q$ be the unique quadratic subextension of $L/\Q$ and suppose that $K$ is imaginary.
Suppose that the class number of $K$ is a power of $p$ (possibly $1$).
Then $\SSC(L/\Q)$ holds and Conjecture \ref{conj:Burns-conjecture} holds for $L/\Q$.
\end{theorem}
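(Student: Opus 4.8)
Write $G:=\Gal(L/\Q)\simeq D_{2p}$. The plan is to prove Conjecture \ref{conj:Burns-conjecture} one prime at a time. The essential difficulty is that $\ETNC_{p}(L/\Q,0)$ is \emph{not} available under our hypotheses: since $p$ may divide the class number of $K$ and need not split in $K/\Q$, neither \S\ref{subsec:known-cases}(ii) nor the injectivity of $\res^{G}_{C_{p}}$ from Proposition \ref{prop:res-D2n-Cn} can be used, and indeed the ETNC at $p$ is genuinely open here. So at $p$ we shall only establish the \emph{weak} form of the conjecture and then appeal to the explicit computation of the relevant ideals for $D_{2p}$.

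First I would establish $\SSC(L/\Q)$, arguing as in the proof of Theorem \ref{thm:ETNC-dihedral} but invoking a different branch of \S\ref{subsec:ETNC-max}(iv). The two linear characters of $G$ are rational-valued, so their $\chi$-parts of $\SSC(L/\Q)$ hold by \S\ref{subsec:ETNC-max}(v). By Example \ref{ex:inversion}, $G=C_{p}\rtimes C_{2}$ is a Frobenius group with Frobenius kernel $N=G'\simeq C_{p}=\Gal(L/K)$, so by Theorem \ref{thm:frob-kernel}(iv) every non-linear irreducible character of $G$ has the form $\mathrm{Ind}_{C_{p}}^{G}(\psi)$ with $1\neq\psi\in\Irr_{\C}(C_{p})$. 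Now $L/K$ is abelian with $K$ imaginary quadratic; since $[L:K]=p$ is a power of $p$, and since for every prime $q\neq p$ the class number of $K$ (a power of $p$) is prime to $q$, \S\ref{subsec:ETNC-max}(iv) gives $\SSC_{q}(L/K)$ for every prime $q$, i.e.\ $\SSC(L/K)$ holds. As $p$ is odd, \cite[Proposition 9(c)]{MR1423032} then yields $\SSC_{p}(L/\Q)(\chi)$ for each non-linear $\chi$; together with the linear case this gives $\SSC_{p}(L/\Q)$. For $q\neq p$ the class-number hypothesis of Theorem \ref{thm:ETNC-dihedral} is satisfied and $q\nmid n=p$, so $\ETNC_{q}(L/\Q,0)$ holds, whence $\SSC_{q}(L/\Q)$ by \S\ref{subsec:ETNC-max}(ii). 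Therefore $\SSC(L/\Q)$ holds; in particular $T\Omega(L/\Q,\Z[G],0)\in K_{0}(\Z[G],\Q)$, so the hypothesis of Theorem \ref{thm:ETNC-implies-Burns} is met.

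I would then deduce Conjecture \ref{conj:Burns-conjecture} prime by prime. For $q\neq p$ we have $\ETNC_{q}(L/\Q,0)$ from the previous step, so Theorem \ref{thm:ETNC-implies-Burns}(i) gives the $q$-part of Conjecture \ref{conj:Burns-conjecture}. For the prime $p$ we have $\SSC_{p}(L/\Q)$, so Theorem \ref{thm:ETNC-implies-Burns}(ii) gives the $p$-part of the weak Conjecture \ref{conj:weak-Burns-conjecture}. But Proposition \ref{prop:D_2l-denominators} applied with $\ell=p$ states $\mathcal{H}_{p}(G)=\mathcal{F}_{p}(G)$ and $\mathcal{I}_{p}(G)=\zeta(\mathfrak{M}_{p}(G))$, which is exactly the assertion that on $p$-parts the weak conjecture and Conjecture \ref{conj:Burns-conjecture} coincide. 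Hence the $p$-part of Conjecture \ref{conj:Burns-conjecture} also holds, and assembling the $q$-parts over all primes $q$ completes the argument.

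The hard part is thus the prime $p$, where the full ETNC is out of reach so that Theorem \ref{thm:ETNC-implies-Burns}(i) cannot be invoked. What makes the argument go through is that, for $D_{2p}$ at the prime $p$, the denominator ideal $\mathcal{H}_{p}$ is as small as the central conductor and the norm ideal $\mathcal{I}_{p}$ is as small as the centre of a maximal order (Proposition \ref{prop:D_2l-denominators}); consequently the weak conjecture — which requires only $\SSC_{p}(L/\Q)$, and this is available thanks to the ``$[L:K]$ is a prime power'' clause in Bley's result quoted in \S\ref{subsec:ETNC-max}(iv) — already delivers the full conjecture at $p$. One should also take care that the global rationality needed to split Conjecture \ref{conj:Burns-conjecture} into $p$-parts is genuinely supplied by $\SSC(L/\Q)$ (equivalently Stark's conjecture for $L/\Q$), which is why it is convenient to prove the strong Stark conjecture in full before turning to the annihilation statement.
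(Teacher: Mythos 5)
Your proposal is correct and follows essentially the same route as the paper: establish $\SSC(L/\Q)$ by the $\chi$-part argument (using both clauses of Bley's result in \S\ref{subsec:ETNC-max}(iv) to get $\SSC(L/K)$ at every prime, then Ritter--Weiss for the non-linear characters), obtain the weak conjecture from Theorem \ref{thm:ETNC-implies-Burns}(ii), and upgrade to the full conjecture at $p$ via Proposition \ref{prop:D_2l-denominators} and at $2$ via the triviality of $DT(\Z_{2}[D_{2p}])$. The only difference is cosmetic: you treat $q=2$ through Theorem \ref{thm:ETNC-dihedral} whereas the paper invokes Proposition \ref{prop:ETNC2-reduction} directly, but both reduce to the same fact that $DT(\Z_{2}[D_{2p}])$ is trivial.
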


\begin{proof}
Essentially the same argument as given in the proof of
Theorem \ref{thm:ETNC-dihedral}
(with subscripts `$p$' removed) shows that  $\SSC(L/\Q)$ holds.
Hence Conjecture \ref{conj:weak-Burns-conjecture} for $L/\Q$
follows from Theorem \ref{thm:ETNC-implies-Burns}(ii).
Thus it remains to establish the $\ell$-part of Conjecture \ref{conj:Burns-conjecture} for $\ell=2,p$.
Proposition \ref{prop:D_2l-denominators} shows that the $p$-parts of Conjectures
 \ref{conj:Burns-conjecture} and \ref{conj:weak-Burns-conjecture}
are in fact equivalent.
Proposition \ref{prop:ETNC2-reduction} shows that $\ETNC_{2}(L/\Q,0)$ holds and so
Theorem \ref{thm:ETNC-implies-Burns}(i) implies the $2$-part of Conjecture \ref{conj:Burns-conjecture}.
\end{proof}

\begin{remark}
Let $L/\Q$ be a Galois extension with $\Gal(L/\Q) \simeq D_{2n}$ for some odd integer $n$.
By combining Theorem \ref{thm:ETNC-dihedral} and Theorem \ref{thm:ETNC-implies-Burns}(i),
we immediately have a result showing that for certain primes $p$, the $p$-part of
Conjecture \ref{conj:Burns-conjecture} holds for $L/\Q$.
However, this gives a weaker result than Theorem \ref{thm:D_2p-ann} when specialised to the case $n=p$.
\end{remark}

\begin{theorem}\label{thm:strong-ann-S4}
Let $L/K$ be a Galois extension of number fields with $\Gal(L/K) \simeq S_{4}$.
Then Conjecture \ref{conj:weak-Burns-conjecture} is true for $L/K$ and
Conjecture \ref{conj:Burns-conjecture} holds up to multiplication by $2$.
\end{theorem}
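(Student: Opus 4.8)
The plan is to assemble the statement from three inputs already available in the text: the validity of $\SSC(L/K)$ (Proposition~\ref{prop:ETNC-S3-S4}), the implication ``$\SSC_{p}(L/K)$ implies the $p$-part of Conjecture~\ref{conj:weak-Burns-conjecture}'' (Theorem~\ref{thm:ETNC-implies-Burns}(ii)), and the explicit denominator and reduced-norm ideal computations for $\Z_{p}[S_{4}]$ in Proposition~\ref{prop:S_4-denominators}. First I would note that since every $\chi\in\Irr_{\C}(S_{4})$ is rational-valued, Stark's conjecture holds for $L/K$, so $T\Omega(L/K,\Z[G],0)\in K_{0}(\Z[G],\Q)$ and both Conjecture~\ref{conj:Burns-conjecture} and Conjecture~\ref{conj:weak-Burns-conjecture} decompose into $p$-parts. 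Since $\SSC(L/K)$ holds, $\SSC_{p}(L/K)$ holds for every prime $p$, and Theorem~\ref{thm:ETNC-implies-Burns}(ii) then gives the $p$-part of Conjecture~\ref{conj:weak-Burns-conjecture} for all $p$; this establishes the first assertion.

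For the second assertion I would split into cases according to Proposition~\ref{prop:S_4-denominators}. If $p>3$ then $\Z_{p}[S_{4}]$ is a maximal $\Z_{p}$-order, so $\mathcal{H}_{p}(S_{4})=\mathcal{F}_{p}(S_{4})=\zeta(\Z_{p}[S_{4}])$ and $\mathcal{I}_{p}(S_{4})=\zeta(\mathfrak{M}_{p}(S_{4}))=\zeta(\Z_{p}[S_{4}])$, whence the $p$-part of Conjecture~\ref{conj:Burns-conjecture} is identical to the $p$-part of Conjecture~\ref{conj:weak-Burns-conjecture}. If $p=3$ then Proposition~\ref{prop:S_4-denominators} gives $\mathcal{H}_{3}(S_{4})=\mathcal{F}_{3}(S_{4})$ and $\mathcal{I}_{3}(S_{4})=\zeta(\mathfrak{M}_{3}(S_{4}))$, so again the $3$-part of Conjecture~\ref{conj:Burns-conjecture} is word-for-word the $3$-part of Conjecture~\ref{conj:weak-Burns-conjecture}. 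Thus at every odd prime the strong conjecture already follows from the weak one, with no factor of $2$. (It is precisely here that the ideal computation is needed: $\ETNC_{3}(L/K,0)$ is not known, so one cannot invoke Theorem~\ref{thm:ETNC-implies-Burns}(i) at $p=3$, but the equality $\mathcal{H}_{3}(S_{4})=\mathcal{F}_{3}(S_{4})$ makes the two conjectures coincide there.)

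The case $p=2$ is where the factor of $2$ enters, and it is the most delicate bookkeeping step. Proposition~\ref{prop:S_4-denominators} supplies the containments $2\mathcal{H}_{2}(S_{4})\subseteq\mathcal{F}_{2}(S_{4})$ and $2\zeta(\mathfrak{M}_{2}(S_{4}))\subseteq\mathcal{I}_{2}(S_{4})$. I would then feed $2\theta_{S,T}^{(r)}(0)$ into the $2$-part of Conjecture~\ref{conj:weak-Burns-conjecture}: from $\theta_{S,T}^{(r)}(0)R(\phi)\in\zeta(\mathfrak{M}_{2}(S_{4}))$ one obtains $2\theta_{S,T}^{(r)}(0)R(\phi)\in\mathcal{I}_{2}(S_{4})$, and for any $a\in\mathcal{H}_{2}(S_{4})$ the element $2a$ lies in $\mathcal{F}_{2}(S_{4})$, so that $a\cdot 2\theta_{S,T}^{(r)}(0)R(\phi)=(2a)\theta_{S,T}^{(r)}(0)R(\phi)$ lies in $\Z_{2}[S_{4}]$, and in the required annihilator of $\Z_{2}\otimes_{\Z}\cl_{S'}$ (or in its $b$-twisted variant) by the weak conjecture. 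This yields the $2$-part of Conjecture~\ref{conj:Burns-conjecture} up to multiplication by $2$, and assembling the $p$-parts gives the theorem.

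I do not expect a genuine obstacle here: essentially all of the real content has been front-loaded into Proposition~\ref{prop:ETNC-S3-S4}, Theorem~\ref{thm:ETNC-implies-Burns}(ii), and especially the ideal computations of Proposition~\ref{prop:S_4-denominators}. The one point that requires care is that the factor of $2$ must be attached to $\theta_{S,T}^{(r)}(0)$ itself rather than absorbed into $a$, so that the conclusion reads exactly as ``Conjecture~\ref{conj:Burns-conjecture} holds up to multiplication by $2$'' in the sense of the remark following Conjecture~\ref{conj:weak-Burns-conjecture}.
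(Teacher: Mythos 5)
Your proof is correct and follows the same route as the paper's: establish $\SSC(L/K)$ from the rationality of all characters of $S_{4}$ (the paper cites \S\ref{subsec:ETNC-max}(v) directly rather than Proposition~\ref{prop:ETNC-S3-S4}, but these are the same input), invoke Theorem~\ref{thm:ETNC-implies-Burns}(ii) to get Conjecture~\ref{conj:weak-Burns-conjecture}, and then use Proposition~\ref{prop:S_4-denominators} to observe that Conjecture~\ref{conj:Burns-conjecture} up to multiplication by $2$ is weaker than Conjecture~\ref{conj:weak-Burns-conjecture}. Your prime-by-prime bookkeeping, in particular the care taken at $p=2$ to attach the factor of $2$ to $\theta_{S,T}^{(r)}(0)$ rather than to $a$, is a correct unpacking of what the paper compresses into a single sentence.
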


\begin{proof}
As all characters of $S_{4}$ are rational-valued, $\SSC(L/K)$ holds by \S \ref{subsec:ETNC-max}(v).
Thus Conjecture \ref{conj:weak-Burns-conjecture} follows from Theorem \ref{thm:ETNC-implies-Burns}(ii).
However, Proposition \ref{prop:S_4-denominators} implies that
Conjecture \ref{conj:Burns-conjecture} up to multiplication by $2$ is even weaker than
Conjecture \ref{conj:weak-Burns-conjecture}.
\end{proof}

\begin{remark}
An interesting observation is that Conjectures \ref{conj:Burns-conjecture}
and \ref{conj:weak-Burns-conjecture}
in fact do not differ very much in the setting of Theorem \ref{thm:strong-ann-S4};
indeed they agree on the $3$-part.
However, by Proposition \ref{prop:S_4-denominators},
the $2$-part of Conjecture \ref{conj:Burns-conjecture} predicts more annihilators
than either Conjecture \ref{conj:weak-Burns-conjecture} or Theorem \ref{thm:strong-ann-S4},
since $\mathcal{H}_{2}(S_{4})$ strictly contains $\mathcal{F}_{2}(S_{4})$.
Note that $\ETNC(L/K, \mathfrak{M}_{2}(S_{4},A_{4}),0)$ holds because
$DT(\mathfrak{M}_{2}(S_{4},A_{4})) = 0$ and $\SSC(L/K)$ holds.
Thus it seems plausible that a proof similar to that of Theorem \ref{thm:ETNC-implies-Burns}
then implies the $2$-part of
Conjecture \ref{conj:Burns-conjecture}, but with $\mathcal{I}_{2}(S_{4})$ and $\mathcal{H}_{2}(S_{4})$
replaced by $\zeta(\mathfrak{M}_{2}(S_{4}, A_{4}))$ and $\mathcal{F}_{2}(S_{4}, A_{4})$, respectively.
\end{remark}

\begin{theorem}
Let $L/K$ be a Galois extension of number fields with $\Gal(L/K) \simeq D_{12}$.
Let $N\simeq C_{3}$ be its commutator subgroup and suppose that $L^{N}/\Q$ is abelian
(in particular, this is the case when $K = \Q$). Then Conjecture \ref{conj:Burns-conjecture} is true for $L/K$.
\end{theorem}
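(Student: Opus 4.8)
The plan is to mimic the proof of the analogous theorem for $\Aff(q)$: reduce to the functorial inputs already assembled, namely Proposition~\ref{prop:ETNC-D12} and Theorem~\ref{thm:ETNC-implies-Burns}, and then dispose of the prime $p=3$ by a denominator-ideal computation.

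First I would invoke Proposition~\ref{prop:ETNC-D12}, which gives that $\SSC(L/K)$ holds and that $\ETNC_{p}(L/K,0)$ holds for every prime $p \neq 3$. Since $\SSC(L/K)$ holds we have $T\Omega(L/K,\Z[G],0) \in DT(\Z[G]) \subseteq K_{0}(\Z[G],\Q)$ by \S\ref{subsec:ETNC-max}(ii), so the running hypothesis of Theorem~\ref{thm:ETNC-implies-Burns} is satisfied; moreover $\SSC(L/K)$ implies Stark's conjecture for every $\chi \in \Irr_{\C}(G)$, so $\theta_{S,T}^{(r)}(0)R(\phi) \in \zeta(\Q[G])$ and Conjecture~\ref{conj:Burns-conjecture} decomposes into its $p$-parts (see the Remark following Conjecture~\ref{conj:weak-Burns-conjecture}). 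It therefore suffices to establish the $p$-part of Conjecture~\ref{conj:Burns-conjecture} for each prime $p$. For $p \neq 3$ this is immediate from $\ETNC_{p}(L/K,0)$ via Theorem~\ref{thm:ETNC-implies-Burns}(i).

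It remains to treat $p=3$. Since $\SSC_{3}(L/K)$ holds, Theorem~\ref{thm:ETNC-implies-Burns}(ii) yields the $3$-part of the weak Conjecture~\ref{conj:weak-Burns-conjecture}, so I would be done once I show that the $3$-parts of Conjectures~\ref{conj:Burns-conjecture} and~\ref{conj:weak-Burns-conjecture} coincide, i.e.\ that $\mathcal{H}_{3}(D_{12}) = \mathcal{F}_{3}(D_{12})$ and $\mathcal{I}_{3}(D_{12}) = \zeta(\mathfrak{M}_{3}(D_{12}))$. The key observation is that $D_{12} \simeq S_{3} \times C_{2}$ and $\Z_{3}[C_{2}] \simeq \Z_{3} \oplus \Z_{3}$ (because $2 \in \Z_{3}^{\times}$), whence $\Z_{3}[D_{12}] \simeq \Z_{3}[S_{3}] \oplus \Z_{3}[S_{3}]$ as rings. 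As the denominator ideal, $\mathcal{I}_{p}(-)$, central conductors, and centres of maximal orders are all compatible with finite direct sum decompositions of the ambient ring, the claimed equalities follow from Proposition~\ref{prop:affine-denominators} applied to $S_{3} \simeq \Aff(3)$ in the case $p = \ell = 3 \neq 2$ (compare the treatment of $p=3$ in the proof of Proposition~\ref{prop:S_4-denominators}). Combining the two ranges of primes gives the $p$-part of Conjecture~\ref{conj:Burns-conjecture} for every $p$, hence the full conjecture.

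The only non-formal ingredient is the $p=3$ denominator computation, and even that is essentially free once the ring isomorphism $\Z_{3}[D_{12}] \simeq \Z_{3}[S_{3}] \oplus \Z_{3}[S_{3}]$ is recorded; I do not anticipate any genuine obstacle.
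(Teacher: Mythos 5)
Your proposal is correct and follows essentially the same route as the paper: establish $\SSC(L/K)$ and $\ETNC_{p}(L/K,0)$ for $p\neq 3$ via Proposition~\ref{prop:ETNC-D12}, handle $p\neq 3$ with Theorem~\ref{thm:ETNC-implies-Burns}(i), and dispose of $p=3$ by combining Theorem~\ref{thm:ETNC-implies-Burns}(ii) with the identification $\Z_{3}[D_{12}]\simeq\Z_{3}[S_{3}]\oplus\Z_{3}[S_{3}]$ and Proposition~\ref{prop:affine-denominators} for $\Aff(3)\simeq S_{3}$. (You also correctly write $\mathcal{I}_{3}(D_{12})=\zeta(\mathfrak{M}_{3}(D_{12}))$, quietly fixing what is a small typographical slip in the paper's statement of that equality.)
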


\begin{proof}
Proposition \ref{prop:ETNC-D12} shows that $\SSC(L/K)$ holds and $\ETNC_{p}(L/K,0)$ is true for every prime $p \neq 3$.
Note that $D_{12} \simeq S_{3} \times C_{2}$ and so $\Z_{3}[D_{12}] \simeq \Z_{3}[S_{3}] \oplus \Z_{3}[S_{3}]$; thus we
have $\mathcal{H}_{3}(D_{12}) = \mathcal{F}_{3}(D_{12})$ and $\mathcal{I}_{3}(D_{12})=\mathfrak{M}_{3}(D_{12})$ by Proposition \ref{prop:affine-denominators} (recall that $\Aff(3) \simeq S_{3}$).
Therefore the result now follows from Theorem \ref{thm:ETNC-implies-Burns}.
 \end{proof}

\subsection{The non-abelian Coates-Sinnott conjecture and generalisations}
Let $L/K$ be a finite Galois extension of number fields with Galois group $G$ and let $r<0$ be an integer.
As before,
we fix a finite set $S$ of places of $K$ containing all archimedean places and all places that ramify in $L/K$.
Let $\mathcal{J}_{r}^{S}$ be the \emph{canonical fractional Galois ideal} defined in \cite[Definition 2.9]{MR2801311}.
We will not give the rather involved definition here, because in the case of most interest to us, it may be described very easily:
Assume further that $L/K$ is an extension of totally real fields and $r<0$ is odd. Then by \cite[Remark 1(iii)]{MR2801311} we have
$\mathcal{J}_{r}^{S} = \theta_{S}(r) \cdot \zeta(\Z[\frac{1}{2}][G]) \subseteq \zeta(\Q[G]).$
In general, we have the following conjecture due to the second named author \cite[Conjecture 2.12]{MR2801311}.

\begin{conj}\label{conj:generalised-CS}
Let $L/K$ be a finite Galois extension of number fields with Galois group $G$ and let $r<0$ be an integer.
Then for every odd prime $p$ and every
$x \in \Ann_{\Z_{p}[G]}(H^{1}_{\et}(\mathcal{O}_{S}[1/p], \Z_{p}(1-r))_{\tors})$ we have
\[
\nr(x) \cdot \mathcal{H}_{p}(G) \cdot \mathcal{J}_{r}^{S} \subseteq \Ann_{\Z_{p}[G]}(H^{2}_{\et}(\mathcal{O}_{S}[1/p], \Z_{p}(1-r))),
\]
where $\nr:\Q_{p}[G] \longrightarrow \zeta(\Q_{p}[G])$ denotes the reduced norm map (see \cite[\S 7D]{MR632548}).
\end{conj}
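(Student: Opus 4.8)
The plan is to prove Conjecture~\ref{conj:generalised-CS} for the extensions considered in \S\ref{subsec:case-r<0}, following the two-step template used in the case $r=0$ in \S\ref{sec:apps-to-ann}. The first step is to show that, for a fixed odd prime $p$, the validity of $\ETNC_p(L/K,r)$ forces the asserted containment, and that already $\ETNC^{\max}_p(L/K,r)$ forces the correspondingly weaker statement in which $\mathcal{H}_p(G)$ and $\mathcal{I}_p(G)$ are replaced by the central conductor $\mathcal{F}_p(G)$ and by $\zeta(\mathfrak{M}_p(G))$; this is the exact analogue of Theorem~\ref{thm:ETNC-implies-Burns} and is in essence due to the second named author \cite{MR2801311}. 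The second step is then to feed in the unconditional cases of the ETNC for $r<0$ obtained in \S\ref{subsec:case-r<0}, supplemented by the computations of $\mathcal{H}_p(G)$ and $\mathcal{I}_p(G)$ in Propositions~\ref{prop:best-denominators}, \ref{prop:affine-denominators}, \ref{prop:S_4-denominators} and \ref{prop:D_2l-denominators}, in order to make the hypothesis of step one unconditional for the relevant families.

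For step one I would first pass to \'{e}tale cohomology: by Soul\'{e}'s Chern class maps and Weibel's proof of the Bloch--Kato conjecture \cite{MR2529300}, the $K$-groups $K_{i-2r}(\mathcal{O}_S)\otimes_\Z\Z_p$ are identified, for odd $p$, with $H^{2-i}_{\et}(\mathcal{O}_S[1/p],\Z_p(1-r))$, so the statement is purely cohomological. One then shows that the vanishing of $T\Omega(L/K,\Z_p[G],r)$ pins down the noncommutative Fitting invariant $\Fit^{\max}_{\Z_p[G]}(H^2_{\et}(\mathcal{O}_S[1/p],\Z_p(1-r)))$ in terms of $\mathcal{J}^S_r$, up to the contribution of $H^1_{\et}(\mathcal{O}_S[1/p],\Z_p(1-r))_{\tors}$, and concludes by combining the general annihilation property $\mathcal{H}_p(G)\cdot\Fit^{\max}_{\Z_p[G]}(M)\subseteq\Ann_{\zeta(\Z_p[G])}(M)$ of Remark~\ref{rmk:interest-in-hpg} with the reduced-norm manipulation of Lemma~\ref{lem:ann-without-extra-|G|-factor}, which removes the extraneous factors of $|G|$ present in the raw Fitting-invariant bounds. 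Run with a maximal order in place of $\Z_p[G]$, the same argument yields the weak statement from $\ETNC^{\max}_p(L/K,r)$.

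For step two, take $L/K$ a Galois extension of totally real number fields with $r<0$ odd, $L^{G'}/\Q$ abelian, and $\Gal(L/K)$ one of the groups treated in \S\ref{subsec:case-r<0} (for instance $\Aff(q)$, $P\rtimes C_2$, $C_\ell\rtimes C_p$, or $S_4$). For every odd prime $p$ not dividing $|N|$ the group ring $\Z_p[G]$ is (weakly) $G'$-hybrid with abelian quotient, so Theorem~\ref{thm:hybrid-tot-real-r-less-than-zero} together with \S\ref{subsec:ETNC-max}(vi) gives $\ETNC_p(L/K,r)$ outright; when $N$ is an $\ell$-group, the remaining prime $p=\ell$, if odd, is covered by Theorem~\ref{thm:ETNC-Frobenius-groups} via the Ferrero--Washington vanishing of the relevant $\mu$-invariant, and when $\ell=2$ there is no odd prime dividing $|G'|=|N|$ to consider. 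Hence $\ETNC_p(L/K,r)$ holds for \emph{all} odd $p$ in these cases, and step one gives Conjecture~\ref{conj:generalised-CS} for $L/K$, the precise shape of the annihilators being read off from Propositions~\ref{prop:affine-denominators} and \ref{prop:S_4-denominators}. For dihedral extensions $D_{2n}/\Q$ over an imaginary quadratic base one argues in the same way, using \S\ref{subsec:known-cases}(iii) and Proposition~\ref{prop:D_2l-denominators} in place of the totally real input.

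The hard part is step one: reducing the conjecture to the ETNC through noncommutative Fitting invariants, and in particular the bookkeeping needed to eliminate the spurious factors of $|G|$ and to keep the auxiliary set $T$ and the exact shape of $\mathcal{J}^S_r$ under control, is where essentially all of the work lies, whereas step two is largely a matter of assembling the ETNC results of \S\ref{subsec:case-r<0} with the denominator-ideal computations cited above. The one point at which the strategy genuinely halts is an odd prime $p$ dividing $|G'|$ for which no case of the ETNC (nor a hybrid reduction to one) is available --- for example a prime dividing $n$ that does not split in the base field in the dihedral setting, or a prime $p\mid|G'|$ for a group lying outside the families above --- and there Conjecture~\ref{conj:generalised-CS} remains open.
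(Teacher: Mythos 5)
The statement you were asked to prove is labelled a \emph{conjecture} in the paper (it is \cite[Conjecture 2.12]{MR2801311}), and the paper does not prove it in general; what the paper actually does, in Theorem \ref{thm:ETNC-implies-CS} (quoted from \cite[Theorems 4.1 and 5.7]{MR2801311}) and the corollaries following it, is to establish specific cases by combining the reduction ``ETNC implies annihilation'' with the unconditional ETNC results of \S\ref{subsec:case-r<0} and the denominator-ideal computations of \S5. Your two-step strategy is precisely that, so you have reproduced the paper's approach to the provable cases; you also correctly flag at the end that the full conjecture remains open.

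A few imprecisions are worth noting. In step two you include $S_4$ among the groups handled by the argument ``for every odd $p\nmid|N|$, $\Z_p[G]$ is weakly $G'$-hybrid with abelian quotient'', but this does not apply to $S_4$ at $p=3$: there the relevant normal subgroup is $N=V_4$, which is strictly smaller than $G'=A_4$, and $S_4/V_4\simeq S_3$ is not abelian, so Theorem \ref{thm:hybrid-tot-real-r-less-than-zero} is not directly applicable. The paper instead uses the two-stage reduction of Theorem \ref{thm:ETNC-S4-negative-r} ($S_4\to S_3$ via $V_4$-hybridness, then $S_3$ as a Frobenius group). Second, Conjecture \ref{conj:generalised-CS} does not involve $\mathcal{I}_p(G)$, so the weak version (Conjecture \ref{conj:weak-generalised-CS}) only replaces $\mathcal{H}_p(G)$ by $\mathcal{F}_p(G)$, not a pair of ideals as you describe. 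Finally, your sketch of the proof of Theorem \ref{thm:ETNC-implies-CS} via noncommutative Fitting invariants and Lemma \ref{lem:ann-without-extra-|G|-factor} is plausible but speculative, since the paper just cites \cite{MR2801311} for that step and Lemma \ref{lem:ann-without-extra-|G|-factor} is introduced specifically for the $r=0$ argument; it is not obviously part of the $r<0$ proof.
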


\begin{conj}\label{conj:weak-generalised-CS}(Weak version of Conjecture \ref{conj:generalised-CS})
We obtain a weak version of Conjecture \ref{conj:generalised-CS} by replacing the denominator ideal
$\mathcal{H}_{p}(G)$ by the central conductor $\mathcal{F}_{p}(G)$.
\end{conj}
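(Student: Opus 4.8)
The plan is to mirror, for $r<0$, the treatment of the case $r=0$ carried out in \S\ref{sec:apps-to-ann}. The first step is to establish the higher analogue of Theorem \ref{thm:ETNC-implies-Burns}: for a finite Galois extension $L/K$ of number fields with group $G$ and an integer $r<0$ with $T\Omega(L/K,\Z[G],r)\in K_{0}(\Z[G],\Q)$, the validity of $\ETNC_{p}(L/K,r)$ implies the $p$-part of Conjecture \ref{conj:generalised-CS}, and the validity of $\ETNC^{\max}_{p}(L/K,r)$ implies the $p$-part of Conjecture \ref{conj:weak-generalised-CS}. The first implication is, up to bookkeeping, already contained in \cite{MR2801311} (this is the assertion, recalled in the introduction, that $\ETNC(L/K,r)$ implies the generalised Coates--Sinnott conjecture): the $\mathcal{J}_{r}^{S}$-datum is the image of $T\Omega(L/K,\Z_{p}[G],r)$ under the relevant comparison isomorphisms, so that $\ETNC_{p}(L/K,r)$ pins down the noncommutative Fitting invariant $\Fit^{\max}_{\Z_{p}[G]}(H^{2}_{\et}(\mathcal{O}_{S}[1/p],\Z_{p}(1-r)))$, and then the annihilation assertion follows from the containment $\mathcal{H}_{p}(G)\cdot\Fit^{\max}_{\Z_{p}[G]}(M)\subseteq\Ann_{\zeta(\Z_{p}[G])}(M)$ recorded in Remark \ref{rmk:interest-in-hpg}, together with Lemma \ref{lem:ann-without-extra-|G|-factor} to absorb the factor $\nr(x)$. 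For the weak version one replaces $\ETNC_{p}$ by $\ETNC^{\max}_{p}$: by Proposition \ref{prop:torsion-kernel} and Proposition \ref{prop:ETNC-funct}(iii) this is equivalent to $T\Omega(L/K,\Z_{p}[G],r)\in DT(\Z_{p}[G])$, which is exactly what is needed to control the Fitting invariant over a maximal order, and Remark \ref{rmk:cond-in-denom-ideal} then yields the containment with $\mathcal{F}_{p}(G)$ in place of $\mathcal{H}_{p}(G)$.

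With these two reductions in hand, the unconditional results follow as for $r=0$. For $L/K$ a Galois extension of totally real number fields and $r<0$ odd, \S\ref{subsec:ETNC-max}(vi) gives $\ETNC^{\max}_{p}(L/K,r)$ for every odd prime $p$; since Conjecture \ref{conj:generalised-CS} only concerns odd $p$, and in this setting $\mathcal{J}_{r}^{S}=\theta_{S}(r)\cdot\zeta(\Z[\frac{1}{2}][G])$, this already establishes Conjecture \ref{conj:weak-generalised-CS} unconditionally in that generality. To upgrade to the full Conjecture \ref{conj:generalised-CS} for specific groups I would combine the $\ETNC$ results of \S\ref{subsec:case-r<0} with the denominator-ideal computations: for $\Gal(L/K)\simeq\Aff(q)$ with $q=\ell^{n}$, Theorem \ref{thm:ETNC-Frobenius-groups} gives $\ETNC_{p}(L/K,r)$ for every prime $p\nmid 2\ell$ (hence the $p$-part of the full conjecture there), while for $p=\ell$ odd Proposition \ref{prop:affine-denominators} shows $\mathcal{H}_{\ell}(\Aff(q))=\mathcal{F}_{\ell}(\Aff(q))$, so the full and weak conjectures coincide at $\ell$ and the weak version already proved suffices; the same pattern handles $\Gal(L/K)\simeq S_{4}$ via Theorem \ref{thm:ETNC-S4-negative-r} together with $\mathcal{H}_{3}(S_{4})=\mathcal{F}_{3}(S_{4})$ from Proposition \ref{prop:S_4-denominators}, and gives analogous statements for dihedral extensions via Theorem \ref{thm:ETNC-dihedral<0}, where Proposition \ref{prop:D_2l-denominators} again collapses the gap between the two conjectures at the bad prime. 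As in \S\ref{subsec:case-r<0}, the CM-extension case with $r<0$ even is obtained by restricting to minus parts and replacing $G$ by $G\times C_{2}$.

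I expect the main obstacle to be making the first reduction fully precise rather than citing it as a black box: one must carefully match the canonical fractional Galois ideal $\mathcal{J}_{r}^{S}$ (and, under the relevant splitting hypotheses, its concrete description $\theta_{S}(r)\cdot\zeta(\Z[\frac{1}{2}][G])$) against the image of $T\Omega(L/K,\Z_{p}[G],r)$, track the involution $^{\#}$ and the Tate twist by $\Z_{p}(1-r)$, deal correctly with the torsion subgroup $H^{1}_{\et}(\mathcal{O}_{S}[1/p],\Z_{p}(1-r))_{\tors}$ that enters through $\nr(x)$, and---crucially for the weak statement---verify that the maximal-order version of the argument really does output $\mathcal{F}_{p}(G)$ rather than some a priori larger conductor. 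This is essentially the content of \cite[\S 6]{MR2801311} combined with \cite{MR3092262}, but its compatibility with the hybrid-order reductions of \S\ref{sec:rel-k-group-weak-hybrid} needs to be spelled out; once that is done, everything else is bookkeeping with the denominator ideals already computed above, the restriction to odd primes $p$ being precisely what allows \S\ref{subsec:ETNC-max}(vi) to be invoked without any $2$-adic subtleties.
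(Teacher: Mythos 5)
The statement you were given is not a theorem but the definition of a (weaker) conjecture: Conjecture~\ref{conj:weak-generalised-CS} is simply Conjecture~\ref{conj:generalised-CS} with $\mathcal{H}_{p}(G)$ replaced by $\mathcal{F}_{p}(G)$, so there is nothing for the paper to prove and the paper offers no proof. What you have written is not a proof of that definition but a (correct) reconstruction of the surrounding material in \S\ref{sec:apps-to-ann}: you reproduce the content of Theorem~\ref{thm:ETNC-implies-CS}, which the paper cites directly from \cite[Theorem 4.1 and Theorem 5.7]{MR2801311}, and then Corollary~\ref{cor:cs-tot-real} and the remark that follows it, matching the paper's strategy of combining the ETNC results of \S\ref{subsec:case-r<0} with the denominator-ideal computations of \S 6.

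One point worth noting: your observation that Conjecture~\ref{conj:weak-generalised-CS} holds unconditionally for \emph{every} finite Galois extension of totally real fields, every odd $r<0$ and every odd prime $p$ (since \S\ref{subsec:ETNC-max}(vi) gives $\ETNC^{\max}_{p}(L/K,r)$ in that generality and Theorem~\ref{thm:ETNC-implies-CS}(ii) then applies) is correct and is only implicit in the paper, which records this only in the remark after Corollary~\ref{cor:cs-tot-real} and only for $\Aff(q)$; it deserves to be stated explicitly as you do. Your sketch of why $\ETNC^{\max}_{p}$ yields the $\mathcal{F}_{p}(G)$-bound, however, is somewhat loose: Remark~\ref{rmk:cond-in-denom-ideal} gives $\mathcal{F}_{p}(G)\subseteq\mathcal{H}_{p}(G)$, which goes the wrong way for your argument, and the real content is the maximal-order Fitting invariant computation of \cite[\S 5--6]{MR2801311} together with the fact that the central conductor is precisely what transfers annihilation from $\mathfrak{M}_{p}(G)$-modules back to $\Z_{p}[G]$-modules; the paper sensibly treats this as a black box via the citation rather than re-deriving it.
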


\begin{remark}
If $L$ is totally real and $r<0$ is odd,
we see by the above description of $\mathcal{J}_{r}^{S}$ that Conjecture \ref{conj:generalised-CS}
is a non-abelian generalisation of (the cohomological version of) the Coates-Sinnott conjecture \cite{MR0369322}.
Similar observations hold on minus-parts if $L/K$ is a CM-extension and $r<0$ is even.
In general, Conjecture \ref{conj:generalised-CS} generalises \cite[Conjecture 5.1]{MR2209286}.
\end{remark}

The following result is \cite[Theorem 4.1 and Theorem 5.7]{MR2801311}.

\begin{theorem} \label{thm:ETNC-implies-CS}
Let $L/K$ be a finite Galois extension of number fields with Galois group $G$.
Let $r<0$ be an integer and let $p$ be an odd prime. Suppose that $T\Omega(L/K, \Z[G],r) \in K_0(\Z[G], \Q)$. Then
\begin{enumerate}
\item If $\ETNC_{p}(L/K,r)$ holds, then Conjecture \ref{conj:generalised-CS} is true for $p$ and $r$.
\item If $\ETNC_{p}^{\max}(L/K,r)$ holds, then Conjecture \ref{conj:weak-generalised-CS} is true for $p$ and $r$.
\end{enumerate}
\end{theorem}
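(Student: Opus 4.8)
The plan is to deduce both assertions from results already in the literature, namely \cite[Theorem 4.1]{MR2801311} for part (i) and \cite[Theorem 5.7]{MR2801311} for part (ii), and to check that our hypotheses line up with theirs. First I would note that, since $r<0$, the standing assumption $T\Omega(L/K,\Z[G],r)\in K_0(\Z[G],\Q)$ is precisely the integrality condition needed for the $p$-part $T\Omega(L/K,\Z_p[G],r)$ to be defined via the first decomposition in \eqref{eq:p-part-decomp}; as in the remark opening the proof of Theorem \ref{thm:ETNC-implies-Burns}, everything below may be formulated globally and then passed to $p$-parts by applying the exact functor $\Z_p\otimes_\Z-$, so I shall work $p$-integrally throughout.

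For part (i), I would recall the reformulation of $\ETNC_p(L/K,r)$ used in \cite{MR2801311}: via Artin--Verdier duality and the comparison of $K$-theory with \'etale cohomology, the vanishing of $T\Omega(L/K,\Z_p[G],r)$ identifies the canonical fractional Galois ideal $\mathcal{J}_r^S$, up to a twist by a unit of $\zeta(\Q_p[G])$ accounting for $H^1_{\et}(\mathcal{O}_S[1/p],\Z_p(1-r))_{\tors}$, with the non-commutative Fitting invariant $\Fit^{\max}_{\Z_p[G]}(H^2_{\et}(\mathcal{O}_S[1/p],\Z_p(1-r)))$. Multiplying by $\nr(x)$ for $x$ in the annihilator of that $H^1$-torsion module absorbs the twist, and then the annihilation property of the denominator ideal recalled in Remark \ref{rmk:interest-in-hpg}, namely $\mathcal{H}_p(G)\cdot\Fit^{\max}_{\Z_p[G]}(M)\subseteq\Ann_{\zeta(\Z_p[G])}(M)$, produces exactly the inclusion asserted in Conjecture \ref{conj:generalised-CS}. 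This is the content of \cite[Theorem 4.1]{MR2801311}, and the present statement is its specialisation.

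For part (ii), the key observation is the equivalence recorded in \S \ref{subsec:ETNC-max}(ii): $\ETNC_p^{\max}(L/K,r)$ holds if and only if $T\Omega(L/K,\Z_p[G],r)\in DT(\Z_p[G])$, i.e. the ETNC class dies only after extension of scalars to a maximal $\Z_p$-order. Under this weaker hypothesis the identification above survives only modulo the central conductor $\mathcal{F}_p(G)$ of \eqref{eq:conductor-formula}; combining this with Jacobinski's formula and the inclusion $\mathcal{F}_p(G)\subseteq\mathcal{H}_p(G)$ of Remark \ref{rmk:cond-in-denom-ideal} is exactly what is needed for Conjecture \ref{conj:weak-generalised-CS}. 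This is \cite[Theorem 5.7]{MR2801311}, which I would invoke directly.

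The main obstacle here is not a mathematical one but one of careful bookkeeping: one must confirm that the definition of $\mathcal{J}_r^S$ in \cite[Definition 2.9]{MR2801311} and the normalisations of Fitting invariants and reduced norms used there agree with the conventions adopted in this paper, and that the hypothesis $T\Omega(L/K,\Z[G],r)\in K_0(\Z[G],\Q)$ is precisely the condition under which \cite[Theorems 4.1 and 5.7]{MR2801311} are stated. Since both assertions are literal citations, no genuinely new argument is required beyond this matching of hypotheses and notation.
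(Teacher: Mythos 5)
Your proposal is correct and takes the same route as the paper: the paper gives no proof beyond the sentence preceding the theorem, which simply states that the result is \cite[Theorem 4.1 and Theorem 5.7]{MR2801311}, exactly the two citations you invoke for parts (i) and (ii) respectively. The additional narrative you supply about Fitting invariants, the denominator ideal, and the central conductor is a reasonable gloss on what those cited theorems prove, but the essential content of both your argument and the paper's is the direct appeal to that reference.
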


\begin{corollary}\label{cor:cs-tot-real}
Let $L/K$ be a finite Galois extension of totally real number fields with Galois group $G$.
Suppose that $G = N \rtimes H$ is a Frobenius group
and that $L^{N} /\Q$ is abelian (in particular, this is the case when $K=\Q$ and $H$ is abelian).
Then Conjecture \ref{conj:generalised-CS} holds for every odd $r<0$ and every prime $p \nmid 2|N|$.
If in addition $N$ is an $\ell$-group for a prime $\ell$, then Conjecture \ref{conj:generalised-CS} holds for
every odd $r<0$ and every odd prime $p$.
In particular, this applies in the following cases:
\begin{itemize}
\item $G \simeq \Aff(q)$, where $q$ is a prime power (see Example \ref{ex:affine}).
\item $G \simeq P \rtimes C_{2}$, where $P$ is an abelian $\ell$-group  (with $\ell$ odd) on which $C_{2}$ acts by inversion
    (see Example \ref{ex:inversion}).
\item $G \simeq C_{\ell} \rtimes C_{p}$, where $p<\ell$ are distinct primes such that $p \mid (\ell-1)$ and $C_{p}$ acts on $C_{\ell}$ via an embedding $C_{p} \hookrightarrow \Aut(C_{\ell})$ (see Example \ref{ex:metacyclic}).
\end{itemize}
\end{corollary}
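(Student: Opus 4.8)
The plan is to obtain the corollary as a formal consequence of Theorem~\ref{thm:ETNC-Frobenius-groups} fed into Theorem~\ref{thm:ETNC-implies-CS}(i), exactly mirroring the way Theorem~\ref{thm:ETNC-Frobenius-groups} itself was deduced from Theorem~\ref{thm:hybrid-tot-real-r-less-than-zero}. First I would record the standing input that, since $L$ is totally real and $r<0$ is odd, the equivariant $L$-value $\theta_{S}(r)$ lies in $\zeta(\Q[G])$ and the relevant regulator is trivial, so the hypothesis $T\Omega(L/K,\Z[G],r)\in K_{0}(\Z[G],\Q)$ demanded by Theorem~\ref{thm:ETNC-implies-CS} is automatically satisfied here (this is the same rationality input, going back to Siegel, that is used silently throughout \S\ref{subsec:case-r<0}). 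Consequently, whenever $\ETNC_{p}(L/K,r)$ is known, Theorem~\ref{thm:ETNC-implies-CS}(i) yields Conjecture~\ref{conj:generalised-CS} for that $p$ and $r$.

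For the first assertion I would simply take a prime $p\nmid 2|N|$ and an odd integer $r<0$: Theorem~\ref{thm:ETNC-Frobenius-groups} (which is where the Frobenius structure and the weakly hybrid machinery of \S\ref{sec:rel-k-group-weak-hybrid} actually do the work) gives $\ETNC_{p}(L/K,r)$, and then Theorem~\ref{thm:ETNC-implies-CS}(i) gives Conjecture~\ref{conj:generalised-CS} for $p$ and $r$. For the second assertion, assume in addition that $N$ is an $\ell$-group and let $p$ be an odd prime. If $p\neq\ell$ then $p\nmid|N|$ and, $p$ being odd, $p\nmid 2|N|$, so this case is covered by the first assertion. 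If $p=\ell$ then $\ell$ is odd, the extension $L/L^{N}$ has $\ell$-power degree, and $L^{N}/\Q$ is abelian by hypothesis, so Theorem~\ref{thm:ETNC-Frobenius-groups} (through \cite{burns-mc} combined with the vanishing of the pertinent Iwasawa $\mu$-invariant by Ferrero and Washington \cite{MR528968}) gives $\ETNC_{\ell}(L/K,r)$, and again Theorem~\ref{thm:ETNC-implies-CS}(i) applies.

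Finally, to dispatch the three displayed examples I would just observe that each is a Frobenius group $G=N\rtimes H$ in which $N$ (isomorphic to $\F_{q}$, to $P$, and to $C_{\ell}$ respectively) is an $\ell$-group and $H$ is abelian, so $L^{N}/\Q$ is abelian whenever $K=\Q$, and the hypotheses of the second assertion are satisfied. I do not expect a genuine obstacle: all the arithmetic substance is already contained in Theorem~\ref{thm:ETNC-Frobenius-groups} and Theorem~\ref{thm:ETNC-implies-CS}, and the only points that need checking are the automatic verification of the $T\Omega$-rationality hypothesis and the bookkeeping that the listed groups fall under the $\ell$-group case.
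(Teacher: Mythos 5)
Your proposal is correct and follows the same route as the paper: the paper's proof also reads off $\ETNC_{p}(L/K,r)$ directly from Theorem~\ref{thm:ETNC-Frobenius-groups} and then applies Theorem~\ref{thm:ETNC-implies-CS}(i). You merely unpack the case analysis that is already packaged inside Theorem~\ref{thm:ETNC-Frobenius-groups} and make explicit the rationality input that the paper leaves implicit.
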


\begin{proof}
The hypotheses are exactly those of Theorem \ref{thm:ETNC-Frobenius-groups} and so $\ETNC_{p}(L/K,r)$ holds.
Hence the result follows from Theorem \ref{thm:ETNC-implies-CS}(i).
\end{proof}

\begin{remark}
We note that in fact it is not always necessary to establish the full strength of $\ETNC_{p}(L/K,r)$ in the proof of
Corollary \ref{cor:cs-tot-real}.
For example, suppose that $G \simeq \Aff(q)$ where  $q$ is a power of an odd prime $\ell$.
In the case $p=\ell$, Proposition \ref{prop:affine-denominators} shows that
the $p$-parts of Conjectures \ref{conj:generalised-CS} and \ref{conj:weak-generalised-CS} are equivalent.
Hence by Theorem \ref{thm:ETNC-implies-CS}(ii), it suffices to establish
$\ETNC_{p}^{\max}(L/K,r)$, which does indeed hold by  \S \ref{subsec:ETNC-max}(vi), even if $L^{N}/\Q$ is non-abelian.
\end{remark}

\begin{corollary}
Let $L/K$ be a Galois extension of totally real number fields with Galois group $\Gal(L/K) \simeq S_{4}$.
Suppose that $L^{A_{4}} / \Q$ is abelian.
Then Conjecture \ref{conj:generalised-CS} holds for every odd $r<0$ and every odd prime $p$.
\end{corollary}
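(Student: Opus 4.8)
The plan is to deduce the corollary from the equivariant Tamagawa number conjecture via Theorem \ref{thm:ETNC-implies-CS}(i). First I would observe that the standing hypothesis $T\Omega(L/K,\Z[G],r) \in K_{0}(\Z[G],\Q)$ is satisfied: for Tate motives at strictly negative integers the relevant rationality statement (the analogue of Stark's conjecture) is known unconditionally, so $T\Omega(L/K,\Z[G],r)$ lies in the rational relative $K$-group and hence admits the decomposition into $p$-parts of \S\ref{subsec:p-parts}. With this in hand, Theorem \ref{thm:ETNC-implies-CS}(i) reduces the corollary, for a fixed odd prime $p$ and a fixed odd integer $r<0$, to the assertion that $\ETNC_{p}(L/K,r)$ holds.

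Next I would simply invoke Theorem \ref{thm:ETNC-S4-negative-r}: under exactly the hypotheses of the corollary --- namely $L/K$ a finite Galois extension of totally real number fields with $\Gal(L/K)\simeq S_{4}$ and $L^{A_{4}}/\Q$ abelian --- that theorem asserts that $\ETNC(L/K,r)$ holds outside its $2$-part for every odd $r<0$, i.e.\ $\ETNC_{p}(L/K,r)$ holds for every odd prime $p$ and every odd $r<0$. Combining this with the preceding paragraph yields Conjecture \ref{conj:generalised-CS} for every odd prime $p$ and every odd $r<0$, which is the claim.

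The only points requiring care are bookkeeping rather than substance: Conjecture \ref{conj:generalised-CS} and Theorem \ref{thm:ETNC-implies-CS} are both stated only for odd primes, and ``outside its $2$-part'' in Theorem \ref{thm:ETNC-S4-negative-r} means precisely ``for all odd $p$'', so the quantifiers match. I do not expect a genuine obstacle here: all the non-formal content already sits inside Theorem \ref{thm:ETNC-S4-negative-r} --- where the prime $p=3$ (at which $\Z_{3}[S_{4}]$ is not maximal) is handled by the $V_{4}$-hybrid decomposition of Example \ref{ex:S4-V4}, reducing $\ETNC_{3}(L/K,r)$ to the $S_{3}$-quotient and thence to the abelian case together with Burns' result recorded in \S\ref{subsec:ETNC-max}(vi) --- and inside Theorem \ref{thm:ETNC-implies-CS}.

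I would also add, in parallel with the preceding remark of the paper, that at $p=3$ one need not in fact use the full $\ETNC_{3}(L/K,r)$: by Proposition \ref{prop:S_4-denominators} one has $\mathcal{H}_{3}(S_{4})=\mathcal{F}_{3}(S_{4})$, so Conjectures \ref{conj:generalised-CS} and \ref{conj:weak-generalised-CS} agree on the $3$-part, whence Theorem \ref{thm:ETNC-implies-CS}(ii) together with $\ETNC_{3}^{\max}(L/K,r)$ --- which holds by \S\ref{subsec:ETNC-max}(vi) even without assuming $L^{A_{4}}/\Q$ abelian --- would already suffice there. Given the hypotheses actually imposed, however, the direct route through Theorem \ref{thm:ETNC-S4-negative-r} is the cleanest.
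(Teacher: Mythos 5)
Your proposal is correct and takes essentially the same route as the paper: the paper's own proof is the single sentence ``This follows by combining Theorem \ref{thm:ETNC-S4-negative-r} and Theorem \ref{thm:ETNC-implies-CS}(i).'' Your additional remarks --- checking the rationality hypothesis $T\Omega(L/K,\Z[G],r)\in K_{0}(\Z[G],\Q)$ and observing the alternative at $p=3$ via $\mathcal{H}_{3}(S_{4})=\mathcal{F}_{3}(S_{4})$ and $\ETNC^{\max}_{3}$ --- are accurate elaborations rather than a different argument.
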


\begin{proof}
This follows by combining Theorem \ref{thm:ETNC-S4-negative-r} and Theorem \ref{thm:ETNC-implies-CS}(i).
\end{proof}

\begin{corollary}
Let $L/\Q$ be a Galois extension with $\Gal(L/\Q) \simeq D_{2n}$ for some odd $n$.
Let $K/\Q$ be the unique quadratic subextension of $L/\Q$ and suppose that $K$ is imaginary.
Then Conjecture \ref{conj:generalised-CS} holds for every $r<0$ and every odd prime that splits in $K/\Q$.
\end{corollary}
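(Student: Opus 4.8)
The plan is to obtain the corollary as a formal consequence of two earlier results: Theorem \ref{thm:ETNC-dihedral<0}, which establishes $\ETNC_p(L/\Q,r)$ in precisely the dihedral situation under consideration, and Theorem \ref{thm:ETNC-implies-CS}(i), which deduces Conjecture \ref{conj:generalised-CS} from the ETNC. First I would note that the hypotheses in force here, namely $\Gal(L/\Q)\simeq D_{2n}$ with $n$ odd, the unique quadratic subextension $K/\Q$ imaginary, and $p$ an odd prime splitting in $K/\Q$, are exactly those of Theorem \ref{thm:ETNC-dihedral<0}; that theorem therefore gives $\ETNC_p(L/\Q,r)$ for every integer $r<0$.

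Applying Theorem \ref{thm:ETNC-dihedral<0} presupposes that $T\Omega(L/\Q,\Z[G],r)$ lies in $K_0(\Z[G],\Q)$, so that the $p$-part of the conjecture is defined; this is already part of the content of that theorem, and it simultaneously supplies the standing hypothesis required to invoke Theorem \ref{thm:ETNC-implies-CS} (it may be viewed as the known rationality of leading terms of Artin $L$-functions at negative integers, which in the present case is also transparent because, by Theorem \ref{thm:frob-kernel}(iv), every irreducible character of $D_{2n}$ is either linear or induced from $\Gal(L/K)$, hence monomial). Granting this, for the given odd prime $p$ and integer $r<0$ I would feed the validity of $\ETNC_p(L/\Q,r)$ into Theorem \ref{thm:ETNC-implies-CS}(i), which yields that Conjecture \ref{conj:generalised-CS} holds for that $p$ and $r$. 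Letting $p$ range over all odd primes splitting in $K/\Q$ and $r$ over all negative integers gives the corollary.

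The argument is thus a direct composition of the two cited theorems and presents no serious obstacle; the only point meriting a moment's care is the rationality of $T\Omega(L/\Q,\Z[G],r)$ noted above, which is subsumed in Theorem \ref{thm:ETNC-dihedral<0}. It is nevertheless worth recording where the splitting hypothesis on $p$ intervenes: it is exactly the condition under which Johnson-Leung's theorem (\S \ref{subsec:known-cases}(iii)) on $\ETNC_p$ for abelian extensions of imaginary quadratic fields applies to $L/K$, and this input, combined with the injectivity of the restriction map $\res^{D_{2n}}_{C_n}$ on $DT(\Z_p[D_{2n}])$ from Proposition \ref{prop:res-D2n-Cn}, is what underlies the proof of Theorem \ref{thm:ETNC-dihedral<0} that we are invoking.
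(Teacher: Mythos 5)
Your argument is correct and follows exactly the route of the paper's proof, which is simply to combine Theorem \ref{thm:ETNC-dihedral<0} (giving $\ETNC_{p}(L/\Q,r)$ for every $r<0$ and every odd prime $p$ splitting in $K/\Q$) with Theorem \ref{thm:ETNC-implies-CS}(i). Your additional remarks on the rationality of $T\Omega(L/\Q,\Z[G],r)$ and on the role of the splitting hypothesis via Johnson-Leung are accurate but are not required once Theorem \ref{thm:ETNC-dihedral<0} is invoked.
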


\begin{proof}
This follows by combining Theorem \ref{thm:ETNC-dihedral<0} and Theorem \ref{thm:ETNC-implies-CS}(i).
\end{proof}

\bibliography{hybrid-ETNC-Bib}{}
\bibliographystyle{amsalpha}

\end{document}